\newcommand{\diag}{\operatorname{diag}}
\newcommand{\innp}[1]{\left\langle #1 \right\rangle}
\newcommand{\vx}{\mathbf{x}}
\newcommand{\va}{\mathbf{a}}
\newcommand{\vd}{\mathbf{d}}
\newcommand{\cx}{\mathcal{X}}
\newcommand{\cs}{\mathcal{S}}
\newcommand{\vy}{\mathbf{y}}
\newcommand{\vz}{\mathbf{z}}
\newcommand{\vw}{\mathbf{w}}
\newcommand{\vu}{\mathbf{u}}
\newcommand{\vlambda}{\bm{\lambda}}
\newcommand{\defeq}{\stackrel{\mathrm{\scriptscriptstyle def}}{=}}
\newcommand{\rr}{\mathbb{R}}
\newcommand{\norm}[1]{\left\| #1 \right\|}
\newcommand{\abs}[1]{\lvert #1 \rvert}
\newcommand*{\vsepfbox}[1]{%
  \begingroup
    \sbox0{\fbox{#1}}%
    \setlength{\fboxrule}{0pt}%
    \mbox{\kern-\fboxsep\fbox{\unhbox0}\kern-\fboxsep}%
  \endgroup
}
\theoremstyle{plain} \numberwithin{equation}{section}
\newtheorem{theorem}{Theorem}[section]
\numberwithin{theorem}{section}
\newtheorem{corollary}[theorem]{Corollary}
\newtheorem{lemma}[theorem]{Lemma}
\theoremstyle{definition}
\newtheorem{definition}[theorem]{Definition}
\newtheorem{remark}[theorem]{Remark}
\theoremstyle{plain}
\newtheorem{assumption}{Assumption}
\DeclareMathOperator{\interior}{\mathrm{int}}
\DeclareMathOperator{\relinterior}{\mathrm{rel.int}}
\DeclareMathOperator*{\trace}{trace}
\DeclareMathOperator*{\argmin}{argmin}
\DeclareMathOperator*{\argmax}{argmax}
\DeclareMathOperator{\vertex}{\mathrm{vert}}
\DeclareMathOperator{\co}{\mathrm{conv}}
\newcommand{\hrulealg}[0]{\vspace{1mm} \hrule \vspace{1mm}}
\colorlet{pink}{red!40}
\colorlet{lightblue}{blue!30}
\renewcommand{\cite}[1]{\citep{#1}}
\date{}
\begin{document}

\title{Second-order Conditional Gradient Sliding}

\author{\name Alejandro Carderera \email  \href{mailto:alejandro.carderera@gatech.edu}{alejandro.carderera@gatech.edu}\\
       \addr Department of Industrial and Systems Engineering\\
       Georgia Institute of Technology\\
       Atlanta, USA
       \AND
       \name Sebastian Pokutta \email \href{mailto:pokutta@zib.de}{pokutta@zib.de} \\
       \addr Institute of Mathematics\\
       Zuse Institute Berlin and Technische Universität Berlin\\
       Berlin, Germany}
       
\maketitle

\begin{abstract}

Constrained second-order convex optimization algorithms are the method of choice when a high accuracy solution to a problem is needed, due to their local quadratic convergence. These algorithms require the solution of a constrained quadratic subproblem at every iteration. We present the \emph{Second-Order Conditional Gradient Sliding} (SOCGS) algorithm, which uses a projection-free algorithm to solve the constrained quadratic subproblems inexactly and uses inexact Hessian oracles (subject to an accuracy requirement). When the feasible region is a polytope the algorithm converges quadratically in primal gap after a finite number of linearly convergent iterations. Once in the quadratic regime the SOCGS algorithm requires $\mathcal{O}(\log(\log 1/\varepsilon))$ first-order and inexact Hessian oracle calls and $\mathcal{O}(\log (1/\varepsilon) \log(\log1/\varepsilon))$ linear minimization oracle calls to achieve an $\varepsilon$-optimal solution. This algorithm is useful when the feasible region can only be accessed efficiently through a linear optimization oracle, and computing first-order information of the function, although possible, is costly.

\end{abstract}

\section{Introduction}

We focus on the optimization problem defined as
\begin{align}
    \min_{\vx \in \cx} f(\vx), \label{eq:OptProblem}
\end{align}
where $\cx$ is a polytope and $f:\cx \rightarrow \rr$ is $\mu$-strongly convex, has $L$-Lipschitz continuous gradients and has $L_2$-Lipschitz continuous Hessian.

An immensely powerful approach to tackle Problem~\eqref{eq:OptProblem} is to construct a second-order approximation to $f(\vx)$ at the current iterate using $\nabla f(\vx)$ and $\nabla^2 f(\vx)$, and move in the direction that minimizes this approximation, giving rise to a family of methods known as \emph{Newton} methods, first developed for unconstrained problems \cite{kantorovich1948functional}. Variants of the former converge globally and have a local quadratic convergence rate when minimizing a self-concordant function or a strongly convex function with Lipschitz continuous Hessian \cite{nesterov1994interior, nesterov2013introductory}. When the problem at hand is constrained to a convex set, one can use a constrained analog of these methods \cite{levitin1966constrained}, where a quadratic approximation to the function is minimized over $\cx$ at each iteration.

However, there are two shortcomings to these methods. First, computing second-order information about $f(\vx)$ is expensive. This has led to the development of \emph{Variable-Metric} algorithms, which use approximate second-order information. Secondly, in many cases solving the quadratic subproblem to optimality is too costly. This has resulted in numerous \emph{Inexact Variable-Metric} algorithms, which in many cases inherit many of the favorable properties of Newton methods \cite{scheinberg2016practical, lee2014proximal}.

The \emph{Conditional Gradients} (CG) algorithm \cite{levitin1966constrained} (also known as the \emph{Frank-Wolfe} algorithm \cite{frank1956algorithm}) instead builds a linear approximation to $f(\vx)$ using $\nabla f(\vx)$, and moves in the direction given by the point that minimizes this linear approximation over $\cx$. Instead of solving a constrained quadratic problem at each iteration, it solves a constrained linear problem, which is usually much cheaper. As the algorithm maintains its iterates as convex combinations of extremal points of \(\cx\) obtained from the linear optimization problem it is dubbed \emph{projection-free}. Conditional Gradients have become the method of choice in many applications where projecting onto $\mathcal{X}$ is computationally prohibitive, such as, e.g., in video co-localization \cite{joulin2014efficient} or greedy particle optimization in Bayesian inference \cite{futami2019bayesian}.

For constrained problems where the gradient of $f(\vx)$ is relatively hard to compute, using Projected Variable-Metric methods seems counter-intuitive, yet it allows the construction of a quadratic approximation whose gradients are much cheaper to compute. Minimizing a quadratic approximation at each iteration is often costly, but due to the substantial progress it provides per-iteration it can often become competitive in wall-clock time with using first-order algorithms to directly minimize $f(\vx)$ \cite{schmidt2009optimizing}. We consider the case where both the first-order oracle for $f(\vx)$ and the projection oracle onto $\cx$ are computationally expensive, but linear programming oracles over $\cx$ are relatively cheap. In this setting, we show how conditional gradient algorithms can be used to compute Inexact Projected Variable-Metric steps, in an approach that is similar in essence to \emph{Conditional Gradient Sliding} (CGS) \cite{lan2016conditional}, where the Euclidean projections onto $\cx$ in \emph{Nesterov's Accelerated Gradient Descent} are computed using the conditional gradient algorithm. We also show how coupling with an independent sequence of conditional gradient steps we can guarantee the global linear convergence in primal gap of the algorithm.
\subsection{Contributions and related work}

We provide a projection-free Inexact Variable-Metric algorithm, denoted as the \emph{Second-order Conditional Gradient Sliding} (SOCGS) algorithm which uses inexact second-order information. The algorithm has a stopping criterion that relies on a lower bound on the primal gap, e.g., via smoothness, and achieves global linear convergence and quadratic local convergence when close to the optimum. 

The use of a combination of second-order and projection-free methods was first pioneered in \citet{gonccalves2017newton}, who proposed an algorithm in which exact unconstrained Newton steps were performed, and were later projected onto $\cx$ using the Euclidean norm and the CG algorithm. This resulted in a method that showed local linear convergence in distance to the optimum for functions whose derivative satisfied a H\"{o}lder-like condition and also for a subclass of analytic functions. This was later extended in \citet{gonccalves2018inexact} to deal with inexact second-order information, using the inexactness criteria in \citet{morini1999convergence}, and resulting in the same local linear convergence. A variation of the former algorithm, was shown to converge globally (without an explicit convergence rate) using a non-monotone line search strategy \cite{gonccalves2019global}. Neither of these three algorithms included a complexity analysis on the number of linear minimization oracle calls needed to achieve a certain target accuracy. 

An approach that is similar in spirit is the recent \emph{Newton Conditional Gradient} (NCG) algorithm \cite{liu2020newton} which performs Inexact Newton steps using a conditional gradient algorithm to minimize a self-concordant function over $\cx$. This algorithm requires exact second-order information, as opposed to the approximate information used by the SOCGS algorithm, however it does not require the function to be smooth and strongly convex, or the feasible region to be a polytope. After a finite number of damped-steps the NCG algorithm reaches an $\varepsilon$-optimal solution with $\mathcal{O}(\log \frac{1}{\varepsilon})$ first order and exact Hessian oracle calls and $\mathcal{O}(\varepsilon^{-\nu})$ linear optimization oracle calls with $\nu = 1 + o(1)$. Note that \citet{ochs2019model}[Example 4.3] also proposed a conditional gradient-based Variable-Metric algorithm via their \emph{Model Function-Based Conditional Gradient} algorithm, however their approach is markedly different from ours: the steps performed in their algorithm can be seen as unconstrained Variable-Metric steps which are projected onto $\cx$ using the Euclidean norm while the SOCGS performs steps which can be interpreted as unconstrained Inexact Variable-Metric steps which are projected onto $\cx$ using a norm defined by the positive semi-definite matrix that approximates the Hessian. The same can be said regarding the algorithms in \cite{gonccalves2019global}, moreover, the SOCGS algorithm directly approximately minimizes a quadratic using a CG variant, in an operation that directly represents an Inexact Projected Variable-Metric step, whereas the algorithm in \cite{gonccalves2019global} proceeds in a sequential manner, first computing the Newton step, and afterwards projecting onto $\cx$ using the CG algorithm.

Further CG variants for the minimization of a self-concordant function have been developed in \citet{dvurechensky2020self}, in which an algorithm was developed that achieves an $\epsilon$-optimal solution in primal gap after $O(1/\epsilon)$ first-order, second-order and linear minimization oracle calls. A related second-order CG variant was later developed in \citet{zhao2022analysis} for the minimization of the sum of a logarithmically-homogeneous self-concordant barrier function and a non-smooth function with bounded domain. The aforementioned algorithm reaches an $\epsilon$-optimal solution in primal gap after $O( 1/\epsilon)$ first-order, second-order and linear minimization oracle calls. 

Later on, other variants in \citet{carderera2021simple} and \citet{dvurechensky2022generalized} were shown to achieve an $\epsilon$-optimal solution in primal gap when minimizing a generalized self-concordant function after $O(1/\epsilon)$ first-order, domain and linear minimization oracle calls (where the domain oracle call simply checks if a given point is in the domain of the function being minimized). These variants, therefore, do not require second-order information. When the feasible region under consideration is a polytope, a variant presented in \citet{carderera2021simple} of the Away-step Conditional Gradient (ACG) algorithm \cite{wolfe1970convergence} with the stepsize of \citet{pedregosa2020linearly} was shown to achieve an $\epsilon$-optimal solution in primal gap after $O(\log 1/\epsilon)$ first-order, domain and linear minimization oracle calls. Another CG variant in \citet{dvurechensky2022generalized} was shown to achieve an $\epsilon$-optimal solution in primal gap after $O(\log 1/\epsilon)$ first-order, second-order and linear minimization oracle calls when the feasible region is a polytope. 

Since our initial submission in 2020, several follow-up works and related preprints have been published in conferences and journals. We have updated the bibliography to reflect these developments.

\section{Preliminaries}       
 We denote the unique minimizer of Problem~\eqref{eq:OptProblem} by $\vx^*$. Let $\mathcal{S}^n_{++}$ and $I^n$ denote the set of symmetric positive definite matrices and the identity matrix in $\rr^{n\times n}$. We denote the largest eigenvalue of the matrix $H \in \rr^{n\times n}$ as $\lambda_{\max}\left(H\right)$. Let $\norm{\cdot}$ and $\norm{\cdot}_{H}$ denote the \emph{Euclidean norm} and the \emph{matrix norm} defined by $H \in \mathcal{S}^n_{++}$, respectively. We denote the \emph{diameter} of the polytope $\cx$ as $D = \max_{\vx,\vy \in \cx} \norm{\vx - \vy}$, and its \emph{vertices} by $\vertex\left(\mathcal{X}\right)\subseteq \cx$. Given a non-empty set $\mathcal{S}\subset \rr^n$  we refer to its \emph{convex hull} as $\co\left( \mathcal{S} \right)$. For any $\vx\in \cx$ we denote by $\mathcal{F}\left( \vx\right)$ the \emph{minimal face} of $\cx$ that contains $\vx$. Lastly, given a matrix $H \in \mathcal{S}^n_{++}$ we denote the \emph{$H$-scaled projection} of $\vy$ onto $\cx$ as:
\begin{align}
    \Pi^{H}_{\cx}(\vy)  & \defeq \argmin_{\vx \in \cx} \frac{1}{2} \norm{\vx - \vy}_{H}^2 \\
    & = \argmin_{\vx \in \cx} \frac{1}{2} \norm{H^{1/2} \left( \vx - \vy \right)}^2. \label{projectionMapping}
\end{align}
 
 \subsection{The Conditional Gradients algorithm} \label{Subsection:CGAlg}
  We define the linear approximation of the function $f(\vx)$ around the point $\vx_k$ as:
\begin{align}
    \hat{l}_{k}(\vx) \defeq f(\vx_k) + \innp{\nabla f(\vx_k), \vx - \vx_k}. \label{def:LinearApprox}
\end{align}
At each iteration the vanilla \emph{Conditional Gradients} (CG) algorithm \cite{levitin1966constrained, frank1956algorithm, jaggi2013revisiting} takes steps defined as $\vx_{k+1} = \vx_k + \gamma_k (\argmin_{\vx \in \cx} \hat{l}_{k}(\vx) - \vx_k )$ with $\gamma_k \in (0,1]$. As the iterates are formed as convex combinations of points in $\cx$ the algorithm is \emph{projection-free}. A useful quantity that can readily be computed in all steps is $\max_{\mathbf{v} \in \cx}\langle\nabla f(\vx_k),\vx_k-\mathbf{v}\rangle$, known as the \emph{Frank-Wolfe gap}, which provides an upper bound on the primal gap and is often used as a stopping criterion when running the CG algorithm.

However, the vanilla CG algorithm does not converge linearly in primal gap when applied to Problem~\eqref{eq:OptProblem} in general. This motivated the development of the \emph{Away-step Conditional Gradient} (ACG) algorithm \cite{wolfe1970convergence} (shown in Algorithm~\ref{Algo:Appx:ACGAlg} in Appendix~\ref{Section:Appx:ACG}), which uses \emph{Away-steps} (shown in Algorithm~\ref{algo:ACGStep}) and converges linearly when coupled with an exact line search \citep{lacoste2015global} or a step size strategy dependent on $L$ \cite{pedregosa2020linearly}. The ACG algorithm maintains what is called an \emph{active set} $\cs_k \subseteq \vertex\left(\mathcal{X}\right)$ which represents the potentially non-unique set of vertices of $\mathcal{X}$ such that $\vx_k \in \co\left(\mathcal{S}_k\right)$. Associated with this active set $\cs_k$ we have a set of barycentric coordinates $\vlambda_k$ such that if we denote by $\vlambda_k(\vu)\in [0,1]$ the element of $\vlambda_k$ associated with $\vu \in \cs_k$ we have that $\vx_{k} = \sum_{\vu \in \cs_k} \vlambda_k(\vu) \vu$, with $\sum_{\vu \in \cs_k} \vlambda_k(\vu) = 1$ and $\vlambda_k(\vu) \geq 0$ for all $\vu \in \cs_k$.

\begin{algorithm}
\SetKwInOut{Input}{Input}\SetKwInOut{Output}{Output}
\Input{Gradient $\nabla f(\vx_k)$, point $\vx_k \in \cx$, active set $\cs_k$ and barycentric coordinates $\vlambda_k$.}
\Output{Point $\vx_{k+1} \in \cx$, active set $\cs_{k+1}$ and barycentric coordinates $\vlambda_{k+1}$.}
\hrulealg
$\mathbf{v} \leftarrow \argmin_{\mathbf{v} \in  \mathcal{X}} \innp{\nabla f\left(\vx_k \right), \mathbf{v}}$, $\va\leftarrow \argmax_{\mathbf{v} \in  \mathcal{S}_k} \innp{\nabla f\left(\vx_k \right), \mathbf{v}}$\;
\uIf{$\innp{\nabla f(\vx_k),  \vx_k - \mathbf{v}} \geq \innp{\nabla f(\vx_k),\va - \vx_k}$}{
$\vd \leftarrow \vx_k - \mathbf{v}$, $\gamma_{\max} \leftarrow 1$\;}
\Else{ 
$\vd \leftarrow \va - \vx_k$, $\gamma_{\max} \leftarrow \vlambda(\va)/\left( 1 - \vlambda(\va)\right)$\;}
$\gamma_k \leftarrow \argmin_{\gamma\in [0,\gamma_{\max}]} f\left(\vx_k + \gamma \vd \right)$\;
$\vx_{k+1} \leftarrow \vx_k + \gamma_k \vd$\;
Update $\cs_k$ and $\vlambda_k$ (see full details in Algorithm~\ref{algo:Appx:ACGSteps} in Appendix~\ref{Section:Appx:ACG})\;
\caption{Away-step Conditional Gradients step ACG$\left( \nabla f(\vx_k), \vx_k , \cs_k, \vlambda_k \right)$} \label{algo:ACGStep}
\end{algorithm}

\subsubsection{Global convergence} \label{Subsection:CGAlgGlobalConv}

The first proof of asymptotic linear convergence of the ACG algorithm relied on the \emph{strict complementarity} of the problem in Equation~\eqref{eq:OptProblem} (shown in Assumption~\ref{assumption:strictComplementarity}), which we will also use in the convergence proof of the SOCGS algorithm. A mild assumption that rules out degeneracy.

\begin{assumption}[Strict Complementarity]\label{assumption:strictComplementarity}
We have that $\innp{\nabla f\left( \vx^*\right), \vx - \vx^*} = 0$ if and only if $\vx \in \mathcal{F} \left( \vx^*\right)$.
\end{assumption} 

If Assumption~\ref{assumption:strictComplementarity} is satisfied the iterates of the ACG algorithm reach $\mathcal{F} \left( \vx^*\right)$ in a finite number of steps, remaining in $\mathcal{F} \left( \vx^*\right)$ for all subsequent iterations \cite{guelat1986some}. When inside $\mathcal{F} \left( \vx^*\right)$, the iterates of the ACG algorithm contract the primal gap linearly. This analysis was later significantly extended to provide an explicit global linear convergence rate in primal gap (Theorem~\ref{theorem:ConvergenceACG}), by making use of the \emph{pyramidal width} of the polytope $\cx$ \cite{lacoste2015global}. With the pyramidal width one can derive a primal progress guarantee for all steps taken by the ACG algorithm except \lq{}bad\rq{} away-steps that reduce the cardinality of the active set $\mathcal{S}_k$, that is when $\innp{\nabla f(\vx_k),  \vx_k - \mathbf{v}} <\innp{\nabla f(\vx_k),\va - \vx_k}$ and the step size satisfies $\gamma_k = \gamma_{\max}$ in Algorithm~\ref{algo:ACGStep}. This cannot happen more than $\lfloor K/2 \rfloor$ times when running the ACG algorithm for $K$ iterations (as the algorithm cannot drop more vertices with away-steps than it has picked up with Frank-Wolfe steps). This is an important consideration to keep in mind, as it means that the ACG linear primal gap contraction does not hold on a per-iteration basis. 

\begin{theorem}[Primal gap convergence of the ACG algorithm]~\citep[Theorem 1]{lacoste2015global} \label{theorem:ConvergenceACG}
  Given an initial point $\vx_0 \in \cx$, the ACG algorithm applied to Problem~\eqref{eq:OptProblem} satisfies after $K \geq 0$ iterations:
   \begin{align*}
      f(\vx_{K}) - f(\vx^*) \leq \left( 1 - \frac{\mu}{4L} \left( \frac{\delta}{D}\right)^2\right)^{ K/2} \left(f(\vx_{0}) - f(\vx^*) \right),
  \end{align*}
  where $D$ denotes the diameter of the polytope $\cx$ and $\delta$ its pyramidal width.
\end{theorem}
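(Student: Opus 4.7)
The theorem is a geometric-plus-analytic statement, and the natural strategy is to combine three ingredients: (i) a smoothness-based per-iteration progress bound, (ii) the pyramidal width inequality that lower-bounds the ACG descent direction in terms of the direction toward $\vx^*$, and (iii) strong convexity to convert distance-to-optimum into primal gap. The only real subtlety is handling the ``drop'' (bad away) steps that make no provable contraction progress; they are absorbed by an amortization argument that accounts for the $k/2$ exponent.

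\textbf{Per-iteration progress.} First I would classify the iterations. At step $k$, let $\vd$ be the chosen direction (either $\vv_k - \vx_k$ or $\vx_k - \va_k$, whichever has larger $|\innp{\nabla f(\vx_k),\,\cdot\,}|$). By $L$-smoothness,
\begin{equation*}
f(\vx_k + \gamma \vd) \;\leq\; f(\vx_k) + \gamma \innp{\nabla f(\vx_k), \vd} + \tfrac{L}{2}\gamma^2 \norm{\vd}^2.
\end{equation*}
Call an iteration a \emph{good step} when the unconstrained minimizer $\gamma^* = \innp{\nabla f(\vx_k), -\vd}/(L\norm{\vd}^2)$ lies in $[0,\gamma_{\max}]$, and a \emph{drop step} when it is a bad away step with $\gamma_k = \gamma_{\max}$. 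On a good step, line-search yields
\begin{equation*}
h_{k+1} \;\leq\; h_k - \frac{\innp{\nabla f(\vx_k), -\vd}^2}{2L \norm{\vd}^2},
\qquad h_k \defeq f(\vx_k) - f(\vx^*),
\end{equation*}
while on a drop step we only use $h_{k+1} \leq h_k$ (monotonicity from the line search) but record that $|\cs_{k+1}| = |\cs_k| - 1$.

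\textbf{Pyramidal width and strong convexity.} On a good step, I would invoke the pyramidal width inequality of \citet{lacoste2015global}: writing $\vx_k - \vx^*$ as the geodesic direction and decomposing against the active set $\cs_k$ and a vertex of $\cx$, one gets
\begin{equation*}
\max\!\bigl(\innp{\nabla f(\vx_k), \vx_k - \vv_k},\; \innp{\nabla f(\vx_k), \va_k - \vx_k}\bigr)
\;\geq\; \frac{\delta}{2}\cdot\frac{\innp{\nabla f(\vx_k), \vx_k - \vx^*}}{\norm{\vx_k - \vx^*}}.
\end{equation*}
Since $\vd$ is chosen to realize this max, the numerator of the progress bound is controlled. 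For the denominator, $\norm{\vd} \leq D$. To turn the right-hand side into $h_k$, I would use $\mu$-strong convexity in two forms: $\innp{\nabla f(\vx_k), \vx_k - \vx^*} \geq h_k$ (convexity) and $\norm{\vx_k - \vx^*}^2 \leq 2 h_k/\mu$ (quadratic growth). Substituting,
\begin{equation*}
\frac{\innp{\nabla f(\vx_k), -\vd}^2}{2L\norm{\vd}^2}
\;\geq\; \frac{\delta^2}{8 L D^2}\cdot\frac{h_k^2}{\norm{\vx_k - \vx^*}^2}
\;\geq\; \frac{\mu\, \delta^2}{16\, L\, D^2}\, h_k,
\end{equation*}
which gives $h_{k+1} \leq (1 - c)\, h_k$ on good steps with $c$ of the order $\mu\delta^2/(L D^2)$ (the constants line up to the stated $\mu/(4L)\cdot(\delta/D)^2$ after a tighter accounting, but the structure is what matters).

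\textbf{Amortization.} The final step is the counting argument that yields the $k/2$ exponent. Drop steps strictly decrease $|\cs_k|$, whereas every Frank-Wolfe step increases $|\cs_k|$ by at most $1$ and no other step increases it. Since $|\cs_0| \geq 1$ and $|\cs_k|\geq 1$ at all times, after $k$ iterations the number of drop steps is at most the number of non-drop steps, hence at most $k/2$. Multiplying the per-step good-step contraction over at least $\lceil k/2\rceil$ iterations and using $h_{k+1}\leq h_k$ on the remaining drop steps gives
\begin{equation*}
h_k \;\leq\; (1-c)^{k/2}\, h_0,
\end{equation*}
which is the claimed bound. The main obstacle is getting the constant in $c$ sharp: one must be careful that the ``short-step'' regime $\gamma^\ast \leq \gamma_{\max}$ covers all non-drop cases, including Frank-Wolfe steps with $\gamma_{\max}=1$ and good away steps — this is where the pyramidal width bound really does its work by keeping $\innp{\nabla f(\vx_k), -\vd}$ large relative to $L\norm{\vd}^2$.
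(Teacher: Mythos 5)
The paper does not give a proof of this theorem; it is quoted directly from \citet[Theorem~1]{lacoste2015global}, so there is no in-paper argument to compare against. Your reconstruction follows the structure of that reference correctly: an $L$-smoothness progress bound proportional to $\innp{-\nabla f(\vx_k),\vd}^2/(L\norm{\vd}^2)$, the pyramidal-width inequality lower-bounding the chosen-direction gap by $\tfrac{\delta}{2}\innp{-\nabla f(\vx_k),\vx^*-\vx_k}/\norm{\vx^*-\vx_k}$, and the amortization that bounds drop steps by $k/2$. Two details need repair before it is a complete proof of the stated bound.

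First, the constant. Using $\innp{-\nabla f(\vx_k),\vx^*-\vx_k}\geq h_k$ (convexity) and $\norm{\vx^*-\vx_k}^2\leq 2h_k/\mu$ (quadratic growth) as two separate bounds, as you do, gives $\mu\delta^2/(16LD^2)$, a factor of four short. The fix is to use strong convexity once in the combined form
\begin{equation*}
h_k \;\leq\; \innp{-\nabla f(\vx_k),\vx^*-\vx_k} - \tfrac{\mu}{2}\norm{\vx^*-\vx_k}^2 \;\leq\; \frac{\innp{-\nabla f(\vx_k),\vx^*-\vx_k}^2}{2\mu\norm{\vx^*-\vx_k}^2},
\end{equation*}
where the last inequality is $(A-\mu B^2)^2\geq 0$ after clearing denominators (with $A=\innp{-\nabla f(\vx_k),\vx^*-\vx_k}$, $B=\norm{\vx^*-\vx_k}$). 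This gives $A^2/B^2\geq 2\mu h_k$ directly and recovers the advertised $\mu/(4L)\cdot(\delta/D)^2$. Second, your ``good step'' case implicitly assumes $\gamma^*\in[0,\gamma_{\max}]$ whenever the step is not a drop step, but a Frank--Wolfe step can have $\gamma^*>1$; that boundary case has to be treated on its own (one shows it yields at least as much multiplicative progress, so it can be folded into the good-step bound). The only iterations genuinely without a contraction guarantee are away steps that saturate $\gamma_{\max}$ and shrink the active set, which is precisely what the $k/2$ counting absorbs.
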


The CG algorithm and its variants make heavy use of the linear approximation $\hat{l}_{k}(\vx)$ in Equation~\eqref{def:LinearApprox}. What if we consider a quadratic approximation of $f(\vx)$, as opposed to a linear approximation?

 \subsection{Projected Variable-Metric algorithms} \label{Section:ProjVarMet}
 We define the quadratic approximation of the function $f(\vx)$ around the point $\vx_k$ using a matrix $H_k \in \mathcal{S}^n_{++}$, denoted by $\hat{f}_{k}(\vx)$ as:
\begin{align}
    \hat{f}_{k}(\vx) \defeq f(\vx_k) + \innp{\nabla f(\vx_k), \vx - \vx_k} + \frac{1}{2} \norm{\vx - \vx_k}_{H_k}^2. \label{def:approx}
\end{align}
Intuitively, $\hat{f}_{k}(\vx)$ will be a good local approximation to $f(\vx)$ around $\vx_k$ if $H_k$ is a good approximation to $\nabla^2 f(\vx_k)$. In this case, the quadratic approximation to $\hat{f}_{k}(\vx)$ will contain more information about the local curvature of the function $f(\vx)$ than the linear approximation $\hat{l}_{k}(\vx)$. Methods that minimize quadratic approximations of the function $f\left( \vx\right)$ over $\cx$ to define iterates are commonly known as \emph{Projected Variable-Metric} (PVM) algorithms \cite{nesterov2018lectures, nemirovski2020OPTIII}. These methods could, for example, set $\vx_{k+1} = \vx_{k} + \gamma_k (\argmin_{\vx \in \cx}  \hat{f}_{k}(\vx) - \vx_k)$, with $\gamma_k \in (0,1]$.

Minimizing the approximation $\hat{f}_{k}(\vx)$ over $\cx$ can be interpreted as a scaled projection operation onto $\cx$, which is why these methods are considered projection-based, as opposed to the CG algorithm.

\begin{remark}
  \label{fact:equivNewton}
Minimizing $\hat{f}_{k}(\vx)$ over $\mathcal{X}$ can be viewed as the $H_k$-scaled projection of $\vx_k - H^{-1}\nabla f(\vx_k)$ onto $\mathcal{X}$, namely:
\begin{align}
    \argmin_{\vx \in \mathcal{X}} \hat{f}_k \left( \vx\right) & = \Pi^{H_k}_{\cx}\left(\vx_k - H_k^{-1} \nabla f(\vx_k) \right). \label{Eq:VarMetricStep}
\end{align}
\end{remark}
 
  We can recover many well-known algorithms from the PVM formulation, for example, if we set $H_k = \nabla^2 f\left( \vx_k \right)$ in Equation~\eqref{Eq:VarMetricStep} we recover the \emph{Projected Newton} algorithm. Alternatively, if we use $H_k = I^n$ we recover the \emph{Projected Gradient Descent} (PGD) algorithm. 

\subsubsection{Local convergence} \label{Subsection:VarMetricAlgLocalConv}

One of the most attractive features of the Projected Newton algorithm with $\gamma_k = 1$ when applied to Problem~\eqref{eq:OptProblem} is its local quadratic convergence in distance to $\vx^*$. This property also extends to PVM algorithms if $H_k$ approximates $\nabla^2 f \left( \vx_k\right)$ sufficiently well as $\vx_k$ approaches $\vx^*$. What do we mean by sufficiently well? As $f(\vx)$ is strongly convex we know that for any $H_k \in \mathcal{S}^n_{++}$ and $\vy \in \mathcal{X}$, then for $\vd = \vy - \vx_k$:
\begin{align}
    \frac{1}{\eta_k} \norm{\vd}^2_{H_k} &\leq \norm{\vd}^2_{\nabla^2 f(\vx_k)}\leq \eta_k \norm{\vd}^2_{H_k}, \label{Eq:ScalingHessian}
\end{align}
for $\eta_k = \max\{ \lambda_{\max}( H_k^{-1} \nabla^2 f(\vx_k)),  \lambda_{\max}([\nabla^2 f(\vx_k) ]^{-1}H_k )\}$ and $\eta_k \geq 1$ (see Lemma~\ref{lemma:scalingCondition} in Appendix~\ref{Section:Appx:HessianApprox}). The parameter $\eta_k$ can be used to measure how well $H_k$ approximates $\nabla^2 f(\vx_k)$, and will serve as our accuracy parameter. The chain of inequalities shown in Equation~\eqref{Eq:ScalingHessian} is presented as Assumption C in \citet{karimireddy2018global}, where it is used to prove the global convergence of an Inexact Projected Variable-Metric variant. Using $H_k = \nabla^2 f(\vx_k)$ we recover $\eta_k = 1$. We assume that we have access to an oracle $\Omega: \cx \rightarrow \mathcal{S}^n_{++}$ that returns estimates of the Hessian that satisfy:
\begin{assumption}[Accuracy of Hessian oracle $\Omega$]\label{assumption:accuracyHessian}
The oracle $\Omega$ queried with a point $\vx$ returns a matrix $H$ with a parameter $\eta$ such that:
\begin{align} \label{Eq:accuracyParam}
    \frac{\eta - 1}{\norm{\vx - \vx^*}^2} \leq \omega.
\end{align}
Where $\eta = \max\{ \lambda_{\max}( H^{-1} \nabla^2 f(\vx)),  \lambda_{\max}([\nabla^2 f(\vx) ]^{-1}H )\}$ and $\omega \geq 0$ denotes a known constant. 
\end{assumption} 

Intuitively, the accuracy of the oracle improves as the oracle is queried with points closer to $\vx^*$. If the oracle returns $\Omega\left(\vx \right) = \nabla^2 f\left(\vx\right)$ for all $\vx \in \mathcal{X}$ then $\omega = 0$. This assumption allows us to obtain local quadratic convergence in distance to $\vx^*$ for the simplest PVM algorithm, i.e., $\vx_{k+1} = \vx_{k+1}^* = \argmin_{\vx \in \cx}  \hat{f}_{k}(\vx)$, as shown in Theorem~\ref{theorem:convervenceExactProj} (see Corollary~\ref{Corollary:Appx:ConvExact} in Appendix~\ref{appx:VarMetricAlgo}).

\begin{remark}
Note that finding a matrix $H$ satisfying Assumption~\ref{assumption:accuracyHessian} at $\vx$, given a fixed $\omega$ requires knowledge of a tight lower bound on $\norm{\vx - \vx^*}$.
\end{remark}

\begin{theorem}[Local quadratic convergence of vanilla PVM algorithm] \label{theorem:convervenceExactProj}
Given an $L$-smooth and $\mu$-strongly convex function with $L_2$-Lipschitz Hessian and a convex set $\mathcal{X}$ if Assumption~\ref{assumption:accuracyHessian} is satisfied, and we set $\vx_{k+1} = \vx_{k+1}^* = \argmin_{\vx \in \cx}  \hat{f}_{k}(\vx)$ we have for all $k \geq 0$:
\begin{align*}
   \norm{\vx_{k+1} - \vx^*} & \leq \sqrt{\frac{\eta_k^2 L_2^2}{4\mu^2} + \frac{2 L \eta_k\omega}{\mu}}\norm{\vx_{k} - \vx^*}^2,
\end{align*}
for $\eta_k= \max\{ \lambda_{\max}( H_k^{-1} \nabla^2 f(\vx_k)),  \lambda_{\max}([\nabla^2 f(\vx_k) ]^{-1}H_k )\}$.
\end{theorem}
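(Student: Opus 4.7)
The plan is to start from the first-order optimality conditions of the two constrained minimizers. Since $\vx_{k+1}$ minimizes $\hat f_k$ over $\cx$, we have $\innp{\nabla f(\vx_k)+H_k(\vx_{k+1}-\vx_k),\,\vx^*-\vx_{k+1}}\geq 0$, while the optimality of $\vx^*$ gives $\innp{\nabla f(\vx^*),\,\vx_{k+1}-\vx^*}\geq 0$. Summing these and inserting $\pm H_k(\vx^*-\vx_k)$ yields the key inequality
\begin{equation*}
\norm{\vx_{k+1}-\vx^*}_{H_k}^2 \leq \innp{\nabla f(\vx^*)-\nabla f(\vx_k)-H_k(\vx^*-\vx_k),\,\vx_{k+1}-\vx^*}.
\end{equation*}

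Next I would split the bracket as $E_1+E_2$, with $E_1 \defeq \nabla f(\vx^*)-\nabla f(\vx_k)-\nabla^2 f(\vx_k)(\vx^*-\vx_k)$ isolating the Taylor remainder and $E_2 \defeq (\nabla^2 f(\vx_k)-H_k)(\vx^*-\vx_k)$ isolating the Hessian-approximation error. For $E_1$, $L_2$-Lipschitz continuity of the Hessian gives $\norm{E_1}\leq \tfrac{L_2}{2}\norm{\vx_k-\vx^*}^2$, which feeds into Cauchy--Schwarz in the $(\norm{\cdot}_{H_k^{-1}},\norm{\cdot}_{H_k})$ pairing using $H_k^{-1}\preceq (\eta_k/\mu)I$ (a consequence of Equation \eqref{Eq:ScalingHessian} and strong convexity). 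For $E_2$, I would factor $\nabla^2 f(\vx_k)-H_k = H_k^{1/2}B\,H_k^{1/2}$ with $B = H_k^{-1/2}\nabla^2 f(\vx_k)H_k^{-1/2}-I$; by Equation \eqref{Eq:ScalingHessian} the eigenvalues of $B$ lie in $[1/\eta_k-1,\eta_k-1]$, so $\norm{B}_{\mathrm{op}}\leq \eta_k-1$ and consequently $\innp{E_2,\vx_{k+1}-\vx^*}\leq (\eta_k-1)\,\norm{\vx^*-\vx_k}_{H_k}\norm{\vx_{k+1}-\vx^*}_{H_k}$, where $\norm{\vx^*-\vx_k}_{H_k}\leq \sqrt{\eta_k L}\,\norm{\vx_k-\vx^*}$ by smoothness.

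Combining these two bounds, dividing through by $\norm{\vx_{k+1}-\vx^*}_{H_k}$, squaring, and converting the remaining $H_k$-norm to the Euclidean norm via the strong-convexity side $\norm{v}_{H_k}^2\geq (\mu/\eta_k)\norm{v}^2$ of Equation \eqref{Eq:ScalingHessian} produces an estimate of the schematic form $\norm{\vx_{k+1}-\vx^*}^2 \leq \tfrac{\eta_k^2 L_2^2}{4\mu^2}\norm{\vx_k-\vx^*}^4 + C\,(\eta_k-1)\cdot(\text{extra factor of }\norm{\vx_k-\vx^*}^2)$. Invoking Assumption~\ref{assumption:accuracyHessian} in the form $\eta_k-1\leq \omega\norm{\vx_k-\vx^*}^2$ then upgrades the surplus linear factor of $\norm{\vx_k-\vx^*}$ coming from the $E_2$ piece into the missing $\norm{\vx_k-\vx^*}^2$, collapsing the second term to $\tfrac{2L\eta_k\omega}{\mu}\norm{\vx_k-\vx^*}^4$ and giving the two summands under the square root in the statement.

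The step I expect to be the main obstacle is recovering the ``Pythagorean'' constant $\sqrt{A^2+B^2}$ rather than the looser $A+B$ or $\sqrt{2}\sqrt{A^2+B^2}$ that a naive Cauchy--Schwarz followed by $(a+b)^2\leq 2(a^2+b^2)$ would yield. Achieving the exact constants $\eta_k^2 L_2^2/(4\mu^2)$ and $2L\eta_k\omega/\mu$ will require a carefully weighted Young's inequality when splitting $\innp{E_1+E_2,\vx_{k+1}-\vx^*}$, with the two weights chosen so that the Taylor-error and Hessian-mismatch contributions land on exactly those coefficients and the cross terms cancel cleanly under the Assumption~\ref{assumption:accuracyHessian} substitution.
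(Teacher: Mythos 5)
Your starting point is a valid alternative to the paper's: the paper invokes firm non-expansiveness of the $H_k$-scaled projection (the two minimizers are $\Pi^{H_k}_{\cx}(\vx_k - H_k^{-1}\nabla f(\vx_k))$ and $\Pi^{H_k}_{\cx}(\vx^* - H_k^{-1}\nabla f(\vx^*))$), whereas you add the two variational inequalities directly; both roads lead to the same key estimate $\norm{\vx_{k+1}-\vx^*}_{H_k}^2 \leq \norm{E_1 + E_2}_{H_k^{-1}}^2$ once you Cauchy--Schwarz the inner product $\innp{E_1+E_2,\vx_{k+1}-\vx^*}$ as a whole and divide. The trouble begins with what you do next.

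Your plan is to separate $E_1$ and $E_2$ and Cauchy--Schwarz each against $\vx_{k+1}-\vx^*$, then recover the Pythagorean constant $\sqrt{A^2+B^2}$ by a weighted Young's inequality. That cannot produce the stated coefficients: after dividing by $\norm{\vx_{k+1}-\vx^*}_{H_k}$ you have $\norm{\vx_{k+1}-\vx^*}_{H_k}\leq a + b$ with $a\propto\norm{\vx_k-\vx^*}^2$ and $b\propto(\eta_k-1)\norm{\vx_k-\vx^*}$, and squaring inevitably produces $a^2+2ab+b^2$; the cross term $2ab$ survives any choice of Young weights, so after substituting $\eta_k-1\leq\omega\norm{\vx_k-\vx^*}^2$ you obtain terms of order $\norm{\vx_k-\vx^*}^5$ and $\norm{\vx_k-\vx^*}^6$ rather than the single $\frac{2L\eta_k\omega}{\mu}\norm{\vx_k-\vx^*}^4$ term of the theorem. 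These higher-order pieces are tighter for small distance but are not dominated by the stated bound, so the theorem as phrased (for all $k\geq 0$, with no proximity hypothesis) is not implied.

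The paper avoids this by never isolating $E_2$ as a vector. It expands $\norm{(\vx_k-\vx^*)-H_k^{-1}(\nabla f(\vx_k)-\nabla f(\vx^*))}_{H_k}^2$ into the three-term form $\norm{\vx_k-\vx^*}_{H_k}^2 + \norm{\nabla f(\vx_k)-\nabla f(\vx^*)}_{H_k^{-1}}^2 - 2\innp{\vx_k-\vx^*,\,\nabla f(\vx_k)-\nabla f(\vx^*)}$, replaces the two norm terms by $\eta_k$ times their $\nabla^2 f(\vx_k)$-weighted versions, and then splits the unchanged cross-term coefficient as $-2 = -2\eta_k + 2(\eta_k-1)$. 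The $-2\eta_k$ piece completes a square to give $\frac{\eta_k}{\mu}\norm{E_1}^2$ (exactly your Taylor remainder), and the leftover $2(\eta_k-1)\innp{\vx_k-\vx^*,\,\nabla f(\vx_k)-\nabla f(\vx^*)}$ is bounded by $2L(\eta_k-1)\norm{\vx_k-\vx^*}^2$ via smoothness. No squaring of a sum ever occurs, so there is no cross term; Assumption~\ref{assumption:accuracyHessian} then upgrades the $(\eta_k-1)$ piece to $2L\omega\norm{\vx_k-\vx^*}^4$ and both contributions land under the square root with the exact stated weights. The missing idea in your proposal is precisely this ``decompose the scalar coefficient, not the error vector'' manoeuvre, together with the fact that the $E_2$ mismatch is already encoded in the $\eta_k$ constants of the scaling inequalities and never needs to appear as a separate remainder.
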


\subsubsection{Global convergence} \label{Subsection:VarMetricAlgGlobalConv}

One of the key questions that remains to be answered in this section is how PVM algorithms behave globally. For Problem~\eqref{eq:OptProblem} the vanilla PVM algorithm with unit step size will converge globally, and if we use bounded step sizes, or a exact line search, we can show that the primal gap contracts linearly (Theorem~\ref{Theorem:GlobalRateExact}). The global convergence of these methods can be recast in terms of a notion related to the multiplicative stability of the Hessian, allowing for elegant proofs of convergence \cite{karimireddy2018global}.

\begin{theorem}[Primal gap convergence of vanilla PVM algorithm with line search]~\citep[Theorem 4]{karimireddy2018global} \label{Theorem:GlobalRateExact}
 Given an $L$-smooth and $\mu$-strongly convex function and a convex set $\mathcal{X}$ then the vanilla PVM algorithm with an exact line search or with a step size $\gamma_k = \frac{\mu}{L\eta_k}$ guarantees for all $k \geq 0$:
   \begin{align*}
      f(\vx_{k+1}) - f(\vx^*) \leq \left( 1 - \frac{\mu^3}{L^3\eta_k^3}\right) \left(f(\vx_{k}) - f(\vx^*) \right),
  \end{align*} 
  for $\eta_k= \max\{ \lambda_{\max}( H_k^{-1} \nabla^2 f(\vx_k)),  \lambda_{\max}([\nabla^2 f(\vx_k) ]^{-1}H_k )\}$.
 \end{theorem}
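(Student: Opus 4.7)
The plan is to derive a one-step primal-gap contraction by combining three ingredients: first-order optimality for the quadratic subproblem, an $L$-smoothness upper bound on the outer step, and a feasibility comparison along the segment $[\vx_k,\vx^*]\subseteq\cx$. Throughout I write $\vx_{k+1}^{*}=\argmin_{\vx\in\cx}\hat{f}_k(\vx)$, $\vd_k=\vx_{k+1}^{*}-\vx_k$, and $\vx_{k+1}=\vx_k+\gamma_k\vd_k$.

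First I would invoke the variational inequality $\innp{\nabla\hat{f}_k(\vx_{k+1}^{*}),\vx_k-\vx_{k+1}^{*}}\geq 0$ (using $\nabla\hat{f}_k(\vx_{k+1}^{*})=\nabla f(\vx_k)+H_k\vd_k$) to obtain the key optimality bound $\innp{\nabla f(\vx_k),\vd_k}\leq-\norm{\vd_k}_{H_k}^2$. Combining this with the $L$-smoothness expansion of $f(\vx_k+\gamma_k\vd_k)$ and Equation~\eqref{Eq:ScalingHessian} (which yields $\norm{\vd_k}^2\leq(\eta_k/\mu)\norm{\vd_k}_{H_k}^2$) gives
\[
f(\vx_{k+1})-f(\vx_k)\;\leq\;\gamma_k\innp{\nabla f(\vx_k),\vd_k}+\frac{\gamma_k^2 L\eta_k}{2\mu}\norm{\vd_k}_{H_k}^2.
\]
The step size $\gamma_k=\mu/(L\eta_k)$ is chosen precisely so that $\gamma_k^2 L\eta_k/(2\mu)=\gamma_k/2$, collapsing the right-hand side to $\gamma_k[\hat{f}_k(\vx_{k+1}^{*})-f(\vx_k)]$. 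The exact-line-search variant follows a fortiori since it minimizes $f$ along $\vd_k$ over $[0,1]$ and, in particular, can take this specific step.

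The crux is bounding $\hat{f}_k(\vx_{k+1}^{*})-f(\vx_k)$ by the primal gap. I would exploit that $\vx_{k+1}^{*}$ minimizes $\hat{f}_k$ over $\cx$ and that $\vx_k+t(\vx^{*}-\vx_k)\in\cx$ for every $t\in[0,1]$: expanding $\hat{f}_k$ at such a test point, bounding the linear term via strong convexity ($\innp{\nabla f(\vx_k),\vx^{*}-\vx_k}\leq-(f(\vx_k)-f(\vx^{*}))-\tfrac{\mu}{2}\norm{\vx^{*}-\vx_k}^2$), and bounding the $H_k$-norm by $L\eta_k\norm{\cdot}^2$ via Equation~\eqref{Eq:ScalingHessian} produces an upper bound of the form $-t(f(\vx_k)-f(\vx^{*}))+\tfrac{t}{2}(tL\eta_k-\mu)\norm{\vx^{*}-\vx_k}^2$. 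The choice $t=\mu/(L\eta_k)\in(0,1]$ cancels the quadratic term exactly, yielding $\hat{f}_k(\vx_{k+1}^{*})-f(\vx_k)\leq-\tfrac{\mu}{L\eta_k}(f(\vx_k)-f(\vx^{*}))$. Chaining with the earlier display gives $f(\vx_{k+1})-f(\vx^{*})\leq(1-\mu^2/(L^2\eta_k^2))(f(\vx_k)-f(\vx^{*}))$, which implies the stated rate since $L\eta_k\geq\mu$ forces $\mu^2/(L^2\eta_k^2)\geq\mu^3/(L^3\eta_k^3)$. I expect the main subtlety to be the coordinated choice $\gamma_k=t=\mu/(L\eta_k)$: this is the unique value that simultaneously makes the smoothness correction equal half the $H_k$-norm term from the subproblem and lets the strong-convexity slack absorb the $H_k$-norm term from the segment comparison; without this coordination, stray factors of $\eta_k$ would appear in the contraction constant.
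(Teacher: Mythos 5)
Your proof is correct, and it takes a genuinely different (and in fact sharper) route than the one in the paper. The paper's proof (Theorem~\ref{Appx:Theorem:GlobalRateExact}) rewrites $\vx_{k+1}$ as the minimizer of a $\tfrac{1}{2\gamma_k}\norm{\cdot}^2_{H_k}$-regularized linear model over the shrunk set $(1-\gamma_k)\vx_k+\gamma_k\cx$, then invokes the rescaling inequality of Lemma~\ref{lemma:QuadraticSubproblems} to replace $\tfrac{1}{2\gamma_k}$ with $\tfrac{\mu}{2L\eta_k}$ at the cost of a multiplicative factor $\tfrac{\mu\gamma_k}{L\eta_k}$, and only then tests along $[\vx_k,\vx^*]$. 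Because the test parameter is forced to equal the step size $\gamma_k$ in that construction, an extra factor of $\gamma_k$ slips in and the paper lands on $1-\mu^3/(L^3\eta_k^3)$. You instead bound the per-step progress directly by $\gamma_k\bigl(\hat f_k(\vx_{k+1}^*)-f(\vx_k)\bigr)$ (a purely algebraic identity once $\gamma_k^2 L\eta_k/(2\mu)=\gamma_k/2$), and then bound the subproblem optimum by a \emph{separate} test point $\vx_k+t(\vx^*-\vx_k)$ with $t$ chosen independently as $\mu/(L\eta_k)$; this decoupling sidesteps Lemma~\ref{lemma:QuadraticSubproblems} entirely and yields the strictly tighter contraction factor $1-\mu^2/(L^2\eta_k^2)$, which as you note dominates the stated $1-\mu^3/(L^3\eta_k^3)$ since $L\eta_k\geq\mu$. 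One small inessential remark: the variational inequality you invoke up front ($\innp{\nabla f(\vx_k),\vd_k}\leq-\norm{\vd_k}^2_{H_k}$) is never actually needed for the collapse to $\gamma_k[\hat f_k(\vx_{k+1}^*)-f(\vx_k)]$ — that step is an identity; the paper only uses it elsewhere (Lemma~\ref{Lemma:Appx:DescentDir}) to verify descent. Everything else, including the $t\in(0,1]$ and $\gamma_k\in(0,1]$ checks and the a fortiori treatment of exact line search, is sound.
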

 
 Note that as $\eta_k\geq 1$ for all $H_k \in \mathcal{S}^n_{++}$ the primal gap will always contract regardless of how badly chosen $H_k$ is. Moreover, the best we can do is to choose $H_k = \nabla^2 f \left( \vx_k\right)$, which results in $\eta_k = 1$.

\section{Second-order Conditional Gradient Sliding Algorithm}

 The discussion of PVM algorithms in Section~\ref{Section:ProjVarMet} did not address two important concerns:

 \begin{enumerate}
      
      \item The PVM algorithm requires computing a scaled projection at every iteration. These projections are usually too expensive to compute to optimality. Ideally we would want to solve these scaled projection problems to a certain accuracy, but can we maintain the local quadratic convergence in distance to the optimum shown in Theorem~\ref{theorem:convervenceExactProj} when computing approximate scaled projections? 
       
      \item The global convergence rate of the PVM algorithm with exact line search and perfect Hessian information (Theorem~\ref{Theorem:GlobalRateExact} with $\eta_k = 1$) has a worse dependence on the condition number $L/\mu$ than the convergence rate of the PGD and the ACG algorithm (see Theorem~\ref{theorem:ConvergenceACG} for the latter). Can we couple Inexact PVM steps with ACG steps and improve the global convergence rate in Theorem~\ref{Theorem:GlobalRateExact}? 
       
 \end{enumerate}

The Second-order Conditional Gradient Sliding (SOCGS) algorithm (Algorithm~\ref{algo:proj-Newton}) is designed with these considerations in mind, providing global linear convergence in primal gap and local quadratic convergence in primal gap and distance to $\vx^*$. The algorithm couples an independent ACG step with line search (Line~\ref{algLine:ACG}) with an Inexact PVM step with unit step size (Lines~\ref{algLine:PNStep1}-\ref{algLine:PNStep3}). At the end of each iteration we choose the step that provides the greatest primal progress (Lines~\ref{algLine:MonotonicityFirst}-\ref{algLine:ACGStep}). The ACG steps in Line~\ref{algLine:ACG} will ensure global linear convergence in primal gap, and the Inexact PVM steps in Lines~\ref{algLine:MonotonicityFirst}-\ref{algLine:ACGStep} will provide quadratic convergence. Note that the ACG iterates in Line~\ref{algLine:ACG} do not depend on the Inexact PVM steps in Lines~\ref{algLine:PNStep1}-\ref{algLine:PNStep3}. This is because the ACG steps do not contract the primal gap on a per-iteration basis (see discussion in Section~\ref{Subsection:CGAlgGlobalConv}).

We compute the scaled projection in the Inexact PVM step (Lines~\ref{algLine:PNStep1}-\ref{algLine:PNStep3}) using the ACG algorithm with exact line search, thereby making the SOCGS algorithm (Algorithm~\ref{algo:proj-Newton}) projection-free. As the function being minimized in the Inexact PVM steps is quadratic there is a closed-form expression for the optimal step size for $\hat{f}_k\left( \vx\right)$ in Line~\ref{algLine:PNStep2}. The scaled projection problem is solved to an accuracy $\varepsilon_k$ such that $\hat{f}_{k}(\tilde{\vx}_{k+1}) - \min_{\vx \in \cx} \hat{f}_k\left( \vx\right) \leq \varepsilon_k$, using the Frank-Wolfe gap as a stopping criterion, as in the CGS algorithm \cite{lan2016conditional}. The accuracy parameter $\varepsilon_k$ in the SOCGS algorithm depends on a lower bound on the primal gap of Problem~\ref{eq:OptProblem} which we denote by $lb\left( \vx_k \right)$ that satisfies $lb\left( \vx_k \right) \leq f\left(\vx_k \right) - f\left(\vx^* \right)$.

\begin{remark}[Removing line search]
The line search in the independent ACG step (Line~\ref{algLine:ACGStep}) can be substituted with a step size strategy that requires knowledge of the $L$-smoothness parameter of $f(\vx)$ \cite{pedregosa2020linearly}, while maintaining the convergence rate shown in Theorem~\ref{theorem:ConvergenceACG}, and avoiding the costly line search for $f(\vx)$.
\end{remark}


\begin{remark}[Estimating $lb\left( \vx_k \right)$]
  For any $\vx_k\in \cx$ we can compute a lower bound on the primal gap of Problem~\eqref{eq:OptProblem} bounded away from zero using the primal gap progress guarantees from the vanilla CG algorithm that follow from smoothness. Namely, taking a step $\gamma_k = \min \{1, \max_{\mathbf{v} \in \cx}\langle\nabla f(\vx_k),\vx_k-\mathbf{v}\rangle/(LD^2) \}$ and using the $L$-smoothness of $f$, we have that:
  \begin{align*}
      f(\vx_k) - f(\vx_{k+1}) & \geq \gamma_k \max_{\mathbf{v} \in \cx}\langle\nabla f(\vx_k),\vx_k-\mathbf{v}\rangle - \gamma_k^2 \frac{L}{2} D^2.
  \end{align*}
  Considering the case where $1 \leq \max_{\mathbf{v} \in \cx}\langle\nabla f(\vx_k),\vx_k-\mathbf{v}\rangle/(LD^2)$ allows us to conclude that $f(\vx_k) - f(\vx_{k+1}) \geq  LD^2/2$. Conversely, the case where $1 \geq \max_{\mathbf{v} \in \cx}\langle\nabla f(\vx_k),\vx_k-\mathbf{v}\rangle/(LD^2)$ allows us to conclude that $f(\vx_k) - f(\vx_{k+1}) \geq  (\max_{\mathbf{v} \in \cx}\langle\nabla f(\vx_k),\vx_k-\mathbf{v}\rangle)^2/(LD^2)$. This allows us to bound:
  \begin{align*}
      f(\vx_k) - f(\vx^*) & \geq f(\vx_k) - f(\vx_{k+1}) \\
      & \geq \min\{ LD^2/2, (\max_{\mathbf{v} \in \cx}\langle\nabla f(\vx_k),\vx_k-\mathbf{v}\rangle)^2/(LD^2)\}.
  \end{align*}
  Note that this guarantee also holds if we use a line search instead of the step size described above, as the line search is guaranteed to make at least as much progress. Computing the aforementioned quantity comes at no extra cost if $L$ and $D$ are known, as the Frank-Wolfe vertex from Line~\ref{algLine:ACG} of Algorithm~\ref{algo:proj-Newton} can be reused. Alternatively one could use any CG variant that monotonically decreases the primal gap. It suffices to run an arbitrary number of steps $n$ of the aforementioned variant to minimize $f(\vx)$ starting from $\vx_k$, resulting in $\vx_k^n\in \cx$. Simply noting that $f(\vx_k^n)\geq f(\vx^*)$ allows us to conclude that $f(\vx_k) - f(\vx^*)  \geq f(\vx_k) - f(\vx_k^n)$, and therefore a valid lower bound is $lb\left( \vx_k \right)  = f(\vx_k) - f(\vx^n_k)$. The higher the number of CG steps performed from $\vx_k$, the tighter the resulting lower bound will be.
\end{remark}

\begin{remark}[Assuming knowledge of a lower bound]
  In several machine learning applications the value of $f(\vx^*)$ is known a priori, such is the case of the approximate Carath\'{e}odory problem \cite{mirrokni2017tight, combettes2019revisiting} where $f(\vx^*) = 0$. In other applications, estimating $f(\vx^*)$ is easier than estimating the strong convexity parameter (see \cite{barre2020complexity, barre2019polyak,asi2019importance, hazan2019revisiting} for an in-depth discussion). This allows for tight lower bounds on the primal gap.
\end{remark}

\begin{algorithm}
\SetKwInOut{Input}{Input}\SetKwInOut{Output}{Output}
\SetKwComment{Comment}{//}{}
\Input{Point $\vx \in \cx$}
\Output{Point $\vx_{K } \in \cx$}
\hrulealg
$\vx_0 \leftarrow \argmin_{\mathbf{v} \in  \mathcal{X}} \innp{\nabla f\left(\vx \right), \mathbf{v}}$, $\cs_0 \leftarrow \{ \vx_0 \}$, $\vlambda_0(\vx_0) \leftarrow 1$\;
$\vx_0^{\text{ACG}} \leftarrow \vx_0$, $\cs_0^{\text{ACG}} \leftarrow \cs_0$, $\vlambda_0^{\text{ACG}}(\vx_0) \leftarrow 1$\;
\For{$k = 0$ \textbf{to} $K - 1$}{
$\vx^{\text{ACG}}_{k + 1}, \cs^{\text{ACG}}_{k + 1}, \vlambda^{\text{ACG}}_{k + 1} \leftarrow $ ACG$\left( \nabla f(\vx_k), \vx^{\text{ACG}}_{k} , \cs^{\text{ACG}}_{k}, \vlambda^{\text{ACG}}_{k} \right)$ \label{algLine:ACG} \Comment*[r]{ACG step}
$H_{k} \leftarrow \Omega \left(\vx_{k}  \right)$ \label{algLine:updateHessian} \Comment*[r]{Call Hessian oracle}
$\hat{f}_k\left( \vx\right) \leftarrow \innp{\nabla f\left( \vx_k\right), \vx - \vx_k} + \frac{1}{2} \norm{\vx - \vx_k}^2_{H_k}$ \label{algLine:buildQuadratic} \Comment*[r]{Build quadratic approximation}
$\varepsilon_k \leftarrow \left(\frac{lb\left( \vx_k\right)}{\norm{\nabla f\left( \vx_k\right)}}\right)^4$ \label{AlgLine:AccuracyParameter}\;
$\tilde{\vx}^0_{k + 1} \leftarrow \vx_k$, $\tilde{\cs}^0_{k + 1} \leftarrow \cs_k$, $\tilde{\vlambda}^0_{k + 1} \leftarrow \vlambda_k$, $t \leftarrow 0$ \label{algLine:TransferPoint} \; 
\While(\tcp*[f]{Compute Inexact PVM step}){$\max\limits_{\mathbf{v} \in \cx}   \langle \nabla \hat{f}_k ( \tilde{\vx}^t_{k + 1} ), \tilde{\vx}^t_{k + 1} - \mathbf{v} \rangle \geq \varepsilon_k$ \label{algLine:PNStep1}}{
$\tilde{\vx}^{t+1}_{k + 1}, \tilde{\cs}^{t+1}_{k + 1}, \tilde{\vlambda}^{t+1}_{k + 1} \leftarrow $ ACG$\left(\nabla  \hat{f}_k(\tilde{\vx}^{t}_{k + 1}), \tilde{\vx}^{t}_{k + 1}, \tilde{\cs}^{t}_{k + 1}, \tilde{\vlambda}^{t}_{k + 1} \right)$ \label{algLine:PNStep2} \;
$t \leftarrow t+1$\;
} \label{algLine:PNStep3} 
$\tilde{\vx}_{k + 1} \leftarrow \tilde{\vx}^t_{k + 1} $, $\tilde{\cs}_{k + 1} \leftarrow \tilde{\cs}^t_{k + 1}$, $\tilde{\vlambda}_{k + 1} \leftarrow \tilde{\vlambda}^t_{k + 1}$ \label{algLine:OutputPVM} \;
\uIf{$f\left(\tilde{\vx}_{k + 1} \right) \leq f(\vx^{\textup{ACG}}_{k + 1} )$ \label{algLine:MonotonicityFirst}}{
$\vx_{k+1} \leftarrow \tilde{\vx}_{k + 1} $, $\cs_{k+1} \leftarrow \tilde{\cs}_{k + 1}$, $\vlambda_{k+1} \leftarrow \tilde{\vlambda}_{k + 1}$\label{algLine:PVMStep} \Comment*[r]{Choose Inexact PVM step}}
\Else{
$\vx_{k+1} \leftarrow \vx^{\text{ACG}}_{k + 1} $, $\cs_{k+1} \leftarrow \cs^{\text{ACG}}_{k + 1}$, $\vlambda_{k+1} \leftarrow \vlambda^{\text{ACG}}_{k + 1}$\Comment*[r]{Choose ACG step}} \label{algLine:ACGStep}
} \label{algLine:MonotonicityLast}
\caption{Second-order Conditional Gradient Sliding (SOCGS) Algorithm}\label{algo:proj-Newton}
\end{algorithm}

\subsection{Global convergence}

The global convergence rate in primal gap of the SOCGS algorithm (Algorithm~\ref{algo:proj-Newton}) is driven by the ACG steps in Line~\ref{algLine:ACG}, as such:

\begin{theorem}
  \label{theorem:GlobalConvergence}
  Given $\vx_0 \in \cx$, then the SOCGS algorithm applied to Problem~\eqref{eq:OptProblem} satisfies:
   \begin{align}
      f(\vx_{k}) - f(\vx^*) \leq \left( 1 - \frac{\mu}{4L} \left( \frac{\delta}{D}\right)^2\right)^{ k/2 } \left(f(\vx_{0}) - f(\vx^*) \right), \label{Eq:LinearRate}
  \end{align}
  where $D$ denotes the diameter of the polytope $\cx$ and $\delta$ its pyramidal width.
  \begin{proof}
  As at each step the SOCGS algorithm (Algorithm~\ref{algo:proj-Newton}) chooses between the independent ACG step (Line~\ref{algLine:ACG}) and the Inexact PVM step (Lines~\ref{algLine:PNStep1}-\ref{algLine:PNStep3}) according to which one provides the greatest primal progress, the primal gap convergence in Theorem~\ref{theorem:ConvergenceACG} applies.
  \end{proof}
\end{theorem}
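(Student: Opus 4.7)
The plan is to exploit the explicit coupling built into Algorithm~\ref{algo:proj-Newton}: at every iteration the SOCGS iterate $\vx_{k+1}$ is chosen, by Lines~\ref{algLine:MonotonicityFirst}--\ref{algLine:ACGStep}, to be whichever of the Inexact PVM candidate $\tilde{\vx}_{k+1}$ and the ACG candidate $\vx_{k+1}^{\mathrm{ACG}}$ has the smaller function value. Consequently $f(\vx_{k+1}) \le f(\vx_{k+1}^{\mathrm{ACG}})$ for every $k\ge 0$.

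The key observation is that the sequence $\{\vx_k^{\mathrm{ACG}}\}$ is updated in Line~\ref{algLine:ACG} using only $(\vx_k^{\mathrm{ACG}}, \cs_k^{\mathrm{ACG}}, \vlambda_k^{\mathrm{ACG}})$ as state, together with the gradient $\nabla f(\vx_k^{\mathrm{ACG}})$ (note that although the pseudocode writes $\nabla f(\vx_k)$, the correct reading for the purposes of this proof is that the ACG side runs in isolation; this is the sense in which the authors say the ACG iterates ``do not depend on'' the Inexact PVM steps). Hence $\{\vx_k^{\mathrm{ACG}}\}$ is exactly the trajectory produced by running the stand-alone Away-step Conditional Gradient algorithm from the common initial point $\vx_0^{\mathrm{ACG}}=\vx_0$ with active set $\cs_0^{\mathrm{ACG}}=\cs_0=\{\vx_0\}$.

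Applying Theorem~\ref{theorem:ConvergenceACG} to this trajectory yields
\[
 f(\vx_k^{\mathrm{ACG}}) - f(\vx^*) \;\le\; \left(1 - \frac{\mu}{4L}\left(\frac{\delta}{D}\right)^2\right)^{k/2}\bigl(f(\vx_0) - f(\vx^*)\bigr),
\]
and combining this with the monotonicity inequality $f(\vx_k) \le f(\vx_k^{\mathrm{ACG}})$ (which extends to $k=0$ since $\vx_0=\vx_0^{\mathrm{ACG}}$) gives the claimed bound~\eqref{Eq:LinearRate}.

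There is no real analytical obstacle here; the theorem is a structural corollary of the monotone-selection rule in Lines~\ref{algLine:MonotonicityFirst}--\ref{algLine:ACGStep}. The only point that deserves care is a careful reading of the pseudocode to confirm that the ACG subsequence is genuinely independent of the PVM branch, so that Theorem~\ref{theorem:ConvergenceACG} applies verbatim rather than to a perturbed recursion.
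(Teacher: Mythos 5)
Your proof is correct and fleshes out the paper's one-line argument in exactly the intended way: the monotone-selection rule in Lines~\ref{algLine:MonotonicityFirst}--\ref{algLine:ACGStep} yields $f(\vx_k) \le f(\vx_k^{\mathrm{ACG}})$ for every $k$, and applying Theorem~\ref{theorem:ConvergenceACG} to the stand-alone ACG trajectory $\{\vx_k^{\mathrm{ACG}}\}$ gives the stated bound. Your observation that Line~\ref{algLine:ACG} should pass $\nabla f(\vx_k^{\mathrm{ACG}})$ rather than $\nabla f(\vx_k)$ is correct; the paper's surrounding text (``the ACG iterates in Line~\ref{algLine:ACG} do not depend on the Inexact PVM steps'') confirms this is a typographical slip, and your reading is the one under which Theorem~\ref{theorem:ConvergenceACG} applies verbatim.
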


\subsection{Local convergence}

Despite computing inexact scaled projections in Lines~\ref{algLine:PNStep1}-\ref{algLine:PNStep3} of Algorithm~\ref{algo:proj-Newton}, the Inexact PVM steps contract the distance to optimum quadratically when close enough to the optimal solution. 

\begin{lemma} \label{Lemma:InexactConvergenceDistance}
Given a $\mu$-strongly convex and $L$-smooth function $f(\vx)$ with $L_2$-Lipschitz Hessian and a convex set $\mathcal{X}$, if Assumption~\ref{assumption:accuracyHessian} is satisfied then the Inexact PVM steps in Lines~\ref{algLine:PNStep1}-\ref{algLine:PNStep3} of Algorithm~\ref{algo:proj-Newton} satisfy for all $k\geq 0$:

 \begin{align}
    \norm{\tilde{\vx}_{k+1} - \vx^*} &\leq  \frac{\sqrt{\eta_k}}{2\mu}\left(   \sqrt{8\mu}\left(1  + \sqrt{L\omega}\right)  + \sqrt{\eta_k}L_2 \right) \norm{\vx_k - \vx^*}^2, \label{Eq:ConvDistOpt} 
\end{align}
 where $\eta_k = \max\{ \lambda_{\max}( H_k^{-1} \nabla^2 f(\vx_k)),  \lambda_{\max}([\nabla^2 f(\vx_k) ]^{-1}H_k )\}$ and $\omega \geq 0$ denotes a constant.
\end{lemma}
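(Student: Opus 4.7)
The plan is to bound $\|\tilde{\vx}_{k+1}-\vx^*\|$ by splitting it via the triangle inequality through the \emph{exact} minimizer of the quadratic subproblem, $\vx_{k+1}^* \defeq \argmin_{\vx\in\cx}\hat{f}_k(\vx)$. Theorem~\ref{theorem:convervenceExactProj} already controls $\|\vx_{k+1}^*-\vx^*\|$ in terms of $\|\vx_k-\vx^*\|^2$, so the only real work is to show that the inexactness $\|\tilde{\vx}_{k+1}-\vx_{k+1}^*\|$ introduced by the inner ACG loop is also quadratic in $\|\vx_k-\vx^*\|$. The two ingredients are (a) strong convexity of $\hat{f}_k$ translating the subproblem accuracy $\varepsilon_k$ into a distance bound, and (b) the specific choice of $\varepsilon_k$ tying it back to $\|\vx_k-\vx^*\|^4$.

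First I would note that from Equation~\eqref{Eq:ScalingHessian} and $\mu$-strong convexity of $f$, we have $H_k \succeq \tfrac{1}{\eta_k}\nabla^2 f(\vx_k) \succeq \tfrac{\mu}{\eta_k}I$, so $\hat{f}_k$ is $\tfrac{\mu}{\eta_k}$-strongly convex. Combining the first-order optimality condition of $\vx_{k+1}^*$ on $\cx$ with the strong convexity inequality gives
\begin{align*}
\hat{f}_k(\tilde{\vx}_{k+1}) - \hat{f}_k(\vx_{k+1}^*) \geq \frac{\mu}{2\eta_k}\|\tilde{\vx}_{k+1}-\vx_{k+1}^*\|^2,
\end{align*}
and since the stopping criterion in Line~\ref{algLine:PNStep1} together with the Frank–Wolfe gap guarantees $\hat{f}_k(\tilde{\vx}_{k+1}) - \hat{f}_k(\vx_{k+1}^*) \leq \varepsilon_k$, we obtain $\|\tilde{\vx}_{k+1}-\vx_{k+1}^*\| \leq \sqrt{2\eta_k\varepsilon_k/\mu}$.

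Next I would upper bound $\varepsilon_k$. By the definition of $\varepsilon_k$ in Line~\ref{AlgLine:AccuracyParameter}, the property $lb(\vx_k)\leq f(\vx_k)-f(\vx^*)$, convexity of $f$ applied at $\vx_k$ (giving $f(\vx_k)-f(\vx^*) \leq \innp{\nabla f(\vx_k),\vx_k-\vx^*}$), and Cauchy–Schwarz, we get $lb(\vx_k) \leq \|\nabla f(\vx_k)\|\,\|\vx_k-\vx^*\|$ and therefore $\varepsilon_k \leq \|\vx_k-\vx^*\|^4$. Plugging into the previous display yields the clean estimate $\|\tilde{\vx}_{k+1}-\vx_{k+1}^*\| \leq \sqrt{2\eta_k/\mu}\,\|\vx_k-\vx^*\|^2$.

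Finally I would assemble the bound. Theorem~\ref{theorem:convervenceExactProj} combined with $\sqrt{a+b}\leq\sqrt{a}+\sqrt{b}$ yields $\|\vx_{k+1}^*-\vx^*\| \leq \bigl(\tfrac{\eta_k L_2}{2\mu} + \sqrt{2L\eta_k\omega/\mu}\bigr)\|\vx_k-\vx^*\|^2$. Adding this to the inexactness term and factoring $\sqrt{\eta_k}/(2\mu)$ gives exactly the stated bound. The main obstacle is really only the bookkeeping of step (b): one has to realize that the apparently ad hoc denominator $\|\nabla f(\vx_k)\|$ in the definition of $\varepsilon_k$ is precisely what lets the convexity inequality cancel $\|\nabla f(\vx_k)\|$ and leave behind a fourth power of $\|\vx_k-\vx^*\|$, which is what the strong-convexity-based distance bound needs to retain the overall quadratic rate.
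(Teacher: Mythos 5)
Your proposal is correct and follows essentially the same route as the paper: split through the exact subproblem minimizer via the triangle inequality, use $\tfrac{\mu}{\eta_k}$-strong convexity of $\hat{f}_k$ together with the Frank--Wolfe stopping criterion to get $\|\tilde{\vx}_{k+1}-\tilde{\vx}^*_{k+1}\|\leq\sqrt{2\eta_k\varepsilon_k/\mu}$, exploit the choice of $\varepsilon_k$ with $lb(\vx_k)\leq f(\vx_k)-f(\vx^*)$, convexity, and Cauchy--Schwarz to turn $\varepsilon_k$ into $\|\vx_k-\vx^*\|^4$, and then add the exact-step bound from Theorem~\ref{theorem:convervenceExactProj} (after $\sqrt{a+b}\leq\sqrt a+\sqrt b$). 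This matches the paper's proof, which factors the argument into Lemma~\ref{Lemma:Appx:InexactConvergenceOptimums} plus Corollary~\ref{Corollary:Appx:ConvExact} and assembles them identically.
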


In order to take advantage of the quadratic convergence in distance to the optimum shown in Lemma~\ref{Lemma:InexactConvergenceDistance}, we need to show that at some point the SOCGS algorithm will always choose in Lines~\ref{algLine:MonotonicityFirst}-\ref{algLine:MonotonicityLast} the Inexact PVM step defined in Lines~\ref{algLine:PNStep1}-\ref{algLine:PNStep3}. To be more specific, we show that the convergence in primal gap for the Inexact PVM step will also be quadratic. We do this by first showing that there is an iteration $K\geq 0$ such that for all $k \geq K$  we have $\vx_k \in \mathcal{F}\left( \vx^*\right)$ (Lemma~\ref{Corollary:Appx:StuckToFace} in Appendix~\ref{Section:Appx:InexactScaledProjections}).

 \begin{lemma}\label{Theorem:ConvOptFace2}
Given a $\mu$-strongly convex and $L$-smooth function $f(\vx)$ with $L_2$-Lipschitz continuous Hessian and a polytope $\cx$, if Assumption~\ref{assumption:strictComplementarity} and \ref{assumption:accuracyHessian} are satisfied, then there is an index $K\geq 0$ such that for $k \geq K$ we have that $\vx_k\in \mathcal{F}(\vx^*)$, that is, both the Inexact PVM steps (Lines~\ref{algLine:PNStep1}-\ref{algLine:PNStep3} of Algorithm~\ref{algo:proj-Newton}) and the ACG step  (Line~\ref{algLine:ACG} of Algorithm~\ref{algo:proj-Newton}) are in $\mathcal{F}(\vx^*)$.
\end{lemma}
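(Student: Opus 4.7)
The plan is to combine global convergence in distance with a Guélat-Marcotte-style finite face-identification argument, applied both to the independent ACG trajectory in Line~\ref{algLine:ACG} and to the ACG subroutine that runs inside the Inexact PVM step. First I would invoke Theorem~\ref{theorem:GlobalConvergence} to get $f(\vx_k) - f(\vx^*) \to 0$ linearly, and then use $\mu$-strong convexity to conclude $\norm{\vx_k - \vx^*}^2 \leq 2(f(\vx_k)-f(\vx^*))/\mu \to 0$; applying the same primal-gap bound to the pure ACG sequence (Theorem~\ref{theorem:ConvergenceACG}) yields $\vx_k^{\textup{ACG}} \to \vx^*$. Assumption~\ref{assumption:strictComplementarity} then supplies a uniform margin $\alpha \defeq \min_{\vv \in \vertex(\cx)\setminus \mathcal{F}(\vx^*)} \innp{\nabla f(\vx^*),\vv-\vx^*} > 0$, while $\innp{\nabla f(\vx^*),\vv-\vx^*}=0$ for every $\vv \in \vertex(\mathcal{F}(\vx^*))$; this gap is what drives the face-identification argument.

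Next I would apply the classical argument of \cite{guelat1986some} to the independent ACG iterates: once $\norm{\nabla f(\vx_k)-\nabla f(\vx^*)} \leq L\norm{\vx_k-\vx^*}$ drops below $\alpha/(2D)$, every Frank-Wolfe atom picked lies in $\vertex(\mathcal{F}(\vx^*))$, while any vertex of $\cs_k^{\textup{ACG}}$ outside $\mathcal{F}(\vx^*)$ carries an away-gap exceeding $\alpha/2$ and is therefore eliminated whenever selected. Using the $\lfloor k/2\rfloor$ bound on bad away-steps noted in Section~\ref{Subsection:CGAlgGlobalConv}, $\cs_k^{\textup{ACG}}$ is purged of every vertex outside $\mathcal{F}(\vx^*)$ within finitely many iterations $K_1$, after which $\cs_k^{\textup{ACG}} \subseteq \vertex(\mathcal{F}(\vx^*))$ and $\vx_k^{\textup{ACG}} \in \mathcal{F}(\vx^*)$ for all $k \geq K_1$.

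The hard part will be pushing the same bookkeeping through the ACG subroutine solving the scaled projection. I would first confine the inner trajectory to a neighborhood of $\vx^*$: monotonicity gives $\hat{f}_k(\tilde{\vx}^t) \leq \hat{f}_k(\vx_k)=0$, which combined with the lower bound $H_k \succeq (\mu/\eta_k) I^n$ extracted from Assumption~\ref{assumption:accuracyHessian} and Cauchy-Schwarz yields $\norm{\tilde{\vx}^t - \vx_k} \leq 2\eta_k \norm{\nabla f(\vx_k)}/\mu$, so the entire inner trajectory sits in a shrinking neighborhood of $\vx^*$. Consequently $\nabla \hat{f}_k(\tilde{\vx}^t) = \nabla f(\vx_k) + H_k(\tilde{\vx}^t-\vx_k)$ tends uniformly to $\nabla f(\vx^*)$, and the FW-atom/away-atom dichotomy of the previous paragraph applies verbatim inside the subroutine. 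The genuine obstacle is that the active set inherited by the subproblem in Line~\ref{algLine:TransferPoint} can still contain legacy vertices outside $\mathcal{F}(\vx^*)$ from earlier iterations; I plan to handle this with two complementary observations. Whenever SOCGS selects the ACG branch after $K_1$, Line~\ref{algLine:ACGStep} overwrites $\cs_{k+1}$ by $\cs_{k+1}^{\textup{ACG}} \subseteq \vertex(\mathcal{F}(\vx^*))$. Otherwise the inner ACG subroutine performs the cleanup itself: the subproblem minimizer lies in $\mathcal{F}(\vx^*)$ by strict complementarity applied to $\hat{f}_k$, and the stopping threshold $\varepsilon_k$ shrinks fast enough to force the inner loop past the finitely many bad away-steps needed to eject each spurious vertex. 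An induction on $k \geq K \defeq \max(K_1,K_2)$ then delivers $\vx_k \in \mathcal{F}(\vx^*)$ together with the same containment for both the ACG step and the Inexact PVM step, which is the claim.
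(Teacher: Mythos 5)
Your overall route is the same as the paper's: Guélat--Marcotte face identification applied to both the free-running ACG sequence and to the ACG subroutine inside the Inexact PVM step, then an active-set counting argument. The paper's proof goes through Theorem~\ref{Theorem:Appx:ConvOptFace}, Theorem~\ref{Theorem:Appx:ConvOptFace2}, Lemma~\ref{Lemma:Appx:DistanceConv}, and Corollaries~\ref{Corollary:Appx:StuckToFaceRadius}--\ref{Corollary:Appx:StuckToFace}, and the skeleton you propose mirrors this. There are, however, two genuine gaps in your sketch.

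First, your confinement bound for the inner ACG trajectory does not shrink. You conclude from $\hat{f}_k(\tilde{\vx}^t)\le\hat{f}_k(\vx_k)=0$ and $H_k\succeq(\mu/\eta_k)I^n$ that $\norm{\tilde{\vx}^t-\vx_k}\le 2\eta_k\norm{\nabla f(\vx_k)}/\mu$. In a constrained problem $\nabla f(\vx^*)\neq 0$ in general, so $\norm{\nabla f(\vx_k)}\to\norm{\nabla f(\vx^*)}>0$ and this radius does not go to zero as $\vx_k\to\vx^*$; the inner iterates need not stay inside the ball where your margin argument is valid. The paper handles this in Lemma~\ref{Lemma:Appx:DistanceConv} with a sharper estimate: it bounds $\hat{f}_k(\vx_k)-\hat{f}_k(\tilde{\vx}^*_{k+1})$ by $\norm{\nabla f(\vx_k)}\norm{\tilde{\vx}^*_{k+1}-\vx_k}$, and then uses the exact-projection contraction (Corollary~\ref{Corollary:Appx:ConvExact}) to show $\norm{\tilde{\vx}^*_{k+1}-\vx_k}=\mathcal{O}(\norm{\vx_k-\vx^*})$, yielding $\norm{\tilde{\vx}^t_{k+1}-\vx^*}=\mathcal{O}(\norm{\vx_k-\vx^*}^{1/2})$, which does shrink. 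You need the factor $\norm{\tilde{\vx}^*_{k+1}-\vx_k}$ in place of $\norm{\tilde{\vx}^t-\vx_k}$ before solving for the deviation.

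Second, the sentence ``the subproblem minimizer lies in $\mathcal{F}(\vx^*)$ by strict complementarity applied to $\hat{f}_k$'' is not a licensed step. Strict complementarity (Assumption~\ref{assumption:strictComplementarity}) is a hypothesis about the original problem at $\vx^*$; it does not automatically transfer to the perturbed quadratic $\hat{f}_k$, whose minimizer over $\cx$ is a different point $\tilde{\vx}^*_{k+1}$. One must show that, for $\vx_k$ and the inner iterates sufficiently close to $\vx^*$, the Frank-Wolfe/away dichotomy for $\nabla\hat{f}_k$ classifies vertices exactly as $\nabla f(\vx^*)$ does; this is a quantitative perturbation argument requiring the $L_2$-Lipschitz Hessian and Assumption~\ref{assumption:accuracyHessian} (via Lemma~\ref{Lemma:Appx:AccuracyHessian}) to bound $\nabla\hat{f}_k(\vx)-\bigl(\nabla f(\vx^*)+\nabla^2 f(\vx^*)(\vx-\vx^*)\bigr)$, which is the content of the paper's Theorem~\ref{Theorem:Appx:ConvOptFace2}. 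Finally, your closing induction is underspecified: after the threshold iteration, an SOCGS iteration need not land $\vx_{k+1}$ in $\mathcal{F}(\vx^*)$ outright (the while-loop can terminate early), so the precise invariant that must be propagated is that the active set strictly shrinks outside $\vertex(\mathcal{F}(\vx^*))$ whenever the Inexact PVM branch is taken with $\vx_k\notin\mathcal{F}(\vx^*)$, and is replaced wholesale by a subset of $\vertex(\mathcal{F}(\vx^*))$ whenever the ACG branch is taken; finiteness of $\abs{\cs_k}$ then terminates. The paper spells this out in Corollary~\ref{Corollary:Appx:StuckToFace}.
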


 We can upper bound the right-hand side of Equation~\eqref{Eq:ConvDistOpt} using strong convexity, and the left-hand side using smoothness, Lemma~\ref{Theorem:ConvOptFace2} and strict-complementarity (Assumption~\ref{assumption:strictComplementarity}). This allows us to show that there exists an iteration after which the primal progress of the Inexact PMV steps in Lines~\ref{algLine:PNStep1}-\ref{algLine:PNStep3} will be quadratic, which ensures the local quadratic convergence of the SOCGS algorithm.

\begin{theorem}[Quadratic convergence in primal gap of the SOCGS algorithm] \label{Theorem:InexactConvergencePrimalGap}
Given a $\mu$-strongly convex and $L$-smooth function $f(\vx)$ with $L_2$-Lipschitz Hessian and a polytope $\mathcal{X}$, if Assumption~\ref{assumption:strictComplementarity} and Assumption~\ref{assumption:accuracyHessian} are satisfied, then there is a $K\geq 0$ such that for $k\geq K$ the iterates of the SOCGS algorithm (Algorithm~\ref{algo:proj-Newton}) satisfy:
  \begin{align*}
  f(\vx_{k+1}) - f(\vx^*) \leq  \frac{L \eta_k}{2\mu^4}  \left(\sqrt{8\mu} (1+ \sqrt{L\omega}) + \sqrt{\eta_k} L_2\right)^2\left(f(\vx_{k}) - f(\vx^*)\right)^{2}.
 \end{align*}
 where $\eta_k = \max\{ \lambda_{\max}( H_k^{-1} \nabla^2 f(\vx_k)),  \lambda_{\max}([\nabla^2 f(\vx_k) ]^{-1}H_k )\}$ and $\omega \geq 0$ denotes a constant.

 \end{theorem}

\subsection{Complexity analysis}

We defer the full details of the complexity analysis to Section~\ref{Appx:ComplexityAnalysis} in Appendix~\ref{appx:SOCG}. Throughout this section we make the simplifying assumption that we have at our disposal the tightest possible lower bound $lb(\vx_k)$ on the primal gap, that is, $lb(\vx_k) = f(\vx_k) - f(\vx^*)$ (in Remark~\ref{primal_gap_not_available} we address a strategy that can be used when the primal gap is not known). Let $r = \min\{r^\text{ACG}, r^{\text{PVM}}\}>0$ (where $r^\text{ACG}$ is described in Theorem~\ref{Theorem:Appx:ConvOptFace} and $r^{\text{PVM}}$ in Corollary~\ref{Corollary:Appx:StuckToFaceRadius}),  $G = \max_{\vx \in \cx} \norm{\nabla f(\vx)}$ and $\beta = \max\{ (2DG)^{1/4}, (2L(1 + \omega D^2) D^3G)^{1/8} \}$. With these considerations in mind the different oracle complexities are listed in Table~\ref{Table}. As in the classical analysis of PVM algorithms, the SOCGS algorithm shows local quadratic convergence after a number of iterations that is independent of $\varepsilon$ (but dependent on $f(\vx)$ and $\cx$).

\begin{remark} \label{primal_gap_not_available}
Providing a looser lower bound $lb(\vx_k)$ on the primal gap does not affect the number of first-order or Hessian oracle calls, however it can significantly increase the number of linear optimization oracle calls used to compute the Inexact PVM steps in Lines~\ref{algLine:PNStep1}-\ref{algLine:PNStep3}. Note that the progress guarantee from a single ACG step that is not an away-step that drops a vertex is $f(\vx_k) - f(\vx_{k+1}) \geq \frac{\mu}{4L}(\frac{\delta}{D})^2(f(x_k) - f(x^*))$ (see Theorem 1 in \citet{lacoste2015global}). If we use as $lb(\vx_k)$ 
 the progress obtained from such a step (note that $f(\vx_k) - f(\vx^*) \geq f(\vx_k) - f(\vx_{k+1})$) in the complexity analysis, one can obtain after a finite number of iterations, a $\log\log 1/\varepsilon$ complexity in terms of FO and Hessian oracle calls and $\log (1/\varepsilon)\log(\log 1/\varepsilon)$ LO calls, but with worse constants then the ones in Table~\ref{Table}. 
\end{remark}

\begin{table*}[th!]
 \begin{center} 
  {\renewcommand{\arraystretch}{2.0}
\begin{tabular}{ccc}
\toprule
\textbf{Phase} & \textbf{FO and Hessian Oracle Calls} & \textbf{LO Oracle Calls} \\ \hline
\midrule
 Initial Phase          &    $\mathcal{O}\left(  \left(\frac{L(1 + \omega D^2)}{\mu}\right)^2\left(\frac{D}{\delta} \right)^4 \log\left(\frac{1}{\mu r^2}\right)\log \left( \frac{\beta G}{\mu r^2}\right)\right)$  & $\mathcal{O}\left(  \left(\frac{L(1 + \omega D^2)}{\mu}\right)^2\left(\frac{D}{\delta} \right)^4 \log\left(\frac{1}{\mu r^2}\right)\log \left( \frac{\beta G}{\mu r^2}\right)\right)$  \\ \hline
 Final Phase  & $\mathcal{O}\left(\log\log \left(\frac{1}{\varepsilon}\right)\right)$  & $\mathcal{O}\left(\frac{L(1 + \omega D^2)^2 }{\mu}\left( \frac{D}{\delta}\right)^2 \log \left( \frac{\beta G}{\varepsilon}\right)\log\log \left(\frac{1}{\varepsilon}\right)\right)$  \\ \hline
\end{tabular}
}
\end{center}
 \caption{Complexity to reach an $\varepsilon$-optimal solution to Problem~\eqref{eq:OptProblem} for the SOCGS algorithm.} \label{Table}
\end{table*}

\section{Computations}

We compare the performance of the SOCGS algorithm with that of other projection-free algorithms, and that of Projected-Gradient Descent (PGD). In all experiments we compare against the vanilla CG algorithm, the ACG algorithm, the \emph{Pairwise-Step Conditional Gradients} algorithms (PCG) and the \emph{Lazy ACG} algorithm \cite{braun2017lazifying} (ACG (L)).  In the first experiment we also compare against the \emph{Decomposition Invariant Conditional Gradient} (DICG) algorithm \cite{garber2016linear}, the CGS algorithm \cite{lan2016conditional} and the \emph{Stochastic Variance-Reduced Conditional Gradients} (SVRCG) algorithm \cite{hazan2016variance}. We were not able to achieve acceptable performance with the CGS algorithm in the second and third experiment and with the SVRFW algorithm in the third experiment. Lastly we also compare against the \emph{Newton Conditional Gradients} (NCG) algorithm \cite{liu2020newton} which is similar in spirit to the SOCGS algorithm, in the second and third experiment. One of the key features of the NCG algorithm is that it does not require an exact line search strategy, as it provides a specific step size strategy (however it requires selecting five hyperparameters and using an exact Hessian).

In the first problem the Hessian oracle will be inexact, but will satisfy Assumption~\ref{assumption:accuracyHessian} with $\omega = 0.1$, moreover we will also assume knowledge of the primal gap, by first computing a solution to high accuracy. In the remaining problems the Hessian oracle will be exact, and we will assume that we do not have knowledge of the primal gap, and will use the strategy outlined in Remark~\ref{primal_gap_not_available}. In the second experiment, in addition to using the exact Hessian, we will also implement SOCGS with an LBFGS  Hessian update (SOCGS LBFGS) (note that this does not satisfy Assumption~\ref{assumption:accuracyHessian}). All the line searches that do not have a closed form solution are computed using a golden-section bounded line search between $0$ and $1$. The full details of the implementation can be found in Appendix~\ref{Appx:Computations}. In the second and third experiment we will also cap the maximum number of inner iterations to $1000$ for the SOCGS and NCG algorithms, as is done in the computational experiments of NCG and SVRCG. 

The code used can be found in \href{https://github.com/alejandro-carderera/SOCGS}{\texttt{https://github.com/alejandro-carderera/SOCGS}}.

\begin{remark}[Hyperparameter search for the NCG algorithm]
We tested 27 hyperparameter combinations for the NCG algorithm, and the one that provided the best performance was selected (see Appendix~\ref{Appx:Computations} for the full details).
\end{remark}

\paragraph{Sparse coding over the Birkhoff polytope}

In this example (Figure~\ref{fig:Birkhoff}) we minimize the objective function $f(X) = \sum_{i = 1}^m \norm{\vy_i - X\vz_i}^2$, with $X \in \rr^{n\times n}$, over the Birkhoff polytope. This objective function is strongly convex if the vectors $\vz_i$, with $m\in [1,m]$ form a basis for $\mathbb{R}^n$ (See discussion in Appendix~\ref{Appx:SparseCoding}). We generate synthetic data by creating a matrix $B\in \rr^{n \times n}$ with $n = 80$ entries sampled from a standard normal distribution, and $m$ vectors $\vx\in \rr^n$ (with $m = 10000$ in the first experiment and $m = 100000$ in the second), with entries sampled from a standard normal distribution, in order to form $Z = \left\{\vz_1, \cdots, \vz_m \right\}$. For both the experiments we verified numerically that the resulting objective function is strongly convex. The set of vectors $Y = \left\{\vy_1, \cdots, \vy_m \right\}$ is generated by computing $\vy_i = B\vz_i$ for all $i\in \llbracket1, m\rrbracket$. The starting point for all the algorithms is $I^n$. To implement the projection operation used in PGD we use the interior point solver implemented in CVXOPT \cite{andersen2011interior}, which we have found to be computationally faster than the Douglas-Rachford approach described in \citet{combettes2021complexity}. Note that the use of this implementation only impacts the performance with respect to time, and not with respect to iteration count.

\paragraph{Structured logistic regression over $\ell_1$ unit ball}
In this last experiment (Figure~\ref{fig:LogReg}) we minimize a function of the form $f(x) = 1/m \sum_{i = 1}^{m}\log\left( 1+e^{-y_i \innp{\vx, \vz_i}}\right) + \lambda/2 \norm{\vx}^2$ over the $\ell_1$ unit ball with $\lambda = 0.05$. The labels and samples used are taken from the training set of the \texttt{gissette} \cite{guyon2007competitive} and the \texttt{real-sim} \cite{chang2011libsvm} dataset, where $n = 5000$ and $m = 6000$ and $n = 72309$ and $m = 20958$ respectively. The starting point for all the algorithms is the vector $(1, 0, \cdots, 0)$.

\paragraph{Inverse covariance estimation over spectrahedron}
In the second experiment (Figure~\ref{fig:GLasso}) we minimize the function $f(X) = -\log \det (X+\delta I^n) + \trace\left( S X\right) + \frac{\lambda}{2} \norm{X}^2_F$ with $X \in \rr^{n\times n}$ over the space of positive semidefinite matrices of unit trace, with $\delta = 10^{-5}$ and $\lambda =0.05$. This feasible region is not a polytope, and so the guarantees shown in the paper do not apply as they crucially rely on Theorem~\ref{theorem:ConvergenceACG}, and the pyramidal width of the spectrahedron is zero. However, we include the results to show the promising numerical performance of the method. The matrix $S$ is generated by computing a random orthonormal basis $\mathcal{B} = \{ \mathbf{v}_1, \cdots,  \mathbf{v}_m \}$ in $\rr^m$ and computing $S = \sum_{i=1} \sigma_i \mathbf{v}_i\mathbf{v}_i^T$, where $\sigma_i$ is uniformly distributed between $0.5$ and $1$ for $i\in \llbracket1, m\rrbracket$.  The starting point for all the algorithms is the matrix $1/n I^n$.

\begin{figure*}[th!]
    \centering
    \vspace{-10pt}
    \hspace{\fill}
    \subfloat[Iterations]{{\includegraphics[width=3.9cm]{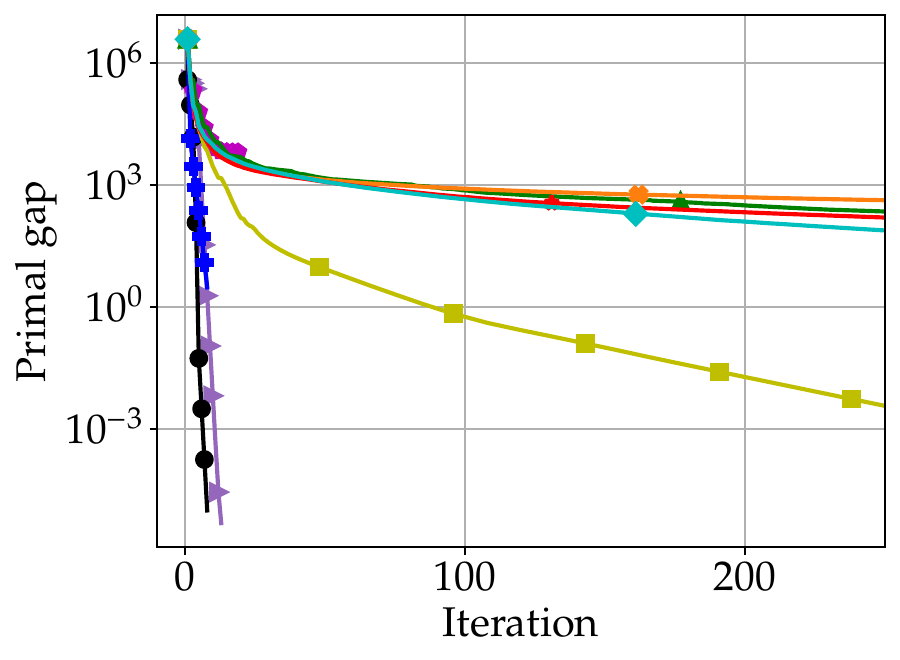} }\label{fig:BirkhoffPGIt}}%
    \hspace{\fill}
    \subfloat[Seconds]{{\includegraphics[width=3.9cm]{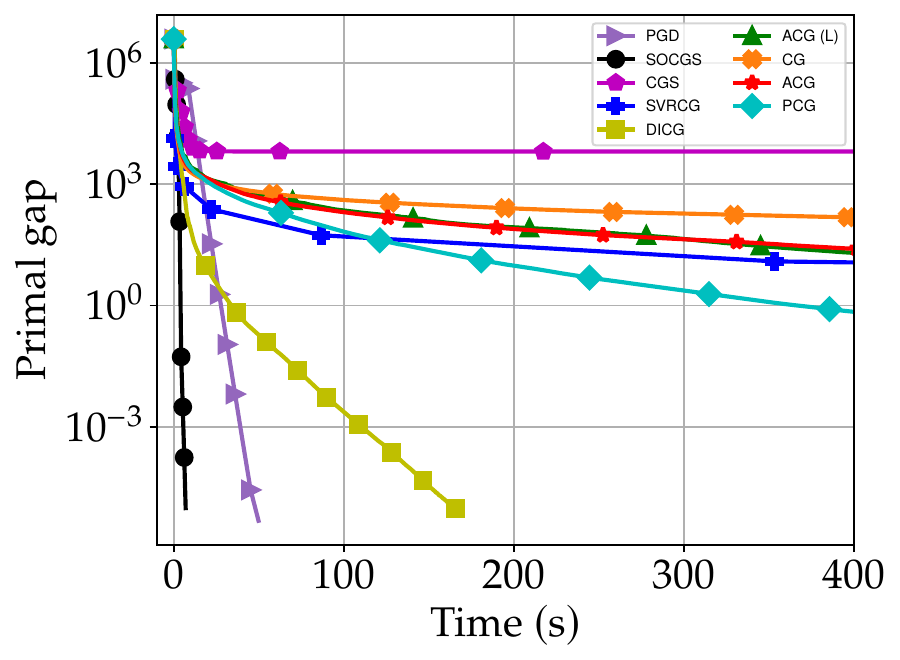}}\label{fig:BirkhoffPGTime} }%
    \hspace{\fill}
    \subfloat[Iterations]{{\includegraphics[width=3.8cm]{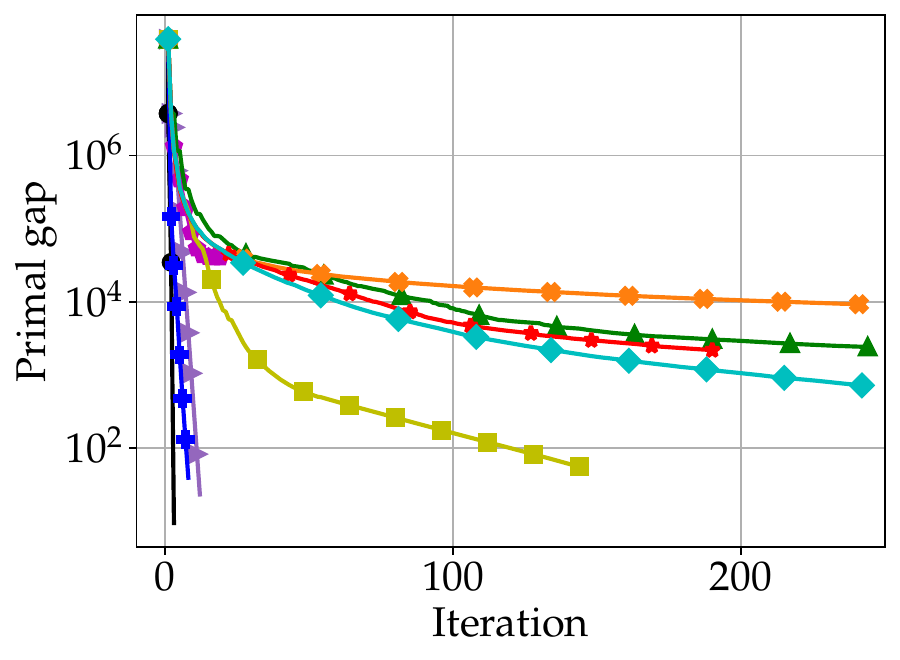} }\label{fig:BirkhoffDistanceIt}}%
    \hspace{\fill}
    \subfloat[Seconds]{{\includegraphics[width=3.9cm]{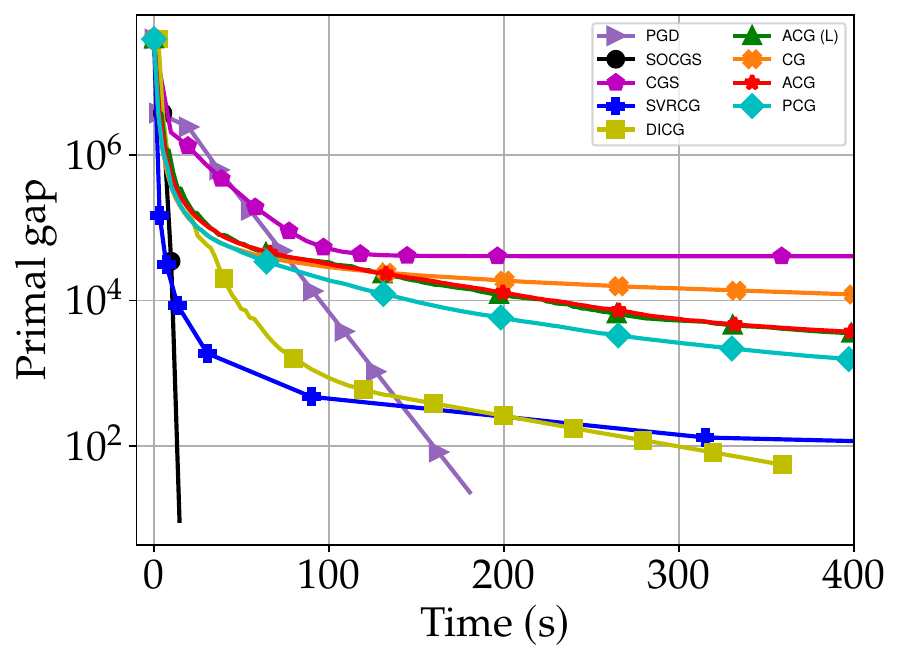} }\label{fig:BirkhoffDistanceTime}}%
    \hspace*{\fill}
    \caption{Birkhoff polytope: Primal gap comparison for $m = 10000$ \protect\subref{fig:BirkhoffPGIt},\protect\subref{fig:BirkhoffPGTime} and $m = 100000$ \protect\subref{fig:BirkhoffDistanceIt},\protect\subref{fig:BirkhoffDistanceTime}.}%
    \label{fig:Birkhoff}%
\end{figure*}

\begin{figure*}[th!]
    \centering
    \vspace{-10pt}
    \hspace{\fill}
    \subfloat[Iterations]{{\includegraphics[width=3.9cm]{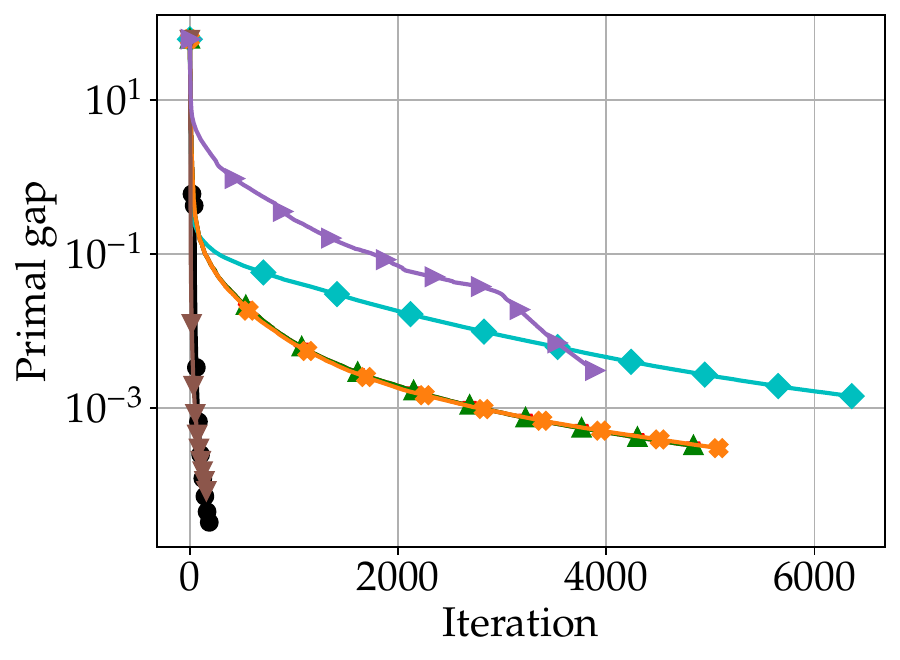} }\label{fig:LogisticRegPGIt}}%
    \hspace{\fill}
    \subfloat[Seconds]{{\includegraphics[width=3.9cm]{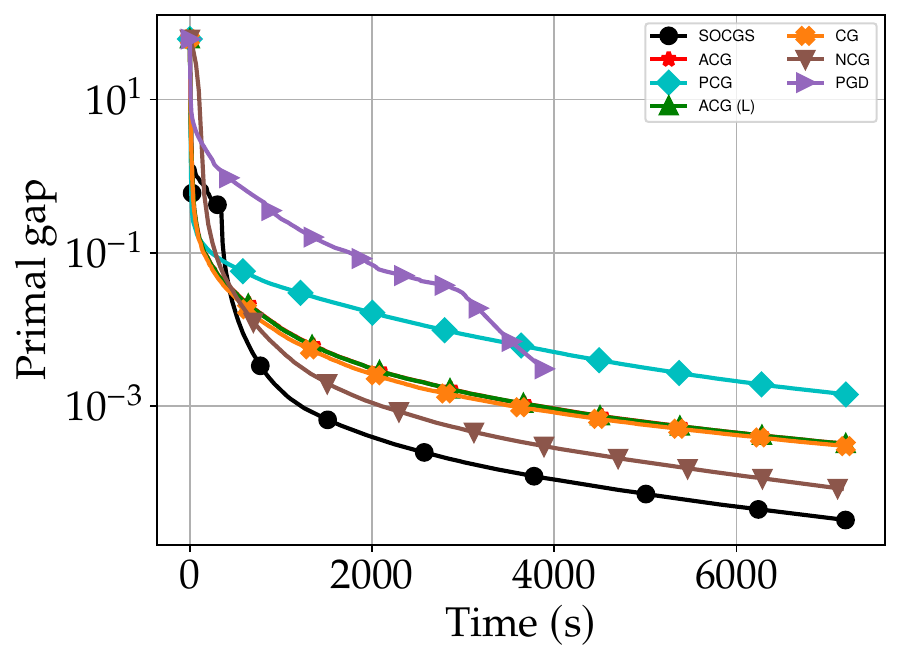} }\label{fig:LogisticRegPGTime}}%
    \hspace{\fill}
    \subfloat[Iterations]{{\includegraphics[width=3.9cm]{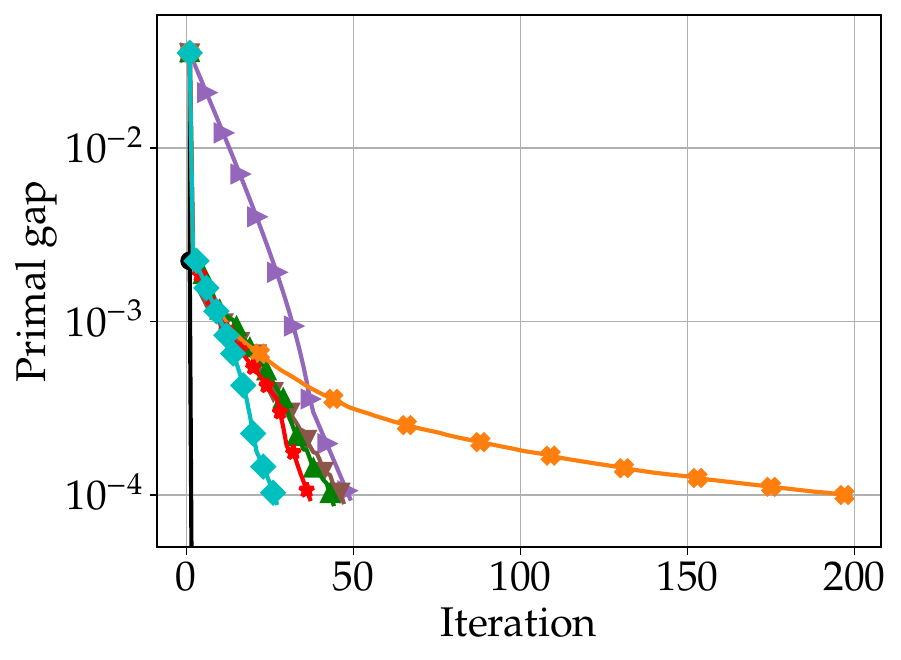} }\label{fig:LogisticRegDistanceIt}}%
    \hspace{\fill}
    \subfloat[Seconds]{{\includegraphics[width=3.9cm]{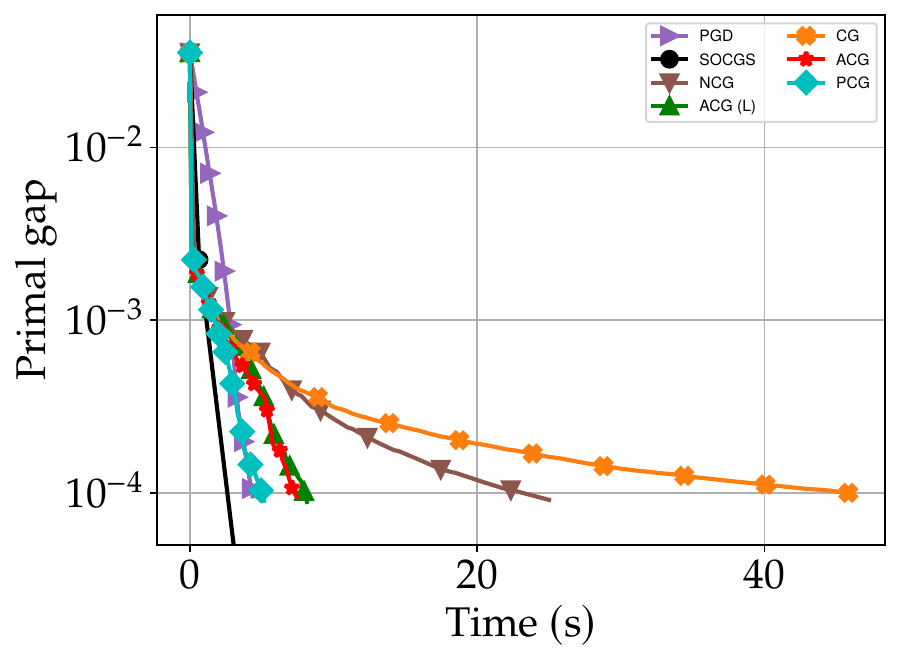} }\label{fig:LogisticRegDistanceTime}}%
    \hspace*{\fill}
    \caption{$\ell_1$-ball: Comparison in terms of primal gap for the \texttt{gissette} \protect\subref{fig:LogisticRegPGIt},\protect\subref{fig:LogisticRegPGTime} and the \texttt{real-sim} \protect\subref{fig:LogisticRegDistanceIt},\protect\subref{fig:LogisticRegDistanceTime} datasets.}%
    \label{fig:LogReg}%
\end{figure*}

\begin{figure*}[th!]
    \centering
    \vspace{-10pt}
    \hspace{\fill}
    \subfloat[Iterations]{{\includegraphics[width=3.9cm]{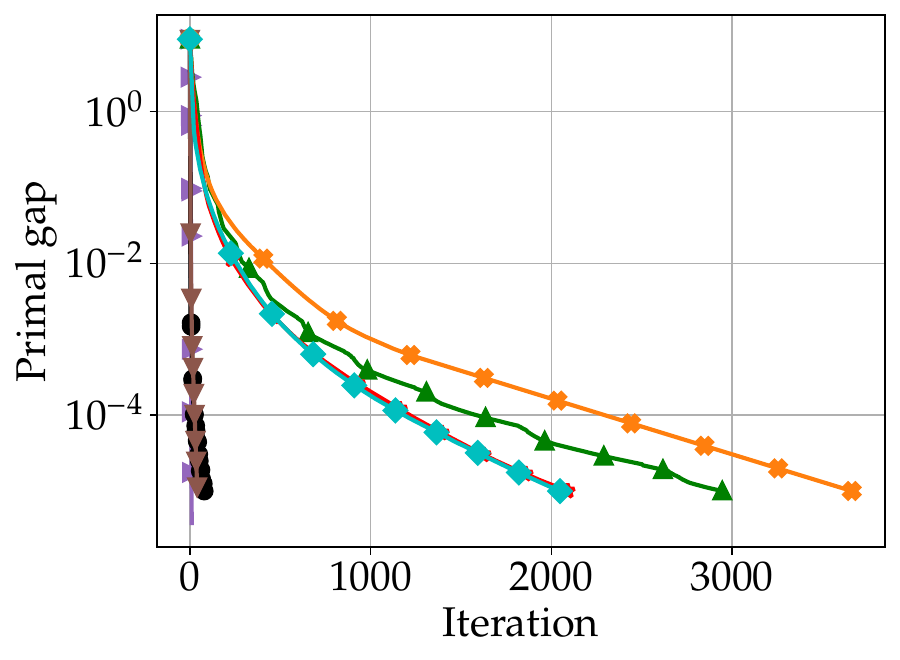} }\label{fig:GLassoPGIt}}%
    \hspace{\fill}
    \subfloat[Seconds]{{\includegraphics[width=3.9cm]{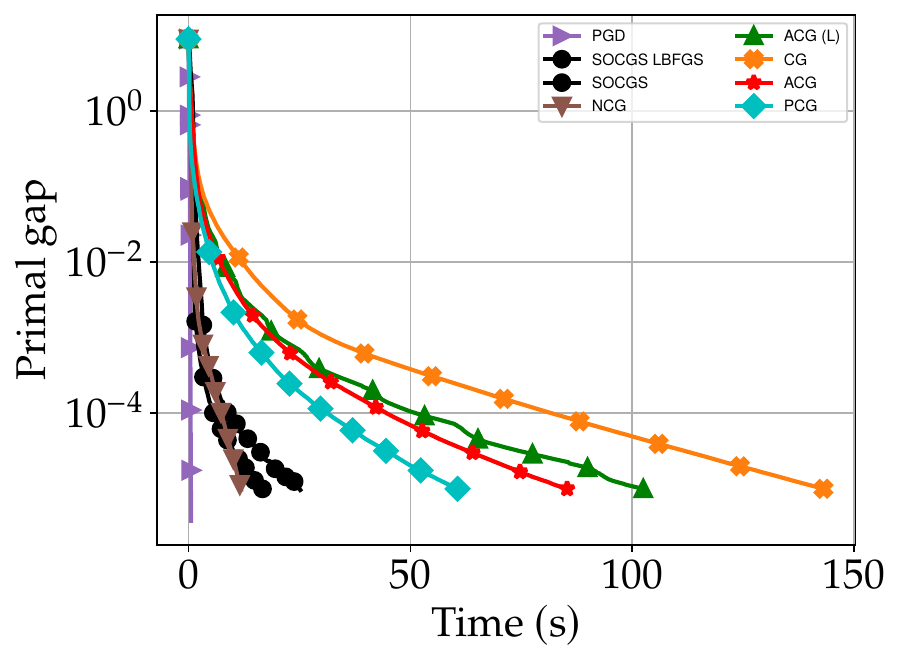} }\label{fig:GLassoPGTime}}%
    \hspace{\fill}
    \subfloat[Iterations]{{\includegraphics[width=3.9cm]{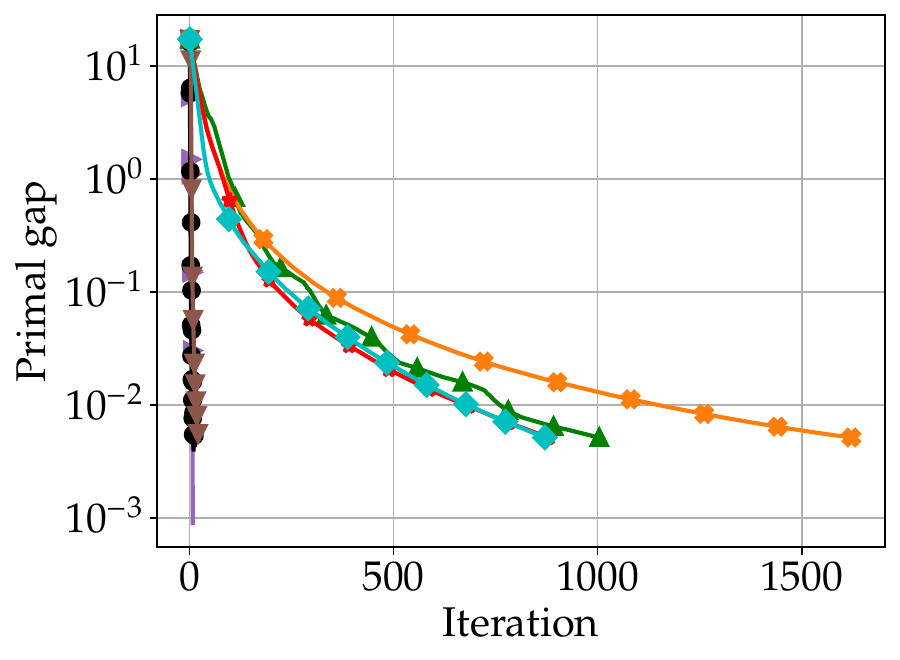} }\label{fig:GLassoDistanceIt}}%
    \hspace{\fill}
    \subfloat[Seconds]{{\includegraphics[width=3.9cm]{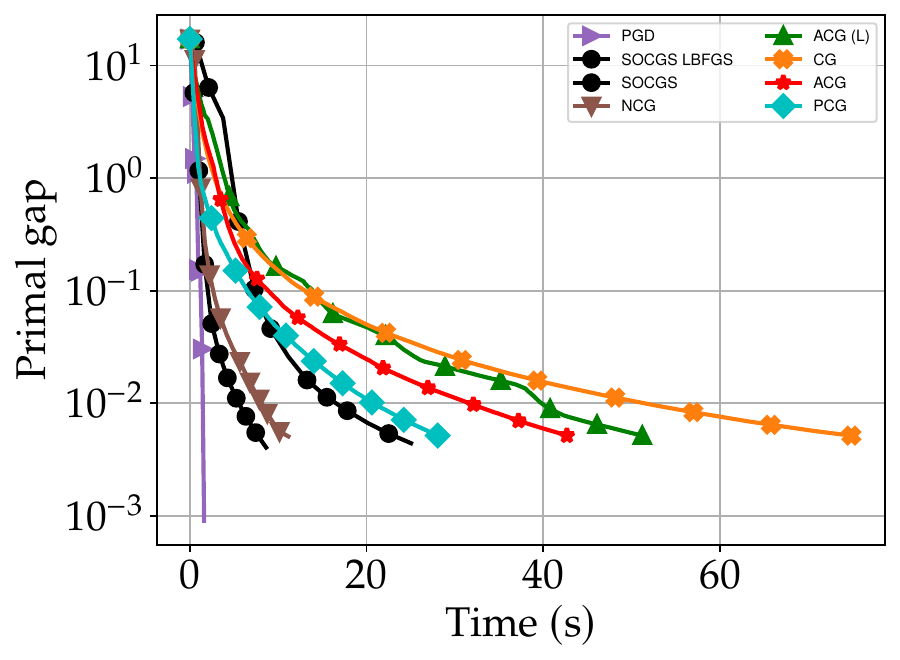} }\label{fig:GLassoDistanceTime}}%
    \hspace*{\fill}
    \caption{Spectrahedron: Comparison in terms of primal gap for $n = 100$ \protect\subref{fig:GLassoPGIt},\protect\subref{fig:GLassoPGTime} and for $n = 50$ \protect\subref{fig:GLassoDistanceIt},\protect\subref{fig:GLassoDistanceTime}.}%
    \label{fig:GLasso}%
\end{figure*}

\section*{Conclusion}
\label{sec:conclusion}

This paper focuses on the minimization of a smooth and strongly convex function over a polytope in the setting where efficient access to the feasible region is limited to a linear optimization oracle and first-order information about the objective function is expensive to compute. We also assume inexact second-order information subject to an accuracy requirement.

Given these challenges, we present the Second-order Conditional Gradient Sliding (SOCGS) algorithm, which at each iteration computes an Inexact Projected Variable-Metric (PVM) step with unit step size (using the Away-step Conditional Gradient (ACG) algorithm and an accuracy criterion that depends on a lower bound on the primal gap), and an independent ACG step with line search, and chooses the step that provides the greatest primal progress. As the algorithm relies on a linear minimization oracle, as opposed to a projection oracle, it is projection-free. The algorithm can be seen as the second-order analog of the Conditional Gradient Sliding algorithm \cite{lan2016conditional}, which uses Conditional Gradient steps to compute inexact Euclidean projections in Nesterov's Accelerated Gradient Descent algorithm. After a finite number (independent of the target accuracy $\varepsilon$) of linearly convergent iterations, the convergence rate of the SOCGS algorithm is quadratic in primal gap. Once inside this phase the SOCGS algorithm reaches an $\varepsilon$-optimal solution after $\mathcal{O}\left(\log(\log 1/\varepsilon)\right)$ Hessian and first-order oracle calls and $\mathcal{O}(\log (1/\varepsilon) \log(\log1/\varepsilon))$ linear minimization oracle calls.

The Newton Conditional Gradient (NCG) (or Newton Frank-Wolfe) algorithm \cite{liu2020newton} uses an approach that is similar in spirit to the one used in the SOCGS algorithm, however with a very different analysis and set of assumptions. The aforementioned algorithm minimizes a self-concordant function over a convex set by performing Inexact Newton steps using a Conditional Gradient algorithm to solve the constrained quadratic subproblems. This algorithm requires exact Hessian information, and after a finite number of iterations (independent of the target accuracy $\varepsilon$), the convergence rate of the NCG algorithm is linear in primal gap. Once inside this phase a $\varepsilon$-optimal solution is reached after $\mathcal{O}\left(\log 1/\varepsilon\right)$ exact Hessian and first-order oracle calls and $\mathcal{O}( 1/\varepsilon^{\nu})$ linear minimization oracle calls, where $\nu$ is a constant greater than one.

The computational results show that the SOCGS algorithm outperforms other first-order projection-free algorithms and the NCG algorithm in applications where first-order information is costly to compute. The improved performance with respect to other first-order projection-free algorithms is due to the substantial progress per iteration provided by the Inexact PVM steps, which makes up for their higher computational cost, resulting in faster convergence with respect to time. The better performance of the SOCGS algorithm with respect to the NCG algorithm is due to the better global convergence of the SOCGS algorithm, and the use of the Away-step Conditional Gradient algorithm as a subproblem solver in the SOCGS algorithm, as opposed to the vanilla Conditional Gradient algorithm used by the NCG algorithm.

\section*{Acknowledgments}
\label{sec:acknowledgments}
Research reported in this paper was partially supported by NSF CAREER
Award CMMI-1452463. We would like to thank Gábor Braun for the helpful discussions, and the anonymous reviewers for their suggestions and comments.


\bibliography{references}

\begin{thebibliography}{62}
\providecommand{\natexlab}[1]{#1}
\providecommand{\url}[1]{\texttt{#1}}
\expandafter\ifx\csname urlstyle\endcsname\relax
  \providecommand{\doi}[1]{doi: #1}\else
  \providecommand{\doi}{doi: \begingroup \urlstyle{rm}\Url}\fi

\bibitem[Aharon et~al.(2006)Aharon, Elad, and Bruckstein]{aharon2006k}
Aharon, M., Elad, M., and Bruckstein, A.
\newblock K-svd: An algorithm for designing overcomplete dictionaries for
  sparse representation.
\newblock \emph{IEEE Transactions on Signal Processing}, 54\penalty0
  (11):\penalty0 4311--4322, 2006.

\bibitem[Andersen et~al.(2011)Andersen, Dahl, Liu, Vandenberghe, Sra, Nowozin,
  and Wright]{andersen2011interior}
Andersen, M., Dahl, J., Liu, Z., Vandenberghe, L., Sra, S., Nowozin, S., and
  Wright, S.
\newblock Interior-point methods for large-scale cone programming.
\newblock \emph{Optimization for machine learning}, 5583, 2011.

\bibitem[Asi \& Duchi(2019)Asi and Duchi]{asi2019importance}
Asi, H. and Duchi, J.~C.
\newblock The importance of better models in stochastic optimization.
\newblock \emph{Proceedings of the National Academy of Sciences}, 116\penalty0
  (46):\penalty0 22924--22930, 2019.

\bibitem[Banerjee et~al.(2008)Banerjee, Ghaoui, and
  d'Aspremont]{banerjee2008model}
Banerjee, O., Ghaoui, L.~E., and d'Aspremont, A.
\newblock Model selection through sparse maximum likelihood estimation for
  multivariate gaussian or binary data.
\newblock \emph{Journal of Machine Learning Research}, 9\penalty0
  (Mar):\penalty0 485--516, 2008.

\bibitem[Barr{\'e} \& d'Aspremont(2019)Barr{\'e} and
  d'Aspremont]{barre2019polyak}
Barr{\'e}, M. and d'Aspremont, A.
\newblock Polyak steps for adaptive fast gradient method.
\newblock \emph{arXiv preprint arXiv:1906.03056}, 2019.

\bibitem[Barr{\'e} et~al.(2020)Barr{\'e}, Taylor, and
  d’Aspremont]{barre2020complexity}
Barr{\'e}, M., Taylor, A., and d’Aspremont, A.
\newblock Complexity guarantees for polyak steps with momentum.
\newblock In \emph{Conference on learning theory}, pp.\  452--478. PMLR, 2020.

\bibitem[Beck(2017)]{beck2017first}
Beck, A.
\newblock \emph{First-order methods in optimization}, volume~25.
\newblock SIAM, 2017.

\bibitem[Ben-Tal \& Nemirovskii(2020)Ben-Tal and
  Nemirovskii]{nemirovski2020OPTIII}
Ben-Tal, A. and Nemirovskii, A.
\newblock \emph{Optimization III (Spring 2020 Lecture Notes): Convex analysis,
  nonlinear programming theory and nonlinear programming algorithms}.
\newblock 2020.

\bibitem[Braun et~al.(2017)Braun, Pokutta, and Zink]{braun2017lazifying}
Braun, G., Pokutta, S., and Zink, D.
\newblock Lazifying conditional gradient algorithms.
\newblock In \emph{Proceedings of the 34th International Conference on Machine
  Learning}, pp.\  566--575, 2017.

\bibitem[Braun et~al.(2019)Braun, Pokutta, Tu, and Wright]{braun2019blended}
Braun, G., Pokutta, S., Tu, D., and Wright, S.
\newblock Blended conditonal gradients.
\newblock In \emph{Proceedings of the 36th International Conference on Machine
  Learning}, pp.\  735--743, 2019.

\bibitem[Carderera et~al.(2021)Carderera, Besan{\c{c}}on, and
  Pokutta]{carderera2021simple}
Carderera, A., Besan{\c{c}}on, M., and Pokutta, S.
\newblock Simple steps are all you need: Frank-wolfe and generalized
  self-concordant functions.
\newblock \emph{Advances in Neural Information Processing Systems},
  34:\penalty0 5390--5401, 2021.

\bibitem[Chang \& Lin(2011)Chang and Lin]{chang2011libsvm}
Chang, C.-C. and Lin, C.-J.
\newblock Libsvm: A library for support vector machines.
\newblock \emph{ACM transactions on intelligent systems and technology (TIST)},
  2\penalty0 (3):\penalty0 1--27, 2011.

\bibitem[Combettes \& Pokutta(2021)Combettes and
  Pokutta]{combettes2021complexity}
Combettes, C.~W. and Pokutta, S.
\newblock Complexity of linear minimization and projection on some sets.
\newblock \emph{Operations Research Letters}, 49\penalty0 (4):\penalty0
  565--571, 2021.

\bibitem[Combettes \& Pokutta(2023)Combettes and
  Pokutta]{combettes2019revisiting}
Combettes, C.~W. and Pokutta, S.
\newblock Revisiting the approximate carath{\'e}odory problem via the
  frank-wolfe algorithm.
\newblock \emph{Mathematical Programming}, 197\penalty0 (1):\penalty0 191--214,
  2023.

\bibitem[Condat(2016)]{condat2016fast}
Condat, L.
\newblock Fast projection onto the simplex and the $\ell_1$ ball.
\newblock \emph{Mathematical Programming}, 158\penalty0 (1-2):\penalty0
  575--585, 2016.

\bibitem[Diakonikolas et~al.(2020)Diakonikolas, Carderera, and
  Pokutta]{diakonikolas2020locally}
Diakonikolas, J., Carderera, A., and Pokutta, S.
\newblock Locally accelerated conditional gradients.
\newblock In \emph{International Conference on Artificial Intelligence and
  Statistics}, pp.\  1737--1747, 2020.

\bibitem[Dvurechensky et~al.(2020)Dvurechensky, Ostroukhov, Safin, Shtern, and
  Staudigl]{dvurechensky2020self}
Dvurechensky, P., Ostroukhov, P., Safin, K., Shtern, S., and Staudigl, M.
\newblock {Self-concordant analysis of Frank-Wolfe algorithms}.
\newblock In \emph{International Conference on Machine Learning}, pp.\
  2814--2824. PMLR, 2020.

\bibitem[Dvurechensky et~al.(2022)Dvurechensky, Safin, Shtern, and
  Staudigl]{dvurechensky2022generalized}
Dvurechensky, P., Safin, K., Shtern, S., and Staudigl, M.
\newblock Generalized self-concordant analysis of frank--wolfe algorithms.
\newblock \emph{Mathematical Programming}, pp.\  1--69, 2022.

\bibitem[Frank \& Wolfe(1956)Frank and Wolfe]{frank1956algorithm}
Frank, M. and Wolfe, P.
\newblock An algorithm for quadratic programming.
\newblock \emph{Naval Research Logistics Quarterly}, 3\penalty0 (1-2):\penalty0
  95--110, 1956.

\bibitem[Friedman et~al.(2008)Friedman, Hastie, and
  Tibshirani]{friedman2008sparse}
Friedman, J., Hastie, T., and Tibshirani, R.
\newblock Sparse inverse covariance estimation with the graphical lasso.
\newblock \emph{Biostatistics}, 9\penalty0 (3):\penalty0 432--441, 2008.

\bibitem[Futami et~al.(2019)Futami, Cui, Sato, and
  Sugiyama]{futami2019bayesian}
Futami, F., Cui, Z., Sato, I., and Sugiyama, M.
\newblock Bayesian posterior approximation via greedy particle optimization.
\newblock In \emph{Proceedings of the AAAI Conference on Artificial
  Intelligence}, volume~33, pp.\  3606--3613, 2019.

\bibitem[Garber(2020)]{garber2020revisiting}
Garber, D.
\newblock Revisiting frank-wolfe for polytopes: Strict complementarity and
  sparsity.
\newblock \emph{Advances in Neural Information Processing Systems},
  33:\penalty0 18883--18893, 2020.

\bibitem[Garber \& Hazan(2016)Garber and Hazan]{garber2016linearly}
Garber, D. and Hazan, E.
\newblock A linearly convergent variant of the conditional gradient algorithm
  under strong convexity, with applications to online and stochastic
  optimization.
\newblock \emph{SIAM Journal on Optimization}, 26\penalty0 (3):\penalty0
  1493--1528, 2016.

\bibitem[Garber \& Meshi(2016)Garber and Meshi]{garber2016linear}
Garber, D. and Meshi, O.
\newblock Linear-memory and decomposition-invariant linearly convergent
  conditional gradient algorithm for structured polytopes.
\newblock In \emph{Advances in Neural Information Processing Systems 29}, pp.\
  1001--1009, 2016.

\bibitem[Garber et~al.(2018)Garber, Sabach, and Kaplan]{garber2018fast}
Garber, D., Sabach, S., and Kaplan, A.
\newblock Fast generalized conditional gradient method with applications to
  matrix recovery problems.
\newblock \emph{arXiv preprint arXiv:1802.05581}, 2018.

\bibitem[Ghanbari \& Scheinberg(2018)Ghanbari and
  Scheinberg]{ghanbari2018proximal}
Ghanbari, H. and Scheinberg, K.
\newblock Proximal quasi-{N}ewton methods for regularized convex optimization
  with linear and accelerated sublinear convergence rates.
\newblock \emph{Computational Optimization and Applications}, 69\penalty0
  (3):\penalty0 597--627, 2018.

\bibitem[Gon{\c{c}}alves \& Oliveira(2018)Gon{\c{c}}alves and
  Oliveira]{gonccalves2018inexact}
Gon{\c{c}}alves, M. and Oliveira, F.
\newblock An inexact newton-like conditional gradient method for constrained
  nonlinear systems.
\newblock \emph{Applied Numerical Mathematics}, 132:\penalty0 22--34, 2018.

\bibitem[Gon{\c{c}}alves \& Melo(2017)Gon{\c{c}}alves and
  Melo]{gonccalves2017newton}
Gon{\c{c}}alves, M.~L. and Melo, J.~G.
\newblock A newton conditional gradient method for constrained nonlinear
  systems.
\newblock \emph{Journal of Computational and Applied Mathematics},
  311:\penalty0 473--483, 2017.

\bibitem[Gon{\c{c}}alves \& Oliveira(2019)Gon{\c{c}}alves and
  Oliveira]{gonccalves2019global}
Gon{\c{c}}alves, M.~L. and Oliveira, F.
\newblock On the global convergence of an inexact quasi-newton conditional
  gradient method for constrained nonlinear systems.
\newblock \emph{Numerical Algorithms}, pp.\  1--23, 2019.

\bibitem[Gu{\'e}lat \& Marcotte(1986)Gu{\'e}lat and Marcotte]{guelat1986some}
Gu{\'e}lat, J. and Marcotte, P.
\newblock Some comments on {W}olfe's ‘away step’.
\newblock \emph{Mathematical Programming}, 35\penalty0 (1):\penalty0 110--119,
  1986.

\bibitem[Guyon et~al.(2007)Guyon, Li, Mader, Pletscher, Schneider, and
  Uhr]{guyon2007competitive}
Guyon, I., Li, J., Mader, T., Pletscher, P.~A., Schneider, G., and Uhr, M.
\newblock Competitive baseline methods set new standards for the {NIPS} 2003
  feature selection benchmark.
\newblock \emph{Pattern recognition letters}, 28\penalty0 (12):\penalty0
  1438--1444, 2007.

\bibitem[Hazan \& Kakade(2019)Hazan and Kakade]{hazan2019revisiting}
Hazan, E. and Kakade, S.
\newblock Revisiting the {P}olyak step size.
\newblock \emph{arXiv preprint arXiv:1905.00313}, 2019.

\bibitem[Hazan \& Luo(2016)Hazan and Luo]{hazan2016variance}
Hazan, E. and Luo, H.
\newblock Variance-reduced and projection-free stochastic optimization.
\newblock In \emph{Proceedings of the 33th International Conference on Machine
  Learning}, pp.\  1263--1271, 2016.

\bibitem[Jaggi(2013)]{jaggi2013revisiting}
Jaggi, M.
\newblock Revisiting frank-wolfe: Projection-free sparse convex optimization.
\newblock In \emph{Proceedings of the 30nd International Conference on Machine
  Learning}, number CONF, pp.\  427--435, 2013.

\bibitem[Joulin et~al.(2014)Joulin, Tang, and Fei-Fei]{joulin2014efficient}
Joulin, A., Tang, K., and Fei-Fei, L.
\newblock Efficient image and video co-localization with {F}rank-{W}olfe
  algorithm.
\newblock In \emph{European Conference on Computer Vision}, pp.\  253--268.
  Springer, 2014.

\bibitem[Kantorovich(1948)]{kantorovich1948functional}
Kantorovich, L.~V.
\newblock Functional analysis and applied mathematics.
\newblock \emph{Uspekhi Matematicheskikh Nauk}, 3\penalty0 (6):\penalty0
  89--185, 1948.

\bibitem[Karimireddy et~al.(2018{\natexlab{a}})Karimireddy, Stich, and
  Jaggi]{karimireddy2018global}
Karimireddy, S.~P., Stich, S.~U., and Jaggi, M.
\newblock Global linear convergence of {N}ewton's method without
  strong-convexity or {L}ipschitz gradients.
\newblock \emph{arXiv preprint arXiv:1806.00413}, 2018{\natexlab{a}}.

\bibitem[Karimireddy et~al.(2018{\natexlab{b}})Karimireddy, Stich, and
  Jaggi]{karimireddy2018adaptive}
Karimireddy, S. P.~R., Stich, S., and Jaggi, M.
\newblock Adaptive balancing of gradient and update computation times using
  global geometry and approximate subproblems.
\newblock In \emph{Proceedings of the 35th International Conference on
  Artificial Intelligence and Statistics}, pp.\  1204--1213,
  2018{\natexlab{b}}.

\bibitem[Lacoste-Julien \& Jaggi(2015)Lacoste-Julien and
  Jaggi]{lacoste2015global}
Lacoste-Julien, S. and Jaggi, M.
\newblock On the global linear convergence of {F}rank-{W}olfe optimization
  variants.
\newblock In \emph{Advances in Neural Information Processing Systems 28}, pp.\
  496--504, 2015.

\bibitem[Lan(2013)]{lan2013complexity}
Lan, G.
\newblock The complexity of large-scale convex programming under a linear
  optimization oracle.
\newblock \emph{arXiv preprint arXiv:1309.5550}, 2013.

\bibitem[Lan \& Zhou(2016)Lan and Zhou]{lan2016conditional}
Lan, G. and Zhou, Y.
\newblock Conditional gradient sliding for convex optimization.
\newblock \emph{SIAM Journal on Optimization}, 26\penalty0 (2):\penalty0
  1379--1409, 2016.

\bibitem[Lee et~al.(2007)Lee, Battle, Raina, and Ng]{lee2007efficient}
Lee, H., Battle, A., Raina, R., and Ng, A.~Y.
\newblock Efficient sparse coding algorithms.
\newblock In \emph{Advances in Neural Information Processing Systems 20}, pp.\
  801--808, 2007.

\bibitem[Lee et~al.(2014)Lee, Sun, and Saunders]{lee2014proximal}
Lee, J.~D., Sun, Y., and Saunders, M.~A.
\newblock Proximal {N}ewton-type methods for minimizing composite functions.
\newblock \emph{SIAM Journal on Optimization}, 24\penalty0 (3):\penalty0
  1420--1443, 2014.

\bibitem[Lehoucq et~al.(1998)Lehoucq, Sorensen, and Yang]{lehoucq1998arpack}
Lehoucq, R.~B., Sorensen, D.~C., and Yang, C.
\newblock \emph{ARPACK users' guide: solution of large-scale eigenvalue
  problems with implicitly restarted Arnoldi methods}, volume~6.
\newblock Siam, 1998.

\bibitem[Levitin \& Polyak(1966)Levitin and Polyak]{levitin1966constrained}
Levitin, E.~S. and Polyak, B.~T.
\newblock Constrained minimization methods.
\newblock \emph{USSR Computational Mathematics and Mathematical Physics},
  6\penalty0 (5):\penalty0 1--50, 1966.

\bibitem[Liu et~al.(2022)Liu, Cevher, and Tran-Dinh]{liu2020newton}
Liu, D., Cevher, V., and Tran-Dinh, Q.
\newblock A newton frank--wolfe method for constrained self-concordant
  minimization.
\newblock \emph{Journal of Global Optimization}, pp.\  1--27, 2022.

\bibitem[Mairal et~al.(2010)Mairal, Bach, Ponce, and Sapiro]{mairal2010online}
Mairal, J., Bach, F., Ponce, J., and Sapiro, G.
\newblock Online learning for matrix factorization and sparse coding.
\newblock \emph{Journal of Machine Learning Research}, 11\penalty0
  (Jan):\penalty0 19--60, 2010.

\bibitem[Mirrokni et~al.(2017)Mirrokni, Leme, Vladu, and
  Wong]{mirrokni2017tight}
Mirrokni, V., Leme, R.~P., Vladu, A., and Wong, S. C.-w.
\newblock Tight bounds for approximate carath{\'e}odory and beyond.
\newblock In \emph{Proceedings of the 34th International Conference on Machine
  Learning}, pp.\  2440--2448, 2017.

\bibitem[Morini(1999)]{morini1999convergence}
Morini, B.
\newblock Convergence behaviour of inexact newton methods.
\newblock \emph{Mathematics of Computation}, 68\penalty0 (228):\penalty0
  1605--1613, 1999.

\bibitem[Nemirovski(2004)]{nemirovski2004interior}
Nemirovski, A.
\newblock Interior point polynomial time methods in convex programming.
\newblock \emph{Lecture notes}, 2004.

\bibitem[Nesterov(2013)]{nesterov2013introductory}
Nesterov, Y.
\newblock \emph{Introductory lectures on convex optimization: A basic course},
  volume~87.
\newblock Springer Science \& Business Media, 2013.

\bibitem[Nesterov(2018)]{nesterov2018lectures}
Nesterov, Y.
\newblock \emph{Lectures on convex optimization}, volume 137.
\newblock Springer, 2018.

\bibitem[Nesterov \& Nemirovskii(1994)Nesterov and
  Nemirovskii]{nesterov1994interior}
Nesterov, Y. and Nemirovskii, A.
\newblock \emph{Interior-point polynomial algorithms in convex programming},
  volume~13.
\newblock Siam, 1994.

\bibitem[Nocedal \& Wright(2006)Nocedal and Wright]{nocedal2006numerical}
Nocedal, J. and Wright, S.
\newblock \emph{Numerical optimization}.
\newblock Springer Science \& Business Media, 2006.

\bibitem[Ochs \& Malitsky(2019)Ochs and Malitsky]{ochs2019model}
Ochs, P. and Malitsky, Y.
\newblock Model function based conditional gradient method with armijo-like
  line search.
\newblock In \emph{Proceedings of the 36th International Conference on Machine
  Learning}, pp.\  4891--4900, 2019.

\bibitem[Pedregosa et~al.(2020)Pedregosa, Negiar, Askari, and
  Jaggi]{pedregosa2020linearly}
Pedregosa, F., Negiar, G., Askari, A., and Jaggi, M.
\newblock Linearly convergent frank-wolfe with backtracking line-search.
\newblock In \emph{International Conference on Artificial Intelligence and
  Statistics}, pp.\  1--10. PMLR, 2020.

\bibitem[Rao et~al.(2015)Rao, Shah, and Wright]{rao2015forward}
Rao, N., Shah, P., and Wright, S.
\newblock Forward--backward greedy algorithms for atomic norm regularization.
\newblock \emph{IEEE Transactions on Signal Processing}, 63\penalty0
  (21):\penalty0 5798--5811, 2015.

\bibitem[Scheinberg \& Tang(2016)Scheinberg and Tang]{scheinberg2016practical}
Scheinberg, K. and Tang, X.
\newblock Practical inexact proximal quasi-{N}ewton method with global
  complexity analysis.
\newblock \emph{Mathematical Programming}, 160\penalty0 (1-2):\penalty0
  495--529, 2016.

\bibitem[Schmidt et~al.(2009)Schmidt, Berg, Friedlander, and
  Murphy]{schmidt2009optimizing}
Schmidt, M., Berg, E., Friedlander, M., and Murphy, K.
\newblock Optimizing costly functions with simple constraints: A limited-memory
  projected quasi-{N}ewton algorithm.
\newblock In \emph{Proceedings of the 12th International Conference on
  Artificial Intelligence and Statistics}, pp.\  456--463, 2009.

\bibitem[Wolfe(1970)]{wolfe1970convergence}
Wolfe, P.
\newblock Convergence theory in nonlinear programming.
\newblock \emph{Integer and nonlinear programming}, pp.\  1--36, 1970.

\bibitem[Yuan \& Lin(2007)Yuan and Lin]{yuan2007model}
Yuan, M. and Lin, Y.
\newblock Model selection and estimation in the gaussian graphical model.
\newblock \emph{Biometrika}, 94\penalty0 (1):\penalty0 19--35, 2007.

\bibitem[Zhao \& Freund(2022)Zhao and Freund]{zhao2022analysis}
Zhao, R. and Freund, R.~M.
\newblock Analysis of the frank--wolfe method for convex composite optimization
  involving a logarithmically-homogeneous barrier.
\newblock \emph{Mathematical Programming}, pp.\  1--41, 2022.

\end{thebibliography}

\bibliographystyle{icml2021.bst}

\newpage

\appendix

{\centering{\LARGE\bfseries Second-order Conditional Gradient Sliding}

  \vspace{1em}
  \centering{{\LARGE\bfseries Supplementary material}}

}
\vspace{2em}

\paragraph{Outline.} The appendix of the paper is organized as follows:
\begin{itemize}[leftmargin=*]
  \item Section~\ref{Appx:Notation} presents the notation and definitions used throughout the appendix, as well as useful material pertaining to the Hessian approximation.
  \item Section~\ref{Section:Appx:ACG} contains background information about the Conditional Gradients algorithm, pseudocode for the vanilla Conditional Gradients algorithm and the Away-step Conditional Gradients algorithm and theoretical information about the convergence of the Away-step Conditional Gradients algorithm.
  \item Section~\ref{appx:VarMetricAlgo} presents information about the vanilla Projected Variable-Metric algorithm, its global linear convergence with exact line search or a bounded stepsize, and its quadratic local convergence in distance to the optimum with unit step size. 
  \item Section~\ref{appx:SOCG} contains the proof of global linear and local quadratic convergence in primal gap of the Second-order Conditional Gradient Sliding algorithm, as well as an oracle complexity analysis.
  \item Section~\ref{Appx:Computations} presents a detailed description of the numerical experiments performed.
\end{itemize}

\section{Notation and Preliminaries} \label{Appx:Notation}

We denote the norm of a vector $\mathbf{v}$ as $\norm{\mathbf{v}} = \sqrt{\innp{\mathbf{v}, \mathbf{v}}}$, and the norm of a matrix $A$ as $\norm{A} = \max_{\mathbf{v} \neq 0} \norm{A \mathbf{v}} /\norm{\mathbf{v}}$. Let $\mathcal{S}^n_{++}$ denote the set of symmetric positive definite matrices in $\rr^{n\times n}$ and let $\norm{\cdot}_H$ denote the matrix norm defined by $H \in \mathcal{S}^n_{++}$, that is, for a given vector $\mathbf{v}$ the norm defined by $H$ is $\norm{\mathbf{v}}_H = \sqrt{\innp{\mathbf{v}, H\mathbf{v}}}$.  We use $\mathbf{v}_{\min}\left(H\right)$ and $\mathbf{v}_{\max}\left(H\right)$ to refer to the eigenvectors of unit norm associated with the minimum and maximum eigenvalues, denoted by $\lambda_{\min}\left( H \right)$ and $\lambda_{\max}\left( H \right)$ respectively, of the matrix $H\in \mathcal{S}^n_{++}$. Similarly, we use $\lambda_{i}\left( H \right)$ with $i \in [1,n]$ to refer to the $i$-th largest eigenvalue of the matrix $H\in \mathcal{S}^n_{++}$. Let $\sigma_{\min}(H)$ and $\sigma_{\max}(H)$ denote the minimum and maximum singular values of the matrix $H$. We denote the open ball of radius $r>0$ centered at $\vx$ as $\mathcal{B}(\vx, r)$. Let $\interior\left( \cx\right)$ and $\relinterior\left( \cx\right)$ represent the interior and the relative interior of the set $\cx$, respectively. Given a function $f (\vx): \rr^n \rightarrow \rr$, we say that the function is:
 \begin{definition}[$\mu$-strongly convex function]
 The function is $\mu$-strongly convex over $\mathcal{X}$ if there exists a $\mu >0$ such that:
 \begin{align*}
     f\left(\vx\right) - f\left(\vy\right) \geq \innp{\nabla f\left(\vy\right), \vx - \vy} + \frac{\mu}{2} \norm{\vx - \vy}^2,
 \end{align*}
 for all $\vx, \vy \in \mathcal{X}$.
 \end{definition}
  \begin{definition}[$L$-smooth function]
 The function is $L$-smooth over $\mathcal{X}$ if there exists a $L >0$ such that:
 \begin{align*}
     f\left(\vx\right) - f\left(\vy\right) \leq \innp{\nabla f\left(\vy\right), \vx - \vy} + \frac{L}{2} \norm{\vx - \vy}^2,
 \end{align*}
 for all $\vx, \vy \in \mathcal{X}$.
 \end{definition}
 
 A simple schematic representation of the bounds provided by convexity, $\mu$-strong convexity and $L$-smoothness can be seen in Figure~\ref{fig:definitions}.
 
 \begin{figure}[th!]
\centering
  \includegraphics[width=0.9\linewidth]{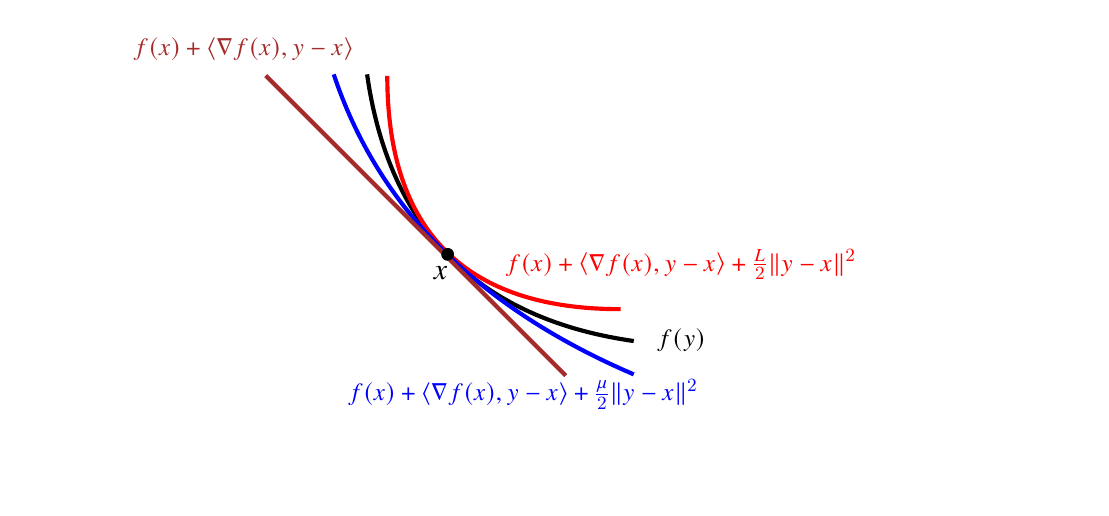}
 \caption{The red line depict the quadratic upper bound from $L$-smoothness, the blue line depicts the quadratic lower bound provided by $\mu$-strong convexity, the green line depicts the linear lower bound provided by convexity.}
  \label{fig:definitions}
\end{figure}

  \begin{definition}[$L_2$-Lipschitz continuous Hessian]
 The function has a $L_2$-Lipschitz continuous Hessian over $\mathcal{X}$ if there exists a $L_2 > 0$  such that:
 \begin{align*}
     \norm{\nabla^2 f\left( \vx\right) - \nabla^2 f\left( \vy\right)} \leq L_2\norm{\vx - \vy}
 \end{align*}
 for all $\vx, \vy \in \mathcal{X}$.
 \end{definition}
 
    \begin{definition}[Normal cone of $\cx$] \label{Def:Appx:NormalCone}
  We define the normal cone of the set $\cx$ at point $\vx$, denoted by $N_{\cx} \left( \vx\right)$, as:
 \begin{align*}
     N_{\cx}\left( \vx \right) = \begin{cases}
\left\{ \vd \in \rr^n \mid \innp{\vd, \vy - \vx} \leq 0, \forall \vy \in \cx \right\} &\text{if $\vx \in \cx$}\\
\emptyset &\text{if $\vx \notin \cx$}
\end{cases}
 \end{align*}
   \end{definition}

\subsection{Hessian Approximation Accuracy} \label{Section:Appx:HessianApprox}

\begin{lemma}
  \label{lemma:fracProg}
  Let $P, Q \in \mathcal{S}^n_{++}$. The solution to the fractional quadratic program $\max_{\vu \in \rr^n} \norm{\vu}_Q^2/\norm{\vu}_P^2$ is given by the the largest eigenvalue of the symmetric positive definite matrix $P^{-1/2} Q P^{-1/2}$, that is, $\lambda_{\max} \left( P^{-1/2} Q P^{-1/2} \right)$, which in turn is equal to $\lambda_{\max} \left( P^{-1} Q  \right)$.  Moreover, the solution to the fractional quadratic program $\min_{\vu \in \rr^n} \norm{\vu}_Q^2/\norm{\vu}_P^2$ is given by the smallest eigenvalue of the symmetric positive matrix $P^{-1/2} Q P^{-1/2}$, that is $\lambda_{\min} \left( P^{-1/2} Q P^{-1/2}\right)$, which in turn is equal to $\lambda_{\min} \left( P^{-1} Q \right)$. 
\begin{proof}
Writing out the expression for the quadratic program we have that:
  \begin{align*}
      \max_{\vu \in \rr^n} \frac{\norm{\vu}_Q^2}{\norm{\vu}_P^2}  &= \max_{\vu \in \rr^n} \frac{\vu^T Q\vu}{\vu^TP\vu} \\
      & = \max_{\vu \in \rr^n} \frac{\vu^T Q \vu}{(P^{1/2}\vu)^T P^{1/2} \vu} \\
      & = \max_{\vw \in \rr^n} \frac{(P^{-1/2} \vw)^T Q P^{-1/2} \vw}{\norm{\vw}^2} \\
      & = \max_{\vw \in \rr^n} \frac{ \vw^T P^{-1/2} Q P^{-1/2} \vw}{\norm{\vw}^2} \\
      & = \lambda_{\max} \left( P^{-1/2} Q P^{-1/2} \right).
  \end{align*}
Moreover, note that as $P$ and $Q$ are positive definite. $\lambda_{\max} \left( P^{-1/2} Q P^{-1/2} \right) = \lambda_{\max} \left( P^{-1} Q  \right)$. The second claim follows using a very similar reasoning.
\end{proof}
\end{lemma}

\begin{lemma}
  \label{lemma:scalingCondition}
  Given two matrices $P, Q \in \mathcal{S}^n_{++}$, then for all $\mathbf{v} \in \rr^n$:
  \begin{align}
  \frac{1}{\eta} \norm{\mathbf{v}}^2_P  \leq \norm{\mathbf{v}}^2_Q \leq \eta \norm{\mathbf{v}}^2_P, \label{Eq:scalingEq}
  \end{align}
  with $\eta = \max \left\{\lambda_{\max} \left( P^{-1} Q\right), \lambda_{\max} \left( Q^{-1} P\right) \right\} \geq 1$.
  \begin{proof}
  Let $\lambda_{i}\left(P\right)$ denote the $i$-th eigenvalue of matrix $P$. Note that as $P$ and $Q$ are positive definite $P^{-1}$ and $Q^{-1}$ are well-defined, furthermore $P^{-1}Q$ and $Q^{-1} P$ are also positive definite, as the eigenvalues of $P^{-1} Q$ are the same as those of the symmetric positive definite matrix $P^{-1/2}QP^{-1/2}$, and the eigenvalues of $Q^{-1} P$ are the same as those of the symmetric positive definite matrix $Q^{-1/2}PQ^{-1/2}$. In order to show that $\eta \geq 1$ note that if $\lambda_{\max} \left( Q^{-1/2} P Q^{-1/2}\right) = \lambda_{\max} \left( Q^{-1} P \right) \leq 1$, then $\lambda_{i}\left(Q^{-1/2} P Q^{-1/2}\right) = \lambda_{i}\left(  Q^{-1} P\right) \in (0,1]$ for all $i \in \llbracket 1, n\rrbracket$, and therefore the eigenvalues of its inverse satisfy $\lambda_{i}\left((Q^{-1} P)^{-1}\right) = \lambda_{i}\left(P^{-1}Q\right) \geq 1$ for all $i \in \llbracket 1, n\rrbracket$. Conversely, if $\lambda_{\max} \left( P^{-1}Q\right) \leq 1$, the same reasoning applies, and $\lambda_{i} \left( Q^{-1}P\right) \geq 1$ for all $i\in \llbracket 1, n\rrbracket$. Note that the definition of $\eta$ together with Lemma~\ref{lemma:fracProg} implies that $\frac{1}{\eta} = \min \left\{\lambda_{\min} \left( P^{-1}Q\right), \lambda_{min} \left( Q^{-1}P\right) \right\}\leq  \lambda_{\min} \left( P^{-1}Q \right) = \lambda_{\max} \left( Q^{-1}P \right)$. Focusing on the first inequality on Equation~\eqref{Eq:scalingEq} and plugging in the value of $\eta$ leads to:
    \begin{align*}
      \frac{1}{\eta} \norm{\mathbf{v}}^2_P & \leq  \norm{\mathbf{v}}^2_P  \lambda_{\min} \left( P^{-1}Q \right)\\
      & =  \norm{\mathbf{v}}^2_P \min\limits_{\vu \in \rr^n} \frac{\norm{\vu}^2_Q}{\norm{\vu}^2_P} \\
      & \leq  \norm{\mathbf{v}}^2_P \frac{\norm{\mathbf{v}}^2_Q}{\norm{\mathbf{v}}^2_P}\\
      & = \norm{\mathbf{v}}^2_Q.
  \end{align*}
  Focusing on the second inequality of Equation~\eqref{Eq:scalingEq} and noting that $\eta = \max \left\{\lambda_{\max} \left( P^{-1}Q\right), \lambda_{\max} \left( Q^{-1}P\right) \right\} \geq  \lambda_{\max} \left( P^{-1}Q\right)$ we have that:
  \begin{align*}
      \eta \norm{\mathbf{v}}^2_P & \geq \norm{\mathbf{v}}^2_P \lambda_{\max} \left( P^{-1}Q\right)\\
       & =  \norm{\mathbf{v}}^2_P \max\limits_{\vu \in \rr^n} \frac{\norm{\vu}^2_Q}{\norm{\vu}^2_P}\\
      & \geq  \norm{\mathbf{v}}^2_P\frac{\norm{\mathbf{v}}^2_Q}{\norm{\mathbf{v}}^2_P} \\
      & = \norm{\mathbf{v}}^2_Q.
  \end{align*}
  Which completes the proof.
  \end{proof}
\end{lemma}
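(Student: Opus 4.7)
The plan is to reduce the inequality $\frac{1}{\eta}\norm{\mathbf{v}}_P^2 \leq \norm{\mathbf{v}}_Q^2 \leq \eta \norm{\mathbf{v}}_P^2$ to extremal values of the generalized Rayleigh quotient $\mathbf{v}^T Q \mathbf{v}/\mathbf{v}^T P \mathbf{v}$, for which Lemma~\ref{lemma:fracProg} has already done the work. The essential preliminary observation is that although $P^{-1}Q$ and $Q^{-1}P$ need not themselves be symmetric, each is similar to an SPD matrix (for instance, $P^{-1}Q$ is similar to $P^{-1/2} Q P^{-1/2}$), so both have real, strictly positive spectra. Moreover the two spectra are mutually reciprocal, since $Q^{-1}P = (P^{-1}Q)^{-1}$; in particular $\lambda_{\min}(P^{-1}Q) = 1/\lambda_{\max}(Q^{-1}P)$ and $\lambda_{\max}(P^{-1}Q) = 1/\lambda_{\min}(Q^{-1}P)$.

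First I would dispatch the claim $\eta \geq 1$. The product $\lambda_{\max}(P^{-1}Q) \cdot \lambda_{\max}(Q^{-1}P)$ equals $\lambda_{\max}(P^{-1}Q)/\lambda_{\min}(P^{-1}Q)$, which is at least $1$, so at least one of its two factors is at least $1$, and hence their maximum $\eta$ is as well.

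Next, I would apply Lemma~\ref{lemma:fracProg} to $\norm{\mathbf{v}}_Q^2/\norm{\mathbf{v}}_P^2 = \mathbf{v}^T Q \mathbf{v}/\mathbf{v}^T P \mathbf{v}$ to obtain
\begin{align*}
\lambda_{\min}(P^{-1}Q)\,\norm{\mathbf{v}}_P^2 \;\leq\; \norm{\mathbf{v}}_Q^2 \;\leq\; \lambda_{\max}(P^{-1}Q)\,\norm{\mathbf{v}}_P^2
\end{align*}
for every $\mathbf{v}\in\rr^n$. The upper bound follows immediately from $\lambda_{\max}(P^{-1}Q) \leq \eta$ by the definition of $\eta$. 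The lower bound follows from $\lambda_{\min}(P^{-1}Q) = 1/\lambda_{\max}(Q^{-1}P) \geq 1/\eta$, again by the definition of $\eta$ together with the reciprocity of the two spectra.

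I do not foresee a serious obstacle: the argument is almost entirely a repackaging of Lemma~\ref{lemma:fracProg}, together with the standard fact that $P^{-1}Q$ has positive real spectrum because $P,Q\in\mathcal{S}^n_{++}$. The only conceptually nontrivial ingredient is the reciprocal relationship between the eigenvalues of $P^{-1}Q$ and $Q^{-1}P$, which is precisely what turns the naturally one-sided generalized Rayleigh bound into the symmetric two-sided bound the lemma asserts.
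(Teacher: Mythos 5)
Your proposal is correct and follows essentially the same route as the paper: reduce both sides of the inequality to the extremal values of the generalized Rayleigh quotient via Lemma~\ref{lemma:fracProg}, then use the reciprocal relation between the spectra of $P^{-1}Q$ and $Q^{-1}P = (P^{-1}Q)^{-1}$ to get $1/\eta \leq \lambda_{\min}(P^{-1}Q)$ and $\eta \geq \lambda_{\max}(P^{-1}Q)$. The only cosmetic difference is your proof of $\eta \geq 1$ via the identity $\lambda_{\max}(P^{-1}Q)\,\lambda_{\max}(Q^{-1}P) = \lambda_{\max}(P^{-1}Q)/\lambda_{\min}(P^{-1}Q) \geq 1$, in place of the paper's case analysis, which is arguably a little slicker but carries the same content.
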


\begin{remark}
  \label{lemma:scalingConditionInv}
 Given two matrices $P, Q \in \mathcal{S}^n_{++}$, then for all $\mathbf{v} \in \rr^n$:
  \begin{align}
  \frac{1}{\eta} \norm{\mathbf{v}}^2_{P^{-1}}  \leq \norm{\mathbf{v}}^2_{Q^{-1}} \leq \eta \norm{\mathbf{v}}^2_{P^{-1}}, \label{Eq:scalingEqInv}
  \end{align}
  with $\eta = \max \left\{\lambda_{\max} \left( P^{-1}Q\right), \lambda_{\max} \left( Q^{-1}P\right) \right\} \geq 1$.
  \begin{proof}
  As $P^{-1}, Q^{-1} \in \mathcal{S}^n_{++}$, we can apply Lemma~\ref{lemma:scalingCondition}. The proof then follows from the fact that $\lambda_{\max} \left( PQ^{-1}\right) = \lambda_{\max} \left( Q^{-1}P\right)$ and $ \lambda_{\max} \left( QP^{-1}\right) = \lambda_{\max} \left( P^{-1}Q\right) $ as $P$ and $Q$ are symmetric positive definite.
  \end{proof}
\end{remark}

If we define the ellipsoid $\mathcal{E}_{P} = \left\{\mathbf{v} \in \rr^n \mid \mathbf{v}^TP\mathbf{v} \leq 1 \right\}$ for $P\in \mathcal{S}^n_{++}$, we can interpret the value of $\eta$ as being the smallest value that ensures that $\mathcal{E}_{
P/\eta} \subseteq \mathcal{E}_{Q} \subseteq \mathcal{E}_{\eta P}$ for $Q\in \mathcal{S}^n_{++}$ (see Figure~\ref{fig:scalingHessian}).

\begin{figure}[h]
    \centering
    \includegraphics[width=0.6\textwidth]{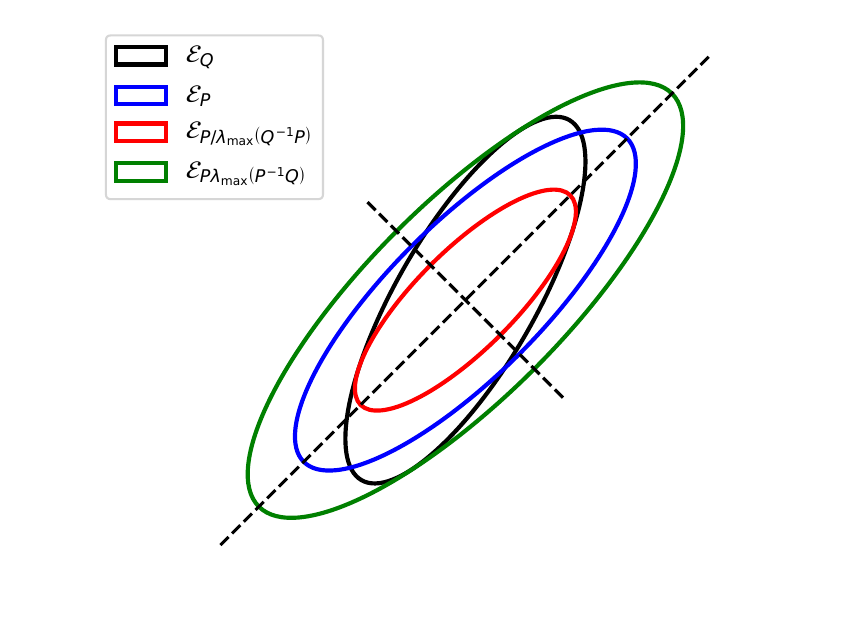}
    \caption{Given $P, Q \in \mathcal{S}^n_{++}$, we can always find an $\eta$ such that $\mathcal{E}_{
P/\eta} \subseteq \mathcal{E}_{Q} \subseteq \mathcal{E}_{\eta P}$.}
    \label{fig:scalingHessian}
\end{figure}

The following corollary will allow us to bound the maximum and minimum eigenvalue of the approximation $H_k$ in terms of the maximum and minimum eigenvalue of $\nabla^2 f(\vx_k)$ and $\eta_k$ for all $k\geq 0$, which will be useful in the proofs to follow.

\begin{corollary}
  \label{corollary:Eigenvalues}
  Given two matrices $P, Q \in \mathcal{S}^n_{++}$, we have that:
  \begin{align*}
      \frac{\lambda_{\min}\left(P \right)}{\eta} \leq \lambda_{\min}\left(Q \right) \leq  \eta \lambda_{\min}\left(P \right) \\
       \frac{\lambda_{\max}\left(P \right)}{\eta}  \leq \lambda_{\max}\left(Q \right) \leq \eta\lambda_{\max}\left(P \right),
  \end{align*}
  where $\eta = \max \left\{\lambda_{\max} \left( P^{-1}Q\right), \lambda_{\max} \left( Q^{-1}P\right) \right\} \geq 1$. This allows us to conclude that $\frac{\lambda_{\min}\left(P \right)}{\eta} I^n \preceq Q \preceq \eta\lambda_{\max}\left(P \right) I^n$.
  \begin{proof}
  Let $\mathbf{v}_{\min}\left(Q \right)$ and $\mathbf{v}_{\max}\left(Q \right)$ denote the eigenvectors of unit length associated with the minimum and maximum eigenvalue of $Q$, denoted by $\lambda_{\min}\left(Q \right)$ and $\lambda_{\max}\left(Q \right)$ respectively. As $P, Q \in \mathcal{S}^n_{++}$ from Lemma~\ref{lemma:scalingCondition} we have that:
\begin{align*}
  \lambda_{\min}\left( Q\right)  & = \norm{\mathbf{v}_{\min}\left(Q \right)}^2_Q \geq \frac{1}{\eta} \norm{\mathbf{v}_{\min}\left(Q \right)}^2_P \geq \frac{1}{\eta} \norm{\mathbf{v}_{\min}\left(P \right)}^2_P = \frac{\lambda_{\min}\left( P\right)}{\eta}.
  \end{align*}
  On the other hand, using similar arguments we have:
    \begin{align*}
  \lambda_{\min}\left( Q\right)  & = \norm{\mathbf{v}_{\min}\left(Q \right)}^2_Q \leq \norm{\mathbf{v}_{\min}\left(P \right)}^2_Q \leq \eta \norm{\mathbf{v}_{\min}\left(P \right)}^2_P = \eta \lambda_{\min}\left( P\right).
  \end{align*}  
  Moving on to the bound for $\lambda_{\max}\left(Q \right)$ we have:
  \begin{align*}
  \lambda_{\max}\left( P\right)  & = \norm{\mathbf{v}_{\max}\left(P \right)}^2_P \geq \norm{\mathbf{v}_{\max}\left(Q \right)}^2_P  \geq\frac{1}{\eta} \norm{\mathbf{v}_{\max}\left(Q \right)}^2_Q  = \frac{\lambda_{\max}\left( Q\right)}{\eta}.
 \end{align*}
  Similarly, we have that:
  \begin{align*}
  \lambda_{\max}\left( P\right)  & = \norm{\mathbf{v}_{\max}\left(P \right)}^2_P \leq \eta \norm{\mathbf{v}_{\max}\left(P \right)}^2_Q  \leq \eta \norm{\mathbf{v}_{\max}\left(Q \right)}^2_Q  = \eta \lambda_{\max}\left( Q\right).
 \end{align*}  
  Combining these bounds completes the proof.
  \end{proof}
\end{corollary}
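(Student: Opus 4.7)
The plan is to leverage Lemma~\ref{lemma:scalingCondition} together with the Rayleigh--Ritz variational characterization of the extreme eigenvalues of a symmetric positive definite matrix. Recall that for any $M \in \mathcal{S}^n_{++}$, $\lambda_{\min}(M) = \min_{\|\mathbf{v}\|=1} \|\mathbf{v}\|_M^2$ and $\lambda_{\max}(M) = \max_{\|\mathbf{v}\|=1} \|\mathbf{v}\|_M^2$, with the min (resp.\ max) attained at $\mathbf{v}_{\min}(M)$ (resp.\ $\mathbf{v}_{\max}(M)$). Since Lemma~\ref{lemma:scalingCondition} gives $\tfrac{1}{\eta}\|\mathbf{v}\|_P^2 \leq \|\mathbf{v}\|_Q^2 \leq \eta\|\mathbf{v}\|_P^2$ for every $\mathbf{v} \in \rr^n$, all four inequalities in the corollary should follow by instantiating this pointwise bound at well-chosen unit-norm test vectors.

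For the lower bound $\lambda_{\min}(Q) \geq \lambda_{\min}(P)/\eta$, the plan is to evaluate Lemma~\ref{lemma:scalingCondition} at $\mathbf{v} = \mathbf{v}_{\min}(Q)$. The left-hand side becomes $\|\mathbf{v}_{\min}(Q)\|_Q^2 = \lambda_{\min}(Q)$, while the right-hand side is $\tfrac{1}{\eta}\|\mathbf{v}_{\min}(Q)\|_P^2 \geq \tfrac{1}{\eta}\lambda_{\min}(P)$, where the last inequality uses the Rayleigh--Ritz lower bound applied to $P$. For the matching upper bound $\lambda_{\min}(Q) \leq \eta\,\lambda_{\min}(P)$, I would instead test the inequality at $\mathbf{v} = \mathbf{v}_{\min}(P)$, obtaining $\lambda_{\min}(Q) \leq \|\mathbf{v}_{\min}(P)\|_Q^2 \leq \eta\|\mathbf{v}_{\min}(P)\|_P^2 = \eta\,\lambda_{\min}(P)$.

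The $\lambda_{\max}$ bounds are entirely symmetric: testing at $\mathbf{v} = \mathbf{v}_{\max}(Q)$ yields $\lambda_{\max}(Q) = \|\mathbf{v}_{\max}(Q)\|_Q^2 \leq \eta\|\mathbf{v}_{\max}(Q)\|_P^2 \leq \eta\,\lambda_{\max}(P)$, while testing at $\mathbf{v} = \mathbf{v}_{\max}(P)$ gives $\lambda_{\max}(Q) \geq \|\mathbf{v}_{\max}(P)\|_Q^2 \geq \tfrac{1}{\eta}\|\mathbf{v}_{\max}(P)\|_P^2 = \tfrac{1}{\eta}\lambda_{\max}(P)$. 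The concluding semidefinite sandwich $\tfrac{\lambda_{\min}(P)}{\eta} I^n \preceq Q \preceq \eta\,\lambda_{\max}(P)\, I^n$ is then immediate from the two-sided eigenvalue bounds just derived, by invoking Rayleigh--Ritz once more on $Q$.

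I do not anticipate any substantive obstacle: the whole proof is a clean application of Lemma~\ref{lemma:scalingCondition} to four carefully chosen test vectors. The only mildly delicate point is to select the correct eigenvector (of $P$ or of $Q$) in each of the four cases so that one side of the scaling inequality collapses to the target eigenvalue while the other side admits a Rayleigh--Ritz estimate in the desired direction; getting this pairing right is what makes the argument tight and avoids introducing superfluous factors of $\eta$.
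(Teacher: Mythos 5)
Your proposal is correct and follows essentially the same approach as the paper: apply Lemma~\ref{lemma:scalingCondition} at the extremal eigenvectors of $P$ and $Q$ and combine with the Rayleigh--Ritz characterization of $\lambda_{\min}$ and $\lambda_{\max}$. If anything, your pairing of a distinct test vector with each of the four inequalities is a bit cleaner than the paper's presentation, which obtains two of the four bounds directly and leaves the remaining two to the implicit $P \leftrightarrow Q$ symmetry of $\eta$.
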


Particularizing Corollary~\ref{corollary:Eigenvalues} with $Q = H_k$ and $P = \nabla^2 f(\vx_k)$ allows us to conclude that $\mu/\eta_k I^n \preceq H_k \preceq \eta_k L I^n$, and so the quadratic approximation $\hat{f}_k(\vx)$ in  Equation~\eqref{def:approx} will be $\mu/\eta_k$-strongly convex and $\eta_k L$-smooth.

\newpage

\section{The Conditional Gradients algorithm} \label{Section:Appx:ACG}

  We define the linear approximation of the function $f(\vx)$ around the point $\vx_k$ as:
\begin{align}
    \hat{l}_{k}(\vx) \defeq f(\vx_k) + \innp{\nabla f(\vx_k), \vx - \vx_k}. \label{def:Appx:LinearApprox}
\end{align}
At each iteration the vanilla \emph{Conditional Gradients} (CG) algorithm \cite{levitin1966constrained, frank1956algorithm, jaggi2013revisiting} (Algorithm~\ref{algo:ConditionalGradients}) takes steps defined as $\vx_{k+1} = \vx_k + \gamma_k (\argmin_{\vx \in \cx} \hat{l}_{k}(\vx) - \vx_k )$ with $\gamma_k \in (0,1]$. As the iterates are formed as convex combinations of points in $\cx$ there is no need for projections onto $\cx$, making the algorithm \emph{projection-free}.

\begin{algorithm}
\SetKwInOut{Input}{Input}\SetKwInOut{Output}{Output}
\SetKwComment{Comment}{//}{}
\Input{Point $\vx_0 \in \cx$, step sizes $\{\gamma_0, \cdots ,\gamma_k \}$}
\Output{Point $\vx_{K } \in \cx$}
\hrulealg
\For{$k = 0$ \textbf{to} $K - 1$}{
  $\mathbf{v}_k\leftarrow  \argmin\limits_{\vx\in\mathcal{X}}\hat{l}_{k}(\vx) =  \argmin\limits_{\vx\in\mathcal{X}} \left( f \left( \vx_k \right) + \innp{\nabla f(\vx_k),\vx - \vx_k}\right) $\label{fw_extract} \;
  $\vx_{k+1}\leftarrow \vx_k+\gamma_k(\mathbf{v}_k-\vx_k)$\label{fw_new}  \;
} 
\caption{Conditional Gradients algorithm}\label{algo:ConditionalGradients}
\end{algorithm}

A useful quantity that can readily be computed in all CG steps is $\langle\nabla f(\vx_k),\vx_k-\mathbf{v}_k\rangle$, known as the \emph{Frank-Wolfe gap}, which provides an upper bound on the primal gap. If $\vx^*\in\argmin\limits_{\vx\in\mathcal{X}}f(\vx)$, then:
\begin{align*}
    \innp{\nabla f(\vx_k), \vx_k - \mathbf{v}_k} = \max_{\mathbf{v} \in \mathcal{X}}\innp{\nabla f(\vx_k), \vx_k - \mathbf{v}} \geq \innp{\nabla f(\vx_k), \vx_k - \vx^*}  \geq f(\vx_k) - f(\vx^*)
\end{align*}
where the last inequality follows from the convexity of $f(\vx)$. This quantity is often used as a stopping criterion when running the CG algorithm. The CG algorithm has seen a renewed interest from the Machine Learning community, as several machine learning problems can be phrased as constrained optimization problems with feasible regions onto which it is hard to project on \cite{joulin2014efficient, futami2019bayesian, garber2018fast}.

\subsection{Global Convergence}

The CG algorithm with exact line search converges linearly in primal gap when applied to Problem~\eqref{eq:OptProblem} when $\vx^* \in \interior\left( \cx \right)$ \citep{guelat1986some}. However, when $\vx^* \in \cx \setminus \interior\left( \cx \right)$ the algorithm suffers from a \emph{zig-zagging} phenomenon - as the iterates get closer to $\vx^*$ the directions provided by the algorithm starts to become close to perpendicular to the gradient (Figure~\ref{fig:zigzag}). This is remedied by using \emph{Away-steps} (Algorithm~\ref{algo:Appx:ACGSteps}), which result in the \emph{Away-step Conditional Gradient} (ACG) algorithm (Algorithm~\ref{Algo:Appx:ACGAlg}, Figure~\ref{fig:ACG}) \cite{wolfe1970convergence}, which converges linearly in primal gap regardless of the location of $\vx^*$ when using exact line search \citep{lacoste2015global} or a step size strategy dependent on $L$ \cite{pedregosa2020linearly}.

\begin{figure*}[ht!]
    \centering
    \hspace{\fill}
    \subfloat[CG (Algorithm~\ref{algo:ConditionalGradients}).]{{\includegraphics[width= .4\textwidth]{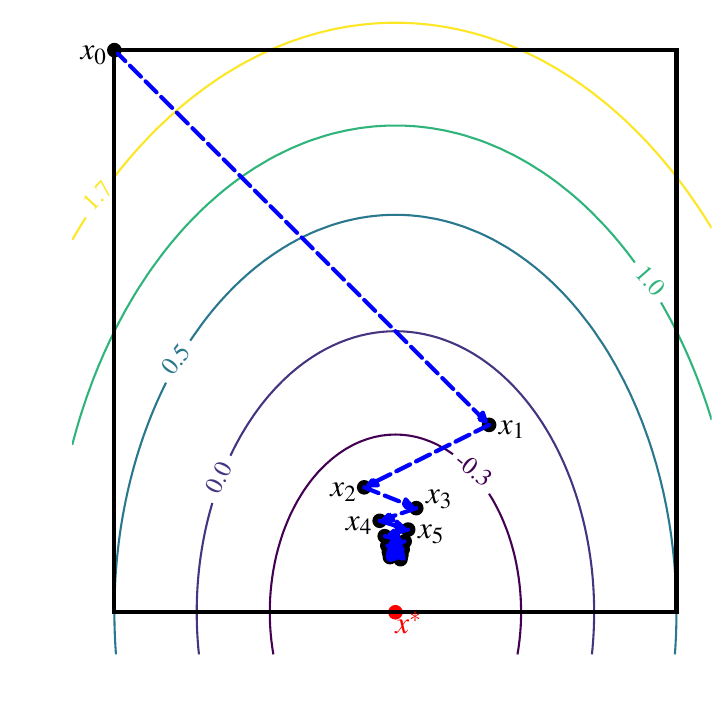} }\label{fig:zigzag}}%
    \hspace{\fill}
    \subfloat[ACG (Algorithm~\ref{Algo:Appx:ACGAlg}).]{{\includegraphics[width = .4\textwidth]{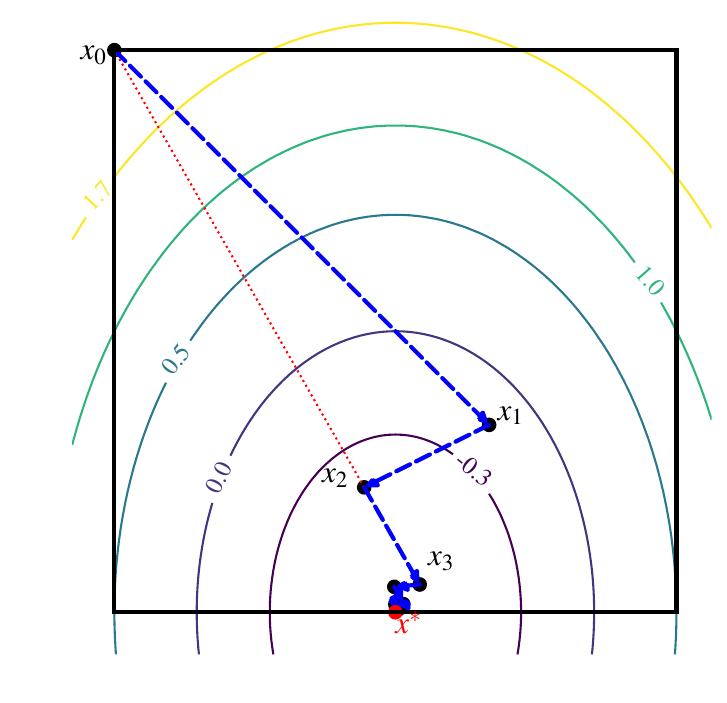} }\label{fig:ACG}}%
    \hspace*{\fill}
    \caption{Qualitative performance comparison of the CG and the ACG algorithm.}%
    \label{fig:comparison}%
\end{figure*}

\begin{algorithm}
\SetKwInOut{Input}{Input}\SetKwInOut{Output}{Output}
\SetKwComment{Comment}{//}{}
\Input{Point $\vx_0 \in \cx$}
\Output{Point $\vx_{K } \in \cx$}
\hrulealg
$\vx_0 \leftarrow \argmin_{\vx \in  \mathcal{X}} \innp{\nabla f\left(\vx \right), \vx}$, $\cs_0 \leftarrow \{ \vx_0 \}$, $\vlambda_0(\vx_0) \leftarrow  1 $\;
\For{$k = 0$ \textbf{to} $K - 1$}{
$\vx_{k+1}, \mathcal{S}_{k+1}, \vlambda_{k+1} \leftarrow $ACG$(f(\vx), \vx_{k}, \mathcal{S}_{k}, \vlambda_{k})$\;
} 
\caption{Away-step Conditional Gradients (ACG) algorithm with exact line search.}\label{Algo:Appx:ACGAlg}
\end{algorithm}

The ACG algorithm maintains what is called an \emph{active set} $\cs_k \subseteq \vertex\left(\mathcal{X}\right)$ which represents the potentially non-unique set of vertices of $\mathcal{X}$ such that $\vx_k \in \co\left(\mathcal{S}_k\right)$. Associated with this active set $\cs_k$ we have a set of barycentric coordinates $\vlambda_k$ such that if we denote by $\vlambda_k(\vu)\in [0,1]$ the element of $\vlambda_k$ associated with $\vu \in \cs_k$ we have that $\vx_{k} = \sum_{\vu \in \cs_k} \vlambda_k(\vu) \vu$, with $\sum_{\vu \in \cs_k} \vlambda_k(\vu) = 1$ and $\vlambda_k(\vu) \geq 0$ for all $\vu \in \cs_k$. 

\begin{algorithm}[t!]
\SetKwInOut{Input}{Input}\SetKwInOut{Output}{Output}
\Input{Function $f:\cx \rightarrow \rr$, point $\vx \in \cx$, active set $\cs$ and barycentric coordinates $\vlambda$.}
\Output{Point $\vx' \in \cx$, active set $\cs'$ and barycentric coordinates $\vlambda'$.}
\hrulealg
$\mathbf{v} \leftarrow \argmin_{\mathbf{v} \in  \mathcal{X}} \innp{\nabla f\left(\vx \right), \mathbf{v}}$\;
$\va \leftarrow \argmax_{\mathbf{v} \in  \mathcal{S}} \innp{\nabla f\left(\vx \right), \mathbf{v}}$\;
\uIf{$\innp{\nabla f(\vx),  \vx - \mathbf{v}} \geq \innp{\nabla f(\vx),\va - \vx}$ \label{algLine:ACG:Appx:ChooseStep} }{
$\vd \leftarrow \vx - \mathbf{v}$, $\gamma_{\max} \leftarrow 1$\;}
\Else{ 
$\vd \leftarrow \va - \vx$, $\gamma_{\max} \leftarrow \vlambda(\va)/\left( 1 - \vlambda(\va)\right)$\;}
$\gamma \leftarrow \argmin_{\gamma\in [0,\gamma_{\max}]} f\left(\vx + \gamma \vd \right)$ \label{algLine:ACG:Appx:LineSearch}\;
$\vx' \leftarrow \vx + \gamma \vd$\;
\uIf{$\innp{\nabla f(\vx),  \vx - \mathbf{v}} \geq \innp{\nabla f(\vx),\va - \vx}$}{
\uIf{$\gamma = 1$}{
$\cs' \leftarrow \left\{ \mathbf{v} \right\}$\;}
\Else{
$\cs' \leftarrow \cs \cup \left\{ \mathbf{v} \right\}$\;
}
$\vlambda'(\vu) \leftarrow \left( 1 - \gamma \right) \vlambda(\vu)$ if $\vu \in \cs \setminus \mathbf{v}$\;
$\vlambda'(\mathbf{v}) \leftarrow \left( 1 - \gamma \right) \vlambda(\mathbf{v}) + \gamma$\;
}
\Else{
\uIf{$\gamma = \gamma_{\max}$}{
$\cs' \leftarrow \cs \setminus \{ \va\}$\;}
\Else{
$\cs' \leftarrow \cs$\;
}
$\vlambda'(\vu) \leftarrow \left( 1 + \gamma \right) \vlambda(\vu)$ if $\vu \in \cs \setminus \va$\;
$\vlambda'(\va) \leftarrow \left( 1 + \gamma \right) \vlambda(\va) - \gamma$\;
}
\caption{Away-step Conditional Gradients step ACG$\left( f, \vx , \cs, \vlambda \right)$} \label{algo:Appx:ACGSteps}
\end{algorithm}

In general one of the easiest ways to maintain the active set is to build a list of previously used vertices and a list of associated barycentric coordinates. If the Frank-Wolfe step adds a new vertex $\mathbf{v}$ that is not already in $\cs$ it is added to the list of vertices and its associated barycentric coordinate is added to the list of barycentric coordinates. If the vertex $\mathbf{v}$ is already contained in the list that maintains $\cs$, its existing barycentric coordinate is updated in the appropiate list. Note that the barycentric coordinates of the points $\cs\setminus\{\mathbf{v}\}$ are also updated at each iteration. The away-steps in Algorithm~\ref{algo:Appx:ACGSteps} cannot add new vertices, only remove them from the active set. This type of step also requires updating the barycentric coordinates of the points $\cs \setminus \{\va\}$. For both Frank-Wolfe and away-steps a vertex is removed from the list of vertices and the associated barycentric coordinate removed from the list of coordinates if the value of the barycentric coordinate is zero.

The first proof of asymptotic linear convergence of the ACG algorithm relied on the \emph{strict complementarity} of the problem in Equation~\eqref{eq:OptProblem} (shown in Assumption~\ref{assumption:strictComplementarity}), which we will also use in the convergence proof of the SOCGS algorithm.

\setcounter{assumption}{0}
\begin{assumption}[Strict Complementarity]\label{assumption:Appx:strictComplementarity}
We have that $\innp{\nabla f\left( \vx^*\right), \vx - \vx^*} = 0$ if and only if $\vx \in \mathcal{F} \left( \vx^*\right)$.
\end{assumption}

\begin{remark}
 Assumption~\ref{assumption:Appx:strictComplementarity} automatically holds if $\vx^* \in \interior \left(\cx \right)$, that is, if $\vx^*$ is in the strict interior of $\cx$. In this case the polytope is fully-dimensional and itself the optimal face, so no off-optimal-face vertices exist. 
\end{remark}

If Assumption~\ref{assumption:Appx:strictComplementarity} is satisfied the iterates of the ACG algorithm reach $\mathcal{F} \left( \vx^*\right)$ in a finite number of steps, remaining in $\mathcal{F} \left( \vx^*\right)$ for all subsequent iterations \cite{guelat1986some}. When inside $\mathcal{F} \left( \vx^*\right)$, the iterates of the ACG algorithm contract the primal gap linearly. This analysis was later significantly extended to provide an explicit global linear convergence rate in primal gap (Theorem~\ref{theorem:ConvergenceACG}), by making use of the \emph{pyramidal width} of the polytope $\cx$ \cite{lacoste2015global}. With the pyramidal width one can derive a primal progress guarantee for all steps taken by the ACG algorithm except "bad" away-steps that reduce the cardinality of the active set $\mathcal{S}_k$, that is when $\innp{\nabla f(\vx_k),  \vx_k - \mathbf{v}} <\innp{\nabla f(\vx_k),\va - \vx_k}$ and the step size satisfies $\gamma_k = \gamma_{\max}$ in Algorithm~\ref{algo:Appx:ACGSteps}. This cannot happen more than $\lfloor K/2 \rfloor$ times when running the ACG algorithm for $K$ iterations (as the algorithm cannot drop more vertices with away-steps than it has picked up with Frank-Wolfe steps). This is an important consideration to keep in mind, as it means that the ACG primal gap contraction does not hold on a per-iteration basis. 

\begin{theorem}[Primal gap convergence of the ACG algorithm (Algorithm~\ref{Algo:Appx:ACGAlg})]~\citep[Theorem 1]{lacoste2015global} \label{theorem:Appx:ConvergenceACG}
  Given an $L$-smooth and $\mu$-strongly convex function $f(\vx)$, a polytope $\mathcal{X}$ and an initial point $\vx_0 \in \cx$, the ACG algorithm satisfies after $K \geq 0$ iterations:
   \begin{align*}
      f(\vx_{K}) - f(\vx^*) \leq \left( 1 - \frac{\mu}{4L} \left( \frac{\delta}{D}\right)^2\right)^{ K/2} \left(f(\vx_{0}) - f(\vx^*) \right),
  \end{align*}
  where $D$ denotes the diameter of the polytope $\cx$ and $\delta$ its pyramidal width.
\end{theorem}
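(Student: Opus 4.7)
The plan is to follow the analysis of Lacoste-Julien and Jaggi, partitioning the ACG iterations into ``good'' steps that contract the primal gap geometrically and ``drop'' steps that need not contract but occur rarely. A drop step is an iteration $k$ that takes the away branch of Algorithm~\ref{algo:Appx:ACGSteps} with $\gamma_k = \gamma_{\max}$, which strictly shrinks $\cs_k$ by removing $\va_k$. Since $|\cs_0|=1$, each Frank--Wolfe step raises $|\cs_k|$ by at most one, and $|\cs_k|\geq 1$ throughout, the number of drop steps in $K$ iterations is at most $\lfloor K/2 \rfloor$. The exact line search on a drop step still guarantees $f(\vx_{k+1}) \leq f(\vx_k)$, so the primal gap is monotone across them.

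For a good step, let $\vd_k$ be the chosen direction and introduce the pairwise Frank--Wolfe gap $g_k^{\mathrm{P}} \defeq \innp{\nabla f(\vx_k),\,\va_k - \mathbf{v}_k}$. The branching rule in Line~\ref{algLine:ACG:Appx:ChooseStep} picks the direction that maximizes $\innp{-\nabla f(\vx_k),\vd_k}$, so this inner product is at least $g_k^{\mathrm{P}}/2$. The smoothness-optimal stepsize $\gamma^\ast = \innp{-\nabla f(\vx_k),\vd_k}/(L\norm{\vd_k}^2)$ is feasible on a good step (for a non-drop away step this is exactly the defining condition; for a Frank--Wolfe step with $\gamma^\ast>1$ the full step $\gamma=1$ already yields even more descent, which the exact line search retains). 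Applying the standard descent lemma and using $\norm{\vd_k}\leq D$ yields
\begin{equation*}
f(\vx_{k+1}) \;\leq\; f(\vx_k) - \frac{(g_k^{\mathrm{P}})^2}{8LD^2}.
\end{equation*}

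To turn this into a primal-gap contraction I invoke the pyramidal-width inequality of Lacoste-Julien and Jaggi, which asserts that for every $\vx_k\in\cx$ with active set $\cs_k$,
\begin{equation*}
g_k^{\mathrm{P}} \;\geq\; \delta \cdot \frac{\innp{\nabla f(\vx_k),\,\vx_k - \vx^*}}{\norm{\vx_k - \vx^*}}.
\end{equation*}
Combined with the strong-convexity inequality $f(\vx_k) - f(\vx^*) \leq \innp{\nabla f(\vx_k),\vx_k-\vx^*} - \tfrac{\mu}{2}\norm{\vx_k-\vx^*}^2$, a completion-of-the-square in $\norm{\vx_k-\vx^*}$ yields $f(\vx_k) - f(\vx^*) \leq (g_k^{\mathrm{P}})^2/(2\mu\delta^2)$. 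Substituting gives the per-good-step contraction $f(\vx_{k+1}) - f(\vx^*) \leq \bigl(1 - \tfrac{\mu}{4L}(\delta/D)^2\bigr)\bigl(f(\vx_k) - f(\vx^*)\bigr)$, and telescoping over the at least $\lceil K/2\rceil$ good steps, together with monotonicity on the at most $\lfloor K/2 \rfloor$ drop steps, yields the stated rate.

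The main obstacle is the pyramidal-width inequality itself: it is the only genuinely polyhedral ingredient, and establishing it requires a delicate minimax analysis over faces of $\cx$ and admissible active sets, culminating in the definition of $\delta$ as the minimum pyramidal width of the polytope. The remaining pieces — the combinatorial cap on drop steps, the smoothness-based descent at $\gamma^\ast$, and the completion-of-square conversion via strong convexity — are routine once $g_k^{\mathrm{P}}$ has been identified as the correct progress certificate.
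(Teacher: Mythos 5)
Your proposal correctly reconstructs the Lacoste-Julien--Jaggi analysis that the paper relies on by citation: the paper does not reprove Theorem~\ref{theorem:Appx:ConvergenceACG} but imports it wholesale as \citep[Theorem 1]{lacoste2015global}, and your decomposition into at-most-$\lfloor K/2\rfloor$ drop steps plus geometrically contracting good steps, the smoothness descent at $\gamma^\ast$ with the pairwise gap $g_k^{\mathrm P}$ lower-bounded via the pyramidal width, and the strong-convexity completion-of-square are exactly the ingredients of that argument. The only imprecision is the gloss on the $\gamma^\ast>1$ Frank--Wolfe case, which in the cited proof is handled by the separate halving bound $h_{k+1}\leq h_k/2$ (subsumed since $1-\tfrac{\mu}{4L}(\delta/D)^2\geq \tfrac34$), but this does not affect correctness.
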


See also \cite{garber2016linearly, diakonikolas2020locally} for work on linearly convergent CG algorithms, and \cite{jaggi2013revisiting, lan2013complexity} for strong lower bounds that limit the linear convergence that can be achieved with algorithms that only access the feasible region through a linear optimization oracle.

\newpage

\section{Projected Variable-Metric algorithms} \label{appx:VarMetricAlgo}

In this section we provide theoretical context for the Projected Variable-Metric (PVM) algorithm (Algorithm~\ref{algo:Appx:varMetricExact}), and we present several well-known results that will be helpful in motivating the SOCGS algorithm.
\begin{algorithm}
\SetKwInOut{Input}{Input}\SetKwInOut{Output}{Output}
\SetKwComment{Comment}{//}{}
\Input{Point $\vx_0 \in \cx$, step sizes $\{\gamma_0, \cdots ,\gamma_k \}$}
\Output{Point $\vx_{K } \in \cx$.}
\For{$k = 0$ \textbf{to} $K - 1$}{
  $\tilde{\vx}^*_{k+1} \leftarrow \argmin\limits_{\vx \in \cx} \hat{f}_{k}(\vx) = \argmin\limits_{\vx \in \cx} \left( f\left(\vx_k \right) + \innp{\nabla f\left( \vx_k\right), \vx - \vx_k} + \frac{1}{2} \norm{\vx - \vx_k}^2_{H_k} \right)$ \label{alg:Appx:BuildQuadApprox}  \;
    $\vx_{k+1} \leftarrow \vx_{k} + \gamma_k \left( \tilde{\vx}_{k+1}^* - \vx_{k} \right)$ \label{alg:Appx:MinimizationVarMetric}  \;
} 
\caption{Projected Variable-Metric (PVM) algorithm}\label{algo:Appx:varMetricExact}
\end{algorithm}

At each iteration the PVM algorithm builds a quadratic approximation of the original function $f(\vx)$, and moves towards the point that minimizes this approximation over $\cx$. Formally, we denote the quadratic approximation of $f(\vx)$ at $\vx_k$ using $H_k \in \mathcal{S}^n_{++}$ as:
 \begin{align}
 \hat{f}_{k}(\vx) \defeq f(\vx_k) + \innp{\nabla f(\vx_k), \vx - \vx_k} + \frac{1}{2} \norm{\vx - \vx_k}_{H_k}^2, \label{Eq:appx:approxfunc}
 \end{align}
 where $H_k$ is an approximation to the Hessian $\nabla^2f\left( \vx_k\right)$. In order to measure how well $H_k$ approximates $\nabla^2f\left( \vx_k\right)$ we note that for any $H_k \in \mathcal{S}^n_{++}$ and all $\vy \in \mathcal{X}$ that:
\begin{align}
    \frac{1}{\eta_k} \norm{\vy - \vx_k}^2_{H_k} \leq \norm{\vy - \vx_k}^2_{\nabla^2 f(\vx_k)} \leq \eta_k \norm{\vy - \vx_k}^2_{H_k}, \label{Eq:Appx:ScalingHessian}
\end{align}
where $\eta_k = \max\{ \lambda_{\max}( H_k^{-1} \nabla^2 f(\vx_k)),  \lambda_{\max}([\nabla^2 f(\vx_k) ]^{-1}H_k )\} \geq 1$ (see Lemma~\ref{lemma:scalingCondition} in Appendix~\ref{Section:Appx:HessianApprox}). We will use the value of $\eta_k$ to measure the accuracy of how well $H_k$ approximates $\nabla^2 f(\vx_k)$. For example, an $\eta_k = 1$ means that $H_k = \nabla^2 f(\vx_k)$. If we were to use $H_k = I^n$ we would have that $\eta_k = \max\left\{L, 1/\mu\right\}$.

Just as the steps taken by the Projected Gradient Descent (PGD) algorithm can be interpreted in terms of Euclidean projection operators, the steps taken by the PVM algorithm in Line~\ref{alg:Appx:BuildQuadApprox} of Algorithm~\ref{algo:Appx:varMetricExact} can be interpreted in terms of scaled projection operators, where the norm of the projection operator is defined by $H_k \in \mathcal{S}^n_{++}$. Let $\Pi^H_{\cx}(\vx): \rr^n \rightarrow \cx$ denote the \emph{scaled projection} of $\vx$ onto $\mathcal{X}$ using the matrix norm $\norm{\cdot}_H$, more concretely $\Pi^H_{\cx}(\vx) \defeq \argmin_{\vy \in \cx} \frac{1}{2} \norm{\vy - \vx}_H^2$. We have that:
 \begin{align}
     \tilde{\vx}_{k+1}^* \defeq \argmin\limits_{\vx \in \mathcal{X}} \hat{f}_k\left( \vx\right) = \Pi^{H_k}_{\cx}\left(\vx_k - H_k^{-1} \nabla f(\vx_k) \right). \label{scaledProjAppx}
 \end{align}
 \begin{remark}[First-order optimality condition for PVM subproblems] \label{Remark:Appx:Optimality}
  The solution to the problem in Line~\ref{alg:Appx:BuildQuadApprox} of Algorithm~\ref{algo:Appx:varMetricExact} (also shown in Equation~\eqref{scaledProjAppx}), that is, $\tilde{\vx}_{k+1}^* = \argmin_{\vx \in \mathcal{X}} \hat{f}_k\left( \vx\right)$ satisfies for all $\vz \in \cx$:
  \begin{align*}
      \innp{\nabla f(\vx_k) + H_k(\tilde{\vx}_{k+1}^* - \vx_k), \vz - \tilde{\vx}_{k+1}^*} \geq 0.
  \end{align*}
\end{remark}
In both the PGD and the PVM algorithm the only point that is invariant under the steps taken by the algorithms is $\vx^*$. That is, in the case of the PGD algorithm we have that $\Pi^{I^n}_{\cx}\left(\vx - \nabla f(\vx) \right) = \vx^*$ if and only if $\vx = \vx^*$. Similarly, in the case of the PVM algorithm we have that $\Pi^{H_k}_{\cx}\left(\vx_k - H_k^{-1} \nabla f(\vx_k) \right) = \vx^*$ if and only if $\vx = \vx^*$ for $H_k \in \mathcal{S}^n_{++}$ (this is shown in Lemma~\ref{lemma:optCond} with the help of Lemma~\ref{lemma:optCondProj}).
\begin{lemma}
  \label{lemma:optCondProj}
  Given a matrix $H \in \mathcal{S}^n_{++}$, for any $\vx \in \cx$ and $\vd \in N_{\cx}\left( \vx\right)$ (where $N_{\cx}\left( \vx\right)$ represents the normal cone of $\cx$ at $\vx$, see Definition~\ref{Def:Appx:NormalCone}) we have that:
  \begin{align}
  \vx   = \Pi^{H}_{\cx}\left(\vx + H^{-1} \vd \right) = \Pi_{\cx}\left(\vx + \vd \right) \label{eq:OptProblem4}.
  \end{align}
  \begin{proof}
  From the definition of the normal cone, given a $\vx \in \cx$ and $\vd \in N_{\cx}\left( \vx\right)$ we know that for all $\vy \in \cx$
  \begin{align}
      0 & \geq \innp{\vd, \vy -\vx}  \\
      & = \innp{\vd, \vy - \left(\vx + H^{-1}\vd \right)} + \innp{\vd, H^{-1}\vd} \\
      & = \innp{H^{-1}\vd, H\left(\vy - \left(\vx + H^{-1}\vd\right)\right)} + \norm{H^{-1}\vd}^2_{H}.\label{Equation:optCondProj}
  \end{align}
  Reordering the previous expression leads to:
    \begin{align*}
      \norm{H^{-1}\vd}^2_{H} & \leq \innp{H^{-1}\vd, H\left(\left(\vx + H^{-1}\vd\right) - \vy\right)} \\
      & \leq \norm{H^{-1}\vd}_{H} \norm{H\left(\left(\vx + H^{-1}\vd\right) - \vy\right)}_{H^{-1}} \\
      & \leq \norm{H^{-1}\vd}_{H} \norm{\left(\vx + H^{-1}\vd\right) - \vy}_{H},
  \end{align*}
  which is true for all $\vy \in \cx$. This leads to:
    \begin{align*}
      \norm{\left(\vx + H^{-1}\vd\right) - \vx}_{H} \leq \norm{\left(\vx + H^{-1}\vd\right) - \vy}_{H},
  \end{align*}
  for all $\vy \in \cx$. This means that the closest point to $\vx + H^{-1}\vd$ that is in $\cx$, when we measure the distance in the $H$ norm, is given by $\vx$ itself, i.e., $\Pi^{H}_{\cx} \left( \vx + H^{-1}\vd\right) = \vx$. This holds for any $H \in \mathcal{S}^n_{++}$, and in particular it also holds for $H = I^n$.
  \end{proof}
\end{lemma}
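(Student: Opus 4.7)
The plan is to verify the equalities via the first-order optimality condition for projection onto a convex set. Since the $H$-scaled projection is the minimizer of a strictly convex quadratic, a point $\vz \in \cx$ equals $\Pi^{H}_{\cx}(\vw)$ if and only if $\innp{H(\vz - \vw), \vy - \vz} \geq 0$ for every $\vy \in \cx$; the case $H = I^n$ recovers the usual Euclidean projection criterion.

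For the first equality, I would take $\vz = \vx$ and $\vw = \vx + H^{-1}\vd$, so that $H(\vz - \vw) = -\vd$, reducing the optimality condition to $\innp{\vd, \vy - \vx} \leq 0$ for all $\vy \in \cx$. This is exactly the definition of $\vd \in N_{\cx}(\vx)$ in Definition~\ref{Def:Appx:NormalCone}. Strict convexity of the $H$-norm objective then yields uniqueness of the minimizer and hence $\vx = \Pi^{H}_{\cx}(\vx + H^{-1}\vd)$. The second equality is the special case $H = I^n$, since then $H^{-1}\vd = \vd$ and the optimality condition is identical.

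Alternatively, one can argue directly by expanding, for any $\vy \in \cx$,
\begin{align*}
\norm{\vy - (\vx + H^{-1}\vd)}_H^2 = \norm{\vy - \vx}_H^2 - 2\innp{\vy - \vx, \vd} + \norm{H^{-1}\vd}_H^2,
\end{align*}
where the cross term uses the identity $(\vy - \vx)^T H H^{-1}\vd = \innp{\vy - \vx, \vd}$. Since $\vd \in N_{\cx}(\vx)$ forces $-2\innp{\vy - \vx, \vd} \geq 0$, one obtains $\norm{\vy - (\vx + H^{-1}\vd)}_H^2 \geq \norm{H^{-1}\vd}_H^2 = \norm{\vx - (\vx + H^{-1}\vd)}_H^2$, so $\vx$ minimizes the $H$-distance to $\vx + H^{-1}\vd$ over $\cx$.

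There is no substantive obstacle here: the lemma is essentially an unpacking of definitions. The one small algebraic point to watch is that the factor $H^{-1}$ applied to the normal direction on the input side exactly cancels the $H$ appearing in the output norm, which is precisely why the same direction $\vd$ witnesses both the $H$-scaled and the Euclidean normal-cone condition simultaneously.
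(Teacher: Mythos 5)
Your proof is correct, and both of your variants differ from the paper's argument in an interesting way. The paper establishes the claim from scratch: starting from the normal-cone inequality $0 \geq \innp{\vd, \vy - \vx}$ it inserts $\pm H^{-1}\vd$, rewrites everything as an $H$-inner product, applies Cauchy--Schwarz in the $H$-norm, and finally divides through by $\norm{H^{-1}\vd}_H$ to conclude $\norm{\vx + H^{-1}\vd - \vx}_H \leq \norm{\vx + H^{-1}\vd - \vy}_H$. Your first route is more abstract: it invokes the variational characterization of the projection (the first-order optimality condition $\innp{H(\vz - \vw), \vy - \vz} \geq 0$, i.e., essentially Remark~\ref{Remark:Appx:Optimality} read as a scaled projection) and observes that with $\vz = \vx$, $\vw = \vx + H^{-1}\vd$ the condition collapses to the normal-cone definition; this is the cleanest conceptual argument and makes the $H = I^n$ specialization transparent. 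Your second route, the direct expansion $\norm{\vy - (\vx + H^{-1}\vd)}_H^2 = \norm{\vy - \vx}_H^2 - 2\innp{\vy - \vx, \vd} + \norm{H^{-1}\vd}_H^2$, is arguably cleaner than the paper's Cauchy--Schwarz computation: it never divides by $\norm{H^{-1}\vd}_H$ (the paper's division step tacitly assumes $\vd \neq 0$, a harmless but unaddressed edge case), and it discards the nonnegative term $\norm{\vy - \vx}_H^2$ rather than invoking an inequality that is then cancelled. All three arguments are sound; yours trade the paper's manual manipulation for either a known characterization or a one-line algebraic identity.
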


\begin{lemma}
  \label{lemma:optCond}
  Given a matrix $H \in \mathcal{S}^n_{++}$,
  an \(\vx \in \cx\) satisfies:
  \begin{align}
  \vx = \Pi^{H}_{\cx}\left(\vx - H^{-1} \nabla f(\vx) \right), \label{eq:OptProblem3}
  \end{align}
  if and only if $\vx = \vx^*$\, where $\vx^* = \argmin_{\vx \in \cx}f(\vx)$.
  \begin{proof}
    ($\Rightarrow$) Using the first-order optimality conditions for the scaled
    projection problem, shown in Remark~\ref{Remark:Appx:Optimality}, and particularizing for $\tilde{\vx}_{k+1}^* = \vx_k = \vx$ we have that for all $\vz\in\mathcal{X}$:
    \begin{align}
        \innp{H\left( \vx - \vx\right) + \nabla f(\vx), \vz - \vx} = \innp{\nabla f(\vx), \vz - \vx} \geq 0, \label{eq:OptCond2}
    \end{align}
    which hold true if and only if $\vx = \vx^*$, as Equation~\eqref{eq:OptCond2} represents the first-order optimality conditions for Problem~\ref{eq:OptProblem}, of which $\vx^*$ is the unique optimal solution. \\
    ($\Leftarrow$) Assume that $\vx = \vx^*$, then $-\nabla f\left( \vx^*\right) \in N_{\cx}\left( \vx^*\right)$. By the application of Lemma~\ref{lemma:optCondProj} we have that for any $H \in \mathcal{S}^n_{++}$ then it holds that $\vx = \Pi^{H}_{\cx}\left(\vx - H^{-1} \nabla f(\vx) \right)$.
  \end{proof}
\end{lemma}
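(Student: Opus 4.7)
The plan is to prove the two implications separately, using Lemma~\ref{lemma:optCondProj} for one direction and the first-order optimality condition for the scaled projection subproblem (Remark~\ref{Remark:Appx:Optimality}) for the other. The underlying idea is that ``being a fixed point of the $H$-scaled projection-gradient map'' and ``satisfying the variational inequality for $f$ on $\cx$'' should be equivalent, so the result is essentially an unpacking of optimality conditions.

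For the $(\Leftarrow)$ direction I would assume $\vx = \vx^*$. The first-order optimality condition for Problem~\eqref{eq:OptProblem} says $\innp{\nabla f(\vx^*), \vz - \vx^*} \geq 0$ for all $\vz \in \cx$, which is exactly the statement that $-\nabla f(\vx^*) \in N_{\cx}(\vx^*)$. Applying Lemma~\ref{lemma:optCondProj} with $\vd = -\nabla f(\vx^*)$ then yields $\vx^* = \Pi^H_{\cx}\left(\vx^* + H^{-1}\vd\right) = \Pi^H_{\cx}\left(\vx^* - H^{-1}\nabla f(\vx^*)\right)$, which is the desired fixed-point identity. Note that this direction works for any $H \in \mathcal{S}^n_{++}$ without needing strong convexity.

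For the $(\Rightarrow)$ direction I would invoke Remark~\ref{Remark:Appx:Optimality} applied to the projection problem defining $\tilde{\vx}^*_{k+1} = \Pi^H_{\cx}(\vx - H^{-1}\nabla f(\vx))$. Taking $\vx_k = \vx$ so that the quadratic approximation is built at $\vx$, and using the hypothesis $\tilde{\vx}^*_{k+1} = \vx$, the optimality inequality reads $\innp{H(\vx - \vx) + \nabla f(\vx), \vz - \vx} \geq 0$ for all $\vz \in \cx$. The $H$-term cancels and this reduces to $\innp{\nabla f(\vx), \vz - \vx} \geq 0$ for all $\vz \in \cx$, which is precisely the first-order optimality condition for Problem~\eqref{eq:OptProblem}. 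Since $f$ is $\mu$-strongly convex over $\cx$, the minimizer $\vx^*$ is unique, so $\vx = \vx^*$.

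There is no substantial obstacle here; both directions are a direct translation between two equivalent variational inequalities. The only subtlety worth flagging is that the $H^{-1}\nabla f(\vx)$ inside the projection is what allows the $H$-weighted first-order condition for the projection to ``collapse'' into the unweighted first-order condition for the original problem --- this cancellation is why the equivalence is insensitive to the particular choice of $H \in \mathcal{S}^n_{++}$.
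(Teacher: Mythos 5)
Your proof is correct and follows essentially the same approach as the paper's: both directions are handled with Remark~\ref{Remark:Appx:Optimality} (for the forward implication) and Lemma~\ref{lemma:optCondProj} together with the normal-cone characterization of $\vx^*$ (for the reverse implication), with only a cosmetic reordering of the two cases.
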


Another interesting property of the PVM algorithm is the fact that the direction $\tilde{\vx}_{k+1}^* - \vx_{k}$ in Line~\ref{alg:Appx:MinimizationVarMetric} of Algorithm~\ref{algo:Appx:varMetricExact} is a descent direction regardless of how well $H_k\in \mathcal{S}^n_{++}$ approximates the Hessian $\nabla^2 f(\vx_k)$, this is formalized in Lemma~\ref{Lemma:Appx:DescentDir}. Note that despite this, we cannot guarantee that $f(\tilde{\vx}_{k+1}^*) \leq f(\vx_k)$, which is why to ensure primal progress at each iteration a line search or a bounded step size is often used in Line~\ref{alg:Appx:MinimizationVarMetric} of Algorithm~\ref{algo:Appx:varMetricExact}.

\begin{lemma}[Descent property of Projected Variable-Metric directions]~\citep[Section 7.2.1]{nemirovski2020OPTIII} \label{Lemma:Appx:DescentDir}
If $H_k\in \mathcal{S}^n_{++}$ and $\vx_k \neq \vx^*$, then the directions given by $\tilde{\vx}^*_{k+1} - \vx_k$, where $\tilde{\vx}^*_{k+1} = \argmin_{\vx \in \mathcal{X}} \hat{f}_{k} (\vx)$ are descent directions at point $\vx_{k}$, i.e., they satisfy $\innp{-\nabla f(\vx_k), \tilde{\vx}^*_{k+1} - \vx_k} > 0$.
  \begin{proof}
  Using the first-order optimality conditions shown in Remark~\ref{Remark:Appx:Optimality} for the scaled projection subproblem and particularizing for $\vz = \vx_{k}$:
  \begin{align*}
    \innp{-\nabla f(\vx_k) , \tilde{\vx}^*_{k+1}- \vx_{k}} \geq \norm{ \tilde{\vx}^*_{k+1} - \vx_{k}}^2_{H_k} > 0.
  \end{align*}
  Where the last strict inequality follows from the fact that we have assumed that $\vx_k \neq \vx^*$, and consequently $ \tilde{\vx}^*_{k+1} \neq \vx_k$ by application of Lemma~\ref{lemma:optCond}, and the assumption that $H_k\in \mathcal{S}^n_{++}$, thus \( \norm{ \tilde{\vx}^*_{k+1} - \vx_{k}}^2_{H_k}>0\). 
  \end{proof}
\end{lemma}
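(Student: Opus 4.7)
The plan is to apply the first-order optimality condition for the constrained quadratic subproblem defining $\tilde{\vx}^*_{k+1}$ and then evaluate it at a well-chosen feasible point. Remark~\ref{Remark:Appx:Optimality} gives, for every $\vz \in \cx$,
\begin{equation*}
\innp{\nabla f(\vx_k) + H_k(\tilde{\vx}^*_{k+1} - \vx_k),\, \vz - \tilde{\vx}^*_{k+1}} \geq 0.
\end{equation*}
Since $\vx_k \in \cx$, I would specialize to $\vz = \vx_k$ and split the resulting inner product into a gradient term and an $H_k$-quadratic term. Collecting the quadratic term on the right-hand side yields
\begin{equation*}
\innp{-\nabla f(\vx_k),\, \tilde{\vx}^*_{k+1} - \vx_k} \;\geq\; \norm{\tilde{\vx}^*_{k+1} - \vx_k}_{H_k}^{2}.
\end{equation*}
This already secures non-negativity of the directional derivative of $-f$ along the proposed direction; the remaining work is only to upgrade it to a strict inequality.

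To get strict positivity, I would appeal to two facts. First, because $H_k \in \mathcal{S}^n_{++}$, the map $\vy \mapsto \norm{\vy}_{H_k}$ is a genuine norm, so $\norm{\tilde{\vx}^*_{k+1} - \vx_k}_{H_k}^{2}>0$ the moment the two points differ. Second, by Remark~\ref{fact:equivNewton} we have $\tilde{\vx}^*_{k+1} = \Pi^{H_k}_{\cx}(\vx_k - H_k^{-1}\nabla f(\vx_k))$, and Lemma~\ref{lemma:optCond} characterizes the fixed points of this scaled projection map as being exactly $\vx^*$. The hypothesis $\vx_k \neq \vx^*$ therefore forces $\tilde{\vx}^*_{k+1}\neq \vx_k$, which delivers the strict inequality and completes the argument.

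Structurally this is a short KKT-style calculation, so I do not expect any genuine obstacle. The one place requiring care is the strictness step, which is where the preparatory Lemma~\ref{lemma:optCond} carries the weight; without it, one could only conclude that $\tilde{\vx}^*_{k+1}-\vx_k$ is either a descent direction or zero. A point worth emphasizing in the writeup is that the argument never uses how well $H_k$ approximates $\nabla^2 f(\vx_k)$: only $H_k\succ 0$ is invoked, consistent with the lemma's message that PVM directions are descent directions regardless of the quality of the Hessian surrogate (which in turn motivates the line search or bounded step size used in Line~\ref{alg:Appx:MinimizationVarMetric} of Algorithm~\ref{algo:Appx:varMetricExact} to convert descent into actual primal progress).
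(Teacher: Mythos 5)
Your argument is identical in structure and substance to the paper's: invoke the first-order optimality condition of Remark~\ref{Remark:Appx:Optimality} at $\vz = \vx_k$, rearrange to obtain $\innp{-\nabla f(\vx_k), \tilde{\vx}^*_{k+1} - \vx_k} \geq \norm{\tilde{\vx}^*_{k+1} - \vx_k}^2_{H_k}$, then upgrade to strict inequality via Lemma~\ref{lemma:optCond} (which rules out $\tilde{\vx}^*_{k+1} = \vx_k$ when $\vx_k \neq \vx^*$) together with positive definiteness of $H_k$. The proposal is correct and takes essentially the same route as the paper.
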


\subsection{Global Convergence} \label{appx:VarMetricAlgoGlobal}

The global primal gap convergence of the PVM algorithm (Algorithm~\ref{algo:Appx:varMetricExact}) with bounded step sizes is a well-known result that we reproduce here for completeness, as we will compare this global convergence rate with that of other first-order optimization algorithms. In order to prove it, we review Lemma~\ref{lemma:QuadraticSubproblems} which will be used in the global convergence proof.

\begin{lemma}~\citep[Lemma 9]{karimireddy2018adaptive} \label{lemma:QuadraticSubproblems}
  Given a convex domain $\mathcal{X}$ and $H_k \in \mathcal{S}^n_{++}$ then for constants $\alpha>0$ and $\nu>0$ such that $\alpha \nu \geq 1$ we have that:
  \begin{align}
      \min\limits_{\vx\in\cx} \left(\innp{\nabla f(\vx_k),\vx - \vx_k} + \frac{\alpha}{2} \norm{\vx - \vx_k}^2_{H_k}\right) \leq \frac{1}{\alpha\nu} \min\limits_{\vx\in\cx} \left(\innp{\nabla f(\vx_k),\vx - \vx_k} + \frac{1}{2\nu} \norm{\vx - \vx_k}^2_{H_k} \right).
  \end{align}
\end{lemma}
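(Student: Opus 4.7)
The plan is to upper bound the left-hand side by evaluating its objective at a cleverly chosen feasible point, namely a convex combination of $\vx_k$ and the minimizer of the right-hand side problem. The key observation is that because $\alpha \nu \geq 1$, there is a natural interpolation parameter $t = 1/(\alpha \nu) \in (0,1]$ that makes the algebra line up.

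First, I would let
\[
    \vy^* \in \argmin_{\vx \in \cx} \Bigl( \innp{\nabla f(\vx_k), \vx - \vx_k} + \tfrac{1}{2\nu}\norm{\vx - \vx_k}^2_{H_k} \Bigr),
\]
which exists because the inner objective is strongly convex on the nonempty convex set $\cx$. I would then set $t \defeq 1/(\alpha \nu)$ and $\vx_t \defeq \vx_k + t(\vy^* - \vx_k)$. Since $\alpha \nu \geq 1$ yields $t \in (0,1]$, and $\vx_k, \vy^* \in \cx$, convexity of $\cx$ gives $\vx_t \in \cx$, so $\vx_t$ is a valid candidate for the left-hand side minimization.

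Next, I would substitute $\vx_t$ into the left-hand objective. Linearity of the first term and bilinearity of the quadratic form with $\vx_t - \vx_k = t(\vy^* - \vx_k)$ produce
\[
    \innp{\nabla f(\vx_k), \vx_t - \vx_k} + \tfrac{\alpha}{2}\norm{\vx_t - \vx_k}^2_{H_k} = t\innp{\nabla f(\vx_k), \vy^* - \vx_k} + \tfrac{\alpha t^2}{2}\norm{\vy^* - \vx_k}^2_{H_k}.
\]
Plugging in $t = 1/(\alpha \nu)$ makes the linear coefficient $1/(\alpha\nu)$ and the quadratic coefficient $\alpha/(2\alpha^2\nu^2) = 1/(2\alpha\nu^2)$, so the expression factors as $\tfrac{1}{\alpha\nu}\bigl( \innp{\nabla f(\vx_k), \vy^* - \vx_k} + \tfrac{1}{2\nu}\norm{\vy^* - \vx_k}^2_{H_k}\bigr)$, which is exactly $\tfrac{1}{\alpha\nu}$ times the right-hand side inner minimum.

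Finally, since the left-hand side minimum is at most the value of its objective at the feasible point $\vx_t$, I get the desired inequality. There is no real obstacle here: the only insight is the choice $t = 1/(\alpha\nu)$, which absorbs the mismatch between the quadratic coefficients $\alpha$ and $1/\nu$ exactly. All remaining work is routine expansion, and the argument does not use anything beyond convexity of $\cx$ and positive definiteness of $H_k$ (the latter is in fact not needed beyond ensuring the norm is well defined).
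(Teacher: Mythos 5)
Your proof is correct, and since the paper cites this lemma from~\citet{karimireddy2018adaptive} without reproducing a proof, your interpolation argument is the natural one; it matches the standard approach used for this type of scaling lemma. The key choice $t = 1/(\alpha\nu)$ is exactly right, the algebra checks out ($\alpha t^2/2 = 1/(2\alpha\nu^2) = (1/(\alpha\nu))\cdot(1/(2\nu))$), and the feasibility of $\vx_t$ follows cleanly from convexity of $\cx$ and $t \in (0,1]$.
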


With the previous Lemma at hand, we can prove the global linear convergence in primal gap of the PVM algorithm with bounded step size when minimizing a $\mu$-strongly convex and $L$-smooth function over a convex set $\cx$.

\begin{theorem}[Global convergence of Projected Variable-Metric algorithm with bounded step size.]~\citep[Theorem 4]{karimireddy2018global}   \label{Appx:Theorem:GlobalRateExact}
   Given an $L$-smooth and $\mu$-strongly convex function and a convex set $\mathcal{X}$ then the Projected Variable-Metric algorithm (Algorithm~\ref{algo:Appx:varMetricExact}) with a step size $\gamma_k \leq \frac{\mu}{L \eta_k}$ guarantees for all $k \geq 0$:
   \begin{align*}
      f(\vx_{k+1}) - f(\vx^*) \leq \left( 1 - \frac{\mu\gamma_k^2}{L\eta_k}\right) \left(f(\vx_{k}) - f(\vx^*) \right),
  \end{align*} 
  where the parameter $\eta_k$ measures how well $H_k$ approximates $\nabla^2 f(\vx_k)$.
  \begin{proof}
  The iterate $\vx_{k+1}$ can be rewritten as:
  \begin{align}
      \vx_{k+1} = \argmin\limits_{\vx \in (1 - \gamma_k)\vx_k + \gamma_k \mathcal{X}} \innp{\nabla f(\vx_k), \vx - \vx_k} + \frac{1}{2\gamma_k}\norm{\vx - \vx_k}^2_{H_k} \label{Eq:DefinitionOptStepSize}
  \end{align}
  Using $L$-smoothness and the $\mu$-strong convexity of the function $f$ we can write:
  \begin{align}
      f(\vx_{k+1}) - f(\vx_k) &\leq \innp{\nabla f(\vx_k), \vx_{k+1} - \vx_k} + \frac{L}{2 \mu}\norm{\vx_{k+1} - \vx_k}^2_{\nabla^2 f(\vx_k)} \\
      &\leq \innp{\nabla f(\vx_k), \vx_{k+1} - \vx_k} + \frac{L\eta_k}{2 \mu}\norm{\vx_{k+1} - \vx_k}^2_{H_k} \\
      &\leq \innp{\nabla f(\vx_k), \vx_{k+1} - \vx_k} + \frac{1}{2\gamma_k}\norm{\vx_{k+1} - \vx_k}^2_{H_k} \\
      &= \min\limits_{\vx \in (1 - \gamma_k)\vx_k + \gamma_k \mathcal{X}}\left( \innp{\nabla f(\vx_k), \vx - \vx_k} + \frac{1}{2\gamma_k}\norm{\vx - \vx_k}^2_{H_k} \right) \label{Eq:GlobalConvergenceExact}.
  \end{align}
  Where the second inequality follows from Equation~\eqref{Eq:ScalingHessian} (which in turn is a consequence of $H_k \in \mathcal{S}^n_{++}$) and the third inequality follows from the fact that $\gamma_k \leq \frac{\mu}{L\eta_k}$. Applying Lemma~\ref{lemma:QuadraticSubproblems} to Equation~\eqref{Eq:GlobalConvergenceExact} and noting that as $H_k \in \mathcal{S}^n_{++}$ we can apply Equation~\eqref{Eq:ScalingHessian} and transform the minimization problem involving $\norm{\vx - \vx_k}_{H_k}$ to one that involves $\norm{\vx - \vx_k}_{\nabla^2 f(\vx_k)}$. Continuing with the chain of inequalities: 
    \begin{align}
      f(\vx_{k+1}) - f(\vx_k) &\leq\min\limits_{\vx \in (1 - \gamma_k)\vx_k + \gamma_k \mathcal{X}}\left( \innp{\nabla f(\vx_k), \vx - \vx_k} + \frac{1}{2\gamma_k}\norm{\vx - \vx_k}^2_{H_k} \right)  \\
      & \leq \frac{\mu \gamma_k}{L\eta_k}  \min\limits_{\vx \in (1 - \gamma_k)\vx_k + \gamma_k \mathcal{X}}\left( \innp{\nabla f(\vx_k), \vx - \vx_k} + \frac{\mu}{2L\eta_k}\norm{\vx - \vx_k}^2_{H_k}\right) \label{eq:plugging_lemma} \\
      &\leq\frac{\mu \gamma_k}{L\eta_k}  \min\limits_{\vx \in (1 - \gamma_k)\vx_k + \gamma_k \mathcal{X}} \left(\innp{\nabla f(\vx_k), \vx - \vx_k} + \frac{\mu}{2L}\norm{\vx - \vx_k}^2_{\nabla^2 f(\vx_k)}\right) \label{eq:before_plugging_optimum}\\
      &\leq\frac{\mu \gamma_k^2}{L\eta_k} \left(  \innp{\nabla f(\vx_k), \vx^* - \vx_k} + \frac{\mu\gamma_k}{2L}\norm{\vx^*- \vx_k}^2_{\nabla^2 f(\vx_k)} \right) \label{eq:plugging_optimum}\\ 
      &\leq\frac{\mu \gamma_k^2}{L\eta_k} \left(  \innp{\nabla f(\vx_k), \vx^* - \vx_k} + \frac{\mu}{2L}\norm{\vx^*- \vx_k}^2_{\nabla^2 f(\vx_k)} \right) \label{eq:particularize_stepsize} \\
      & \leq \frac{\mu \gamma_k^2}{L\eta_k} \left(  f(\vx^*) - f(\vx_k) \right). \label{eq:strong_convexity_deriv} 
  \end{align}
  We obtain Equation~\eqref{eq:plugging_lemma} by applying Lemma~\ref{lemma:QuadraticSubproblems}, and  Equation~\eqref{eq:before_plugging_optimum} from applying  Lemma~\ref{lemma:scalingCondition} to the norm term in Equation~\eqref{eq:plugging_lemma}, which allows us to use that $1/ \eta_k\norm{\vx - \vx_k}^2_{H_k} \leq \norm{\vx - \vx_k}^2_{\nabla^2 f(\vx_k)}$. Equation~\eqref{eq:plugging_optimum} follows from plugging in $\vx =(1 - \gamma_k) \vx_k + \gamma_k \vx^*$ into Equation~\eqref{eq:before_plugging_optimum} (as of course $\vx^* \in \mathcal{X}$). We obtain Equation~\eqref{eq:particularize_stepsize} by considering that $\gamma_k \leq 1$, and Equation~\eqref{eq:strong_convexity_deriv} from the  $\mu$-strong convexity and $L$-smoothness of the function $f(\vx)$. Reordering the previous expression leads to:
  \begin{align*}
      f(\vx_{k+1}) - f(\vx^*) \leq \left( 1 - \frac{\mu\gamma_k^2}{L\eta_k}\right) \left(f(\vx_{k}) - f(\vx^*) \right).
  \end{align*}
  \end{proof}
\end{theorem}

  As the exact line search strategy makes at least as much progress as choosing any $\gamma_k \leq \frac{\mu}{L \eta_k}$, the bound in Theorem~\ref{Appx:Theorem:GlobalRateExact} also holds for the Projected Variable-Metric algorithm (Algorithm~\ref{algo:Appx:varMetricExact}) with exact line search.

\begin{corollary}[Global convergence of Projected Variable-Metric algorithm with exact line search or $\gamma_k = \frac{\mu}{L\eta_k}$]  \label{Appx:corollary:GlobalRateExactLineSearch}
   Given an $L$-smooth and $\mu$-strongly convex function and a convex set $\mathcal{X}$ then the Projected Variable-Metric algorithm (Algorithm~\ref{algo:Appx:varMetricExact}) with an exact line search or with a step size $\gamma_k = \frac{\mu}{L \eta_k}$ guarantees for all $k \geq 0$:
   \begin{align*}
      f(\vx_{k+1}) - f(\vx^*) \leq \left( 1 - \frac{\mu^3}{L^3\eta_k^3}\right) \left(f(\vx_{k}) - f(\vx^*) \right),
  \end{align*} 
  where the parameter $\eta_k$ measures how well $H_k$ approximates $\nabla^2 f(\vx_k)$.
\end{corollary}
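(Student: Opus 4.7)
The plan is to obtain this corollary as an immediate specialization of Theorem~\ref{Appx:Theorem:GlobalRateExact}, which already packages almost all of the work. The theorem supplies the contraction
\[
f(\vx_{k+1}) - f(\vx^*) \leq \left(1 - \frac{\mu \gamma_k^2}{L \eta_k}\right)(f(\vx_k) - f(\vx^*))
\]
for every admissible step size $\gamma_k \leq \mu/(L\eta_k)$, so only two short verifications remain: that the specific choice $\gamma_k = \mu/(L\eta_k)$ matches the bound in the corollary, and that the exact line search case reduces to that fixed choice.

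For the fixed step size $\gamma_k = \mu/(L\eta_k)$, I would substitute directly and compute
\[
\frac{\mu \gamma_k^2}{L\eta_k} = \frac{\mu}{L\eta_k}\cdot \frac{\mu^2}{L^2 \eta_k^2} = \frac{\mu^3}{L^3 \eta_k^3},
\]
which is exactly the contraction rate claimed. Admissibility of this step is immediate since $\mu \leq L$ and $\eta_k \geq 1$ yield $\gamma_k \in (0,1]$.

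For the exact line search case, I would use a standard dominance argument along the descent direction $\tilde{\vx}^*_{k+1} - \vx_k$, which is indeed a descent direction by Lemma~\ref{Lemma:Appx:DescentDir}. By definition, exact line search returns the step in $[0,1]$ that minimizes $f$ along this direction, so the resulting iterate cannot have a larger $f$-value than the iterate produced by the particular choice $\gamma_k = \mu/(L\eta_k)$. Consequently the same contraction factor $1 - \mu^3/(L^3 \eta_k^3)$ derived in the fixed step case carries over verbatim to the line search iterate.

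I do not anticipate any genuine obstacle: the entire analytic content sits in Theorem~\ref{Appx:Theorem:GlobalRateExact}, and this corollary is essentially a bookkeeping substitution together with a one-line monotonicity remark about exact line search. The only point that requires the mildest care is confirming that $\mu/(L\eta_k)$ lies in the admissible step range, which is a trivial consequence of $\mu \leq L$ and $\eta_k \geq 1$.
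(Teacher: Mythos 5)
Your proposal matches the paper's own treatment: the paper also obtains this corollary by observing that exact line search makes at least as much progress as any fixed $\gamma_k \leq \mu/(L\eta_k)$, and then substituting $\gamma_k = \mu/(L\eta_k)$ into the bound of Theorem~\ref{Appx:Theorem:GlobalRateExact}. Your admissibility check ($\mu \leq L$, $\eta_k \geq 1$) and the one-line monotonicity argument for the line-search case are exactly the intended content.
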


As was mentioned in Lemma~\ref{Lemma:Appx:DescentDir} the direction $\tilde{\vx}_{k+1}^* - \vx_{k}$ in Line~\ref{alg:Appx:MinimizationVarMetric} of Algorithm~\ref{algo:Appx:varMetricExact} is a descent direction regardless of how well $H_k\in \mathcal{S}^n_{++}$ approximates the Hessian $\nabla^2 f(\vx_k)$. However, as we can see in Theorem~\ref{Appx:Theorem:GlobalRateExact} and Corollary~\ref{Appx:corollary:GlobalRateExactLineSearch}, if we pick a matrix $H_k\in \mathcal{S}^n_{++}$ that approximates the Hessian $\nabla^2 f(\vx_k)$ well, that is, we have an $\eta_k$ close to 1, we will be able to guarantee more primal progress per step when using an exact line search or bounded step sizes.

One of the key consequences of Corollary~\ref{Appx:corollary:GlobalRateExactLineSearch} is that even if we run the PVM algorithm with an exact line search and we use $H_k = \nabla^2 f(\vx_k)$ (which is equivalent to $\eta_k = 1$), we need $\mathcal{O}(L^3/\mu^3 \log 1/\varepsilon)$ iterations to reach an $\varepsilon$-optimal solution to Problem~\eqref{eq:OptProblem}. This stands in contrast to the PGD algorithm, which requires $\mathcal{O}(L/\mu \log 1/\varepsilon)$ iterations, or \emph{Nesterov's Projected Gradient Descent} (NPGD) algorithm, which requires $\mathcal{O}(\sqrt{L/\mu} \log 1/\varepsilon)$ iterations to reach an $\varepsilon$-optimal solution. Note that with a small modification of the proof in Theorem~\ref{Appx:Theorem:GlobalRateExact} we can recover the same rate for the PGD algorithm and the PVM algorithm with $H_k = I^n$. This is expected, as in this case the algorithms are equivalent, except for the bounded step size strategy.

\begin{theorem}[Global convergence of Projected Variable-Metric algorithm with bounded step size and $H_k = I^n$]   \label{Appx:Theorem:GlobalRateExactEuclidean}
   Given an $L$-smooth and $\mu$-strongly convex function and a convex set $\mathcal{X}$ then the Projected Variable-Metric algorithm (Algorithm~\ref{algo:Appx:varMetricExact}) with a step size $\gamma_k \leq \min\{ 1, \frac{1}{L } \}$ and $H_k = I^n$ guarantees for all $k \geq 0$:
   \begin{align}
      f(\vx_{k+1}) - f(\vx^*) \leq \left( 1 - \frac{\mu}{L}\right) \left(f(\vx_{k}) - f(\vx^*) \right).
  \end{align} 
  \begin{proof}
  The proof mirrors that of Theorem~\ref{Appx:Theorem:GlobalRateExact}, and so we only give a brief outline.   The iterate $\vx_{k+1}$ can be rewritten as:
  \begin{align}
      \vx_{k+1} = \argmin\limits_{\vx \in (1 - \gamma_k)\vx_k + \gamma_k \mathcal{X}} \innp{\nabla f(\vx_k), \vx - \vx_k} + \frac{1}{2\gamma_k}\norm{\vx - \vx_k}^2. \label{Eq:Appx:ReqriteIterate}
  \end{align} Using $L$-smoothness we can write:
  \begin{align}
      f(\vx_{k+1}) - f(\vx_k) &\leq \innp{\nabla f(\vx_k), \vx_{k+1} - \vx_k} + \frac{L}{2}\norm{\vx_{k+1} - \vx_k}^2 \\
      &\leq \innp{\nabla f(\vx_k), \vx_{k+1} - \vx_k} + \frac{1}{2\gamma_k}\norm{\vx_{k+1} - \vx_k}^2 \label{Eq:Appx:StepsizeGlobalConvergenceExact} \\
      &= \min\limits_{\vx \in (1 - \gamma_k)\vx_k + \gamma_k \mathcal{X}}\left( \innp{\nabla f(\vx_k), \vx - \vx_k} + \frac{1}{2\gamma_k}\norm{\vx - \vx_k}^2 \right) \label{Eq:GlobalConvergenceExact2} \\
     &\leq \gamma_k\mu \min\limits_{\vx \in (1 - \gamma_k)\vx_k + \gamma_k \mathcal{X}}\left( \innp{\nabla f(\vx_k), \vx - \vx_k} + \frac{\mu}{2}\norm{\vx - \vx_k}^2 \right) \label{Eq:GlobalConvergenceExact3} \\
      &\leq \gamma_k\mu \left( \innp{\nabla f(\vx_k), \vx^* - \vx_k} + \frac{\mu}{2}\norm{\vx^* - \vx_k}^2 \right) \label{Eq:GlobalConvergenceExact4} \\
        &\leq \frac{\mu}{L} \left( f(\vx^*) - f(\vx_k)) \right) \label{Eq:GlobalConvergenceExact5}.
  \end{align}
  Where Equation~\eqref{Eq:Appx:StepsizeGlobalConvergenceExact} follows from $\gamma_k \leq \min\{ 1, \frac{1}{L } \}$ and Equation~\eqref{Eq:GlobalConvergenceExact2} follows from Equation~\eqref{Eq:Appx:ReqriteIterate}. Applying Lemma~\ref{lemma:QuadraticSubproblems} to Equation~\eqref{Eq:GlobalConvergenceExact2} leads to Equation~\eqref{Eq:GlobalConvergenceExact3}. Equation~\eqref{Eq:GlobalConvergenceExact4} follows from plugging in $\vx =(1 - \gamma_k) \vx_k + \gamma_k \vx^*$ into Equation~\eqref{Eq:GlobalConvergenceExact3} (as of course $\vx^* \in \mathcal{X}$)
  Lastly, in Equation~\eqref{Eq:GlobalConvergenceExact5} we have used $\mu$-strong convexity and the fact that $\gamma_k \leq \min\{ 1, \frac{1}{L } \}$. Reordering the terms previous inequality completes the proof.
  \end{proof}
\end{theorem}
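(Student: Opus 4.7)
The plan is to mirror the proof of Theorem~\ref{Appx:Theorem:GlobalRateExact} specialized to $H_k = I^n$ (hence $\eta_k = 1$), while exploiting the fact that with the Euclidean metric the $L$-smoothness descent inequality admits the larger step size $\gamma_k \leq 1/L$ rather than $\gamma_k \leq \mu/L$. That enlarged step size is precisely what upgrades the contraction from $(1 - \mu^3/L^3)$ in Theorem~\ref{Appx:Theorem:GlobalRateExact} to the claimed $(1 - \mu/L)$ here, recovering the standard PGD rate (which is consistent with the observation in Section~\ref{appx:VarMetricAlgoGlobal} that PVM with $H_k = I^n$ and bounded step size reduces to PGD up to step-size choice).

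First I would rewrite the iterate $\vx_{k+1} = \vx_k + \gamma_k(\tilde{\vx}_{k+1}^* - \vx_k)$ as the minimizer of $\innp{\nabla f(\vx_k), \vx - \vx_k} + \frac{1}{2\gamma_k}\norm{\vx - \vx_k}^2$ over the scaled set $(1-\gamma_k)\vx_k + \gamma_k\cx$, by absorbing the step size into the quadratic coefficient. Applying $L$-smoothness at $\vx_{k+1}$ and using $\gamma_k \leq 1/L$ to replace $L/2$ with $1/(2\gamma_k)$ then upper-bounds $f(\vx_{k+1}) - f(\vx_k)$ by the optimal value of that very minimization problem. The core move is to apply Lemma~\ref{lemma:QuadraticSubproblems} with $\alpha = 1/\gamma_k$ and $\nu = 1/\mu$, which is valid because $\alpha\nu = 1/(\gamma_k\mu) \geq 1$ (using $\gamma_k \leq 1/L \leq 1/\mu$), and which rescales the quadratic coefficient from $1/(2\gamma_k)$ down to $\mu/2$ at the cost of a prefactor $\gamma_k\mu$.

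To finish, I would substitute an $\vx^*$-related feasible point (namely $\vx_k + \gamma_k(\vx^* - \vx_k) \in (1-\gamma_k)\vx_k + \gamma_k\cx$) into the rescaled minimization, collect factors of $\gamma_k$ using $\gamma_k \leq 1$ to absorb leftover $\gamma_k$ inside the quadratic coefficient, and apply $\mu$-strong convexity to bound the resulting expression $\innp{\nabla f(\vx_k), \vx^* - \vx_k} + \frac{\mu}{2}\norm{\vx^* - \vx_k}^2$ by $f(\vx^*) - f(\vx_k)$. Combining with $\gamma_k \leq 1/L$ in the prefactor yields $f(\vx_{k+1}) - f(\vx_k) \leq (\mu/L)(f(\vx^*) - f(\vx_k))$, and rearranging gives the stated contraction. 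The step I expect to be most delicate is the choice of constants in Lemma~\ref{lemma:QuadraticSubproblems}: $\alpha$ and $\nu$ must simultaneously satisfy $\alpha\nu \geq 1$ (forcing $\gamma_k \leq 1/\mu$, implied by $\gamma_k \leq 1/L$) and produce exactly the strong-convexity constant $\mu$ after rescaling, so that the subsequent $\vx^*$-substitution collapses cleanly into a primal-gap bound — everything else is routine step-size bookkeeping.
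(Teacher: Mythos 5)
Your proposal reproduces the paper's outline step for step: rewrite the iterate as the minimizer over the scaled domain $(1-\gamma_k)\vx_k+\gamma_k\cx$ with quadratic coefficient $1/(2\gamma_k)$, invoke $L$-smoothness together with $\gamma_k\le 1/L$, apply Lemma~\ref{lemma:QuadraticSubproblems} with $\alpha=1/\gamma_k$ and $\nu=1/\mu$ (giving the prefactor $\gamma_k\mu$), substitute a feasible $\vx^*$-directed point, and close with $\mu$-strong convexity. Your identification of the admissible feasible point $\vx_k+\gamma_k(\vx^*-\vx_k)\in(1-\gamma_k)\vx_k+\gamma_k\cx$ is in fact more careful than the paper's own display, which simply plugs $\vx^*$ into Equation~\eqref{Eq:GlobalConvergenceExact4} even though $\vx^*$ need not belong to the restricted domain when $\gamma_k<1$.

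That extra care, however, exposes a gap in the final bookkeeping, and it is a gap shared by the paper's sketch. Substituting $\vx=\vx_k+\gamma_k(\vx^*-\vx_k)$ yields $\gamma_k\mu\left(\gamma_k\innp{\nabla f(\vx_k),\vx^*-\vx_k}+\tfrac{\mu\gamma_k^2}{2}\norm{\vx^*-\vx_k}^2\right)$, and after absorbing the leftover $\gamma_k$ in the quadratic via $\gamma_k\le 1$ the prefactor is $\gamma_k^2\mu$, not $\gamma_k\mu$. This is exactly the $\mu\gamma_k^2/(L\eta_k)$ factor that the general proof of Theorem~\ref{Appx:Theorem:GlobalRateExact} records, specialized to $\eta_k=1$ and without the $1/L$. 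With $\gamma_k\le 1/L$ this gives $\gamma_k^2\mu\le\mu/L^2$ and the contraction $\bigl(1-\gamma_k^2\mu\bigr)\ge\bigl(1-\mu/L^2\bigr)$, which is strictly weaker than the stated $\bigl(1-\mu/L\bigr)$; your closing sentence that $\gamma_k\le 1/L$ delivers the $(\mu/L)$ prefactor would require $\gamma_k=1/\sqrt{L}$, outside the allowed range whenever $L>1$. (A $\gamma_k$-independent contraction is also impossible on its face, since the step vanishes as $\gamma_k\to 0$.) To actually get $(1-\mu/L)$ one needs a different argument, e.g. the standard PGD analysis in which $\vx_{k+1}=\Pi_\cx(\vx_k-\tfrac{1}{L}\nabla f(\vx_k))$ minimizes $\innp{\nabla f(\vx_k),\vx-\vx_k}+\tfrac{L}{2}\norm{\vx-\vx_k}^2$ directly over $\cx$ so that $\vx^*$ itself is an admissible comparison point; but this update is not the PVM step of Algorithm~\ref{algo:Appx:varMetricExact} with $H_k=I^n$ and $\gamma_k<1$.
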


\subsection{Local Convergence} \label{appx:VarMetricAlgoLocal}

Despite the lackluster convergence rate in primal gap shown in Theorem~\ref{Appx:Theorem:GlobalRateExact}, the PVM algorithm can achieve quadratic convergence in distance to the optimum when the iterates are close enough to the optimum and the Hessian approximations are accurate enough. We first review a series of results that will allow us to prove the local quadratic convergence of the PVM algorithm. One of the key properties that is often used in the convergence proof of the PGD algorithm is the non-expansiveness of the Euclidean projection operator onto a convex set $\cx$, denoted by  $\Pi^{I^n}_{\mathcal{X}}$. In the local convergence proof of the PVM algorithm we use a generalization of the aforementioned fact, that is, the scaled projection operator onto a convex set $\cx$, denoted by $\Pi^{H_k}_{\mathcal{X}}$ where $H  \in \mathcal{S}^n_{++}$, is also non-expansive (see Lemma~\ref{lemma:scaledProj}).

\begin{lemma}
  \label{lemma:scaledProj}~\cite{beck2017first}[Theorem 6.42]
  Given a $H  \in \mathcal{S}^n_{++}$ and a convex set $\mathcal{X}$, the
  scaled projection is a contraction mapping (it is firmly-nonexpansive) in the $H$-norm:
  \begin{align*}
  (\vx - \vy)^T H (\Pi^{H}_{\mathcal{X}}\left(\vx\right) - \Pi^{H}_{\mathcal{X}}\left(\vy\right)) \geq \norm{\Pi^{H}_{\mathcal{X}}\left(\vx\right) - \Pi^{H}_{\mathcal{X}}\left(\vy\right)}^2_{H}
  \end{align*}
  Using the Cauchy-Schwarz inequality this leads to $\norm{\vx - \vy}_H \geq \norm{\Pi^{H}_{\mathcal{X}}\left(\vx\right) - \Pi^{H}_{\mathcal{X}}\left(\vy\right)}_{H}$.
\end{lemma}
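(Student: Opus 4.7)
The plan is to derive the firmly non-expansive property directly from the first-order optimality conditions of the two scaled projection subproblems defining $\vp \defeq \Pi^H_{\cx}(\vx)$ and $\vq \defeq \Pi^H_{\cx}(\vy)$. Since each is the unique minimizer of a strongly convex quadratic (because $H \in \mathcal{S}^n_{++}$) over the convex set $\cx$, the standard variational inequality characterization (essentially Remark~\ref{Remark:Appx:Optimality} specialized to $\nabla f(\vx_k) = \zeros$ and $\vx_k$ replaced by the point being projected) gives
\begin{align*}
\innp{H(\vp - \vx), \vz - \vp} \geq 0, \quad \innp{H(\vq - \vy), \vz - \vq} \geq 0, \quad \forall \vz \in \cx.
\end{align*}

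Next, I would cross-substitute: set $\vz = \vq$ in the first inequality and $\vz = \vp$ in the second, then add them. The cross terms $\innp{H\vp, \vq}$ and $\innp{H\vq, \vp}$ combine using the symmetry of $H$, and after simple rearrangement the sum collapses to
\begin{align*}
\innp{H(\vx - \vy), \vp - \vq} \geq \norm{\vp - \vq}_H^2,
\end{align*}
which, upon noting $(\vx-\vy)^T H (\vp-\vq) = \innp{H(\vx-\vy), \vp-\vq}$ by symmetry of $H$, is exactly the firmly non-expansive inequality stated in the lemma.

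For the final claim, I would treat $\innp{\cdot, \cdot}_H \defeq \innp{H\cdot, \cdot}$ as a genuine inner product (using $H \succ 0$) and apply the Cauchy--Schwarz inequality in this inner product to obtain $\innp{H(\vx - \vy), \vp - \vq} \leq \norm{\vx - \vy}_H \norm{\vp - \vq}_H$. Chaining this with the firmly non-expansive inequality and dividing through by $\norm{\vp - \vq}_H$ (handling the trivial case $\vp = \vq$ separately, where the inequality is immediate) yields $\norm{\vx - \vy}_H \geq \norm{\vp - \vq}_H$, as claimed.

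I do not anticipate any serious obstacle: the argument is the standard textbook proof of firm non-expansiveness of projections, lifted from the Euclidean case to the $H$-weighted inner product. The only care needed is to invoke optimality using $H$ rather than the identity, and to be sure to use the symmetry of $H$ when combining the two variational inequalities and when applying Cauchy--Schwarz in the $H$-inner product.
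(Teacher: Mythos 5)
Your proof is correct. The paper does not actually provide a proof of this lemma; it simply cites it as Theorem~6.42 of Beck's textbook. Your argument is the standard one for firm non-expansiveness of metric projections, carried over to the $H$-weighted inner product: write the variational inequality characterizing each projection, cross-substitute the two projected points, add, rearrange using the symmetry of $H$, and finish with Cauchy--Schwarz in the $H$-inner product (handling $\vp = \vq$ separately). This is exactly the expected derivation and fills in what the paper leaves to the citation.
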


The following Lemma, which is intimately linked with the $L_2$-Lipschitzness of the Hessian $\nabla^2 f(\vx)$, will also be key in the proof of quadratic local convergence.

\begin{lemma}
  \label{lemma:LipschitzHessian}~\cite{nesterov2018lectures}[Lemma 4.1.1]
  If a twice differentiable function $f$ has $L_2$-Lipschitz continuous Hessian over $\mathcal{X}$ then for all $\vx, \vy \in \mathcal{X}$:
  \begin{align*}
  \norm{\nabla f(\vy) - \nabla f(\vx) - \nabla^2 f(\vx)\left( \vy - \vx\right)} \leq \frac{L_2}{2} \norm{\vy - \vx}^2.
  \end{align*}
\end{lemma}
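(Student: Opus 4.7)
The plan is to prove this via the fundamental theorem of calculus applied to the gradient along the line segment joining $\vx$ and $\vy$. Since $f$ is twice differentiable on a set containing the segment, I can write
\begin{align*}
\nabla f(\vy) - \nabla f(\vx) = \int_0^1 \nabla^2 f\bigl(\vx + t(\vy - \vx)\bigr)(\vy - \vx)\,dt.
\end{align*}
This is the natural integral representation because differentiating the right-hand side with respect to the endpoint $\vy$ recovers the Hessian of $f$, so it matches the gradient at the endpoints.

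Next I would subtract $\nabla^2 f(\vx)(\vy - \vx)$, which I can rewrite as $\int_0^1 \nabla^2 f(\vx)(\vy - \vx)\,dt$ since the integrand is constant in $t$. The difference then lives inside a single integral, giving
\begin{align*}
\nabla f(\vy) - \nabla f(\vx) - \nabla^2 f(\vx)(\vy - \vx) = \int_0^1 \bigl[\nabla^2 f(\vx + t(\vy-\vx)) - \nabla^2 f(\vx)\bigr](\vy - \vx)\,dt.
\end{align*}

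To conclude, I would take the Euclidean norm on both sides, push it inside the integral via the standard Bochner-type inequality $\norm{\int g(t)\,dt} \leq \int \norm{g(t)}\,dt$, and then apply the operator-norm bound $\norm{A\vv} \leq \norm{A}\norm{\vv}$ together with the $L_2$-Lipschitz hypothesis on $\nabla^2 f$. The Lipschitz bound gives $\norm{\nabla^2 f(\vx + t(\vy-\vx)) - \nabla^2 f(\vx)} \leq L_2 \cdot t \norm{\vy - \vx}$, which factors out of the integral leaving $\norm{\vy - \vx}^2 \int_0^1 L_2 t\,dt = \tfrac{L_2}{2}\norm{\vy - \vx}^2$.

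The only mild obstacle is justifying the use of the fundamental theorem of calculus and the Bochner-type norm inequality for vector-valued integrals; both are entirely standard for continuous integrands on a compact interval, and the segment $[\vx, \vy]$ must be assumed to lie in $\mathcal{X}$ (which is implicit when $\mathcal{X}$ is convex, as in the paper's setting). No properties of $\mathcal{X}$ beyond containing the segment are needed, and strong convexity or smoothness of $f$ beyond $L_2$-Lipschitz Hessian plays no role here.
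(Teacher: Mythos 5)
Your proof is correct and is exactly the standard argument from the cited source (Nesterov, Lemma 1.2.4 / 4.1.1): write the gradient difference as $\int_0^1 \nabla^2 f(\vx + t(\vy-\vx))(\vy-\vx)\,dt$, subtract the constant-Hessian term, bound the integrand by the operator norm and the Lipschitz hypothesis, and integrate $L_2 t$ over $[0,1]$. The paper does not give its own proof of this lemma, only a citation, and your derivation coincides with the one it points to; the only caveat you need, that $[\vx,\vy]\subseteq\mathcal{X}$, you correctly note is supplied by convexity of $\mathcal{X}$.
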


With the results from Lemma~\ref{lemma:scaledProj} and Lemma~\ref{lemma:LipschitzHessian} we can formalize the local convergence of the PVM algorithm.

\begin{lemma}[Local convergence of Projected Variable-Metric algorithm] \label{lemma:convervenceExactProjAppx}
Given an $L$-smooth and $\mu$-strongly convex function with $L_2$-Lipschitz Hessian and a compact convex set $\mathcal{X}$, if $\tilde{\vx}^*_{k+1} = \argmin\limits_{\vx \in \mathcal{X}} \hat{f}_{k} (\vx)$
then for all $k \geq 0$:
\begin{align*}
   \norm{\tilde{\vx}^*_{k+1} - \vx^*}^2 & \leq \frac{\eta_k^2 L_2^2}{4\mu^2}\norm{\vx_{k} - \vx^*}^4 + \frac{2 L \eta_k\left( \eta_k - 1 \right)}{\mu} \norm{\vx_{k} - \vx^*}^2.
\end{align*}
where the parameter $\eta_k$ measures how well $H_k$ approximates $\nabla^2 f(\vx_k)$.
  \begin{proof}
 Using the definition of $\tilde{\vx}^*_{k+1}$ (see Remark \ref{fact:equivNewton}) and Lemma~\ref{lemma:optCond} we have:
        \begin{align}
       \norm{\tilde{\vx}^*_{k+1} - \vx^*}_{H_k}^2  =& \norm{\Pi^{H_k}_{\cx}\left(\vx_{k} - H_k^{-1} \nabla f(\vx_{k}) \right) - \Pi^{H_k}_{\cx}\left(\vx^* - H_k^{-1} \nabla f(\vx^*) \right) }_{H_k}^2 \\
         \leq & \norm{\left(\vx_{k} - \vx^* \right) - H_k^{-1} \left( \nabla f(\vx_{k}) - \nabla f(\vx^*) \right) }_{H_k}^2 \\
         = &\norm{H_k \left(\vx_{k} - \vx^* \right) -  \left(\nabla f(\vx_{k}) - \nabla f(\vx^*) \right)  }_{H_k^{-1}}^2 \\
         = &\norm{\vx_{k} - \vx^*}_{H_k}^2 + \norm{\nabla f(\vx_{k}) - \nabla f(\vx^*)}^2_{H^{-1}_k}\\
        & - 2 \innp{\vx_{k} - \vx^*, \nabla f(\vx_{k}) - \nabla f(\vx^*)}. \label{Eq:ConvIntermediate}
    \end{align}
        Where the first inequality is a consequence of Lemma~\ref{lemma:scaledProj}. We can apply Lemma~\ref{lemma:scalingCondition} and Remark~\ref{lemma:scalingConditionInv} to bound $\norm{\vx_{k} - \vx^*}_{H_k}^2$ and $\norm{\nabla f(\vx_{k}) - \nabla f(\vx^*)}^2_{H^{-1}_k}$ in Equation~\eqref{Eq:ConvIntermediate}, this allows us to write: 
    \begin{align*}
       \norm{\tilde{\vx}^*_{k+1} - \vx^*}_{H_k}^2  & \leq \eta_k\norm{\nabla f(\vx_{k}) - \nabla f(\vx^*)}^2_{\nabla^2 f(\vx_{k})^{-1}} - 2\eta_k \innp{\vx_{k} - \vx^*, \nabla f(\vx_{k}) - \nabla f(\vx^*)}\\
        & \quad +\eta_k  \norm{\vx_{k} - \vx^*}_{\nabla^2 f(\vx_{k})}^2     + 2 \left( \eta_k - 1 \right) \innp{\vx_{k} - \vx^*, \nabla f(\vx_{k}) - \nabla f(\vx^*)} \\
        & = \eta_k \norm{\nabla^2 f(\vx_{k})  \left(\vx_{k} - \vx^* \right) -  \left(\nabla f(\vx_{k}) - \nabla f(\vx^*) \right)  }_{\nabla^2 f(\vx_{k})^{-1}}^2  \\ 
        & \quad + 2 \left( \eta_k - 1 \right) \innp{\vx_{k} - \vx^*, \nabla f(\vx_{k}) - \nabla f(\vx^*)}  \\
        & \leq \frac{\eta_k}{\mu} \norm{\nabla^2 f(\vx_{k})  \left(\vx_{k} - \vx^* \right) -  \left(\nabla f(\vx_{k}) - \nabla f(\vx^*) \right)  }^2  \\
        & \quad + 2 \left( \eta_k - 1 \right) \innp{\vx_{k} - \vx^*, \nabla f(\vx_{k}) - \nabla f(\vx^*)}.
    \end{align*}
 The last inequality is a consequence of the $\mu$-strong convexity of $f$ which ensures that $ \nabla^2 f(\vx_{k})^{-1}\preceq \mu^{-1} I^n$. Using the fact that the Hessian is  $L_2$-Lipschitz and applying Lemma~\ref{lemma:LipschitzHessian} and using the $L$-smoothness of $f$ leads to:
    \begin{align}
       \norm{\tilde{\vx}^*_{k+1} - \vx^*}_{H_k}^2 & \leq \frac{\eta_k L_2^2}{4\mu}\norm{\vx_{k} - \vx^*}^4 + 2 L\left( \eta_k - 1 \right) \norm{\vx_{k} - \vx^*}^2.
       \label{Eq:ExactHessianIntermediate3}
    \end{align}
     Using Lemma~\ref{lemma:scalingCondition} along with the $\mu$-strong convexity of $f$ and reordering the expression shown in Equation~\eqref{Eq:ExactHessianIntermediate3} completes the proof.
  \end{proof}
\end{lemma}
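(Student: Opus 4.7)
The plan is to obtain the bound by combining the contractive (firm non-expansive) property of the scaled projection with a two-part error decomposition: a term that would appear even with exact Hessian information (giving the quartic $L_2$-Lipschitz contribution) and a residual driven by $\eta_k - 1$ that captures the Hessian mismatch. First I would identify both sides of the comparison as fixed points of the same scaled projection: by Remark~\ref{fact:equivNewton}, $\tilde{\vx}^*_{k+1} = \Pi^{H_k}_{\cx}(\vx_k - H_k^{-1}\nabla f(\vx_k))$, and by Lemma~\ref{lemma:optCond} applied with $H = H_k$ we have $\vx^* = \Pi^{H_k}_{\cx}(\vx^* - H_k^{-1}\nabla f(\vx^*))$. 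Subtracting these and invoking the non-expansiveness of the scaled projection in the $H_k$-norm (Lemma~\ref{lemma:scaledProj}) yields
\[
\norm{\tilde{\vx}^*_{k+1} - \vx^*}_{H_k}^2 \le \norm{(\vx_k - \vx^*) - H_k^{-1}(\nabla f(\vx_k) - \nabla f(\vx^*))}_{H_k}^2.
\]

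Next I would expand the right-hand side. Rewriting the argument as $H_k^{-1}[H_k(\vx_k - \vx^*) - (\nabla f(\vx_k) - \nabla f(\vx^*))]$ converts the $H_k$-norm to an $H_k^{-1}$-norm, after which expanding the square gives three pieces: $\norm{\vx_k - \vx^*}_{H_k}^2$, $\norm{\nabla f(\vx_k) - \nabla f(\vx^*)}_{H_k^{-1}}^2$, and $-2\innp{\vx_k - \vx^*, \nabla f(\vx_k) - \nabla f(\vx^*)}$. The crucial algebraic step is to replace $H_k$ by $\nabla^2 f(\vx_k)$ in the first two quadratic terms at the cost of a multiplicative $\eta_k$ (via Lemma~\ref{lemma:scalingCondition} and Remark~\ref{lemma:scalingConditionInv}), then add and subtract $2\eta_k\innp{\vx_k - \vx^*, \nabla f(\vx_k) - \nabla f(\vx^*)}$. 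This regroups the three pieces into $\eta_k$ times the perfect-Hessian square $\norm{\nabla^2 f(\vx_k)(\vx_k - \vx^*) - (\nabla f(\vx_k) - \nabla f(\vx^*))}_{\nabla^2 f(\vx_k)^{-1}}^2$ plus a residual $2(\eta_k - 1)\innp{\vx_k - \vx^*, \nabla f(\vx_k) - \nabla f(\vx^*)}$.

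From here the two terms are handled separately. For the perfect-Hessian square I would use $\nabla^2 f(\vx_k)^{-1} \preceq \mu^{-1} I^n$ (strong convexity) to drop to the Euclidean norm, then apply Lemma~\ref{lemma:LipschitzHessian} to bound the resulting quantity by $(L_2/2)^2 \norm{\vx_k - \vx^*}^4 / \mu$. For the residual, $L$-smoothness (or the co-coercivity inequality) gives $\innp{\vx_k - \vx^*, \nabla f(\vx_k) - \nabla f(\vx^*)} \le L \norm{\vx_k - \vx^*}^2$. Collecting these yields a bound on $\norm{\tilde{\vx}^*_{k+1} - \vx^*}_{H_k}^2$; finally, converting back to the Euclidean norm on the left-hand side by observing $\norm{\cdot}^2 \le \eta_k \mu^{-1} \norm{\cdot}_{H_k}^2$ (again using strong convexity combined with Lemma~\ref{lemma:scalingCondition}) introduces one last factor of $\eta_k/\mu$, producing the stated coefficients $\eta_k^2 L_2^2/(4\mu^2)$ and $2L\eta_k(\eta_k-1)/\mu$.

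The main obstacle will be the bookkeeping of $\eta_k$ factors when moving between the $H_k$-norm, the $\nabla^2 f(\vx_k)$-norm, and the Euclidean norm on both sides of the inequality; in particular, the add-and-subtract trick that isolates a clean "exact-Hessian" square plus an $(\eta_k - 1)$-scaled residual is the only nonroutine step, and getting the constants to match requires being careful about which norm each term is measured in at each stage.
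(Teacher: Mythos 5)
Your proposal matches the paper's proof essentially step for step: identifying both $\tilde{\vx}^*_{k+1}$ and $\vx^*$ as images of the same $H_k$-scaled projection, applying non-expansiveness in the $H_k$-norm, expanding the square after passing to the $H_k^{-1}$-norm, using the add-and-subtract of $2\eta_k\innp{\vx_k - \vx^*,\nabla f(\vx_k) - \nabla f(\vx^*)}$ to isolate a perfect-Hessian square plus an $(\eta_k - 1)$-weighted residual, bounding the square via $\nabla^2 f(\vx_k)^{-1}\preceq \mu^{-1}I^n$ and the Lipschitz-Hessian lemma, bounding the residual via $L$-smoothness, and finally converting back to the Euclidean norm using strong convexity and the scaling lemma. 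This is the same decomposition and the same sequence of lemma applications as in the paper.
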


As we can see, even if the scaled projection subproblems are solved to optimality  we arrive at a convergence rate for $\norm{\tilde{\vx}^*_{k+1}- \vx^*}$ that is linear-quadratic in terms of $\norm{\vx_{k} -\vx^*}$, and we do not obtain local quadratic convergence without additional assumptions on how well $H_k$ approximates $\nabla^2 f(\vx_k)$, due to $\eta_k - 1$ in the second term in Equation~\eqref{Eq:ExactHessianIntermediate3}. This can be remedied with Assumption~\ref{assumption:accuracyHessian}:

\begin{corollary} \label{Corollary:Appx:ConvExact}
  If in addition to the conditions described in Lemma~\ref{lemma:convervenceExactProjAppx} we also assume that Assumption~\ref{assumption:accuracyHessian} is satisfied, we have:
  \begin{align}
   \norm{\tilde{\vx}^*_{k+1} - \vx^*} & \leq  \sqrt{\frac{\eta_k}{\mu}\left( \frac{\eta_k L_2^2}{4\mu} + 2 L\omega \right) }\norm{\vx_{k} - \vx^*}^2, \label{Eq:Appx:ConvExact}
\end{align}
where $\omega \geq 0$ is described in Equation~\eqref{Eq:accuracyParam}.
\end{corollary}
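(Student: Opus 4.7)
The plan is to directly combine Lemma~\ref{lemma:convervenceExactProjAppx} with Assumption~\ref{assumption:accuracyHessian}. Lemma~\ref{lemma:convervenceExactProjAppx} already supplies the linear-quadratic bound
\[
\norm{\tilde{\vx}^*_{k+1} - \vx^*}^2 \leq \frac{\eta_k^2 L_2^2}{4\mu^2}\norm{\vx_{k} - \vx^*}^4 + \frac{2 L \eta_k\left( \eta_k - 1 \right)}{\mu} \norm{\vx_{k} - \vx^*}^2,
\]
and the only obstruction to a purely quadratic rate is the factor $(\eta_k - 1)$ multiplying $\norm{\vx_k - \vx^*}^2$ in the second summand. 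Assumption~\ref{assumption:accuracyHessian}, applied to $\vx = \vx_k$, is tailor-made to remove precisely this obstruction: it guarantees $\eta_k - 1 \leq \omega \norm{\vx_k - \vx^*}^2$.

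The proof then reduces to a short calculation. First I would substitute the bound $\eta_k - 1 \leq \omega \norm{\vx_k - \vx^*}^2$ into the second term of the right-hand side of the Lemma, which converts $\norm{\vx_k - \vx^*}^2$ there into $\omega \norm{\vx_k - \vx^*}^4$. Both terms on the right-hand side then share the common factor $\norm{\vx_k - \vx^*}^4$, which I would pull out to obtain
\[
\norm{\tilde{\vx}^*_{k+1} - \vx^*}^2 \leq \left( \frac{\eta_k^2 L_2^2}{4\mu^2} + \frac{2 L \eta_k \omega}{\mu} \right) \norm{\vx_k - \vx^*}^4 = \frac{\eta_k}{\mu}\left( \frac{\eta_k L_2^2}{4\mu} + 2 L \omega \right) \norm{\vx_k - \vx^*}^4.
\]
Taking square roots on both sides, and noting that all constants involved are nonnegative (since $\eta_k \geq 1$, $\mu, L, L_2 > 0$, and $\omega \geq 0$), yields the bound stated in the corollary.

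There is no real obstacle here, as the corollary is essentially a direct specialization of the Lemma under the additional Hessian-accuracy oracle assumption. The only small thing worth checking is that the factorization is valid (i.e.\ that $\eta_k/\mu$ can indeed be pulled out cleanly), which is immediate from the algebra above. Consequently, this can be written as a compact two-line derivation in the appendix without needing any auxiliary machinery beyond Lemma~\ref{lemma:convervenceExactProjAppx} and Assumption~\ref{assumption:accuracyHessian}.
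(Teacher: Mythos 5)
Your proposal is correct and is exactly the intended derivation: the paper omits an explicit proof of this corollary precisely because it reduces to substituting $\eta_k - 1 \leq \omega\norm{\vx_k - \vx^*}^2$ from Assumption~\ref{assumption:accuracyHessian} into Lemma~\ref{lemma:convervenceExactProjAppx}, factoring out $\eta_k/\mu$, and taking the square root. Your algebra checks out, and the resulting bound is the same one stated in Theorem~\ref{theorem:convervenceExactProj} in the main text (written there in unfactored form).
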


Even though $\norm{\tilde{\vx}^*_{k+1} - \vx^*}$ may converge quadratically, what we are interested in is in the quadratic convergence of $\norm{\vx_{k+1} - \vx^*}$, formed as $\vx_{k+1} = \vx_k + \gamma_k (\tilde{\vx}^*_{k+1} - \vx_k)$, that is:
\begin{align}
 \norm{\vx_{k+1} - \vx^*} &=   \norm{\vx_{k} + \gamma_k \left(\tilde{\vx}_{k+1}^* - \vx_{k} \right)  - \vx^* } \\
 &=   \norm{\left(1 - \gamma_k \right)\left(\vx_{k} - \vx^*\right) + \gamma_k \left(\tilde{\vx}^*_{k+1} - \vx^* \right) } \\
& \leq  \left(1 - \gamma_k \right)\norm{\vx_{k} - \vx^*} + \gamma_k\norm{\tilde{\vx}^*_{k+1} - \vx^*}. \label{eq:inexactProjections}
\end{align}
We can see from Equation~\eqref{eq:inexactProjections} that we will only have the desired convergence rate if $\left(1 - \gamma_k \right) \leq\beta \norm{\vx_{k} - \vx^*}$ for some $\beta \geq 0$, that is, we either need to set $\gamma_k=1$, or select a step size strategy that makes $\gamma_k$ converge to $1$ fast enough. 

\newpage

\section{Second-order Conditional Gradient Sliding} \label{appx:SOCG}

In Section~\ref{Section:Appx:InexactScaledProjections} we prove that the Inexact PVM steps (Lines~\ref{algLine:PNStep1}-\ref{algLine:PNStep3} in Algorithm~\ref{algo:proj-Newton}) that the SOCGS algorithm computes contract the distance to the optimum and the primal gap quadratically when close enough to $\vx^*$, by carefully choosing the $\varepsilon_k$-parameter at each iteration. First, we review the SOCGS from the main body of the text (shown in Algorithm~\ref{algo:Appx:proj-Newton}), and then we review a key result in Lemma~\ref{Lemma:Appx:AccuracyHessian} that measures the accuracy of the Hessian matrix approximation $H$ as we approach $\vx^*$, which will be used in the convergence proofs.

\begin{algorithm}
\SetKwInOut{Input}{Input}\SetKwInOut{Output}{Output}
\SetKwComment{Comment}{//}{}
\Input{Point $\vx \in \cx$}
\Output{Point $\vx_{K } \in \cx$}
\hrulealg
$\vx_0 \leftarrow \argmin_{\mathbf{v} \in  \mathcal{X}} \innp{\nabla f\left(\vx \right), \mathbf{v}}$, $\cs_0 \leftarrow \{ \vx_0 \}$, $\vlambda_0(\vx_0) \leftarrow 1$\;
$\vx_0^{\text{ACG}} \leftarrow \vx_0$, $\cs_0^{\text{ACG}} \leftarrow \cs_0$, $\vlambda_0^{\text{ACG}}(\vx_0) \leftarrow 1$\;
\For{$k = 0$ \textbf{to} $K - 1$}{
$\vx^{\text{ACG}}_{k + 1}, \cs^{\text{ACG}}_{k + 1}, \vlambda^{\text{ACG}}_{k + 1} \leftarrow $ ACG$\left( \nabla f(\vx_k), \vx^{\text{ACG}}_{k} , \cs^{\text{ACG}}_{k}, \vlambda^{\text{ACG}}_{k} \right)$ \label{algLine:Appx:ACG} \Comment*[r]{ACG step}
$H_{k} \leftarrow \Omega \left(\vx_{k}  \right)$ \label{algLine:Appx:updateHessian} \Comment*[r]{Call Hessian oracle}
$\hat{f}_k\left( \vx\right) \leftarrow \innp{\nabla f\left( \vx_k\right), \vx - \vx_k} + \frac{1}{2} \norm{\vx - \vx_k}^2_{H_k}$ \label{algLine:Appx:buildQuadratic} \Comment*[r]{Build quadratic approximation}
$\varepsilon_k \leftarrow \left(\frac{lb\left( \vx_k\right)}{\norm{\nabla f\left( \vx_k\right)}}\right)^4$ \label{AlgLine:Appx:AccuracyParameter}\;
$\tilde{\vx}^0_{k + 1} \leftarrow \vx_k$, $\tilde{\cs}^0_{k + 1} \leftarrow \cs_k$, $\tilde{\vlambda}^0_{k + 1} \leftarrow \vlambda_k$, $t \leftarrow 0$ \label{algLine:Appx:TransferPoint} \; 
\While(\tcp*[f]{Compute Inexact PVM step}){$\max\limits_{\mathbf{v} \in \cx}   \langle \nabla \hat{f}_k ( \tilde{\vx}^t_{k + 1} ), \tilde{\vx}^t_{k + 1} - \mathbf{v} \rangle \geq \varepsilon_k$ \label{algLine:Appx:PNStep1}}{
$\tilde{\vx}^{t+1}_{k + 1}, \tilde{\cs}^{t+1}_{k + 1}, \tilde{\vlambda}^{t+1}_{k + 1} \leftarrow $ ACG$\left(\nabla  \hat{f}_k(\tilde{\vx}^{t}_{k + 1}), \tilde{\vx}^{t}_{k + 1}, \tilde{\cs}^{t}_{k + 1}, \tilde{\vlambda}^{t}_{k + 1} \right)$ \label{algLine:Appx:PNStep2} \;
$t \leftarrow t+1$\;
} \label{algLine:Appx:PNStep3} 
$\tilde{\vx}_{k + 1} \leftarrow \tilde{\vx}^t_{k + 1} $, $\tilde{\cs}_{k + 1} \leftarrow \tilde{\cs}^t_{k + 1}$, $\tilde{\vlambda}_{k + 1} \leftarrow \tilde{\vlambda}^t_{k + 1}$ \label{algLine:Appx:OutputPVM} \;
\uIf{$f\left(\tilde{\vx}_{k + 1} \right) \leq f(\vx^{\textup{ACG}}_{k + 1} )$ \label{algLine:Appx:MonotonicityFirst}}{
$\vx_{k+1} \leftarrow \tilde{\vx}_{k + 1} $, $\cs_{k+1} \leftarrow \tilde{\cs}_{k + 1}$, $\vlambda_{k+1} \leftarrow \tilde{\vlambda}_{k + 1}$\label{algLine:Appx:PVMStep} \Comment*[r]{Choose Inexact PVM step}}
\Else{
$\vx_{k+1} \leftarrow \vx^{\text{ACG}}_{k + 1} $, $\cs_{k+1} \leftarrow \cs^{\text{ACG}}_{k + 1}$, $\vlambda_{k+1} \leftarrow \vlambda^{\text{ACG}}_{k + 1}$\Comment*[r]{Choose ACG step}} \label{algLine:Appx:ACGStep}
} \label{algLine:Appx:MonotonicityLast}
\caption{Second-order Conditional Gradient Sliding (SOCGS) Algorithm}\label{algo:Appx:proj-Newton}
\end{algorithm}

The algorithm couples an independent ACG step with line search (Line~\ref{algLine:Appx:ACG}) with an Inexact PVM step with unit step size (Lines~\ref{algLine:Appx:PNStep1}-\ref{algLine:Appx:PNStep3}). At the end of each iteration we choose the step that provides the greatest primal progress (Lines~\ref{algLine:Appx:MonotonicityFirst}-\ref{algLine:Appx:ACGStep}). The ACG steps in Line~\ref{algLine:Appx:ACG} will ensure global linear convergence in primal gap, and the Inexact PVM steps in Lines~\ref{algLine:Appx:MonotonicityFirst}-\ref{algLine:Appx:ACGStep} will provide quadratic convergence.

Note that the ACG iterates in Line~\ref{algLine:Appx:ACG} do not depend on the Inexact PVM steps in Lines~Lines~\ref{algLine:Appx:PNStep1}-\ref{algLine:Appx:PNStep3}. This is because the ACG steps do not contract the primal gap on a per-iteration basis, and if the active sets of the ACG steps in Line~\ref{algLine:Appx:ACG} were to be modified using the active set of the PVM steps in Lines~\ref{algLine:Appx:PNStep1}-\ref{algLine:Appx:PNStep3}, this would break the proof of linear convergence in Theorem~\ref{theorem:Appx:ConvergenceACG} for the ACG algorithm. The proof in Theorem~\ref{theorem:Appx:ConvergenceACG} crucially relies on the fact that at each iteration of the ACG algorithm we can pick up or drop at most one vertex from the active set, whereas a PVM step may have dropped or picked up multiple vertices from the active set. The line search in the ACG step (Line~\ref{algLine:Appx:ACG}) can be substituted with a step size strategy that requires knowledge of the $L$-smoothness parameter of $f(\vx)$ \cite{pedregosa2020linearly}.

We compute the scaled projection in the Inexact PVM step (Lines~\ref{algLine:Appx:MonotonicityFirst}-\ref{algLine:Appx:ACGStep}) using the ACG algorithm with exact line search, as the objective function is quadratic, thereby making the SOCGS algorithm (Algorithm~\ref{algo:Appx:proj-Newton}) projection-free. As the function being minimized in the Inexact PVM steps is quadratic there is a closed-form expression for the optimal step size in Line~\ref{algLine:Appx:PNStep2}. The scaled projection problem is solved to an accuracy $\varepsilon_k$ such that $\hat{f}_{k}(\tilde{\vx}_{k+1}) - \min_{\vx \in \cx} \hat{f}_k\left( \vx\right) \leq \varepsilon_k$, using the Frank-Wolfe gap as a stopping criterion, as in the CGS algorithm \cite{lan2016conditional}. The accuracy parameter $\varepsilon_k$ in the SOCGS algorithm depends on a lower bound on the primal gap of Problem~\ref{eq:OptProblem} which we denote by $lb\left( \vx_k \right)$ that satisfies $lb\left( \vx_k \right) \leq f\left(\vx_k \right) - f\left(\vx^* \right)$.

\begin{lemma} \label{Lemma:Appx:AccuracyHessian}
Given a $\mu$-strongly convex and $L$-smooth function $f(\vx)$ and a convex set $\cx$, then for any $\vx \in \cx$ and any matrix $H \in \mathcal{S}^n_{++}$ that satisfies Assumption~\ref{assumption:accuracyHessian} at $\vx$ we have that:
\begin{align}
\norm{ H^{-1} -  [ \nabla^2 f\left(\vx \right) ]^{-1} } & \leq \frac{\eta\omega}{\mu} \norm{\vx - \vx^*}^2. \label{Eq:Appx:LemmaInvHess}
\end{align}
Similarly, we also have that:
\begin{align}
\norm{ H -  \nabla^2 f\left(\vx \right)  } & \leq \eta\omega L \norm{\vx - \vx^*}^2. \label{Eq:Appx:LemmaHess}
\end{align}
\begin{proof}
We can bound the term on the left-hand side of Equation~\eqref{Eq:Appx:LemmaInvHess} as:
\begin{align}
    \norm{H^{-1}- [ \nabla^2 f\left(\vx\right) ]^{-1}} & = \norm{H^{-1}\left(H[\nabla^2 f\left(\vx \right)]^{-1}- I^n\right)} \\
    & \leq \norm{H^{-1}}\norm{H[\nabla^2 f\left(\vx \right)]^{-1}- I^n} \label{eq:submultiplicative}\\
    & = \lambda_{\max} \left(H^{-1} \right) \norm{H[\nabla^2 f\left(\vx \right)]^{-1}- I^n} \\
    & \leq \eta/\mu \norm{H[\nabla^2 f\left(\vx \right)]^{-1}- I^n}. \label{Eq:Appx:StrCvxty}
\end{align}
We obtain Equation~\eqref{eq:submultiplicative} from the fact that the spectral norm of a matrix is submultiplicative, and both matrices are square. The inequality shown in Equation~\eqref{Eq:Appx:StrCvxty} follows from $H\in \mathcal{S}^n_{++}$ and Corollary~\ref{corollary:Eigenvalues}. Proceeding similarly, we can also bound the previous quantity as:
\begin{align}
    \norm{H^{-1}- [ \nabla^2 f\left(\vx \right) ]^{-1} } & \leq 1/\mu\norm{\nabla^2 f\left(\vx \right)H^{-1}- I^n} \\
    & \leq \eta/\mu \norm{\nabla^2 f\left(\vx \right)H^{-1}- I^n}. \label{Eq:Appx:StrCvxtBound}
\end{align}
Where the inequality in Equation~\eqref{Eq:Appx:StrCvxtBound} follows from fact that $\eta \geq 1$. Putting together these bounds, we have that:
\begin{align*}
    \norm{H^{-1}- [ \nabla^2 f\left(\vx \right) ]^{-1} } &  \leq\frac{\eta}{\mu} \max\left\{\norm{H[\nabla^2 f\left(\vx \right)]^{-1}- I^n} ,\norm{\nabla^2 f\left(\vx \right)H^{-1}- I^n} \right\}.
\end{align*}
Each of the terms in the maximization operator in the previous equation can be written as:
\begin{align}
    \norm{\nabla^2 f\left(\vx \right)H^{-1}- I^n} & =  \sigma_{\max} \left(\nabla^2 f\left(\vx \right)H^{-1}- I^n\right)\\
    & = \max\limits_{1 \leq i \leq n} \abs{\lambda_{i} \left(\nabla^2 f\left(\vx \right)H^{-1}- I^n\right)}  \label{Eq:Appx:SingVal}\\
    & =  \max\limits_{1 \leq i \leq n} \abs{\lambda_{i} \left(\nabla^2 f\left(\vx \right)H^{-1}\right)- 1}. \label{Eq:Appx:EigenvalIdentity}
\end{align}
Where the equality in Equation~\eqref{Eq:Appx:SingVal} follows from the fact that the maximum singular value of a square matrix is equal to the maximum absolute value of the eigenvalues of the matrix. This allows us to write:
\begin{equation}
\begin{aligned}
 \max\left\{\norm{H[\nabla^2 f\left(\vx \right)]^{-1}- I^n} ,\norm{\nabla^2 f\left(\vx \right)H^{-1}- I^n} \right\}= \max\bigg\{  & \max\limits_{1 \leq i \leq n}\abs{\lambda_{i} \left(\nabla^2 f\left(\vx \right)H^{-1}\right)- 1} ,\\
      & \max\limits_{1 \leq i \leq n}\abs{\lambda_{i} \left(H[\nabla^2 f\left(\vx \right)]^{-1}\right)- 1} \bigg\} \\
= \max\limits_{1 \leq i \leq n}&\bigg\{   \max\big\{\abs{\lambda_{i} \left(\nabla^2 f\left(\vx \right)H^{-1}\right)- 1} ,\\
      & \abs{\lambda_{n + 1 -i} \left(H[\nabla^2 f\left(\vx \right)]^{-1}\right)- 1} \big\} \bigg\}. \label{Eq:Appx:BoundMatrix} 
\end{aligned}
\end{equation}
We can get rid of the absolute values in the previous expression using the fact that if $0 < z \leq 1$, where $z\in \rr$, then $\abs{z - 1} \leq 1 / z - 1$. Note that as $H, \nabla^2 f\left(\vx \right) \in \mathcal{S}^n_{++}$ we have that $\lambda_{i} \left(H[\nabla^2 f\left(\vx \right)]^{-1}\right) = \lambda_{i} \left([\nabla^2 f\left(\vx \right)]^{-1/2}H[\nabla^2 f\left(\vx \right)]^{-1/2}\right)>0$ and $\lambda_{i} \left(\nabla^2 f\left(\vx \right)H^{-1}\right)=\lambda_{i} \left(H^{-1/2} \nabla^2 f\left(\vx \right)H^{-1/2}\right)>0$, moreover $\lambda_{i} \left(H[\nabla^2 f\left(\vx \right)]^{-1}\right) = 1/ \lambda_{n+1-i} \left(\nabla^2 f\left(\vx \right)H^{-1}\right)$ as $(H[\nabla^2 f\left(\vx \right)]^{-1})^{-1} = \nabla^2 f\left(\vx \right)H^{-1}$. This means that for any $1\leq i\leq n$ such that $\lambda_{i} \left(\nabla^2 f\left(\vx \right)H^{-1}\right) \leq 1$ we have that:
\begin{align*}
\big\lvert \lambda_{i} \left(\nabla^2 f\left(\vx \right)H^{-1}\right) - 1\big\rvert &\leq 1/ \lambda_{i} \left(\nabla^2 f\left(\vx \right)H^{-1}\right) - 1 = \lambda_{n+1-i} \left(H[\nabla^2 f\left(\vx \right)]^{-1}\right) - 1.
\end{align*}
Similarly, for any $1\leq i\leq n$ such that $\lambda_{i} \left(H[\nabla^2 f\left(\vx \right)]^{-1}\right) \leq 1$ we have that:
\begin{align*}
\big\lvert \lambda_{i} \left(H[\nabla^2 f\left(\vx \right)]^{-1}\right) - 1\big\rvert &\leq 1/ \lambda_{i} \left(H[\nabla^2 f\left(\vx \right)]^{-1}\right) - 1 = \lambda_{n+1-i} \left(\nabla^2 f\left(\vx \right)H^{-1}\right) - 1.
\end{align*}
This means that for all $1\leq i\leq n$ we have that:
\begin{align*}
\max\bigg\{\abs{\lambda_{i} \left(\nabla^2 f\left(\vx \right)H^{-1}\right)- 1} ,\abs{\lambda_{n + 1 -i} \left(H[\nabla^2 f\left(\vx \right)]^{-1}\right)- 1} \bigg\} \leq \max\bigg\{\lambda_{i} \left(\nabla^2 f\left(\vx \right)H^{-1}\right)- 1 ,\lambda_{n + 1 -i} \left(H[\nabla^2 f\left(\vx \right)]^{-1}\right)- 1 \bigg\}.
\end{align*}
Which allows us to write Equation~\eqref{Eq:Appx:BoundMatrix} as:
\begin{equation*}
\begin{aligned}
 \max\left\{\norm{H[\nabla^2 f\left(\vx \right)]^{-1}- I^n} ,\norm{\nabla^2 f\left(\vx \right)H^{-1}- I^n} \right\}= \max\bigg\{  & \max\limits_{1 \leq i \leq n}\left(\lambda_{i} \left(\nabla^2 f\left(\vx \right)H^{-1}\right)- 1 \right),\\
      & \max\limits_{1 \leq i \leq n}\left(\lambda_{i} \left(H[\nabla^2 f\left(\vx \right)]^{-1}\right)- 1\right) \bigg\}. 
\end{aligned}
\end{equation*}
Which immediately leads to:
\begin{align}
 \max\bigg\{ & \max\limits_{1 \leq i \leq n}\left(\lambda_{i} \left(\nabla^2 f\left(\vx \right)H^{-1}\right)- 1 \right),      \max\limits_{1 \leq i \leq n}\left(\lambda_{i} \left(H[\nabla^2 f\left(\vx \right)]^{-1}\right)- 1\right) \bigg\}  \\
     & = \max\left\{ \lambda_{\max} \left(\nabla^2 f\left(\vx \right)H^{-1}\right) , \lambda_{\max} \left(H[\nabla^2 f\left(\vx \right)]^{-1}\right) \right\} - 1 \\
    & = \eta - 1 \label{Eq:Appx:DefinitionHessAcc}\\ 
    & \leq \omega \norm{\vx - \vx^*}^2.  \label{Eq:Appx:HessAssump}
\end{align}
Where Equation~\eqref{Eq:Appx:DefinitionHessAcc} follows from the definition of $\eta$ and Equation~\eqref{Eq:Appx:HessAssump} follows from Assumption~\ref{assumption:accuracyHessian}. Putting this all together allows us to write:
\begin{align*}
\norm{ H^{-1}- [ \nabla^2 f\left(\vx \right) ]^{-1} } & \leq \frac{\eta\omega}{\mu} \norm{\vx - \vx^*}^2.
\end{align*}
The claim shown in Equation~\eqref{Eq:Appx:LemmaHess} follows from a very similar reasoning. With the only difference that:
\begin{align}
    \norm{H- \nabla^2 f\left(\vx \right)  } &  \leq \eta L \max\left\{\norm{[\nabla^2 f\left(\vx \right)]^{-1}H- I^n} ,\norm{H^{-1}\nabla^2 f\left(\vx \right)- I^n} \right\}. \label{Eq:Appx:LemmaAccHessian}
\end{align}
The maximization term on the right-hand side of Equation~\eqref{Eq:Appx:LemmaAccHessian} can be bound exactly like in the first claim.
\end{proof}
\end{lemma}

\subsection{Inexact Projected Variable-Metric steps} \label{Section:Appx:InexactScaledProjections}

We first begin by showing that if the PVM steps are computed inexactly using the error criterion shown in the SOCGS algorithm (Line~\ref{AlgLine:AccuracyParameter} of Algorithm~\ref{algo:proj-Newton}) they still achieve local quadratic convergence in distance to the optimum.

\begin{lemma} \label{Lemma:Appx:InexactConvergenceOptimums}
Given a $\mu$-strongly convex function $f(\vx)$ and a compact convex set $\mathcal{X}$, if $\tilde{\vx}_{k+1}$ denotes an $\varepsilon_k$-optimal solution to $\tilde{\vx}^*_{k+1} = \argmin_{\vx \in \mathcal{X}} \hat{f}_k\left( \vx \right)$ where $\varepsilon_k =  (lb(\vx_k)/\norm{\nabla f(\vx_k)})^4$ and $lb(\vx_k)$ denotes a lower bound on the primal gap such that $lb(\vx_k) \leq f(\vx_k) - f(\vx^*)$ then:
 \begin{align*}
      \norm{\tilde{\vx}_{k+1} - \tilde{\vx}_{k+1}^*} &\leq  \sqrt{\frac{2\eta_k}{\mu}}  \norm{\vx_k - \vx^*}^{2}.
\end{align*}
    where the parameter $\eta_k = \max\{ \lambda_{\max}( H_k^{-1} \nabla^2 f(\vx_k)),  \lambda_{\max}([\nabla^2 f(\vx_k) ]^{-1}H_k )\} \geq 1$ measures how well $H_k$ approximates $\nabla^2 f(\vx_k)$.
  \begin{proof}
 By the strong convexity of $\hat{f}_{k}$ (as $H_k \in \mathcal{S}^n_{++}$) we have that:
    \begin{align}
         \varepsilon_k & \geq \hat{f}_{k}(\tilde{\vx}_{k+1}) - \hat{f}_{k}(\tilde{\vx}^*_{k+1}) \\
         & \geq  \innp{\nabla \hat{f}_{k}(\tilde{\vx}^*_{k+1}), \tilde{\vx}_{k+1} - \tilde{\vx}^*_{k+1}} + \frac{\lambda_{\min}(H_k)}{2} \norm{\tilde{\vx}_{k+1} - \tilde{\vx}^*_{k+1}}^2  \\
         & \geq  \innp{\nabla \hat{f}_{k}(\tilde{\vx}^*_{k+1}), \tilde{\vx}_{k+1} - \tilde{\vx}^*_{k+1}} + \frac{\mu}{2\eta_k} \norm{\tilde{\vx}_{k+1} - \tilde{\vx}^*_{k+1}}^2  \label{Eq:Appx:BoundEigenval}\\
        & \geq  \frac{\mu}{2\eta_k} \norm{\tilde{\vx}_{k+1} - \tilde{\vx}^*_{k+1}}^2. \label{Eq:Appx:OptCond}
    \end{align}
    The inequality in Equation~\eqref{Eq:Appx:BoundEigenval} follows from Corollary~\ref{corollary:Eigenvalues} and the one in Equation~\eqref{Eq:Appx:OptCond} from the first-order optimality conditions for the scaled projection problem, of which $\tilde{\vx}^*_{k+1}$ is the exact solution. Rearranging the previous expression allows us to conclude that $\norm{\tilde{\vx}_{k+1} - \tilde{\vx}^*_{k+1}} \leq \sqrt{2\eta_k\varepsilon_k/\mu}$. If we plug in the value of $\varepsilon_k$ in the previous bound:
 \begin{align}
      \norm{\tilde{\vx}_{k+1} - \tilde{\vx}^*_{k+1}} &\leq \sqrt{\frac{2\eta_k}{\mu}\varepsilon_k} \\
       & = \sqrt{\frac{2\eta_{k}}{\mu}} \left(\frac{lb(\vx_k)}{\norm{\nabla f(\vx_k)}} \right)^{2} \\
       & \leq \sqrt{\frac{2\eta_k}{\mu}} \left(\frac{f(\vx_k) - f(\vx^*)}{\norm{\nabla f(\vx_k)}} \right)^{2}  \label{Eq:Appx:BoundPrimalGap}\\
       & \leq \sqrt{\frac{2\eta_k}{\mu}}  \left(\frac{\innp{\nabla f(\vx_k), \vx_k - \vx^*}}{\norm{\nabla f(\vx_k)}}\right)^{2}  \label{Eq:Appx:BoundPConvexity} \\
       & \leq \sqrt{\frac{2\eta_k}{\mu}}  \norm{\vx_k - \vx^*}^{2}. \label{Eq:Appx:BoundCauchySchwartz} 
\end{align}
Where the inequality in Equation~\eqref{Eq:Appx:BoundPrimalGap} follows from the fact that $lb \left( \vx_k\right)$ is a lower bound on the primal gap, the one in Equation~\eqref{Eq:Appx:BoundPConvexity} follows from the convexity of $f(\vx)$ and the last inequality, in Equation~\eqref{Eq:Appx:BoundCauchySchwartz}, follows from the Cauchy-Schwarz inequality.
 \end{proof}
\end{lemma}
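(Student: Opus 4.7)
The plan is to bound the squared norm $\norm{\tilde{\vx}_{k+1} - \tilde{\vx}_{k+1}^*}^2$ using the strong convexity of $\hat{f}_k$ together with the $\varepsilon_k$-optimality certificate, and then to convert the resulting bound in $\varepsilon_k$ into the claimed quadratic bound in $\norm{\vx_k - \vx^*}$ by exploiting the specific choice of $\varepsilon_k$.

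First, I would establish that $\hat{f}_k$ is $(\mu/\eta_k)$-strongly convex. By definition, $\nabla^2 \hat{f}_k(\vx) = H_k$; applying Corollary~\ref{corollary:Eigenvalues} with $P = \nabla^2 f(\vx_k)$ and $Q = H_k$ yields $\lambda_{\min}(H_k) \geq \lambda_{\min}(\nabla^2 f(\vx_k))/\eta_k \geq \mu/\eta_k$, where the last step uses $\mu$-strong convexity of $f$. With strong convexity in hand, I would write the standard quadratic growth inequality around the minimizer $\tilde{\vx}_{k+1}^*$ of $\hat{f}_k$ over $\cx$: combining the first-order optimality condition $\innp{\nabla \hat{f}_k(\tilde{\vx}_{k+1}^*), \tilde{\vx}_{k+1} - \tilde{\vx}_{k+1}^*} \geq 0$ with the strong convexity lower bound yields
\begin{align*}
\hat{f}_k(\tilde{\vx}_{k+1}) - \hat{f}_k(\tilde{\vx}_{k+1}^*) \geq \frac{\mu}{2\eta_k} \norm{\tilde{\vx}_{k+1} - \tilde{\vx}_{k+1}^*}^2.
\end{align*}
Using the hypothesis that $\tilde{\vx}_{k+1}$ is $\varepsilon_k$-optimal, the left-hand side is at most $\varepsilon_k$, so rearranging produces $\norm{\tilde{\vx}_{k+1} - \tilde{\vx}_{k+1}^*} \leq \sqrt{2\eta_k \varepsilon_k/\mu}$.

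Next, I would substitute $\varepsilon_k = \bigl(lb(\vx_k)/\norm{\nabla f(\vx_k)}\bigr)^4$ and convert the right-hand side into a quadratic bound in $\norm{\vx_k - \vx^*}$. Using $lb(\vx_k) \leq f(\vx_k) - f(\vx^*)$ by assumption, then $f(\vx_k) - f(\vx^*) \leq \innp{\nabla f(\vx_k), \vx_k - \vx^*}$ by convexity of $f$ (applied with the roles of $\vx$ and $\vy$ swapped), and finally Cauchy--Schwarz to obtain $\innp{\nabla f(\vx_k), \vx_k - \vx^*} \leq \norm{\nabla f(\vx_k)}\,\norm{\vx_k - \vx^*}$, I can bound $lb(\vx_k)/\norm{\nabla f(\vx_k)} \leq \norm{\vx_k - \vx^*}$. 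Raising to the fourth power and taking the square root gives $\sqrt{\varepsilon_k} \leq \norm{\vx_k - \vx^*}^2$, and plugging back yields exactly the claimed inequality.

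I do not expect a serious obstacle here: the proof is essentially a chain of standard inequalities (strong convexity, first-order optimality, convexity, Cauchy--Schwarz) glued together by the design of $\varepsilon_k$. The only mild subtlety is being careful that the strong convexity constant of $\hat{f}_k$ is $\mu/\eta_k$ rather than $\mu$, which is what introduces the factor $\eta_k$ into the final bound and which relies on Corollary~\ref{corollary:Eigenvalues} already established in Appendix~\ref{Section:Appx:HessianApprox}.
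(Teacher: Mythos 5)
Your proof is correct and follows essentially the same chain of reasoning as the paper: derive $(\mu/\eta_k)$-strong convexity of $\hat{f}_k$ via Corollary~\ref{corollary:Eigenvalues}, combine with first-order optimality at $\tilde{\vx}_{k+1}^*$ and $\varepsilon_k$-optimality to get $\norm{\tilde{\vx}_{k+1}-\tilde{\vx}_{k+1}^*}\leq\sqrt{2\eta_k\varepsilon_k/\mu}$, then unwind $\varepsilon_k$ through the lower-bound hypothesis, convexity, and Cauchy--Schwarz. No gaps and no meaningful departure from the paper's argument.
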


Using the previous bound along with Corollary~\ref{Corollary:Appx:ConvExact} we can show that the iterates will converge quadratically in distance to the optimum (Lemma~\ref{Lemma:Appx:InexactConvergenceDistance}), despite not solving the problems to optimality.

\begin{lemma}[Quadratic convergence in distance to the optimum of the Inexact Projected-Variable Metric (PMV) steps] \label{Lemma:Appx:InexactConvergenceDistance}
Given a $\mu$-strongly convex and $L$-smooth function $f(\vx)$ with $L_2$-Lipschitz Hessian and a compact convex set $\mathcal{X}$, let $\tilde{x}_{k+1}$ denote an $\varepsilon_k$-optimal solution to $\tilde{\vx}^*_{k+1} = \argmin_{\vx \in \mathcal{X}} \hat{f}_k\left( \vx \right)$ where $\varepsilon_k =  (lb(\vx_k)/\norm{\nabla f(\vx_k)})^4$ and $lb(\vx_k)$ denotes a lower bound on the primal gap such that $lb(\vx_k) \leq f(\vx_k) - f(\vx^*)$, if Assumption~\ref{assumption:accuracyHessian} is satisfied then:
 \begin{align}
      \norm{\tilde{\vx}_{k+1} - \vx^*} &\leq  \frac{\sqrt{\eta_k}}{2\mu}\left(   \sqrt{8\mu}\left(1  + \sqrt{L\omega}\right)  + \sqrt{\eta_k}L_2 \right) \norm{\vx_k - \vx^*}^2. \label{Eq:Appx:DistanceContractionQuadratic}
\end{align}
    where the parameter $\eta_k = \max\{ \lambda_{\max}( H_k^{-1} \nabla^2 f(\vx_k)),  \lambda_{\max}([\nabla^2 f(\vx_k) ]^{-1}H_k )\} \geq 1$ measures how well $H_k$ approximates $\nabla^2 f(\vx_k)$ and $\omega$ is defined in Assumption~\ref{assumption:accuracyHessian}.
\end{lemma}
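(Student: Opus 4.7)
The plan is to decompose $\tilde{\vx}_{k+1} - \vx^*$ via the triangle inequality into (a) the gap between the inexact iterate $\tilde{\vx}_{k+1}$ and the exact scaled projection $\tilde{\vx}^*_{k+1} = \argmin_{\vx \in \cx}\hat{f}_k(\vx)$, and (b) the gap between $\tilde{\vx}^*_{k+1}$ and $\vx^*$. The two pieces have already been handled by earlier results in this paper, so the proof is essentially a direct combination followed by bookkeeping on the constants.

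First I would bound piece (a) using Lemma~\ref{Lemma:Appx:InexactConvergenceOptimums}, which precisely exploits the choice $\varepsilon_k = (lb(\vx_k)/\|\nabla f(\vx_k)\|)^4$ in Line~\ref{AlgLine:Appx:AccuracyParameter} of Algorithm~\ref{algo:Appx:proj-Newton}, together with strong convexity of $\hat f_k$ and the convexity of $f$ (to convert the primal gap bound into $\|\vx_k - \vx^*\|^2$). This yields $\|\tilde{\vx}_{k+1} - \tilde{\vx}^*_{k+1}\| \leq \sqrt{2\eta_k/\mu}\,\|\vx_k - \vx^*\|^2$. Next I would bound piece (b) using Corollary~\ref{Corollary:Appx:ConvExact}, which is the quadratic convergence of the exact PVM step under Assumption~\ref{assumption:accuracyHessian}, giving $\|\tilde{\vx}^*_{k+1} - \vx^*\| \leq \sqrt{(\eta_k/\mu)(\eta_k L_2^2/(4\mu) + 2L\omega)}\,\|\vx_k - \vx^*\|^2$.

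Summing the two bounds and applying the subadditivity of the square root to the second term, $\sqrt{\eta_k L_2^2/(4\mu) + 2L\omega} \leq \sqrt{\eta_k}\,L_2/(2\sqrt{\mu}) + \sqrt{2L\omega}$, would give
\begin{align*}
\|\tilde{\vx}_{k+1} - \vx^*\| &\leq \left[\sqrt{2\eta_k/\mu}\bigl(1 + \sqrt{L\omega}\bigr) + \eta_k L_2/(2\mu)\right]\|\vx_k - \vx^*\|^2.
\end{align*}
Factoring $\sqrt{\eta_k}/(2\mu)$ out of the bracket reproduces the stated bound $\frac{\sqrt{\eta_k}}{2\mu}\bigl(\sqrt{8\mu}(1+\sqrt{L\omega}) + \sqrt{\eta_k}L_2\bigr)\|\vx_k - \vx^*\|^2$, matching Equation~\eqref{Eq:Appx:DistanceContractionQuadratic}.

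There is no real obstacle here: the conceptual work was done in Lemma~\ref{Lemma:Appx:InexactConvergenceOptimums} (controlling the inexactness of the ACG subproblem solver via the $\varepsilon_k$ schedule) and in Corollary~\ref{Corollary:Appx:ConvExact} (quadratic convergence of the exact variable-metric projection). The only minor care needed is the algebraic simplification of the constants and the use of $\sqrt{a+b} \leq \sqrt{a} + \sqrt{b}$ in a form that cleanly factors out $\sqrt{\eta_k}/(2\mu)$; a slightly looser combination would still yield quadratic convergence but with uglier constants. If anything is worth double-checking, it is ensuring that the $\omega$ appearing in Corollary~\ref{Corollary:Appx:ConvExact} genuinely coincides with the $\omega$ in Assumption~\ref{assumption:accuracyHessian} at the query point $\vx_k$, so that both piece (a) and piece (b) are simultaneously valid whenever the Hessian oracle $\Omega$ is queried at $\vx_k$.
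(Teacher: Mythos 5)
Your proof is correct and follows the paper's argument essentially verbatim: the same triangle-inequality split into $\norm{\tilde{\vx}_{k+1} - \tilde{\vx}^*_{k+1}}$ (controlled by Lemma~\ref{Lemma:Appx:InexactConvergenceOptimums}) and $\norm{\tilde{\vx}^*_{k+1} - \vx^*}$ (controlled by Corollary~\ref{Corollary:Appx:ConvExact}), followed by $\sqrt{a+b}\le\sqrt{a}+\sqrt{b}$ and factoring out $\sqrt{\eta_k}/(2\mu)$. Your side-concern about $\omega$ is moot since both cited results invoke the same $\omega$ from Assumption~\ref{assumption:accuracyHessian} at the same query point $\vx_k$.
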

\begin{proof}
  Using the triangle inequality yields:
 \begin{align*}
     \norm{\tilde{\vx}_{k+1} - \vx^*} & \leq \norm{\tilde{\vx}_{k+1} - \tilde{\vx}^*_{k+1}} + \norm{\tilde{\vx}^*_{k+1} - \vx^*} \\
      & \leq \left( \frac{\eta_k L_2}{2\mu} + \sqrt{\frac{2 L \eta_k\omega}{\mu} } + \sqrt{\frac{2\eta_k}{\mu}} \right) \norm{\vx_{k} - \vx^*}^2 \\
      & = \frac{\sqrt{\eta_k}}{2\mu}\left(   \sqrt{8\mu}\left(1  + \sqrt{L\omega}\right)  + \sqrt{\eta_k}L_2 \right) \norm{\vx_k - \vx^*}^2.
 \end{align*}
 Where the second inequality follows from using the bounds shown in Corollary~\ref{Corollary:Appx:ConvExact} and Lemma~\ref{Lemma:Appx:InexactConvergenceOptimums}.
 \end{proof}

The SOCGS algorithm chooses at each iteration between the ACG step and the Inexact PVM step according to which one provides more progress in primal gap 
(Lines~\ref{algLine:MonotonicityFirst}-\ref{algLine:ACGStep} of Algorithm~\ref{algo:proj-Newton}). Therefore we need to translate the local rate in distance to the optimum of the PVM algorithm in Lemma~\ref{Lemma:Appx:InexactConvergenceDistance} to one in primal gap. It is immediate to see that we can upper bound the right-hand side of Equation~\eqref{Eq:Appx:DistanceContractionQuadratic} using $\mu$-strong convexity, as:
\begin{align*}
    \norm{\vx_k - \vx^*}^2 \leq \frac{2}{\mu} \left( f(\vx_k) - f(\vx^*)\right).
\end{align*}
However, when we try to lower bound the norm that appears on the left-hand side of Equation~\eqref{Eq:Appx:DistanceContractionQuadratic} using $L$-smoothness we arrive at:
\begin{align}
   \sqrt{\frac{2}{L}}\left( f(\tilde{\vx}_{k+1}) - f(\vx^*) - \innp{\nabla f(\vx^*), \tilde{\vx}_{k+1} - \vx^*} \right)^{1/2} \leq \norm{\tilde{\vx}_{k+1} - \vx^*}. \label{Eq:Appx:Smoothness}
\end{align}
The only term preventing us from expressing the left-hand side of Equation~\eqref{Eq:Appx:Smoothness} solely in terms of primal gap values is $- \innp{\nabla f(\vx^*), \tilde{\vx}_{k+1} - \vx^*}$. As by Assumption~\ref{assumption:strictComplementarity} for any $\vx \in \mathcal{F} \left( \vx^*\right)$ we have that $\innp{\nabla f \left( \vx^*\right), \vx - \vx^*} = 0$, if we can show that from some point onward the iterates $\tilde{\vx}_{k+1}$ remain in $\mathcal{F}\left( \vx^*\right)$, we will be able conclude that $\innp{\nabla f(\vx^*), \tilde{\vx}_{k+1} - \vx^*} = 0$.

The main tool that we will use for the analysis is based on the idea that for points $\vx_k$ sufficiently close to $\vx^*$, when we minimize $\hat{f}_k(\vx)$ over $\cx$ using the ACG algorithm, the iterates $\tilde{\vx}_{k+1}$ of the algorithm will reach $\mathcal{F}(\vx^*)$ in a finite number of iterations, remaining in $\mathcal{F}(\vx^*)$ for all subsequent iterations, that is, the ACG algorithm "identifies" the optimal face while computing the Inexact PVM steps. This is a variation of the proof originally presented in \citet{guelat1986some}, which was used to show for the first time that the ACG algorithm asymptotically converges linearly in primal gap when minimizing a strongly convex and smooth function over a polytope. We reproduce the original proof here, as it will be useful in the technical results to come.

\begin{theorem}[Identification of the optimal face]~\cite{guelat1986some}[Theorem 5] \label{Theorem:Appx:ConvOptFace}
Given a strongly convex and smooth function $f(\vx)$ and a polytope $\cx$, if Assumption~\ref{assumption:strictComplementarity} is satisfied, then there is a $r^{\text{ACG}}>0$ such that for $\vx_k^{\text{ACG}} \in \mathcal{B}(\vx^*,r^{\text{ACG}}) \cap \cx$ and $\vx_k^{\text{ACG}} \notin \mathcal{F}(\vx^*)$ then the ACG algorithm (Algorithm~\ref{Algo:Appx:ACGAlg}) with exact line search satisfies that $\abs{\mathcal{S}_{k+1}^{\text{ACG}}}<\abs{\mathcal{S}_{k}^{\text{ACG}}}$ and $\mathcal{S}_{k}^{\text{ACG}}\setminus \mathcal{S}_{k+1}^{\text{ACG}} \notin \mathcal{F}(\vx^*)$. That is, the ACG algorithm performs an away-step that drops a vertex from $\mathcal{S}_{k}^{\text{ACG}}$ that is not a vertex of the optimal face $\mathcal{F}(\vx^*)$. Moreover, there is a $K^{\text{ACG}}\geq 0$ such that for $k \geq K^{\text{ACG}}$ we have that $\vx_k^{\text{ACG}} \in \mathcal{F}(\vx^*)$.
\begin{proof}
The proof starts by showing that there is an index $T\geq 0$ such that for $k\geq T$ all the steps taken by the ACG algorithm will be away-steps that reduce the cardinality of the active set if $\vx_k^{\text{ACG}} \notin\mathcal{F}(\vx^*)$. Let $r_i>0$ and $c>0$ be such that:
\begin{alignat}{2}
  \innp{\mathbf{v}_i - \vx, \nabla f(\vx)} &\geq -\frac{c}{2}  \quad \quad && \text{if } \norm{\vx - \vx^*} \leq r_i \text{ and } \mathbf{v}_i\in\vertex(\mathcal{F}(\vx^*))  \label{Eq:Appx:OptFace}\\
  \innp{\mathbf{v}_i - \vx, \nabla f(\vx)} &\geq c  \quad \quad && \text{if } \norm{\vx - \vx^*} \leq r_i \text{ and } \mathbf{v}_i\in\vertex(\cx) \setminus \vertex(\mathcal{F}(\vx^*)). \label{Eq:Appx:NonOptFace}
\end{alignat}
Taking $r^{\text{ACG}} = \min_{\mathbf{v}_i\in\vertex(\cx)} r_i$, we know by strong convexity that there is an index $T\geq 0$ such that for $k \geq T$ we have that $\vx_k^{\text{ACG}}\in\mathcal{B}(\vx^*, r^{\text{ACG}})\cap \cx$. Furthermore, suppose that $\vx_k^{\text{ACG}}\notin\mathcal{F}(\vx^*)$, then we have that:
\begin{align*}
    \min_{\mathbf{v}_i\in\mathcal{S}_k^{\text{ACG}} \cap \vertex(\cx) \setminus \vertex(\mathcal{F}(\vx^*))} &\innp{\mathbf{v}_i - \vx_k^{\text{ACG}}, \nabla f(\vx_k^{\text{ACG}})} \geq c\\
    &\geq \frac{c}{2} \\
    & \geq \max_{\mathbf{v}_j\in \mathcal{S}_k^{\text{ACG}} \cap \vertex(\mathcal{F}(\vx^*))} \innp{\vx_k^{\text{ACG}} - \mathbf{v}_j , \nabla f(\vx_k^{\text{ACG}})}. 
\end{align*}
Where the left-hand side follows from Equation~\eqref{Eq:Appx:NonOptFace} and the right-hand side from Equation~\eqref{Eq:Appx:OptFace}.  As $\vx_k^{\text{ACG}}\notin\mathcal{F}(\vx^*)$, then $\mathcal{S}_k^{\text{ACG}} \cap \vertex(\cx) \setminus \vertex(\mathcal{F}(\vx^*)) \neq \emptyset$, as the active set $\mathcal{S}_k^{\text{ACG}}$ must include vertices that are not in the optimal face $\mathcal{F}(\vx^*)$ (otherwise we would have $\vx_k^{\text{ACG}}\in\mathcal{F}(\vx^*)$). This means that the ACG algorithm in Line~\ref{algLine:ACG:Appx:ChooseStep} of Algorithm~\ref{algo:Appx:ACGSteps} will choose an away-step with a vertex $\mathbf{v}_i\in \mathcal{S}_k^{\text{ACG}} \cap\vertex(\cx) \setminus \vertex(\mathcal{F}(\vx^*))$, and not a Frank-Wolfe step with a vertex $\mathbf{v}_j\in \vertex(\mathcal{F}(\vx^*))$, for iterations $k\geq T$. We denote the vertex chosen in the away-step by $\mathbf{v} \in \mathcal{S}_k^{\text{ACG}} \cap\vertex(\cx) \setminus \vertex(\mathcal{F}(\vx^*))$, and we remark that $\vd = \vx_k^{\text{ACG}} - \mathbf{v}$ is a descent direction at $\vx_{k}^{\text{ACG}}$, and so the exact line search will output a step size $\gamma_k \in (0, \gamma_{\max}]$. The proof proceeds by showing that we must have that $\gamma_k = \gamma_{\max}$ in Line~\ref{algLine:ACG:Appx:LineSearch} of Algorithm~\ref{algo:Appx:ACGSteps} for iterations $k\geq T$. Using proof by contradiction, we assume that $\gamma_k < \gamma_{\max}$ and we apply the first-order optimality conditions for the exact line search:
\begin{align}
   0 &=  \innp{\vd, \nabla f(\vx_{k+1}^{\text{ACG}})} \\
   & =  \innp{\vx_{k+1}^{\text{ACG}} - \mathbf{v}, \nabla f(\vx_{k+1}^{\text{ACG}})} + \innp{\vx_{k}^{\text{ACG}} - \vx_{k+1}^{\text{ACG}}, \nabla f(\vx_{k+1}^{\text{ACG}})} \\
   & =  \innp{\vx_{k+1}^{\text{ACG}} - \mathbf{v}, \nabla f(\vx_{k+1}^{\text{ACG}})} - \gamma_k\innp{\vd, \nabla f(\vx_{k+1}^{\text{ACG}})} \\
   & =  \innp{\vx_{k+1}^{\text{ACG}} - \mathbf{v}, \nabla f(\vx_{k+1}^{\text{ACG}})} \label{Eq:Appx:GM1} \\
   & <  - c. \label{Eq:Appx:GM2}
\end{align}
Which is the desired contradiction as $c > 0$. The equality in Equation~\eqref{Eq:Appx:GM1} is due to $\innp{\vd, \nabla f(\vx_{k+1}^{\text{ACG}})} = 0$ because of the optimality conditions of the exact line search and the inequality in Equation~\eqref{Eq:Appx:GM2} is due to $\innp{\vx_{k+1}^{\text{ACG}} - \mathbf{v}, \nabla f(\vx_{k+1}^{\text{ACG}})} \leq -c$ as $\mathbf{v}\in\vertex(\cx) \setminus \vertex(\mathcal{F}(\vx^*))$ and $\vx_{k+1}^{\text{ACG}} \in\mathcal{B}(\vx^*, r^{\text{ACG}})$ (thus Equation~\eqref{Eq:Appx:NonOptFace} holds). This proves that we must have $\gamma_k = \gamma_{\max}$ and $\abs{\cs_k^{\text{ACG}}} > \abs{\cs_{k+1}^{\text{ACG}}}$. While $k\geq T$ and $\vx_k^{\text{ACG}} \notin \mathcal{F}(\vx^*)$ the ACG algorithm will drop a vertex $\mathcal{S}_k^{\text{ACG}} \cap \vertex(\cx) \setminus \vertex(\mathcal{F}(\vx^*))$ using an away-step. As $\abs{\mathcal{S}_k^{\text{ACG}}}$ is finite, we will have for some $K^{\text{ACG}}> T$ that $\cs_{K^{\text{ACG}}}^{\text{ACG}}\cap \vertex (\cx)\setminus \vertex(\mathcal{F}(\vx^*)) = \emptyset$, and therefore $\cs_{K^{\text{ACG}}}^{\text{ACG}} \subseteq \vertex(\mathcal{F}(\vx^*))$. This is equivalent to $\vx_{K^{\text{ACG}}}^{\text{ACG}} \in \mathcal{F}(\vx^*)$. Lastly, using Equation~\eqref{Eq:Appx:OptFace} and $\cs_K^{\text{ACG}}\cap \vertex (\cx)\setminus \vertex(\mathcal{F}(\vx^*)) = \emptyset$ we can show that the ACG algorithm will not perform any Frank-Wolfe steps with vertices $\mathbf{v} \in \vertex (\cx)\setminus \vertex(\mathcal{F}(\vx^*))$ for $k \geq K^{\text{ACG}}$, and so $\vx_k \in \mathcal{F}(\vx^*)$.
\end{proof}
\end{theorem}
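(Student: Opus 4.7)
The plan is to exploit Assumption~\ref{assumption:Appx:strictComplementarity} to obtain a \emph{quantitative} separation between non-optimal vertices and optimal-face vertices. At $\vx^*$ itself, strict complementarity gives $\langle \nabla f(\vx^*), \mathbf{v}_i - \vx^*\rangle > 0$ for every $\mathbf{v}_i \in \vertex(\cx) \setminus \vertex(\mathcal{F}(\vx^*))$ and $\langle \nabla f(\vx^*), \mathbf{v}_j - \vx^*\rangle = 0$ for $\mathbf{v}_j \in \vertex(\mathcal{F}(\vx^*))$. Since $\vertex(\cx)$ is finite, I would set $c$ to be roughly half the minimum of these strict positivities and invoke continuity of $\nabla f$ to pick $r^{\text{ACG}} > 0$ so that, uniformly in $\vx \in \mathcal{B}(\vx^*, r^{\text{ACG}}) \cap \cx$, the two bounds $\langle \mathbf{v}_i - \vx, \nabla f(\vx)\rangle \geq c$ for non-optimal $\mathbf{v}_i$ and $|\langle \mathbf{v}_j - \vx, \nabla f(\vx)\rangle| \leq c/2$ for optimal $\mathbf{v}_j$ both hold. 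By further shrinking $r^{\text{ACG}}$ using $L$-smoothness and $\mu$-strong convexity, I would also ensure that the monotone iterates of exact-line-search ACG stay inside this ball once they enter it; entry in finite time is guaranteed by Theorem~\ref{theorem:Appx:ConvergenceACG}.

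For iterations past the entry time $T$ at which $\vx_k^{\text{ACG}} \notin \mathcal{F}(\vx^*)$, the active set $\cs_k^{\text{ACG}}$ must contain a non-optimal vertex, so the away-candidate $\va$ in Algorithm~\ref{algo:Appx:ACGSteps} satisfies $\langle \nabla f(\vx_k^{\text{ACG}}), \va - \vx_k^{\text{ACG}}\rangle \geq c$, while the Frank--Wolfe candidate $\mathbf{v}$, being the linear minimizer over $\cx$, must lie on the optimal face in this neighborhood and hence $\langle \nabla f(\vx_k^{\text{ACG}}), \vx_k^{\text{ACG}} - \mathbf{v}\rangle \leq c/2$. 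The comparison $c/2 < c$ then forces the ACG criterion in Line~\ref{algLine:ACG:Appx:ChooseStep} to select the away-step. The remaining step, and the most delicate one, is to rule out $\gamma_k < \gamma_{\max}$ by contradiction: the first-order optimality of the exact line search gives $\langle \vd, \nabla f(\vx_{k+1}^{\text{ACG}})\rangle = 0$ with $\vd$ parallel to $\vx_k^{\text{ACG}} - \va$, whereas the separation bound applied at $\vx_{k+1}^{\text{ACG}}$ (which, by the shrink above, still lies in $\mathcal{B}(\vx^*, r^{\text{ACG}})$) yields $\langle \va - \vx_{k+1}^{\text{ACG}}, \nabla f(\vx_{k+1}^{\text{ACG}})\rangle \geq c$; combining these with $\vx_{k+1}^{\text{ACG}} - \vx_k^{\text{ACG}} = \gamma_k \vd$ produces the contradiction. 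Hence $\gamma_k = \gamma_{\max}$, the non-optimal vertex $\va$ is dropped, and $|\cs_{k+1}^{\text{ACG}}| < |\cs_k^{\text{ACG}}|$, which proves the first claim.

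Since $|\cs_k^{\text{ACG}}|$ is a nonnegative integer that strictly decreases on every iteration on which $\vx_k^{\text{ACG}} \notin \mathcal{F}(\vx^*)$, after at most $|\cs_T^{\text{ACG}}|$ iterations past $T$ the active set is contained entirely in $\vertex(\mathcal{F}(\vx^*))$, giving the face-identification index $K^{\text{ACG}}$. Invariance of the face thereafter follows from the two-sided bound for optimal vertices: once $\vx_k^{\text{ACG}} \in \mathcal{F}(\vx^*)$, any non-optimal vertex $\mathbf{v}_i$ yields value $\geq c$ in $\langle \nabla f(\vx_k^{\text{ACG}}), \cdot - \vx_k^{\text{ACG}}\rangle$ while some optimal vertex yields value $\leq c/2$, so the Frank--Wolfe minimizer is always on the optimal face and no non-optimal vertex can re-enter $\cs_k^{\text{ACG}}$; away-steps only remove vertices, preserving the inclusion. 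The main technical obstacle is exactly the contradiction argument locking in $\gamma_k = \gamma_{\max}$, which requires a careful upfront choice of $r^{\text{ACG}}$ (in terms of $L$ and $\mu$) so that the next iterate continues to satisfy the separation bounds on which that argument rests.
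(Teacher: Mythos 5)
Your proposal follows essentially the same route as the paper's proof, which itself reproduces Guelat and Marcotte's argument: you establish the same uniform separation constants $c$ and $c/2$ via strict complementarity and continuity, show the away-step is preferred when a non-optimal vertex is present in the active set, rule out $\gamma_k < \gamma_{\max}$ by the same first-order line-search contradiction, and conclude by finiteness of the active set and invariance of the optimal face. You also correctly identify the delicate point that $\vx_{k+1}^{\text{ACG}}$ must remain in the ball for the contradiction to go through, which the paper handles via linear primal-gap convergence plus monotonicity and strong convexity.
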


The consequence of Theorem~\ref{Theorem:Appx:ConvOptFace} is that after a finite number of iterations $K^{\text{ACG}}\geq 0$ the iterates of the ACG algorithm applied to Problem~\eqref{eq:OptProblem} are "stuck" in the face $\mathcal{F}(\vx^*)$, that is,  we have that $\vx_k^{\text{ACG}} \in \mathcal{F}(\vx^*)$ for all $k\geq K^{\text{ACG}}$. The SOCGS algorithm (Algorithm~\ref{algo:proj-Newton}) uses the ACG algorithm to inexactly solve the scaled projection problem of the PVM steps in Lines~\ref{algLine:MonotonicityFirst}-\ref{algLine:ACGStep} of Algorithm~\ref{algo:proj-Newton}. The function being minimized in these steps is not $f(\vx)$, but rather an approximation $\hat{f}_k(\vx)$ that changes at each iteration. However for points sufficiently close to $\vx^*$ we show in Theorem~\ref{Theorem:Appx:ConvOptFace2} that the ACG steps that solve the scaled projection problem of the PVM steps (in Lines~\ref{algLine:PNStep1}-\ref{algLine:PNStep3} of Algorithm~\ref{algo:proj-Newton}) will also get "stuck" to $\mathcal{F}(\vx^*)$, that is, there is a $K\geq 0$ such that we will have that $\tilde{\vx}_{k+1}\in \mathcal{F}(\vx^*)$ for all $k\geq K$.

\begin{theorem}\label{Theorem:Appx:ConvOptFace2}
Let $f(\vx)$ be a strongly convex and smooth function with Lipschitz continuous Hessian and $\cx$ be a polytope such that Assumption~\ref{assumption:strictComplementarity} is satisfied. We denote the quadratic approximation of $f(\vx)$ at $\vx_k$ as $\hat{f}_{k}(\vx) = \innp{\nabla f(\vx_k), \vx_k - \vx} + 1/2\norm{\vx_k - \vx}^2_{H_k}$, where $H_k$ satisfies Assumption~\ref{assumption:accuracyHessian}. Assume that we use the ACG algorithm (Algorithm~\ref{Algo:Appx:ACGAlg}) with exact line search to minimize $\hat{f}_{k}(\vx)$ over $\cx$, and denote the iterate generated by this algorithm at iteration $t$ as $\tilde{\vx}^t_{k+1}$, then there is a $r>0$ such that if $\{\vx_k, \tilde{\vx}^t_{k+1}, \tilde{\vx}^{t+1}_{k+1} \}\subset \mathcal{B}(\vx^*,r) \cap \cx$ and $\tilde{\vx}^t_{k+1} \notin \mathcal{F}(\vx^*)$ then $\abs{\tilde{\mathcal{S}}^{t+1}_{k+1}}<\abs{\tilde{\mathcal{S}}^{t}_{k+1}}$ and $\tilde{\mathcal{S}}^{t}_{k+1} \setminus\tilde{\mathcal{S}}^{t+1}_{k+1} \notin \mathcal{F}(\vx^*)$. That is, at iteration $t$ the ACG algorithm drops a vertex from the active set $\tilde{\mathcal{S}}^{t}_{k+1}$ that is not a vertex of the optimal face $\mathcal{F}(\vx^*)$. 
\begin{proof}
This proof follows relies on the same concepts as the proof in Theorem~\ref{Theorem:Appx:ConvOptFace} from \citet{guelat1986some}. Let $r_i^* > 0$ and $c^* > 0$ be such that:
\begin{alignat}{2}
  \innp{\mathbf{v}_i - \vx, \nabla f(\vx^*) +\nabla^2 f(\vx^*)(\vx - \vx^*) } &\geq -\frac{c}{2}  \quad \quad && \text{if } \norm{\vx - \vx^*} \leq r_i^* \text{ and } \mathbf{v}_i\in\vertex(\mathcal{F}(\vx^*))  \label{Eq:Appx:OptFaceInexact}\\
  \innp{\mathbf{v}_i - \vx, \nabla f(\vx^*) +\nabla^2 f(\vx^*)(\vx - \vx^*) } &\geq c  \quad \quad && \text{if } \norm{\vx - \vx^*} \leq r_i^* \text{ and } \mathbf{v}_i\in\vertex(\cx) \setminus \vertex(\mathcal{F}(\vx^*)). \label{Eq:Appx:NonOptFaceInexact}
\end{alignat}
Where $\nabla f(\vx^*) +\nabla^2 f(\vx^*)(\vx - \vx^*)$ is the gradient of the quadratic approximation at $\vx^*$ using $\nabla^2 f(\vx^*)$ (note that the minimizer of this quadratic approximation is $\vx^*$ and that this approximation is strongly convex and smooth). We have that:
\begin{align}
    \innp{\mathbf{v}_i - \vx, \nabla f(\vx^*) +\nabla^2 f(\vx^*)(\vx - \vx^*) }  = & \innp{\mathbf{v}_i - \vx, \nabla f(\vx_k) + H_k(\vx - \vx_k) } \\
    & + \innp{ \vx - \mathbf{v}_i , \nabla f(\vx_k) - \nabla f(\vx^*) - \nabla^2 f(\vx^*)(\vx_k - \vx^*)  } \label{Eq:Appx:GMCustom1}\\
    & +\innp{ \vx - \mathbf{v}_i, \left(H_k - \nabla^2 f(\vx_k)\right)(\vx - \vx_k) } \label{Eq:Appx:GMCustom2} \\
    & +\innp{ \vx - \mathbf{v}_i, \left(\nabla^2 f(\vx_k) - \nabla^2 f(\vx^*)\right)(\vx - \vx_k) }. \label{Eq:Appx:GMCustom3}
\end{align}
The term shown in Equation~\eqref{Eq:Appx:GMCustom1} can be bounded using the triangle inequality and the fact that the Hessian of $f(\vx)$ is $L_2$-Lipschitz:
\begin{align*}
    \innp{ \vx - \mathbf{v}_i , \nabla f(\vx_k) - \nabla f(\vx^*) - \nabla^2 f(\vx^*)(\vx_k - \vx^*)  }  & \leq  \norm{\mathbf{v}_i - \vx  } \norm{\nabla f(\vx_k) - \nabla f(\vx^*) - \nabla^2 f(\vx^*)(\vx_k - \vx^*)  }  \\
    &  \leq \frac{L_2}{2} \norm{\mathbf{v}_i - \vx  } \norm{\vx_k - \vx^*}^2.
\end{align*}
The term shown in Equation~\eqref{Eq:Appx:GMCustom2}, can be bounded using the triangle inequality and Lemma~\ref{Lemma:Appx:AccuracyHessian}, leading to:
\begin{align}
    \innp{ \vx - \mathbf{v}_i, \left(H_k - \nabla^2 f(\vx_k)\right)(\vx - \vx_k) } & \leq L\eta_k\omega \norm{ \vx - \mathbf{v}_i}\norm{\vx - \vx_k} \norm{\vx_k - \vx^*}^2 \\
    & \leq L\omega (1 + \omega D^2)\norm{ \vx - \mathbf{v}_i}\norm{\vx - \vx_k} \norm{\vx_k - \vx^*}^2,
\end{align}
where $1 + \omega D^2 \geq \eta_k$ for all $k\geq 0$ from Assumption~\ref{assumption:accuracyHessian}. Lastly, the term in Equation~\eqref{Eq:Appx:GMCustom3} can be bounded using the triangle inequality and the $L_2$-Lipschitz continuity of the Hessian, which allows us to write:
\begin{align}
    \innp{ \vx - \mathbf{v}_i, \left(\nabla^2 f(\vx_k) - \nabla^2 f(\vx^*)\right)(\vx - \vx_k) } \leq L_2\norm{ \vx - \mathbf{v}_i}\norm{\vx - \vx_k} \norm{\vx_k - \vx^*}.
\end{align}
Using these bounds we have:
\begin{align}
    \innp{\mathbf{v}_i - \vx, \nabla f(\vx^*) +\nabla^2 f(\vx^*)(\vx - \vx^*) }  \leq & \innp{\mathbf{v}_i - \vx, \nabla f(\vx_k) + H_k(\vx - \vx_k) } \\
    & + \frac{L_2}{2} \norm{\mathbf{v}_i - \vx  } \norm{\vx_k - \vx^*}^2 \\
    & + L \omega (1 + \omega D^2) \norm{\mathbf{v}_i- \vx}\norm{\vx - \vx_k} \norm{\vx_k - \vx^*}^2 \\
    & + L_2\norm{ \mathbf{v}_i - \vx }\norm{\vx - \vx_k} \norm{\vx_k - \vx^*} \\
     \leq & \innp{\mathbf{v}_i - \vx, \nabla \hat{f}_k(\vx)  } \\
    & + \left(3L_2/2 + L\omega D(1 + \omega D^2) \right) D^2 \norm{\vx_k - \vx^*}. \label{Eq:Lemma:GMBound}
\end{align}
Where we note that $\hat{f}_k(\vx) = \innp{\nabla f(\vx_k), \vx - \vx_k} + 1/2\norm{\vx - \vx_k}_{H_k}^2$. Using the bound in Equation~\eqref{Eq:Lemma:GMBound} along with Equations~\eqref{Eq:Appx:OptFaceInexact}-\eqref{Eq:Appx:NonOptFaceInexact}, and setting $C = \left(3L_2/2 + L\omega D(1 + \omega D^2) \right) D^2$ we have:

\begin{alignat}{2}
  \innp{\mathbf{v}_i - \vx,  \nabla \hat{f}_k(\vx)  } &\geq -\frac{c}{2}  - C \norm{\vx_k - \vx^*}  \quad \quad && \text{if } \norm{\vx - \vx^*} \leq r_i^* \text{ and } \mathbf{v}_i\in\vertex(\mathcal{F}(\vx^*))  \label{Eq:Appx:OptFaceInexactMod}\\
  \innp{\mathbf{v}_i - \vx,  \nabla \hat{f}_k(\vx)  } &\geq c  - C \norm{\vx_k - \vx^*}   \quad \quad && \text{if } \norm{\vx - \vx^*} \leq r_i^* \text{ and } \mathbf{v}_i\in\vertex(\cx) \setminus \vertex(\mathcal{F}(\vx^*)). \label{Eq:Appx:NonOptFaceInexactMod}
\end{alignat}
Let $r^* = \min_{\mathbf{v}_i\in\vertex(\cx)} r_i^*$ and $r = \min\left\{r^*,  c/(4C) \right\}$ and assume that $\vx_k\in\mathcal{B}\left(\vx^*, r\right)\cap \cx$ (we know by strong convexity that there is an index $T\geq 0$ such that for $k \geq T$ the iterates $\vx_k$ of the SOCGS algorithm (Algorithm~\ref{algo:proj-Newton}) will be in the aforementioned ball). If $\tilde{\vx}^t_{k+1}\in\mathcal{B}\left(\vx^*, r\right)\cap \cx$ then the bounds in Equations~\eqref{Eq:Appx:OptFaceInexactMod}-\eqref{Eq:Appx:NonOptFaceInexactMod} hold, as $\norm{\tilde{\vx}^t_{k+1} - \vx^*} \leq r^*$, this leads to:
\begin{align}
  \min_{\mathbf{v}_i\in \tilde{\cs}^t_{k+1}\cap \vertex(\cx) \setminus \vertex(\mathcal{F}(\vx^*))}  &\innp{\mathbf{v}_i - \tilde{\vx}^t_{k+1}, \nabla \hat{f}_k(\tilde{\vx}^t_{k+1})  }   \geq c - C \norm{\vx_k - \vx^*} \label{Eq:Appx:proofGMModified1}\\
  & \geq \frac{c}{2} + C \norm{\vx_k - \vx^*}  \label{Eq:Appx:proofGMModified2}\\
  & \geq \max\limits_{\mathbf{v}_i \in \tilde{\cs}_{k+1}^t \cap \vertex(\mathcal{F}(\vx^*))} \innp{\tilde{\vx}^t_{k+1} - \mathbf{v}_i, \nabla \hat{f}_k(\tilde{\vx}^t_{k+1})  }, \label{Eq:Appx:proofGMModified3}
\end{align}
Where the inequality in Equation~\eqref{Eq:Appx:proofGMModified1} follows from Equation~\eqref{Eq:Appx:NonOptFaceInexactMod}, the inequality in Equation~\eqref{Eq:Appx:proofGMModified2} from the fact that $\norm{\vx_k - \vx^*} < r \leq c/(4C)$ and the last inequality from Equation~\eqref{Eq:Appx:OptFaceInexactMod}. Therefore if $\tilde{\vx}^t_{k+1} \notin \mathcal{F}(\vx^*)$ the ACG algorithm will take an away-step with a vertex $\mathbf{v} \in \tilde{\cs}_{k+1}^t\cap \vertex(\cx) \setminus \vertex(\mathcal{F}(\vx^*))$ and direction $\vd = \tilde{\vx}^t_{k+1} - \mathbf{v}$ (where $\tilde{\cs}_{k+1}^t\cap \vertex(\cx) \setminus \vertex(\mathcal{F}(\vx^*)) \neq \emptyset$ as $\tilde{\vx}^t_{k+1} \notin \mathcal{F}(\vx^*)$). Similarly as in the proof of Theorem~\ref{Theorem:Appx:ConvOptFace}, we show that $\gamma_k = \gamma_{\max}$ if $\tilde{\vx}^{t+1}_{k+1} \in\mathcal{B}\left(\vx^*, r\right)\cap \cx$. We use proof by contradiction, and assume that $\gamma_k < \gamma_{\max}$. Using the optimality of the line search:
\begin{align}
    0 & = \innp{\vd, \nabla \hat{f}_k ( \tilde{\vx}^{t+1}_{k+1})} \\
    & = \innp{\tilde{\vx}^{t+1}_{k+1} - \mathbf{v}, \nabla \hat{f}_k ( \tilde{\vx}^{t+1}_{k+1})} + \innp{\tilde{\vx}^{t}_{k+1} - \tilde{\vx}^{t+1}_{k+1}, \nabla \hat{f}_k ( \tilde{\vx}^{t+1}_{k+1})}  \\
    & = \innp{\tilde{\vx}^{t+1}_{k+1} - \mathbf{v}, \nabla \hat{f}_k ( \tilde{\vx}^{t+1}_{k+1})} -\gamma_k \innp{\vd, \nabla \hat{f}_k ( \tilde{\vx}^{t+1}_{k+1})}  \\
    & = \innp{\tilde{\vx}^{t+1}_{k+1} - \mathbf{v}, \nabla \hat{f}_k ( \tilde{\vx}^{t+1}_{k+1})}  \\
    & \leq -c  + C \norm{\vx_k - \vx^*}   \label{Theorem:Appx:Eq1}\\
    & < - \frac{3}{4}c \label{Theorem:Appx:Eq2} \\
    & < 0.
\end{align}
The inequality in Equation~\eqref{Theorem:Appx:Eq1} follows from Equation~\eqref{Eq:Appx:NonOptFaceInexactMod}, as $\norm{\tilde{\vx}^{t+1}_{k+1} - \vx^*} < r \leq r^*$, and the one in Equation~\eqref{Theorem:Appx:Eq2} follows from $\norm{\vx_k - \vx^*} < r \leq c/(4C)$. This is the desired contradiction, and we must therefore have that $\gamma_k = \gamma_{\max}$. This means that $\abs{\tilde{\cs}^t_{k+1}} >\abs{\tilde{\cs}^{t+1}_{k+1}}$ and $\tilde{\mathcal{S}}^{t}_{k+1} \setminus\tilde{\mathcal{S}}^{t+1}_{k+1} \notin \vertex( \mathcal{F}(\vx^*))$, or stated equivalently, the ACG algorithm has dropped one of the vertices in its active set $\tilde{\mathcal{S}}^{t}_{k+1}$ that is not present in $\mathcal{F}(\vx^*)$.
\end{proof}
\end{theorem}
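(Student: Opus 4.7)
The plan is to mimic the Guélat--Marcotte dichotomy used in the proof of Theorem~\ref{Theorem:Appx:ConvOptFace}, but applied to the \emph{perturbed} objective $\hat{f}_k$ rather than to $f$ itself. Strict complementarity for $f$ at $\vx^*$ (Assumption~\ref{assumption:strictComplementarity}), combined with continuity of $\nabla^2 f$, yields constants $c>0$ and $r^*>0$ such that, for every $\vx\in\mathcal{B}(\vx^*,r^*)\cap\cx$, the quadratic model of $f$ based at $\vx^*$ separates the vertices of $\cx$: one has $\innp{\mathbf{v}-\vx,\nabla f(\vx^*)+\nabla^2 f(\vx^*)(\vx-\vx^*)}\geq c$ whenever $\mathbf{v}\in\vertex(\cx)\setminus\vertex(\mathcal{F}(\vx^*))$ and $\geq -c/2$ whenever $\mathbf{v}\in\vertex(\mathcal{F}(\vx^*))$. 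If I can show that $\nabla\hat f_k(\tilde{\vx}^t_{k+1})$ inherits these inequalities up to a perturbation controlled by $\|\vx_k-\vx^*\|$, the remainder of the Guélat--Marcotte argument transfers almost verbatim.

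The crux is therefore a perturbation estimate for $\nabla\hat f_k(\vx)-\nabla f(\vx^*)-\nabla^2 f(\vx^*)(\vx-\vx^*)$. Writing $\nabla\hat f_k(\vx)=\nabla f(\vx_k)+H_k(\vx-\vx_k)$, the difference splits into three pieces: (a) $\nabla f(\vx_k)-\nabla f(\vx^*)-\nabla^2 f(\vx^*)(\vx_k-\vx^*)$, bounded by $\tfrac{L_2}{2}\|\vx_k-\vx^*\|^2$ by Lemma~\ref{lemma:LipschitzHessian}; (b) $(H_k-\nabla^2 f(\vx_k))(\vx-\vx_k)$, bounded by $L\eta_k\omega\|\vx_k-\vx^*\|^2\|\vx-\vx_k\|$ via Lemma~\ref{Lemma:Appx:AccuracyHessian}; and (c) $(\nabla^2 f(\vx_k)-\nabla^2 f(\vx^*))(\vx-\vx_k)$, bounded by $L_2\|\vx_k-\vx^*\|\,\|\vx-\vx_k\|$ by Lipschitzness of the Hessian. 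Using $\|\vx-\vx_k\|\leq D$ and capping $\eta_k\leq 1+\omega D^2$ via Assumption~\ref{assumption:accuracyHessian}, the total collapses to at most $C\|\vx_k-\vx^*\|$ for an explicit constant $C=C(L,L_2,\omega,D)$.

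Choosing $r:=\min\{r^*,\,c/(4C)\}$, whenever $\vx_k,\tilde{\vx}^t_{k+1}\in\mathcal{B}(\vx^*,r)\cap\cx$ the transferred inequalities read
\begin{align*}
\min_{\mathbf{v}\in\tilde{\cs}^t_{k+1}\cap\vertex(\cx)\setminus\vertex(\mathcal{F}(\vx^*))}\innp{\mathbf{v}-\tilde{\vx}^t_{k+1},\nabla\hat f_k(\tilde{\vx}^t_{k+1})}&\geq\tfrac{3c}{4},\\
\max_{\mathbf{v}\in\tilde{\cs}^t_{k+1}\cap\vertex(\mathcal{F}(\vx^*))}\innp{\tilde{\vx}^t_{k+1}-\mathbf{v},\nabla\hat f_k(\tilde{\vx}^t_{k+1})}&\leq\tfrac{3c}{4}.
\end{align*}
Because $\tilde{\vx}^t_{k+1}\notin\mathcal{F}(\vx^*)$ forces $\tilde{\cs}^t_{k+1}\cap\vertex(\cx)\setminus\vertex(\mathcal{F}(\vx^*))\neq\emptyset$, the ACG selection rule (Line~\ref{algLine:ACG:Appx:ChooseStep} of Algorithm~\ref{algo:Appx:ACGSteps}) must pick an away-step with some $\mathbf{v}\in\vertex(\cx)\setminus\vertex(\mathcal{F}(\vx^*))$ and direction $\vd=\tilde{\vx}^t_{k+1}-\mathbf{v}$.

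It remains to verify that the exact line search saturates the maximal step size, which is what actually drops $\mathbf{v}$ from the active set. Assuming $\gamma<\gamma_{\max}$ for contradiction, first-order optimality of the line search gives $\innp{\vd,\nabla\hat f_k(\tilde{\vx}^{t+1}_{k+1})}=0$; combined with the separation inequality applied at $\tilde{\vx}^{t+1}_{k+1}\in\mathcal{B}(\vx^*,r)$, this forces $0=\innp{\tilde{\vx}^{t+1}_{k+1}-\mathbf{v},\nabla\hat f_k(\tilde{\vx}^{t+1}_{k+1})}\leq-\tfrac{3c}{4}<0$, a contradiction. I expect the main obstacle to be term~(b) of the perturbation decomposition: Assumption~\ref{assumption:accuracyHessian} is a ratio-of-eigenvalues condition on $\eta_k$ that does not immediately translate into a spectral-norm bound on $H_k-\nabla^2 f(\vx_k)$; Lemma~\ref{Lemma:Appx:AccuracyHessian} is precisely what bridges the two, and without it the whole perturbation strategy collapses.
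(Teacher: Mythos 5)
Your proposal matches the paper's proof essentially step for step: the same Gu\'elat--Marcotte-style separation constants at $\vx^*$, the same three-term decomposition of the discrepancy between $\nabla\hat{f}_k$ and the quadratic model of $f$ at $\vx^*$ (Taylor remainder of $\nabla f$, Hessian-approximation error via Lemma~\ref{Lemma:Appx:AccuracyHessian}, Hessian Lipschitz drift), the same aggregation into a perturbation bound $C\norm{\vx_k - \vx^*}$ after capping $\eta_k \leq 1 + \omega D^2$, the same choice $r = \min\{r^*, c/(4C)\}$, and the same line-search contradiction forcing $\gamma = \gamma_{\max}$. You also correctly identify Lemma~\ref{Lemma:Appx:AccuracyHessian} as the indispensable bridge from Assumption~\ref{assumption:accuracyHessian} to an operator-norm bound on $H_k - \nabla^2 f(\vx_k)$, which is exactly how the paper handles the most delicate term.
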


One of the key requirements in Theorem~\ref{Theorem:Appx:ConvOptFace2} is that $\{\vx_k, \tilde{\vx}^t_{k+1}, \tilde{\vx}^{t+1}_{k+1} \}\subset \mathcal{B}(\vx^*,r) \cap \cx$. As the SOCGS algorithm (Algorithm~\ref{algo:proj-Newton}) decreases the primal gap of Problem~\eqref{eq:OptProblem} at least linearly (Theorem~\ref{theorem:GlobalConvergence}), we can guarantee by strong convexity that there is an index $K \geq0$ after which for $k\geq K$ we have that $\vx_k \in \mathcal{B}(\vx^*,r) \cap \cx$. But in order for Theorem~\ref{Theorem:Appx:ConvOptFace2} to apply for all ACG iterations in Line~\ref{algLine:PNStep2}, when computing the Inexact PVM step, we also need to ensure that $\tilde{\vx}^t_{k+1} \in \mathcal{B}(\vx^*,r) \cap \cx$ for all $t\geq 0$. In the next Lemma we show that $\norm{\tilde{\vx}^t_{k+1} - \vx^*} \leq \mathcal{O}( \norm{\vx_k- \vx^*}^{1/2})$, allowing us to claim that for any $r>0$ we can ensure that $\norm{\tilde{\vx}^t_{k+1} - \vx^*} \leq r$ for small enough $\norm{\vx_k - \vx^*}$.

\begin{lemma}
\label{Lemma:Appx:DistanceConv}
Given a $\mu$-strongly convex and $L$-smooth function $f(\vx)$, a polytope $\mathcal{X}$, and a quadratic approximation $\hat{f}_k(\vx)$ that satisfies Assumption~\ref{assumption:accuracyHessian}, let $\tilde{\vx}_{k+1}^t$ denote the iterate obtained after applying $t$ steps of the ACG algorithm (Line~\ref{algLine:PNStep2} of Algorithm~\ref{algo:proj-Newton}) to minimize $\hat{f}_k(\vx)$ over $\cx$, starting from $\tilde{\vx}_{k+1}^0 = \vx_k$, then for any $t\geq 0$:
 \begin{align*}
      \norm{\tilde{\vx}_{k+1}^t - \vx^*}  \leq & \frac{\sqrt{\eta_k}}{2\mu}\left(   \sqrt{8\mu}\left(1  + \sqrt{L\omega}\right)  + \sqrt{\eta_k}L_2 \right) \norm{\vx_k - \vx^*}^2 \\
      & + \frac{\sqrt{\eta_k^{3/2} G}}{\mu}  \left(   \sqrt{8\mu}\left(1  + \sqrt{L\omega}\right)  + \sqrt{\eta_k}L_2 \right)^{1/2} \norm{\vx_k - \vx^*} \\
      & + \sqrt{\frac{2\eta_k G}{\mu}} \norm{\vx_k - \vx^*}^{1/2},
\end{align*}
where $G = \max_{\vx \in \cx} \norm{\nabla f(\vx)}$. And so for small enough $\norm{\vx_k - \vx^*}$ we can ensure that:
\begin{align*}
  \norm{\tilde{\vx}_{k+1}^t - \vx^*} \leq \mathcal{O}(\norm{\vx_k - \vx^*}^{1/2}).
\end{align*}
\begin{proof}
By the triangle inequality we have:
\begin{align}
    \norm{\tilde{\vx}_{k+1}^t - \vx^*} & \leq  \norm{\tilde{\vx}_{k+1}^t - \tilde{\vx}_{k+1}^*} + \norm{\tilde{\vx}_{k+1}^* - \vx^*} . \label{Eq:Appx:BallLemma}
\end{align}
The first term in Equation~\eqref{Eq:Appx:BallLemma} can be bounded as follows:
\begin{align}
    \norm{\tilde{\vx}_{k+1}^t  -  \tilde{\vx}_{k+1}^*} & \leq \sqrt{\frac{2\eta_k}{\mu}} (\hat{f}_k(\tilde{\vx}_{k+1}^t) - \hat{f}_k(\tilde{\vx}_{k+1}^*))^{1/2} \\
    & \leq \sqrt{\frac{2\eta_k}{\mu}} (\hat{f}_k(\tilde{\vx}_{k+1}^0) - \hat{f}_k(\tilde{\vx}_{k+1}^*))^{1/2} \label{Eq:Appx:BallLemma1}\\
    & = \sqrt{\frac{2\eta_k}{\mu}} (\hat{f}_k(\vx_{k}) - \hat{f}_k(\tilde{\vx}_{k+1}^*))^{1/2} \\
    & = \sqrt{\frac{2\eta_k}{\mu}} \left(\innp{-\nabla f(\vx_k), \tilde{\vx}_{k+1}^* - \vx_{k}} - 1/2\norm{\tilde{\vx}_{k+1}^* - \vx_{k}}^2_{H_k}\right)^{1/2} \label{Eq:Appx:BallLemma2_1_1} \\
    & \leq \sqrt{\frac{2\eta_k}{\mu}} \norm{\nabla f(\vx_k)}^{1/2} \norm{\tilde{\vx}_{k+1}^* - \vx_{k}}^{1/2} \label{Eq:Appx:BallLemma2_1} \\
    & \leq \sqrt{\frac{2\eta_k G}{\mu}}  \norm{\tilde{\vx}_{k+1}^* - \vx_{k}}^{1/2} \label{Eq:Appx:BallLemma2} \\
    & \leq \sqrt{\frac{2\eta_k G}{\mu}}  \left(\norm{\tilde{\vx}_{k+1}^* - \vx^*} + \norm{\vx_{k} - \vx^*}\right)^{1/2}.  \label{Eq:Appx:BallLemma3} 
\end{align}
Where Equation~\eqref{Eq:Appx:BallLemma1} follows from the fact that the ACG algorithm decreases the primal gap at each iteration $t$ and Equation~\eqref{Eq:Appx:BallLemma2_1} is obtained by applying the Cauchy-Schwarz inequality to the first term in Equation~\eqref{Eq:Appx:BallLemma2_1_1} and using the fact that $-\norm{\tilde{\vx}_{k+1}^* - \vx_{k}}^2_{H_k} \leq 0$. Moreover, in Equation~\eqref{Eq:Appx:BallLemma2} we have set $G = \max_{\vx \in \cx} \norm{\nabla f(\vx)}$. Note that the $\norm{\tilde{\vx}_{k+1}^* - \vx^*}$ term appearing in Equations~\eqref{Eq:Appx:BallLemma} and \eqref{Eq:Appx:BallLemma3} can be bounded using Corollary~\ref{Corollary:Appx:ConvExact}, which results in $\norm{\tilde{\vx}_{k+1}^* - \vx^*} \leq\mathcal{O}( \norm{\vx_{k} - \vx^*}^2)$. Combining the bound shown in Equation~\eqref{Eq:Appx:BallLemma3} with the bound in Lemma~\ref{Lemma:Appx:InexactConvergenceDistance} allows us to conclude that that:
 \begin{align*}
      \norm{\tilde{\vx}_{k+1}^t - \vx^*}  \leq & \frac{\sqrt{\eta_k}}{2\mu}\left(   \sqrt{8\mu}\left(1  + \sqrt{L\omega}\right)  + \sqrt{\eta_k}L_2 \right) \norm{\vx_k - \vx^*}^2 \\
      & + \frac{\sqrt{\eta_k^{3/2} G}}{\mu}  \left(   \sqrt{8\mu}\left(1  + \sqrt{L\omega}\right)  + \sqrt{\eta_k}L_2 \right)^{1/2} \norm{\vx_k - \vx^*} \\
      & + \sqrt{\frac{2\eta_k G}{\mu}} \norm{\vx_k - \vx^*}^{1/2}.
\end{align*}
\end{proof}
\end{lemma}

With Lemma~\ref{Lemma:Appx:DistanceConv} we can guarantee that for any radius $r>0$, there is a $K\geq 0$ such that $\tilde{\vx}^t_{k+1} \in \mathcal{B}(\vx^*,r) \cap \cx$ for all $k \geq K$ and all $t\geq 0$. With this, we can move on to prove that after a finite number of iterations $K\geq 0$ we can guarantee that $\vx_k \in \mathcal{F}(\vx^*)$ for all $k\geq K$.

\begin{corollary} \label{Corollary:Appx:StuckToFaceRadius}
Given a strongly convex and smooth function $f(\vx)$ with Lipschitz continuous Hessian and a polytope $\cx$, if Assumptions~\ref{assumption:strictComplementarity} and \ref{assumption:accuracyHessian} are satisfied, then there is a $r^{\text{PVM}}>0$ such that if $\vx_k \in \mathcal{B}(\vx^*, r^{\text{PVM}})\cap \cx$ and for any $t\geq 0$ we have that $\tilde{\vx}_{k+1}^t \notin \mathcal{F}(\vx^*)$ then $\abs{\tilde{\mathcal{S}}^{t+1}_{k+1}}<\abs{\tilde{\mathcal{S}}^{t}_{k+1}}$ and $\tilde{\mathcal{S}}^{t}_{k+1} \setminus\tilde{\mathcal{S}}^{t+1}_{k+1} \notin \mathcal{F}(\vx^*)$.
\begin{proof}
Let $r>0$ be the radius in Theorem~\ref{Theorem:Appx:ConvOptFace2} such that if $\{\vx_k, \tilde{\vx}^t_{k+1}, \tilde{\vx}^{t+1}_{k+1} \}\subset \mathcal{B}(\vx^*,r) \cap \cx$ then $\abs{\tilde{\mathcal{S}}^{t+1}_{k+1}}<\abs{\tilde{\mathcal{S}}^{t}_{k+1}}$ and $\tilde{\mathcal{S}}^{t}_{k+1} \setminus\tilde{\mathcal{S}}^{t+1}_{k+1} \notin \mathcal{F}(\vx^*)$. Since we want this to hold for all $t\geq 0$ for a given $\vx_k$, we need to ensure that $\tilde{\vx}^{t}_{k+1} \in \mathcal{B}(\vx^*,r) \cap \cx$ for $t \geq 0$. This can be accomplished with Lemma~\ref{Lemma:Appx:DistanceConv}, which allows us to ensure that there is a $r^{\text{PVM}}>0$ such that for any $\vx_k \in \mathcal{B}(\vx^*, r^{\text{PVM}})\cap \cx$ we have that $\{\vx_k, \tilde{\vx}^t_{k+1}, \tilde{\vx}^{t+1}_{k+1} \}\subset \mathcal{B}(\vx^*,r) \cap \cx$ for all $t\geq 0$.
\end{proof}
\end{corollary}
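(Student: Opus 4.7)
The plan is to combine Theorem~\ref{Theorem:Appx:ConvOptFace2} with the uniform trajectory bound from Lemma~\ref{Lemma:Appx:DistanceConv}. Theorem~\ref{Theorem:Appx:ConvOptFace2} already delivers the desired vertex-dropping conclusion, but under a three-point hypothesis: it requires $\vx_k$, $\tilde{\vx}^t_{k+1}$, and $\tilde{\vx}^{t+1}_{k+1}$ to all lie in $\mathcal{B}(\vx^*, r)\cap \cx$ for the specific radius $r>0$ produced in that theorem. The work of the corollary is therefore to replace this three-point hypothesis by a single-point hypothesis on $\vx_k$, uniformly in the ACG sub-iteration counter $t$.

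First I would fix the radius $r>0$ from Theorem~\ref{Theorem:Appx:ConvOptFace2}. Lemma~\ref{Lemma:Appx:DistanceConv} provides a constant $C>0$ (independent of $t$ and $k$, depending only on $\mu$, $L$, $\omega$, $D$, and $G$) such that $\norm{\tilde{\vx}^t_{k+1}-\vx^*}\leq C\,\norm{\vx_k-\vx^*}^{1/2}$ for every $t\geq 0$. This is exactly the uniform-in-$t$ control we need: if we choose
\begin{align*}
r^{\text{PVM}} \;\defeq\; \min\!\left\{ r,\; \frac{r^2}{C^2} \right\},
\end{align*}
then $\vx_k\in \mathcal{B}(\vx^*,r^{\text{PVM}})\cap\cx$ immediately forces $\vx_k\in\mathcal{B}(\vx^*,r)$ and $\tilde{\vx}^t_{k+1}\in\mathcal{B}(\vx^*,r)$ for every $t\geq 0$, and in particular $\tilde{\vx}^{t+1}_{k+1}\in\mathcal{B}(\vx^*,r)$ as well.

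With this choice of $r^{\text{PVM}}$, I would finish by fixing an arbitrary $t\geq 0$ with $\tilde{\vx}^t_{k+1}\notin \mathcal{F}(\vx^*)$ and simply applying Theorem~\ref{Theorem:Appx:ConvOptFace2}: all three points $\vx_k$, $\tilde{\vx}^t_{k+1}$, $\tilde{\vx}^{t+1}_{k+1}$ satisfy the ambient-ball hypothesis, so the theorem yields $\abs{\tilde{\mathcal{S}}^{t+1}_{k+1}}<\abs{\tilde{\mathcal{S}}^{t}_{k+1}}$ together with $\tilde{\mathcal{S}}^{t}_{k+1}\setminus \tilde{\mathcal{S}}^{t+1}_{k+1}\not\subset \vertex(\mathcal{F}(\vx^*))$.

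Conceptually there is no real obstacle here; the whole statement is a packaging of two already-proven results. The only care-point is the bookkeeping around why the bound from Lemma~\ref{Lemma:Appx:DistanceConv} is genuinely uniform in $t$ (it is, because the ACG algorithm used in Line~\ref{algLine:PNStep2} monotonically decreases $\hat{f}_k$, so the estimate on $\norm{\tilde{\vx}^t_{k+1}-\tilde{\vx}^*_{k+1}}$ used in that lemma holds for every $t$ from the initial gap $\hat{f}_k(\vx_k)-\hat{f}_k(\tilde{\vx}^*_{k+1})$). Once that is noted, the quantitative choice of $r^{\text{PVM}}$ above suffices and the corollary reduces to a one-line invocation of Theorem~\ref{Theorem:Appx:ConvOptFace2}.
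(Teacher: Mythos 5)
Your proposal is correct and follows essentially the same route as the paper: fix the radius $r$ from Theorem~\ref{Theorem:Appx:ConvOptFace2}, then use the uniform-in-$t$ bound $\norm{\tilde{\vx}^t_{k+1}-\vx^*}\leq \mathcal{O}(\norm{\vx_k-\vx^*}^{1/2})$ from Lemma~\ref{Lemma:Appx:DistanceConv} to shrink $\vx_k$'s radius so the entire inner trajectory stays inside $\mathcal{B}(\vx^*,r)\cap\cx$. Your explicit choice $r^{\text{PVM}}=\min\{r,\,r^2/C^2\}$ and the remark that $C$ is uniform in $t$ and $k$ (because $\eta_k\leq 1+\omega D^2$ under Assumption~\ref{assumption:accuracyHessian} and the ACG subiterates monotonically decrease $\hat f_k$) simply make quantitative what the paper leaves implicit.
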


\begin{corollary} \label{Corollary:Appx:StuckToFace}
Given a strongly convex and smooth function $f(\vx)$ with Lipschitz continuous Hessian and a polytope $\cx$, if Assumptions~\ref{assumption:strictComplementarity} and \ref{assumption:accuracyHessian} are satisfied, then there is a $K>0$ such that for all $k\geq K$ the iterates of the SOCGS algorithm (Algorithm~\ref{algo:proj-Newton}) satisfy that $\vx_k \in \mathcal{F}(\vx^*)$.
\begin{proof}
By Theorem~\ref{Theorem:Appx:ConvOptFace} we know that there is a $K^{\text{ACG}}\geq 0$ such that for $k \geq K^{\text{ACG}}$ we have that $\vx_k^{\text{ACG}} \in \mathcal{F}(\vx^*)$. Moreover, from Corollary~\ref{Corollary:Appx:StuckToFaceRadius} we know that there is a radius $r^{\text{PVM}}>0$ such that if $\vx_k \in \mathcal{B}(\vx^*, r^{\text{PVM}})\cap \cx$ then $\{\vx_k, \tilde{\vx}^t_{k+1}, \tilde{\vx}^{t+1}_{k+1} \}\subset \mathcal{B}(\vx^*,r) \cap \cx$ for all $t \geq 0$, where $r>0$ is the radius in Theorem~\ref{Theorem:Appx:ConvOptFace2}. As the SOCGS algorithm contracts the primal gap at least linearly, there is a $K^{\text{PVM}}\geq 0$ after which we can guarantee that $\vx_k \in \mathcal{B}(\vx^*, r^{\text{PVM}})\cap \cx$ for all $k\geq K^{\text{PVM}}$.

Assume that $K' = \max\{K^{\text{ACG}}, K^{\text{PVM}} \}$ and $\vx_{K'}\notin \mathcal{F}(\vx^*)$. Then for all subsequent iterations $k \geq K'$ we either choose the ACG step (Line~\ref{algLine:ACGStep} in Algorithm~\ref{algo:proj-Newton}) and have that $\vx_{k+1} = \vx_{k+1}^{\text{ACG}}\in \mathcal{F}(\vx^*)$ and the claim is true, or we choose the Inexact PVM step (Line~\ref{algLine:PVMStep} in Algorithm~\ref{algo:proj-Newton}) and have that $\abs{\mathcal{\cs}_{k}}  > \abs{\mathcal{\cs}_{k+1}}$ and $\abs{\mathcal{\cs}_{k}} \setminus \abs{\mathcal{\cs}_{k+1}}\in (\vertex(\cx)\setminus \vertex(\mathcal{F}(\vx^*)))$ by Theorem~\ref{Theorem:Appx:ConvOptFace2}. The latter case can only happen a finite number of times before $\vx_{K}\in \mathcal{F}(\vx^*)$ for some $K > K'$, as $\abs{\mathcal{\cs}_{K'}}$ is finite. Thereafter we will have that $\vx_{k}\in \mathcal{F}(\vx^*)$ for all $k > K$ (as Theorem~\ref{Theorem:Appx:ConvOptFace} and Theorem~\ref{Theorem:Appx:ConvOptFace2} will still hold).
\end{proof}
\end{corollary}

This allows us to conclude in the next theorem that the quadratic convergence in distance to the optimum of the Inexact PVM steps translates into quadratic convergence in the primal gap for the SOCGS algorithm.

\begin{theorem}[Quadratic convergence in primal gap of the SOCGS algorithm] \label{Lemma:Appx:InexactConvergencePrimalGap}
Given a $\mu$-strongly convex and $L$-smooth function $f(\vx)$ with $L_2$-Lipschitz Hessian and a polytope $\mathcal{X}$, if Assumption~\ref{assumption:strictComplementarity} and Assumption~\ref{assumption:accuracyHessian} are satisfied, then there is a $K\geq 0$ such that for $k\geq K$ the iterates of the SOCGS algorithm (Algorithm~\ref{algo:proj-Newton}) satisfy:
  \begin{align*}
  f(\vx_{k+1}) - f(\vx^*) \leq \frac{L \eta_k}{2\mu^4} \left(\sqrt{8\mu} (1+ \sqrt{L\omega}) + \sqrt{\eta_k} L_2\right)^2\left(f(\vx_{k}) - f(\vx^*)\right)^{2}.
 \end{align*}
 where the parameter $\eta_k = \max\{ \lambda_{\max}( H_k^{-1} \nabla^2 f(\vx_k)),  \lambda_{\max}([\nabla^2 f(\vx_k) ]^{-1}H_k )\} \geq 1$ measures how well $H_k$ approximates $\nabla^2 f(\vx_k)$ and $\omega$ is defined in Assumption~\ref{assumption:accuracyHessian}.
\begin{proof}
 From Corollary~\ref{Corollary:Appx:StuckToFace} we know that there is an index $K\geq 0$ such that for $k \geq K$ we know that the Inexact PVM iterates and the ACG iterates will be contained in $\mathcal{F}(\vx^*)$. This allows us to convert the quadratic convergence in distance to the optimum in Lemma~\ref{Lemma:Appx:InexactConvergenceDistance} for the Inexact PVM steps to a quadratic convergence in primal gap. Using strong-convexity we can bound bound $\norm{\vx_k - \vx^*}^2\leq 2/\mu(f(\vx_k) - f(\vx^*))$. Using $L$-smoothness along with the strict-complementary assumption (Assumption~\ref{assumption:strictComplementarity}) and the fact that $\tilde{\vx}_{k+1}\in\mathcal{F}(\vx^*)$ leads to $\norm{\tilde{\vx}_{k+1} - \vx^*}^2\geq 2/L(f(\tilde{\vx}_{k+1}) - f(\vx^*)))$. Plugging these bounds into the convergence in distance to the optimum from Lemma~\ref{Lemma:Appx:InexactConvergenceDistance} results in:
  \begin{align}
  f(\tilde{\vx}_{k+1}) - f(\vx^*) \leq \frac{L \eta_k}{2\mu^4} \left(\sqrt{8\mu} (1+ \sqrt{L\omega}) + \sqrt{\eta_k} L_2\right)^2\left(f(\vx_{k}) - f(\vx^*)\right)^{2}. \label{Eq:Appx:PrimalGapPVM}
 \end{align}
 As the SOCGS contracts the primal gap at least linearly (see Theorem~\ref{theorem:GlobalConvergence}), then for small enough $f(\vx_k) - f(\vx^*)$ with $k\geq K$ we know that the quadratic convergence shown in Equation~\eqref{Eq:Appx:PrimalGapPVM} for the Inexact PVM steps in Line~\ref{algLine:PNStep1}-\ref{algLine:PNStep3} will provide more primal progress than the ACG steps in Line~\ref{algLine:ACG}. Therefore the Inexact PVM steps will be chosen in Line~\ref{algLine:MonotonicityFirst} and we will have that:
   \begin{align*}
  f(\vx_{k+1}) - f(\vx^*) \leq \frac{L \eta_k}{2\mu^4} \left(\sqrt{8\mu} (1+ \sqrt{L\omega}) + \sqrt{\eta_k} L_2\right)^2\left(f(\vx_{k}) - f(\vx^*)\right)^{2}.
 \end{align*}
 \end{proof}
 \end{theorem}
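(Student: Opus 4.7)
The plan is to convert the quadratic-in-distance bound of Lemma~\ref{Lemma:InexactConvergenceDistance} into the desired quadratic-in-primal-gap bound for the Inexact PVM iterate $\tilde{\vx}_{k+1}$, and then propagate the bound to $\vx_{k+1}$ via the monotonic selection rule in Lines~\ref{algLine:MonotonicityFirst}-\ref{algLine:ACGStep}. The three ingredients I expect to glue together are face identification (Lemma~\ref{Theorem:ConvOptFace2}), strict complementarity (Assumption~\ref{assumption:strictComplementarity}), and the standard $\mu$-strong-convexity / $L$-smoothness sandwich that relates distances and primal gaps.

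First I would invoke Lemma~\ref{Theorem:ConvOptFace2} to produce an index $K \geq 0$ past which $\vx_k \in \mathcal{F}(\vx^*)$, and moreover the inner ACG trajectory that computes $\tilde{\vx}_{k+1}$ starts from $\vx_k \in \mathcal{F}(\vx^*)$ and stays in $\mathcal{F}(\vx^*)$, so that $\tilde{\vx}_{k+1} \in \mathcal{F}(\vx^*)$ as well. Strict complementarity then gives $\innp{\nabla f(\vx^*), \tilde{\vx}_{k+1} - \vx^*} = 0$, which combined with $L$-smoothness of $f$ expanded around $\vx^*$ produces
\[
f(\tilde{\vx}_{k+1}) - f(\vx^*) \;\leq\; \innp{\nabla f(\vx^*), \tilde{\vx}_{k+1} - \vx^*} + \tfrac{L}{2}\norm{\tilde{\vx}_{k+1} - \vx^*}^2 \;=\; \tfrac{L}{2}\norm{\tilde{\vx}_{k+1} - \vx^*}^2.
\]

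Next I would square the conclusion of Lemma~\ref{Lemma:InexactConvergenceDistance} to obtain a bound on $\norm{\tilde{\vx}_{k+1} - \vx^*}^2$ in terms of $\norm{\vx_k - \vx^*}^4$ with the corresponding constant, and replace $\norm{\vx_k - \vx^*}^4$ with $\tfrac{4}{\mu^2}\bigl(f(\vx_k) - f(\vx^*)\bigr)^2$ via $\mu$-strong convexity of $f$ at $\vx_k$ (applied to $\vx^*$). Chaining these two inequalities with the smoothness bound above yields the claimed quadratic rate for $\tilde{\vx}_{k+1}$. Because the selection step in Lines~\ref{algLine:MonotonicityFirst}-\ref{algLine:ACGStep} sets $\vx_{k+1}$ to whichever of $\tilde{\vx}_{k+1}$ and $\vx^{\text{ACG}}_{k+1}$ has the smaller function value, we have $f(\vx_{k+1}) \leq f(\tilde{\vx}_{k+1})$ for free, and the bound transfers to $\vx_{k+1}$ without any extra work.

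The main obstacle is the face-identification step. The inner ACG loop minimizes a moving quadratic $\hat{f}_k$ whose gradient at $\vx^*$ is not $\nabla f(\vx^*)$, so the classical Guélat--Marcotte face-identification argument does not apply verbatim to $f$. One has to show that the perturbation between $\hat{f}_k$ and $f$ is controlled by $\norm{\vx_k - \vx^*}$ (via Assumption~\ref{assumption:accuracyHessian} together with the Lipschitz Hessian), and is small enough that the normal-cone inequalities witnessing strict complementarity for $f$ at the non-optimal vertices of $\cx$ survive the perturbation; this is exactly what Lemma~\ref{Theorem:ConvOptFace2} provides, so once that lemma is in hand the remainder is a routine combination of previously established estimates.
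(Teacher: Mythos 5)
Your proposal matches the paper's proof essentially step for step: face identification via Lemma~\ref{Theorem:ConvOptFace2} (= Corollary~\ref{Corollary:Appx:StuckToFace} in the appendix), the strong-convexity upper bound on $\norm{\vx_k-\vx^*}^2$, the smoothness-plus-strict-complementarity lower bound on $\norm{\tilde{\vx}_{k+1}-\vx^*}^2$, and chaining through Lemma~\ref{Lemma:InexactConvergenceDistance}. The one place you diverge is the very last step, and yours is actually cleaner: the paper argues that for small enough primal gap the Inexact PVM step must eventually \emph{be chosen} in Line~\ref{algLine:MonotonicityFirst} (invoking the global linear rate to make the quadratic contraction dominate), whereas you simply observe that the selection rule always yields $f(\vx_{k+1}) \leq f(\tilde{\vx}_{k+1})$ regardless of which branch fires, so the bound on $f(\tilde{\vx}_{k+1}) - f(\vx^*)$ transfers to $f(\vx_{k+1}) - f(\vx^*)$ unconditionally. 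This avoids an unnecessary ``eventually chosen'' argument and proves the stated inequality for all $k\ge K$ rather than only asymptotically.
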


\subsection{Complexity Analysis} \label{Appx:ComplexityAnalysis}

Throughout this section we make the simplifying assumption that we have at our disposal the tightest possible lower bound $lb(\vx_k)$ on the primal gap, that is, $lb(\vx_k) = f(\vx_k) - f(\vx^*)$. Providing a looser lower bound on the primal gap does not affect the number of first-order or Hessian oracle calls, however it can significantly increase the number of linear optimization oracle calls used to compute the Inexact PVM steps. Let $r = \min\left\{r^\text{ACG}, r^{\text{PVM}}\right\}>0$, where $r^\text{ACG}$ is described in Theorem~\ref{Theorem:Appx:ConvOptFace} and $r^{\text{PVM}}$ in Corollary~\ref{Corollary:Appx:StuckToFaceRadius}. Note that $r$ is independent of the target accuracy $\varepsilon$. For ease of exposition we can divide the behaviour of the SOCGS algorithm (Algorithm~\ref{algo:proj-Newton}) into three phases:
\begin{enumerate}
    \item \textbf{Phase 1: $\vx_k \notin \mathcal{B}(\vx^*, r)\cap \cx$ or $\vx_k^{\text{ACG}} \notin \mathcal{B}(\vx^*, r)\cap \cx$.} In this phase the SOCGS algorithm will contract the primal gap at least linearly, as dictated by Theorem~\ref{theorem:GlobalConvergence}. Using strong-convexity we can upper bound the number of iterations needed until $\{\vx_k, \vx_k^{\text{ACG}}\} \in \mathcal{B}(\vx^*, r)$, which marks the end of this first phase.
    
    \item \textbf{Phase 2: $\{\vx_k, \vx_{k}^{\text{ACG}}\} \in \mathcal{B}(\vx^*, r)\cap \cx$ and $\{\vx_k, \vx_{k}^{\text{ACG}}\} \notin \mathcal{F}(\vx^*)$.} The primal gap convergence of the SOCGS algorithm in this phase is also at least linear, and the convergence bound of Theorem~\ref{theorem:GlobalConvergence} still holds. However in this phase, the ACG steps in Line~\ref{algLine:ACG} and the ACG steps used to compute the Inexact PVM iterates in Lines~\ref{algLine:PNStep1}-\ref{algLine:PNStep3} will drop any vertices in their respective active sets that are not in $\mathcal{F}(\vx^*)$. That is, if $\vx_k^{\text{ACG}}\in\mathcal{B}(\vx^*, r)\cap \cx\setminus \mathcal{F}(\vx^*)$ then $\abs{\mathcal{S}_k^{\text{ACG}}} > \abs{\mathcal{S}_{k+1}^{\text{ACG}}}$ and $\mathcal{S}_k^{\text{ACG}} \setminus \mathcal{S}_{k+1}^{\text{ACG}} \notin \vertex(\mathcal{F}(\vx^*))$. Similarly, if $\vx_k\in\mathcal{B}(\vx^*, r)\cap \cx \setminus \mathcal{F}(\vx^*)$ then $\tilde{\vx}_{k+1}$ in Line~\ref{algLine:OutputPVM} in Algorithm~\ref{algo:proj-Newton} satisfies after exiting the while loop in Lines~\ref{algLine:PNStep1}-\ref{algLine:PNStep3} that $\abs{\mathcal{S}_k} > \abs{\tilde{\mathcal{S}}_{k+1}}$ and $\mathcal{S}_k \setminus \tilde{\mathcal{S}}_{k+1} \not\subset \vertex(\mathcal{F}(\vx^*))$. As the cardinality of both active sets is finite, after a finite number of iterations we must have that $\{\vx_k, \vx_{k}^{\text{ACG}}\} \in \mathcal{B}(\vx^*, r)\cap \mathcal{F}(\vx^*)$, which marks the end of this phase.
    
    \item \textbf{Phase 3: $\{\vx_k, \vx_{k}^{\text{ACG}}\} \in \mathcal{B}(\vx^*, r)\cap \mathcal{F}(\vx^*)$.} In this final phase the SOCGS algorithm has a quadratic convergence rate in primal gap, as shown in Theorem~\ref{Lemma:Appx:InexactConvergencePrimalGap}. Once $\{\vx_k, \vx_{k}^{\text{ACG}}\} \in \mathcal{B}(\vx^*, r)\cap \mathcal{F}(\vx^*)$ the ACG steps in Line~\ref{algLine:ACG} and in Lines~\ref{algLine:PNStep1}-\ref{algLine:PNStep3} will not pick up any vertices in $\vertex(\cx) \setminus \vertex(\mathcal{F}(\vx^*))$, and the iterates will remain in $\mathcal{B}(\vx^*, r)\cap \mathcal{F}(\vx^*)$ for all subsequent steps.
    
\end{enumerate}
As in the classical analysis of PVM and Newton algorithms, the SOCGS algorithm shows local quadratic convergence (in primal gap and distance to the optimum) after a number of iterations that is independent of $\varepsilon$ (but dependent on $f(\vx)$ and $\cx$). The SOCGS algorithm makes use of three different types of oracle calls, namely, Hessian, first-order and linear optimization oracle calls. The Hessian oracle is called once per iteration (in Line~\ref{algLine:updateHessian}), while the first-order oracle is called at most twice (to compute the independent ACG step in Line~\ref{algLine:ACG} and to build the quadratic approximation in Line~\ref{algLine:buildQuadratic}). The linear minimization oracle will be called once in Line~\ref{algLine:ACG} for the independent ACG step and potentially multiple times in Line~\ref{algLine:PNStep2} while computing the Inexact PVM step.

In order to study the number of linear optimization oracle calls needed to achieve a $\varepsilon$-optimal solution to Problem~\eqref{eq:OptProblem} we first review the convergence of the Frank-Wolfe gap of the ACG algorithm, which is used as a stopping criterion in the SOCGS algorithm to compute the Inexact PVM steps (Line~\ref{algLine:PNStep1} in Algorithm~\ref{algo:proj-Newton}).

\begin{theorem}[Convergence of the Frank-Wolfe gap of the ACG algorithm]~\citep[Theorem 2]{lacoste2015global} \label{theorem:convergencedual}
Given a $\mu$-strongly convex and $L$-smooth function $f(\vx)$ and a polytope $\mathcal{X}$, then for any $k\geq 0$ the ACG algorithm satisfies:
  \begin{align*}
\max\limits_{\mathbf{v} \in \cx}   \langle \nabla f \left( \vx_{k } \right), \vx_{k} - \mathbf{v} \rangle \leq \begin{cases}
    LD^2/2 + f(\vx_k) - f(\vx^*),& \text{if } f(\vx_k) - f(\vx^*) \geq LD^2/2\\
    D\sqrt{2L(f(\vx_k) - f(\vx^*))},              & \text{otherwise},
\end{cases}
 \end{align*}
 where $D$ denotes the diameter of the polytope $\cx$.
 \end{theorem}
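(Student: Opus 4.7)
The plan is to derive this as a purely local consequence of $L$-smoothness and the fact that the iterates lie in $\cx$ — no structural property of the ACG algorithm is actually used beyond $\vx_k \in \cx$, so the argument works for any feasible point. Let $\mathbf{v}_k \in \argmin_{\mathbf{v}\in\cx} \innp{\nabla f(\vx_k), \mathbf{v}}$ and denote the Frank-Wolfe gap by $g_k = \innp{\nabla f(\vx_k), \vx_k - \mathbf{v}_k}$. For every $\gamma \in [0,1]$ the convex combination $\vx_k + \gamma(\mathbf{v}_k - \vx_k)$ is feasible, so $L$-smoothness together with $\norm{\mathbf{v}_k - \vx_k} \leq D$ yields
\begin{align*}
f(\vx_k + \gamma(\mathbf{v}_k - \vx_k)) \;\leq\; f(\vx_k) - \gamma\, g_k + \frac{\gamma^2 L D^2}{2}.
\end{align*}
Since $f(\vx^*) \leq f(\vx_k + \gamma(\mathbf{v}_k - \vx_k))$, rearranging gives the master inequality
\begin{align*}
\gamma\, g_k - \frac{\gamma^2 L D^2}{2} \;\leq\; f(\vx_k) - f(\vx^*) \qquad \text{for all } \gamma \in [0,1].
\end{align*}

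Next I would optimize the left-hand side over $\gamma \in [0,1]$. The unconstrained maximizer is $\gamma^\star = g_k/(LD^2)$. Two regimes appear depending on whether $\gamma^\star \leq 1$, i.e., whether $g_k \leq LD^2$. If $g_k \leq LD^2$, plug in $\gamma = \gamma^\star$ to obtain $g_k^2/(2LD^2) \leq f(\vx_k) - f(\vx^*)$, so $g_k \leq D\sqrt{2L(f(\vx_k) - f(\vx^*))}$. If instead $g_k > LD^2$, the maximum on $[0,1]$ is attained at $\gamma = 1$, giving $g_k - LD^2/2 \leq f(\vx_k) - f(\vx^*)$, i.e., $g_k \leq LD^2/2 + f(\vx_k) - f(\vx^*)$.

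Finally I would translate these two regimes in terms of $g_k$ into the two regimes in terms of the primal gap that the theorem states. If $f(\vx_k) - f(\vx^*) < LD^2/2$, then the bound $g_k^2/(2LD^2) \leq f(\vx_k) - f(\vx^*)$ forces $g_k < LD^2$, placing us in the first regime and yielding the second branch $g_k \leq D\sqrt{2L(f(\vx_k)-f(\vx^*))}$. Conversely, when $f(\vx_k) - f(\vx^*) \geq LD^2/2$, either $g_k \leq LD^2 \leq LD^2/2 + (f(\vx_k) - f(\vx^*))$ directly, or $g_k > LD^2$ in which case the second regime gives exactly $g_k \leq LD^2/2 + f(\vx_k) - f(\vx^*)$; either way the first branch holds.

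There is no real obstacle here — the argument is a one-step smoothness bound followed by a scalar optimization in $\gamma$. The only subtlety worth flagging is matching the two case distinctions: the natural dichotomy that emerges from optimizing $\gamma$ is $g_k$ versus $LD^2$, whereas the statement phrases the dichotomy in terms of $f(\vx_k) - f(\vx^*)$ versus $LD^2/2$, so the last paragraph's cross-check is the step that needs to be written carefully.
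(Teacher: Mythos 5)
Your proof is correct and is essentially the standard argument for this bound (the one underlying the cited result from Lacoste-Julien and Jaggi 2015). You are also right that nothing about the ACG dynamics enters: the estimate is a pointwise smoothness consequence valid for every $\vx_k \in \cx$, which is precisely how the cited source treats it. The paper itself does not reprove this theorem but simply cites it, so there is no in-paper proof to compare against; your derivation recovers the intended argument.

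One small slip worth fixing in the last paragraph. When you handle the case $f(\vx_k) - f(\vx^*) < LD^2/2$, you invoke the bound $g_k^2/(2LD^2) \leq f(\vx_k) - f(\vx^*)$ to deduce $g_k < LD^2$, but that bound was itself derived only under the hypothesis $g_k \leq LD^2$, so as written the step is circular. The clean fix: observe that taking $\gamma = 1$ in the master inequality gives $g_k \leq LD^2/2 + f(\vx_k) - f(\vx^*)$ unconditionally. Then $f(\vx_k) - f(\vx^*) < LD^2/2$ immediately yields $g_k < LD^2$, so $\gamma^\star \leq 1$ and the substitution $\gamma = \gamma^\star$ gives the second branch; and when $f(\vx_k) - f(\vx^*) \geq LD^2/2$ the unconditional bound already is the first branch. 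With that repair the case analysis is airtight.
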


With the previous Theorem at hand we can move on to study the number of oracle calls of each type that we need in the aforementioned phases.

\noindent \textbf{Phase 1: $\vx_k \notin \mathcal{B}(\vx^*, r)\cap \cx$ or $\vx_k^{\text{ACG}} \notin \mathcal{B}(\vx^*, r)\cap \cx$.}

The number of outer iterations needed for $\vx_k^{\text{ACG}}$ and $\vx_k$ to reach $\mathcal{B}(\vx^*, r)\cap \cx$ can be upper bounded using strong convexity. As $f(\vx) - f(\vx^*) \geq \mu/2\norm{\vx - \vx^*}^2$ then if $f(\vx) - f(\vx^*) \leq \mu/2r^2$ we can conclude that $\vx \in \mathcal{B}(\vx^*, r)\cap \cx$. As the iterates $\vx_k$ and $\vx_k^{\text{ACG}}$ have a primal gap convergence that is at least linear (see Theorem~\ref{theorem:GlobalConvergence} and Theorem~\ref{theorem:ConvergenceACG} respectively) then the number of iterations $T_1$ needed to ensure that $\{\vx_k,\vx_k^{\text{ACG}} \}\in\mathcal{B}(\vx^*, r)\cap \cx$ for all $k \geq T_1$ can be upper bounded by:
\begin{align}
    T_1 \leq \frac{8L}{\mu}\left(\frac{D}{\delta} \right)^2 \log \left(\frac{2 (f(\vx_0) - f(\vx^*))}{\mu r^2}\right). \label{Eq:Appx:BoundNumberIteration}
\end{align}
Where we have used the primal gap convergence of Theorem~\ref{theorem:GlobalConvergence} and $\mu$-strong convexity. If we denote by $N_{k,1}$ the number of inner ACG steps in Line~\ref{algLine:PNStep2} that we need to take to satisfy the exit criterion shown in Line~\ref{algLine:PNStep1} of Algorithm~\ref{algo:proj-Newton} at iteration $k$ during this phase, and we use Theorem~\ref{theorem:convergencedual} we have that:
\begin{align}
N_{k, 1}  & \leq \frac{64 \lambda_{\max}(H_k)}{\lambda_{\min}(H_k)}\left( \frac{D}{\delta}\right)^2 \log \left( \frac{\max\left\{(2(\hat{f}_k(\vx_k) -\hat{f}_k(\vx^*_{k+1})))^{1/4}, (2\lambda_{\max}(H_k)D^2(\hat{f}_k(\vx_k) -\hat{f}_k(\vx^*_{k+1})))^{1/8}\right\}}{(f(\vx_k) - f(\vx^*))/\norm{\nabla f(\vx_k)}}\right)  \label{eq:ComplexityBound1}\\
& \leq \frac{64L \eta_k^2}{\mu}\left( \frac{D}{\delta}\right)^2 \log \left( \frac{\max\left\{(2(\hat{f}_k(\vx_k) -\hat{f}_k(\vx^*_{k+1})))^{1/4}, (2L\eta_k D^2(\hat{f}_k(\vx_k) -\hat{f}_k(\vx^*_{k+1})))^{1/8}\right\}}{(f(\vx_k) - f(\vx^*))/\norm{\nabla f(\vx_k)}}\right)  \\
& \leq \frac{64L\eta_k^2}{\mu}\left( \frac{D}{\delta}\right)^2 \log \left( \frac{2\max\left\{(2D\norm{\nabla f(\vx_k)} )^{1/4}, (2L\eta_k D^3\norm{\nabla f(\vx_k)} )^{1/8}\right\}\norm{\nabla f(\vx_k)}}{\mu r^2}\right).\label{eq:ComplexityBound2}
\end{align}

The inequality follows from the fact that for $\vx_k \notin \mathcal{B}(\vx^*, r)\cap \cx$ we can bound $\mu r^2/2  \leq f(\vx_k) - f(\vx^*)$, and the fact that $\hat{f}_k(\vx_k) -\hat{f}_k(\vx^*_{k+1}) = \innp{-\nabla f(\vx_k),\vx_k -  \vx^*_{k+1}} -1/2 \norm{\vx_k -  \vx^*_{k+1}}^2_{H_k} \leq \norm{\nabla f(\vx_k)}\norm{\vx_k -  \vx^*_{k+1}} \leq \norm{\nabla f(\vx_k)} D$. If we denote:
\begin{align*}
    G = \max_{\vx \in \cx} \norm{\nabla f(\vx)} \quad \text{and} \quad \beta = \max\{ (2DG)^{1/4}, (2L (1 + \omega D^2) D^3G)^{1/8},
\end{align*}
then, using the fact that  $\eta_k \leq 1 + \omega D^2$, we can bound the number of inner ACG steps in Line~\ref{algLine:PNStep2} needed for any iteration $k\geq 0$ in the first phase such that $\vx_k \notin \mathcal{B}(\vx^*, r)\cap \cx$ as:
  \begin{align}
N_{k,1}  & \leq \mathcal{O}\left(\frac{L (1 + \omega D^2)^2}{\mu}\left( \frac{D}{\delta}\right)^2 \log \left( \frac{\beta G}{\mu r^2}\right)\right). \label{Eq:Appx:LMOperIteration}
 \end{align}
  As the SOCGS algorithm calls the Hessian oracle once, and the first-order oracle at most twice per iteration we can upper bound the total number of first-order and Hessian oracle calls using the bound shown in Equation~\eqref{Eq:Appx:BoundNumberIteration}. Combining the aforementioned bound with the bound on the total number of linear minimization oracle calls per iteration in Equation~\eqref{Eq:Appx:LMOperIteration} we can bound the total number of linear minimization oracle calls. Therefore in this phase we will need:
  \begin{alignat}{2}
   &\mathcal{O}\left( \frac{8L}{\mu}\left(\frac{D}{\delta} \right)^2 \log \left(\frac{1}{\mu r^2}\right) \right)  \quad \quad &&\text{first-order and Hessian oracle calls.} \\
  &\mathcal{O}\left( \left(\frac{L(1 + \omega D^2)}{\mu}\right)^2\left(\frac{D}{\delta} \right)^4 \log \left(\frac{1}{\mu r^2}\right) \log \left( \frac{\beta G}{\mu r^2}\right) \right)   \quad \quad && \text{Linear minimization oracle calls.}
\end{alignat}

\noindent \textbf{Phase 2: $\{\vx_k, \vx_{k}^{\text{ACG}}\} \in \mathcal{B}(\vx^*, r)\cap \cx$ and $\{\vx_k, \vx_{k}^{\text{ACG}}\} \notin \mathcal{F}(\vx^*)$.}

In this phase we can guarantee that if $\vx_k^{\text{ACG}}\in\mathcal{B}(\vx^*, r)\cap \cx\setminus \mathcal{F}(\vx^*)$ then the ACG step in Line~\ref{algLine:ACG} will be an away-step that reduces the cardinality of the active set $\mathcal{S}_k^{\text{ACG}}$, satisfying that $\abs{\mathcal{S}_k^{\text{ACG}}} > \abs{\mathcal{S}_{k+1}^{\text{ACG}}}$ and $\mathcal{S}_k^{\text{ACG}} \setminus \mathcal{S}_{k+1}^{\text{ACG}} \notin \vertex(\mathcal{F}(\vx^*))$. Similarly, if $\vx_k\in\mathcal{B}(\vx^*, r)\cap \cx \setminus \mathcal{F}(\vx^*)$ then the ACG steps in Line~\ref{algLine:PNStep2} will also be away-steps that reduce the cardinality of the active set $\mathcal{S}_{k}$, that is, after exiting the while loop in Line~\ref{algLine:PNStep3} of Algorithm~\ref{algo:proj-Newton} we have that $\abs{\mathcal{S}_k} > \abs{\tilde{\mathcal{S}}_{k+1}}$ and $\mathcal{S}_k \setminus \tilde{\mathcal{S}}_{k+1} \not\subset \vertex(\mathcal{F}(\vx^*))$. This behaviour will continue until $\vx_{k}^{\text{ACG}} \in \mathcal{F}(\vx^*)$ and $\tilde{\vx}^{t+1}_{k+1} \in \mathcal{F}(\vx^*)$.

Therefore we need to bound the number of vertices that have to be dropped from both $\cs^{\text{ACG}}_k$ and $\cs_k$ in order for $\cs^{\text{ACG}}_k\subseteq \vertex(\mathcal{F}(\vx^*))$ and $\cs_k\subseteq \vertex(\mathcal{F}(\vx^*))$. The ACG algorithm in Line~\ref{algLine:ACG} will have picked up at most $T_1$ vertices in the first phase (as each iteration can only add one vertex to $\cs^{\text{ACG}}$ in Line~\ref{algLine:ACG}), on the other hand, the PVM steps in Lines~\ref{algLine:PNStep1}-\ref{algLine:PNStep3} will have picked up at most $\sum_{k=1}^{T_1}N_{k,1}$ vertices. As once inside the ball all ACG steps (both in Line~\ref{algLine:ACG} and Lines~\ref{algLine:PNStep1}-\ref{algLine:PNStep3}) reduce the cardinality of the active set, and using the bounds in Equation~\eqref{Eq:Appx:BoundNumberIteration} and \eqref{Eq:Appx:LMOperIteration}, we will need:
  \begin{alignat}{2}
  &\mathcal{O}\left( \left(\frac{L(1 + \omega D^2)}{\mu}\right)^2\left(\frac{D}{\delta} \right)^4 \log \left(\frac{1}{\mu r^2}\right) \log \left( \frac{\beta G}{\mu r^2}\right) \right)   \quad \quad && \text{Linear minimization oracle calls.}
\end{alignat}

We now need to bound the number of first-order oracle calls needed to drop the aforementioned vertices. The ACG algorithm in Line~\ref{algLine:ACG} will need to call the first-order oracle at most $T_1$ times. On the other hand, we need to bound the number of vertices that the PVM steps will drop per first-order oracle call in Lines~\ref{algLine:PNStep1}-\ref{algLine:PNStep3}, for which we will use the following Lemma:
\begin{lemma}
If $f(\vx_k) - f(\vx^*) \leq 4\mu^2$ then the Inexact PVM steps in Lines~\ref{algLine:PNStep1}-\ref{algLine:PNStep3} of Algorithm~\ref{algo:proj-Newton} will perform at least one ACG step in Line~\ref{algLine:PNStep2}.
\begin{proof}
 We use proof by contradiction, and we assume that to compute the Inexact PVM step to the necessary accuracy we did not perform any ACG steps in Line~\ref{algLine:PNStep2}, that is:
 \begin{align*}
\left(\frac{f(\vx_k) - f(\vx^*)}{\norm{\nabla f(\vx_k)}}\right)^4 & > \max\limits_{\mathbf{v} \in \cx}   \innp{ \nabla \hat{f}_k \left( \tilde{\vx}^0_{k + 1} \right), \tilde{\vx}^0_{k + 1} - \mathbf{v} } \\
& = \max\limits_{\mathbf{v} \in \cx} \innp{\nabla f (\vx_k), \vx_k - \mathbf{v}}  \\
& \geq \innp{\nabla f(\vx_k), \vx_k - \vx^*} \\ 
& \geq f(\vx_k) - f(\vx^*).
\end{align*}
Where the last inequality follows from convexity. Using the previous chain of inequalities along with $f(\vx_k) - f(\vx^*)\leq \norm{\nabla f(\vx_k)}^2/2\mu$ from $\mu$-strong convexity we have that $f(\vx_k) - f(\vx^*) > 4\mu^2$, which is the desired contradiction. 
\end{proof}
\end{lemma}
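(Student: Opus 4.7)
The plan is to argue by contradiction: assume that the while loop in Lines~\ref{algLine:PNStep1}--\ref{algLine:PNStep3} exits without executing even a single ACG step, and then derive a lower bound on $f(\vx_k) - f(\vx^*)$ that violates the hypothesis $f(\vx_k) - f(\vx^*) \leq 4\mu^2$.

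First I would observe that the loop's entry test is evaluated at $\tilde{\vx}^0_{k+1} = \vx_k$ (Line~\ref{algLine:TransferPoint}), and at this point $\nabla \hat{f}_k(\tilde{\vx}^0_{k+1}) = \nabla f(\vx_k) + H_k(\vx_k - \vx_k) = \nabla f(\vx_k)$. Thus, no ACG step being performed means that
\[
\varepsilon_k = \left(\frac{f(\vx_k) - f(\vx^*)}{\norm{\nabla f(\vx_k)}}\right)^{4} > \max_{\mathbf{v} \in \cx} \innp{\nabla f(\vx_k), \vx_k - \mathbf{v}},
\]
where we have plugged in $lb(\vx_k) = f(\vx_k) - f(\vx^*)$ as stipulated at the start of Section~\ref{Appx:ComplexityAnalysis}. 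The right-hand side is the standard Frank-Wolfe gap of $f$ at $\vx_k$, which dominates $\innp{\nabla f(\vx_k), \vx_k - \vx^*}$, and by convexity of $f$ the latter is bounded below by $f(\vx_k) - f(\vx^*)$. Dividing through yields $(f(\vx_k) - f(\vx^*))^3 > \norm{\nabla f(\vx_k)}^4$.

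The next ingredient is the Polyak--\L{}ojasiewicz-type inequality $\norm{\nabla f(\vx_k)}^2 \geq 2\mu(f(\vx_k) - f(\vx^*))$, which follows from $\mu$-strong convexity by minimizing the quadratic lower bound $f(\vy) \geq f(\vx_k) + \innp{\nabla f(\vx_k), \vy - \vx_k} + \tfrac{\mu}{2}\norm{\vy - \vx_k}^2$ over $\vy \in \rr^n$ and using that $\min_{\vy \in \rr^n} f(\vy) \leq f(\vx^*)$ since $\vx^* \in \cx$. Squaring this gives $\norm{\nabla f(\vx_k)}^4 \geq 4\mu^2(f(\vx_k) - f(\vx^*))^2$. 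Combining with the previous inequality yields $(f(\vx_k) - f(\vx^*))^3 > 4\mu^2(f(\vx_k) - f(\vx^*))^2$, i.e. $f(\vx_k) - f(\vx^*) > 4\mu^2$, which contradicts the hypothesis.

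There is no real obstacle here; the only subtlety worth double-checking is that the PL-type bound $\norm{\nabla f(\vx_k)}^2 \geq 2\mu(f(\vx_k) - f(\vx^*))$ remains valid in the constrained setting. This is immediate because we derive it from strong convexity on all of $\rr^n$ and then use $\min_{\rr^n} f \leq f(\vx^*)$, so the constrained optimum only makes the inequality tighter. The entire argument is two short lines of manipulation once the exit criterion is unfolded.
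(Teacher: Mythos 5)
Your argument is correct and follows exactly the same route as the paper's proof: unfold the exit criterion at $\tilde{\vx}^0_{k+1}=\vx_k$ where $\nabla\hat f_k$ reduces to $\nabla f$, lower-bound the Frank--Wolfe gap by $f(\vx_k)-f(\vx^*)$ via convexity, and then invoke the strong-convexity (PL) bound $\norm{\nabla f(\vx_k)}^2 \geq 2\mu\left(f(\vx_k)-f(\vx^*)\right)$ to force $f(\vx_k)-f(\vx^*) > 4\mu^2$. You merely spell out the intermediate algebra and the PL justification more explicitly; the substance is identical.
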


We assume that $r< \sqrt{8\mu}$, which allows us to claim that the primal gap for any point $\vx_k \in \mathcal{B}(\vx^*, r)$ satisfies $f(\vx_k) - f(\vx^*) \leq 4\mu^2$ (otherwise it simply takes a constant number of iterations to achieve this once in $\mathcal{B}(\vx^*, r)$, as the primal gap contracts at least linearly). Therefore in this phase we will need:
  \begin{alignat}{2}
  &\mathcal{O}\left( \left(\frac{L(1 + \omega D^2)}{\mu}\right)^2\left(\frac{D}{\delta} \right)^4 \log \left(\frac{1}{\mu r^2}\right) \log \left( \frac{\beta G}{\mu r^2}\right) \right)   \quad \quad && \text{first-order and Hessian oracle calls.}
\end{alignat}

\noindent \textbf{Phase 3: $\{\vx_k, \vx_{k}^{\text{ACG}}\} \in \mathcal{B}(\vx^*, r)\cap \mathcal{F}(\vx^*)$.}

Let $T$ denote the first iteration of the final phase, where $\{\vx_T, \vx_{T}^{\text{ACG}}\} \in \mathcal{B}(\vx^*, r)\cap \mathcal{F}(\vx^*)$ and the quadratic rate dominates over the linear rate. Using the quadratic convergence in primal gap shown in Theorem~\ref{Lemma:Appx:InexactConvergencePrimalGap} we have that:
\begin{align*}
    f(\vx_{k+T+1}) - f(\vx^*) & \leq \left[\frac{L (1 + \omega r^2)}{2\mu^4} \left(8\mu (1+ \sqrt{L\omega}) + \sqrt{(1 + \omega r^2) L_2}\right)^2 \right]^{2^k - 1}\left( f(\vx_T) - f(\vx^*)\right)^{2^k} \\
    & \leq \left[\frac{L (1 + \omega r^2)}{2\mu^4} \left(8\mu (1+ \sqrt{L\omega}) + \sqrt{(1 + \omega r^2) L_2}\right)^2 \left( f(\vx_T) - f(\vx^*)\right) \right]^{2^k}
\end{align*}
Where we have used the fact that by Assumption~\ref{assumption:accuracyHessian} we have that $\eta_k \leq  1 + \omega\norm{\vx_k - \vx^*}^2 \leq 1 + \omega r^2$. Therefore in order to reach a $\varepsilon$-optimal solution starting from this phase we need:
\begin{alignat}{2}
  &\mathcal{O}\left( \log \log \frac{1}{\varepsilon} \right)   \quad \quad && \text{first-order and Hessian oracle calls.}
\end{alignat}
 Where we have only included the dependence on $\varepsilon$ for notational convenience. If we denote by $N_{k,3}$ the number of inner ACG steps in Line~\ref{algLine:PNStep2} that we need to take to satisfy the exit criterion shown in Line~\ref{algLine:PNStep1} of Algorithm~\ref{algo:proj-Newton} at iteration $k$ during this last phase and we use the fact that $f(\vx_k) - f(\vx^*) \geq \varepsilon$ for all suboptimal iterates, resulting in:
  \begin{align}
N_{k,3}  & \leq \mathcal{O}\left(\frac{L\eta_k^2}{\mu}\left( \frac{D}{\delta}\right)^2 \log \left( \frac{\beta G}{\varepsilon}\right)\right).\label{eq:ComplexityBound3}
 \end{align}
Therefore combining the bound on the total number of iterations in this phase with the bound on the number of linear minimization oracle calls per iteration we need:
\begin{alignat}{2}
  &\mathcal{O}\left(\frac{L (1 + \omega D^2)^2}{\mu}\left( \frac{D}{\delta}\right)^2 \log \left( \frac{\beta G}{\varepsilon}\right)  \log \log \frac{1}{\varepsilon} \right)   \quad \quad && \text{Linear minimization oracle calls.}
\end{alignat}

The results for all these phases can be seen in Table~\ref{Table:Appx}.

\begin{table*}[h!]
\footnotesize
 \begin{center} 
  {\renewcommand{\arraystretch}{2.0}
\begin{tabular}{ccc}
\toprule
\textbf{Phase}   & \textbf{FO and Hessian Oracle Calls} & \textbf{LO Oracle Calls} \\ \hline
\midrule
Phase 1  &  $\mathcal{O}\left(  \frac{L}{\mu}\left(\frac{D}{\delta} \right)^2 \log \left(\frac{1}{\mu r^2}\right)\right)$  & $\mathcal{O}\left(  \left(\frac{L(1 + \omega D^2)}{\mu}\right)^2\left(\frac{D}{\delta} \right)^4 \log\left(\frac{1}{\mu r^2}\right)\log \left( \frac{\beta G}{\mu r^2}\right)\right)$  \\ 
Phase 2  & $\mathcal{O}\left(  \left(\frac{L(1 + \omega D^2)}{\mu}\right)^2\left(\frac{D}{\delta} \right)^4 \log\left(\frac{1}{\mu r^2}\right)\log \left( \frac{\beta G}{\mu r^2}\right)\right)$  & $\mathcal{O}\left(  \left(\frac{L(1 + \omega D^2)}{\mu}\right)^2\left(\frac{D}{\delta} \right)^4 \log\left(\frac{1}{\mu r^2}\right)\log \left( \frac{\beta G}{\mu r^2}\right)\right)$  \\ \hline
Phase 3  & $\mathcal{O}\left(\log\log \left(\frac{1}{\varepsilon}\right)\right)$  & $\mathcal{O}\left(\frac{L(1 + \omega D^2)^2}{\mu}\left( \frac{D}{\delta}\right)^2 \log \left( \frac{\beta G}{\varepsilon}\right)\log\log \left(\frac{1}{\varepsilon}\right)\right)$  \\ \hline

\end{tabular}
}
\end{center}
 \caption{Oracle complexity to reach an $\varepsilon$-optimal solution to Problem~\ref{eq:OptProblem} for the SOCGS algorithm (Algorithm~\ref{algo:proj-Newton}).} \label{Table:Appx}
\end{table*}

\newpage

\begin{remark}
The constant $r$ is an invariant of the function and feasible region under consideration and has been used in a similar fashion in \cite{wolfe1970convergence, guelat1986some} and more recently in \cite{garber2020revisiting}, and although unknown, still makes the convergence analysis and complexity estimate conceptually useful, as it adds at most a constant number of iterations independent of $\varepsilon$.
\end{remark}

\begin{remark}
Note that for simplicity we are implicitly assuming in the complexity analysis that the last iterate of the SOCGS algorithm at the end of Phase 2 satisfies $f(\vx_k) - f(\vx^*) \leq [L\eta_k/(2 \mu^4) (\sqrt{8\mu} (1 + \sqrt{L\omega}) + \sqrt{\eta_k}L_2 )]^{-2}$, as otherwise the convergence guarantee in Theorem~\ref{Lemma:Appx:InexactConvergencePrimalGap} does not provide a contraction. If this is not the case at the end of Phase 2, then after an additional finite number of linearly convergent iterations in primal gap, the iterates will indeed satisfy $f(\vx_k) - f(\vx^*) \leq [L\eta_k/(2 \mu^4) (\sqrt{8\mu} (1 + \sqrt{L\omega}) + \sqrt{\eta_k}L_2 )]^{-2}$, after which the complexity analysis from Phase 3 will apply.
\end{remark}

\section{Computational Results} \label{Appx:Computations}

In this section we compare the performance of the SOCGS algorithm with that of other first-order projection-free algorithms for several problems of interest. In the first problem the Hessian oracle will be inexact, but will satisfy Assumption~\ref{assumption:accuracyHessian} with $\omega = 0.1$, moreover we will also assume knowledge of the primal gap, by first computing a solution to high accuracy. In the remaining problems the Hessian oracle will be exact, and we will assume that we do not have knowledge of the primal gap, and will use the strategy outlined in Remark~\ref{primal_gap_not_available}. In the second experiment, in addition to using the exact Hessian, we will also implement SOCGS with an LBFGS  Hessian update (SOCGS LBFGS) (note that this does not satisfy Assumption~\ref{assumption:accuracyHessian}). In the second and third experiment we will also cap the maximum number of inner iterations for the SOCGS and NCG algorithms, as is done in the computational experiments of NCG and SVRCG.

In all three experiments we compare the performance of the SOCGS algorithm with the vanilla Conditional Gradients algorithm (denoted by CG), the Away-Step and Pairwise-Step Conditional Gradients algorithms (ACG and PCG), the \emph{Lazy Away-Step Conditional Gradients algorithm} \cite{braun2017lazifying} (ACG (L)). In the first problem the Hessian oracle will be inexact, but will satisfy Assumption~\ref{assumption:accuracyHessian}. In the remaining problems the Hessian oracle will be exact.

In the first experiment we also compare the performance of the algorithm with the Decomposition Invariant Conditional Gradient (DICG) algorithm \cite{garber2016linear}, as the feasible region is a $0-1$ polytope. 

We also compare against the Conditional Gradient Sliding (CGS) algorithm \cite{lan2016conditional} in the first experiment. This algorithm was also used in the second and third experiment, however the results were not competitive with the ones obtained for the other algorithms, both in terms of iteration count and wall-clock time, and so the CGS results are not included in the images for the second and third experiment.

Additionally, in the first experiment we also compare against the Stochastic Variance-Reduced Conditional Gradients (SVRCG) algorithm \cite{hazan2016variance}, as we can take stochastic first-order oracles of the objective function in question. The third experiment has an objective function that is also amenable to stochastic first-order oracle calls, however the results obtained were not competitive with the other algorithms, both in terms of iteration count and wall-clock time, and so the results for this algorithm were not included in the images for the third experiment.

In the second and third experiments, which use an exact second-order oracle, we also compare the performance against the \emph{Newton Conditional Gradients} (NCG) algorithm in \citet{liu2020newton} which is similar in spirit to the SOCGS algorithm. One of the key features of this algorithm is that it does not require an exact line search strategy, as it provides a specific step size strategy (however it requires selecting five hyperparameters), and it does not require estimating an upper bound on the primal gap.

\begin{remark}[Hyperparameter search for the NCG algorithm]
We tested 27 hyperparameters for the NCG algorithm, and the one that provided the best performance was selected. The parameters used (see \cite{liu2020newton} for their meaning) were combinations of $C_1 \in \{ 0.1, 0,25, 0.4 \}$,  $\delta \in \{ 0.01, 0,5, 0.99 \}$ and $C = \{1.1, 1.5, 2 \}$. The two remaining hyperparemeters were chosen as $\beta = \frac{1}{2}(1 - \frac{1}{2 - 1/C})$ and $\sigma = \frac{1}{C(1 - \beta)} + \frac{\beta}{(1 - 2\beta)(1 - \beta)^2}$ so as to satisfy the requirements in Theorem 4.2 in \cite{liu2020newton}. The hyperparameters that gave the best performance were $\sigma = 0.96$, $\beta = 1/6.0$, $C = 2.0$, $C_1 = 0.25$ and $\delta = 0.99$.
\end{remark}

One of the key challenges that we found when implementing the NCG algorithm is the management of the active set. Starting from a given point $\vx_k$ the algorithm builds a quadratic approximation and performs a series of CG variant steps until the algorithm reaches a certain Frank-Wolfe gap (like in the SOCGS algorithm), which we denote by $\tilde{\vx}_k^{\text{NCG}}$. At that point the algorithm either takes a step with $\gamma_k = 1$ (what is called a full step), or it takes a step size $\gamma_k \neq 1$ (which is called a damped step). In the former case the active set and the barycentric coordinates used for $\vx_{k+1}$ are simply those of $\tilde{\vx}_k^{\text{NCG}}$, which is the point returned by the CG variant steps. In the latter case, however, we set $\vx_{k+1} = \vx_{k} + \gamma_k (\tilde{\vx}_k^{\text{NCG}} -\vx_{k} )$ with $\gamma_k \neq 1$, and we need to combine the active sets and barycentric coordinates of the points $\vx_{k}$ and $\tilde{\vx}_k^{\text{NCG}}$ to form $\vx_{k+1}$. This is a computationally expensive task in general, as the CG variant can drop and pick-up an arbitrary number of vertices going from $\vx_{k}$ to $\tilde{\vx}_k^{\text{NCG}}$, and we need to reconcile the two active sets and barycentric coordinates. This process involves checking if each vertex in the active set of $\tilde{\vx}_k^{\text{NCG}}$ is in the active set of $\vx_{k}$, and vice-versa. When the dimensionality of the problem and the cardinality of the active set is high this can become too costly. That is why in general this algorithm is easiest to implement with CG variants that do not maintain an active set, like the vanilla CG algorithm or the DICG algorithm. We have chosen to use the vanilla CG algorithm in out implementation, as it gave good performance. Note however that there are simple feasible regions where updating the active set and the barycentric coordinates is trivial, like in the probability simplex.

The experiments were run on a laptop with Windows 10, an Intel Core i7 2.4GHz CPU and 6GB RAM.

\subsection{Sparse Coding over the Birkhoff Polytope} \label{Appx:SparseCoding}

Given a set of $m$ input data points $Y = \left[\vy_1, \cdots, \vy_m \right]$ with $\vy_i\in\rr^d$, sparse dictionary learning attempts to find a dictionary $X\in \rr^{d \times n}$ and a sparse representation $Z = \left[\vz_1, \cdots, \vz_m \right]$ with $\vz_i \in \rr^n$ that minimizes:
\begin{align}
    \min\limits_{\substack{X \in \mathcal{C} \\ \vz_i \in \rr^n}} \sum\limits_{i =1}^m \norm{\vy_i - X\vz_i}_2^2 + \lambda \norm{\vz_i}_1. \label{Eq:SparseApprox}
\end{align}
Where $\mathcal{C} = \{ X \in \rr^{d \times n} \mid \sum_{j = 1}^n X_{j,i}^2 \leq 1, \forall i\in[1,d]\}$ is the set of matrices with columns with $\ell_2$ norm less than one. This problem is of interest as many signal processing tasks see performance boosts when given a learned dictionary $X$ that is able to give a sparse representation \cite{mairal2010online}, as opposed to a predefined dictionary obtained from Fourier or wavelet transforms. The elements in this learned dictionary are not required to be orthogonal, and they can form an undercomplete or an overcomplete dictionary.

The problem in Equation~\eqref{Eq:SparseApprox} is convex with respect to $X$ when $Z$ is fixed, and vice-versa, and can be solved by alternating between minimizing with respect to $Z$ with fixed $X$, and minimizing with respect to $X$ with fixed $Z$ \cite{lee2007efficient, mairal2010online}. The latter problem is typically solved with a stochastic projected gradient descent \cite{aharon2006k}. We focus on a variation of the minimization with respect to $X$ with fixed $Z$, more concretely, we also require the rows of $X$ to have norm bounded below $1$, the elements of $X$ be non-negative, and $d = n$. A natural way to impose this is to solve the problem over the Birkhoff polytope. Given a set of vectors $Y = \left\{\vy_1, \cdots, \vy_m \right\}$ and $Z = \left\{\vz_1, \cdots, \vz_m \right\}$, such that $\vy_i, \vz_i \in \rr^n$ for all $i\in \llbracket1, m\rrbracket$, we aim to solve the problem $\min_{X \in \mathcal{X}} f(X)$ where $\mathcal{X}$ is the Birkhoff polytope and $f(X)$ has the form:
\begin{align*}
    f(X) = \sum\limits_{i = 1}^m \norm{\vy_i - X\vz_i}^2,
\end{align*}
The gradient of $f(X)$ amounts to computing $\nabla f(X) = \sum_{i = 1}^m -2(\vy_i - X\vz_i) \vz_i^T$ and the Hessian is given by the block diagonal matrix $\nabla^2 f(X) \in \rr^{n^2 \times n^2}$ with $\nabla^2 f(X) =\diag \left[ B, \cdots, B \right]$ where $B\in \rr^{n\times n}$ has the form $B = \sum_{i = 1}^m \vz_i\vz_i^T$. Therefore $B$ will be positive definite as long as we can form a basis for $\mathbb{R}^n$ with the vectors $\vz_i$, with $m\in [1,m]$. This is verified numerically. As the eigenvalues of a block-diagonal matrix are the eigenvalues of the blocks that form the diagonal, and as we verify that $B$ is positive definite, the function $f(X)$ is $\mu$-strongly convex and $L$-smooth. The complexity of the gradient computation scales as $\mathcal{O}(mn^2)$.

\begin{remark}[On the complexity of linear oracles for the Birkhoff polytope]
Solving an LP exactly over the Birkhoff polytope using the Hungarian algorithm (from combinatorial optimization) has complexity $\mathcal{O}(n^3)$.  Thus it is more expensive to compute the gradient $\nabla f(X)$ than it is to solve an LP over the Birkhoff polytope if $m$ is large.
\end{remark}

\begin{remark}[On the complexity of projection oracles for the Birkhoff polytope]
There are no known algorithms to compute exact projections onto the Birkhoff polytope, and as such projections onto this feasible region have to be computed approximately. For example, if we use an interior-point method to compute a projection onto the Birkhoff polytope, the projection is computed to a certain accuracy (say $\hat{\varepsilon}$), and as such the complexity will depends on a $\log 1/\hat{\varepsilon}$ term. Moreover, to represent the constraints of the Birkhoff polytope we need $n^2$ linear inequality constraints and $2n - 1$ linear equality constraints. We can get rid of the equality constraints by adding $2(2n - 1)$ inequality constraints. We can transform the projection problem with a quadratic objective function and linear inequality constraints into a problem with a linear objective function and quadratic/linear inequality constraints using standard optimization techniques. This means that we have $\mathcal{O}(n^2)$ inequality constraints, and the dimensionality of our problem is $n^2$. If we use a path following interior-point method, and we use the complexity guarantee from Equation 10.12 in \cite{nemirovski2004interior} the resulting complexity to reach an $\hat{\varepsilon}$-optimal solution is $\mathcal{O}(n^7 \log 1/\hat{\varepsilon})$. Note that in the complexity guarantee in the reference, the ambient dimension is $n$, whereas in our case it is $n^2$, and the number of constraints is $n^2$ as opposed to $m$. The cost of these projection oracles justifies the use of conditional gradient algorithms to minimize convex functions over the Birkhoff polytope.
\end{remark}

We generate synthetic data by creating a matrix $B\in \rr^{n \times n}$ with $n = 80$ and entries sampled from a standard normal distribution, and $m$ vectors $\vx\in \rr^n$, with entries sampled from a standard normal distribution, in order to form $Z = \left\{\vz_1, \cdots, \vz_m \right\}$. The set of vectors $Y = \left\{\vy_1, \cdots, \vy_m \right\}$ is generated by computing $\vy_i = B\vz_i$ for all $i\in \llbracket1, m\rrbracket$.

Let us denote the Frobenius norm by $\norm{\cdot}^2_F$, and the uniform distribution between $a$ and $b$ as \textbf{$\mathcal{U}(a,b)$}. In this problem the Hessian oracle will return a matrix $H_k = \nabla^2 f(X_k) + \beta_k \omega \norm{X_k - X^*}_F^2 I^n$, where $\beta_k \in \mathcal{U} (-\lambda_{\max}(\nabla^2 f(X_k))/(\omega\norm{X_k - X^*}_F^2 + 1) , \lambda_{\min}(\nabla^2 f(X_k)))$.

\begin{remark}
  The approximate matrix $H_k = \nabla^2 f(X_k) + \beta_k \omega\norm{X_k - X^*}_F^2 I^n$ with:
  \begin{align}
  \beta_k \in \left[\frac{-\lambda_{\max}\left(\nabla^2 f(X_k)\right)}{\omega \norm{X_k - X^*}_F^2 + 1} ,  \lambda_{\min}\left(\nabla^2 f(X_k)\right) \right],
  \end{align}
  satisfies Assumption~\ref{assumption:accuracyHessian}.
  \begin{proof}
   To see this note that $\eta_k = \max\{ \lambda_{\max}(H_k^{-1}\nabla^2 f(X_k)), \lambda_{\max}([\nabla^2 f(X_k)]^{-1} H_k )\}$ and if we plug in the approximation for the Hessian we have that:
\begin{align}
    \lambda_{\max}([\nabla^2 f(X_k)]^{-1} H_k ) & =\lambda_{\max}([\nabla^2 f(X_k)]^{-1} (\nabla^2 f(X_k) + \beta_k \omega \norm{X_k - X^*}_F^2 I^n) ) \\
    & = 1 + \beta_k \omega \norm{X_k - X^*}_F^2 \lambda_{\max} ([\nabla^2 f(X_k)]^{-1}) \\
    & = 1 + \beta_k\omega \norm{X_k - X^*}_F^2/  \lambda_{\min} (\nabla^2 f(X_k)). \label{Eq:Appx:HessianApprox1}
\end{align}
On the other hand:
\begin{align}
    \lambda_{\max}(H_k^{-1} \nabla^2 f(X_k) ) & = \frac{1}{\lambda_{\min}([\nabla^2 f(X_k)]^{-1} H_k )} \\
    & = \frac{1}{\lambda_{\min}([\nabla^2 f(X_k)]^{-1}) (\nabla^2 f(X_k) + \beta_k \omega \norm{X_k - X^*}_F^2 I^n )} \\
    & = \frac{1}{1 + \beta_k \omega \norm{X_k - X^*}_F^2 \lambda_{\min}([\nabla^2 f(X_k)]^{-1} )} \\
    & = \frac{1}{1 + \beta_k \omega \norm{X_k - X^*}_F^2 / \lambda_{\max}(\nabla^2 f(X_k) )}. \label{Eq:Appx:HessianApprox2}
\end{align}
The conditions on Assumption~\ref{assumption:accuracyHessian} state that $\eta_k \leq 1 + \omega \norm{X_k - X^*}_F^2$. Using Equations~\eqref{Eq:Appx:HessianApprox1} and \eqref{Eq:Appx:HessianApprox2} we can see that the approximate Hessian $H_k = \nabla^2 f(X_k) + \beta_k \omega\norm{X_k - X^*}_F^2 I^n$, with $\beta_k \in [ (-\lambda_{\max}(\nabla^2 f(X_k))/(\omega\norm{X_k - X^*}_F^2 + 1) , \lambda_{\min}(\nabla^2 f(X_k)))]$ satisfies Assumption~\ref{assumption:accuracyHessian}.
  \end{proof}
\end{remark}

The results for $m = 10000$ and $m = 100000$ can be seen in Figure~\ref{fig:Birkhoff1:Appx} and Figure~\ref{fig:Birkhoff2:Appx} respectively. In both cases, the initial point used for all the algorithms is the identity matrix $I^{n\times n}$. We can see that the SOCGS algorithm (with the DICG algorithm as a subproblem solver for the PVM steps) outperforms all the other algorithms being considered for both moderate to high values of $m$. The performance of the SVRCG algorithm improves relative to the other algorithms as we increase the value of $m$, as expected. We use the original implementation of the CGS algorithm for strongly-convex and smooth functions shown in \citet{lan2016conditional}, which uses CG to solve the Euclidean projection subproblems that arise in Nesterov's Accelerated Gradient Descent. The poor performance of the CGS algorithm can be explained with the fact that the CG algorithm does not contract the Frank-Wolfe gap linearly in general, and the accuracy to which the subproblems are solved increases with each iteration, and so at some point the subproblems become very computationally expensive to solve.

\subsection[Structured Logistic Regression]{Structured Logistic Regression over $\ell_1$ unit ball}
Given a binary classification task with $m$ labels $Y = \{\vy_1, \cdots, \vy_m \}$ and $m$ samples $Z = \{\vz_1, \cdots, \vz_m \}$ with $y_i \in \{-1,1 \}$ and $\vz_i \in \rr^n$ for all $i \in [1,m]$, we wish to solve:
\begin{align*}
    \min\limits_{x\in\mathcal{X}}f(x) = \min\limits_{x\in\mathcal{X}} \frac{1}{m} \sum\limits_{i = 1}^{m}\log\left( 1+e^{-y_i \innp{\vx, \vz_i}}\right) + \frac{\lambda}{2}\norm{\vx}^2,
\end{align*}
where $\mathcal{X}$ is the $\ell_1$ unit ball centered at the origin and $\lambda = 1/m$. Although projecting into the $\ell_1$ ball has complexity $\mathcal{O}(n)$ \cite{condat2016fast}, and so projections are cheap, this feasible region is often used to compare the performance of projection-free algorithms between each other (see \citet{lacoste2015global, rao2015forward, braun2019blended}). Solving a linear program over the $\ell_1$ ball also has complexity $\mathcal{O}(n)$. This experiment was also considered in \citet{ghanbari2018proximal} and \citet{scheinberg2016practical} to compare the performance of several Proximal Quasi-Newton methods in the context of minimization with a projection oracle. The gradient of the objective function has the form given by:
\begin{align*}
  \nabla f(\vx) = - \frac{1}{m}\sum_{i = 1}^{m}  \frac{y_i \vz_i}{1 + e^{y_i \innp{\vx, \vz_i}}} + \lambda \vx.
\end{align*}
 The Hessian of the objective function can be written as:
 \begin{align}
     \nabla^2 f(\vx) = \frac{1}{m} \sum_{i = 1}^{m}   \frac{\vz_i \vz_i^T}{(1 + e^{-y_i \innp{\vx, \vz_i}})(1 + e^{y_i \innp{\vx, \vz_i}})} + \lambda I^n. \label{Eq:Appx:HessianLogReg}
 \end{align}
 Note that the $\nabla^2 f(\vx) \in \rr^{n\times n}$ in Equation~\eqref{Eq:Appx:HessianLogReg}, and so for large $n$ even storing the Hessian might become problematic. However, the quadratic approximation does not need to store the matrix, as the function $\hat{f}_k(\vx)$ can be written as:
 \begin{align*}
    \hat{f}_k(\vx) = & - \frac{1}{m}\sum_{i = 1}^{m}  \frac{y_i \innp{\vz_i, \vx - \vx_k}}{1 + e^{-y_i \innp{\vx_k, \vz_i}}} + \lambda \innp{\vx_k , \vx - \vx_k} \\
    & + \frac{1}{2m} \sum_{i = 1}^{m}   \frac{ \innp{\vz_i, \vx - \vx_k}^2}{(1 + e^{-y_i \innp{\vx_k, \vz_i}})(1 + e^{y_i \innp{\vx_k, \vz_i}})} + \frac{\lambda}{2} \norm{\vx - \vx_k}^2 \\
    & =  \innp{\nabla f(\vx_k), \vx - \vx_k} + \frac{1}{2m} \sum_{i = 1}^{m}   \frac{ \innp{\vz_i, \vx - \vx_k}^2}{(1 + e^{-y_i \innp{\vx_k, \vz_i}})(1 + e^{y_i \innp{\vx_k, \vz_i}})} + \frac{\lambda}{2} \norm{\vx - \vx_k}^2.
 \end{align*}
 Which means that the gradient of $\hat{f}_k(\vx)$ is given by:
  \begin{align*}
    \nabla \hat{f}_k(\vx) & = \nabla f(\vx_k) + \frac{1}{m} \sum_{i = 1}^{m}   \frac{ \innp{\vz_i, \vx - \vx_k} \vz_i}{(1 + e^{-y_i \innp{\vx_k, \vz_i}})(1 + e^{y_i \innp{\vx_k, \vz_i}})} + \lambda(\vx - \vx_k).
 \end{align*}
 When computing the Inexact PVM steps we compute $\nabla f(\vx_k)$ and $1/((1 + e^{-y_i \innp{\vx_k, \vz_i}})(1 + e^{y_i \innp{\vx_k, \vz_i}}))$ for each $i\in [1,m]$ at the beginning of the iteration, as these quantities do not change for a fixed $k$. This significantly decreases the time it takes to compute an ACG step with $\nabla \hat{f}_k(\vx)$ in Line~\ref{algLine:Appx:ACG} of Algorithm~\ref{algo:Appx:proj-Newton}, as we only perform operations with transcendental operations once at the beginning of the PVM step. Moreover, as in the previous numerical experiments, we can find a closed-form expression for the line search, that is:
 \begin{align*}
     \argmin\limits_{\gamma\in \rr} \hat{f}_k(\tilde{\vx}^{t}_{k + 1} + \gamma \vd) = -\frac{\innp{\nabla \hat{f}_k(\tilde{\vx}^{t}_{k + 1}), \vd}}{\lambda \norm{\vd}^2 + \frac{1}{m}\sum\limits_{i=1}^m\frac{\innp{\vz_i, d}^2}{(1 + e^{-y_i \innp{\vx_k, \vz_i}})(1 + e^{y_i \innp{\vx_k, \vz_i}})}}.
 \end{align*}
 Where we only need to compute a series of inner products with quantities that in many cases we have already pre-computed in previous operations and stored. This makes line searches with $\hat{f}_k(\vx)$ significantly cheaper than line searches with $f(\vx)$. 

The labels and samples used are taken from the training set of the
\texttt{gissette} \cite{guyon2007competitive} (Figure~\ref{fig:LogReg:Appx}) and the \texttt{real-sim} \cite{chang2011libsvm} (Figure~\ref{fig:LogReg2:Appx}) dataset, where
$n = 5000$ and $m = 6000$ and $n = 72309$ and $m = 20958$, respectively.
Figure~\ref{fig:LogReg} shows the performance of
Algorithm~\ref{algo:proj-Newton} with the Lazy Away-Step Conditional
Gradient algorithm \cite{braun2019blended}. We also limit the maximum number of inner iterations that the SOCGS algorithm and the NCG algorithm perform at each outer iteration to $1000$. In this last example we substituted the step size strategy of the NCG algorithm with a line search, as otherwise we were not getting comparable performance to the other algorithms using the step size strategy defined in \citet{liu2020newton}. We use a golden-section bounded line search for all the line searches for which we cannot find a closed-form solution.

The results for this experiment can be seen in Figure~\ref{fig:LogReg:Appx} and \ref{fig:LogReg2:Appx}. The initial point used for all the algorithms is the vector $\vx_0 = (1,0,\cdots, 0)$. We can see that the SOCGS algorithm (with the AFW algorithm as a subproblem solver for the PVM steps) and the NCG algorithm outperform all the other algorithms, with the SOCGS performing better than the NCG algorithm. The quadratic approximation in this example is easier to evaluate than the original function, as we only need to perform operations with transcendental functions once when we build the approximation, reusing these quantities for all remaining inner iterations. Like in the previous two examples, the SOCGS algorithm and the NCG algorithm benefit from the fact that there is a closed-form solution to the step size at each inner iteration when computing the PVM steps, and so avoid a potentially expensive golden section line search.

\subsection{Inverse covariance estimation over spectrahedron}
\label{sec:inverse-covar-estim}
In many applications the relationships between variables can be modeled with the use of undirected graphical models, such is the case for example in gene expression problems, where the goal is to find out which groups of genes are responsible for producing a certain outcome, given a gene dataset. When the underlying distribution of these variables is Gaussian, the problem of determining the relationship between variables boils down to finding patterns of zeros in the inverse covariance matrix $\Sigma^{-1}$ of the distribution. A common approach to solving this problem relies on finding a $\ell_1$-regularized maximum likelihood estimator of $\Sigma^{-1}$, so as to encourage sparsity, over the positive definite cone \cite{banerjee2008model, friedman2008sparse}, this is often called the Graphical Lasso.

Several optimization algorithms have been used to tackle this problem, such as interior point methods \cite{yuan2007model}, block coordinate descent or accelerated first-order algorithms \cite{banerjee2008model}, coordinate descent algorithms \cite{friedman2008sparse} and even projected limited-memory quasi-Newton algorithms \cite{schmidt2009optimizing}. We solve a variation of the Graphical Lasso problem over the space of positive semidefinite matrices of unit trace, that is:
\begin{gather}
  \min\limits_{\substack{X\succeq 0 \\ \trace\left(X\right) = 1}} -\log \det( X + \delta I^n)+ \trace\left( S X\right) + \frac{\lambda}{2} \norm{X}^2_F. \label{Eq:Appx:InverseCov}
\end{gather}
Where $\delta >0$ is a small constant that we add to make to problem smooth, $S = \sum_{i=1}^N (\vz_i - \mu)(\vz_i - \mu)^T$ is the empirical covariance matrix of a set of datapoints $Z = \{\vz_1, \cdots, \vz_N\}$  drawn from a Gaussian distribution with $\vz_i \in \rr^m$ and $\lambda>0$ is a regularization parameter. This feasible region (known as the spectrahedron) is not a polytope, and so the guarantees shown in the paper do not apply as they crucially rely on Theorem~\ref{theorem:ConvergenceACG}. However, we include the results to show the promising numerical performance of the method. Evaluating $f(X)$ has complexity $\mathcal{O}(n^3)$ if we compute the determinant with a LU decomposition, and evaluating the gradient  $\nabla f(X) = -(X + \delta I^n)^{-1} + S +\lambda X$ has complexity $\mathcal{O}(n^3)$, dominated by the matrix inversion. Solving the linear program $\min_{Y\in\mathcal{X}} \sum_{i,j = 1}^n (\nabla f(X) \otimes Y)_{i,j}$, where $\otimes$ denotes the Hadamard product, amounts to finding the largest eigenvector of $-\nabla f(X)$. We do this approximately by using the Implicitly Restarted Lanczos algorithm \cite{lehoucq1998arpack} (implemented in \texttt{eigsh} in the \texttt{scipy.sparse.linalg} library).

The quadratic approximation $\hat{f}_k(X)$ of $f(X)$ that the PVM steps in Line~\ref{algLine:Appx:PNStep2} of Algorithm~\ref{algo:Appx:proj-Newton} uses can be written as:
\begin{align}
    \hat{f}_k(X) = & \trace \left( \left( -(X_k + \delta I^n)^{-1}  + S + \lambda X_k \right) (X - X_k)  \right) \\
    & + \frac{1}{2}\norm{(X_k + \delta I^n)^{-1} (X - X_k)}_F^2 + \frac{\lambda}{2} \norm{X - X_k}_F^2 \\
    = & \trace \left( \left( -(X_k + \delta I^n)^{-1}  + S + \lambda X_k \right) (X - X_k)  \right) \\
    & + \frac{1}{2}\trace \left((X - X_k)^T \left( (X_k + \delta I^n)^{-T}(X_k + \delta I^n)^{-1} + \lambda I^n \right)(X - X_k) \right). \label{eq:used_in_linesearch}
\end{align}
This allows us to write the gradient $\nabla \hat{f}_k(X)$ of the quadratic approximation as:
\begin{align*}
    \nabla \hat{f}_k(X) = &  \nabla f(X_k) + (X_k + \delta I^n)^{-1}(X- X_k)(X_k + \delta I^n)^{-1} + \lambda (X - X_k).
\end{align*}
The complexity of evaluating the gradient of $\hat{f}_k(X)$ is also $\mathcal{O}(n^3)$, dominated by the matrix inversion and the matrix multiplication operations. In practice, we only invert the matrix $(X_k + \delta I^n)^{-1}$ once per iteration when we form the quadratic approximation in Line~\ref{algLine:Appx:buildQuadratic} of Algorithm~\ref{algo:Appx:proj-Newton}. Nevertheless, this means that the complexity of computing $\nabla f(X)$ and $\nabla\hat{f}_k (X)$ is the same, so in this respect there is no advantage to using the quadratic approximation. However, for the quadratic approximation $\hat{f}_k(X)$ we can find a closed-form expression for the optimal step size when moving along a direction $D$. It suffices to take the derivative of  $\hat{f}_k(X + \gamma D)$ with respect to $\gamma$ using the expression shown in Equation~\eqref{eq:used_in_linesearch} and set the derivative to zero. This leads to:
\begin{align}
  \argmin_{\gamma \in \rr} \hat{f}_k(\tilde{X}^t_k + \gamma D) = -\frac{\trace \left( \nabla \hat{f}_k(\tilde{X}^t_k)D\right)}{\lambda \norm{D}_F^2 + \norm{(X_k + \delta I^n)^{-1}D}_F^2} \label{eq:Appx:LineSearchGLasso}
\end{align}
If we use a golden section search to perform a line search over the original function $f(X)$ to compute the optimal step size we will potentially need to evaluate $f(X)$ multiple times, and each evaluation has complexity $\mathcal{O}(n^3)$. On the other hand, to compute the exact line search for $\hat{f}_k(X)$ we only need to evaluate the expression in Equation~\eqref{eq:Appx:LineSearchGLasso} once, with complexity $\mathcal{O}(n^3)$. This makes the line search operation with $\hat{f}_k(X)$ significantly cheaper than the line search with $f(X)$, and makes the ACG iterations in Line~\ref{algLine:Appx:PNStep2} of Algorithm~\ref{algo:Appx:proj-Newton} significantly cheaper than the iterations in Line~\ref{algLine:Appx:ACGStep} of Algorithm~\ref{algo:Appx:proj-Newton}.

The matrix $S$ is generated by computing a random orthonormal basis $\mathcal{B} = \{ \mathbf{v}_1, \cdots,  \mathbf{v}_m \}$ in $\rr^m$ and computing $S = \sum_{i=1} \sigma_i \mathbf{v}_1\mathbf{v}_1^T$, where $\sigma_i$ is uniformly distributed between $0.5$ and $1$ for $i\in [1,m]$. We use $\lambda = 0.05$ and $\delta = 10^{-5}$ in the experiments. We also limit the maximum number of inner iterations that the SOCGS algorithm and the NCG algorithm perform at each outer iteration to $1000$. We use a golden-section bounded line search for all the line searches for which we cannot find a closed-form solution.

We also implemented an LBFGS algorithm to build an approximate Hessian from first order information from previous iterations. This is specially useful if we cannot find an analytical expression to the exact Hessian, or its matrix-vector products. Note however that the matrix outputted by the LBFGS algorithm does not satisfy Assumption~\ref{assumption:accuracyHessian}, and so the best we can hope for is for the linear-quadratic convergence in primal gap of the SOCGS algorithm. The implementation used stores the Hessian approximation in outer-product form, and so does not explicitly store the full Hessian matrix, as that could be computationally prohibitive (see Section 7.2 in \citet{nocedal2006numerical}).

The results for this experiment can be seen in Figures~\ref{fig:GLasso:Appx} and \ref{fig:GLasso2:Appx}. The initial point for all the algorithms is the matrix $1/n I^n$. We can see that the SOCGS (with the PCG algorithm as a subproblem solver for the PVM steps) and the NCG algorithm outperform all the other algorithms, with the SOCGS performing better than the NCG algorithm. Note that the in this case the main advantage that the SOCGS and the NCG algorithms have over all the other algorithms is the fact that there is a closed-form solution to the step size at each inner iteration when computing the PVM steps. As discussed earlier, the complexity of evaluating the original function $f(X)$ is the same as that of evaluating $\hat{f}_k(X)$. The SOCGS algorithm that uses the LBFGS algorithm to build up an approximate Hessian also performs well in terms of iterations and in terms of time, despite Assumption~\ref{assumption:accuracyHessian} not holding in this case.

\newpage

\begin{figure*}[th!]
    \centering
    \vspace{-10pt}
    \hspace{\fill}
    \subfloat[Iterations]{{\includegraphics[width=6.8cm]{Images/New_Images/Birkhoff_PG_iteration_v2.pdf} }\label{fig:BirkhoffPGIt1:Appx}}%
    \hspace{\fill}
    \subfloat[Seconds]{{\includegraphics[width=6.8cm]{Images/New_Images/Birkhoff_PG_time_v2.pdf} }\label{fig:BirkhoffPGTime1:Appx}}%
    \hspace{\fill}
    
    \bigskip 
    
    \hspace{\fill}
    \subfloat[Iterations]{{\includegraphics[width=6.8cm]{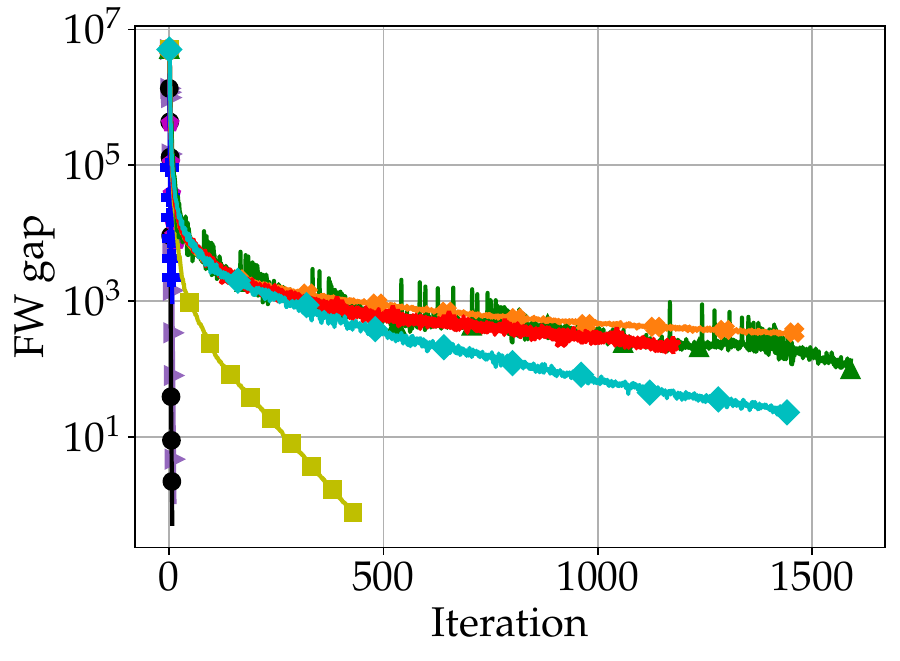} }\label{fig:BirkhoffDGIt1:Appx}}%
    \hspace{\fill}
    \subfloat[Seconds]{{\includegraphics[width=6.8cm]{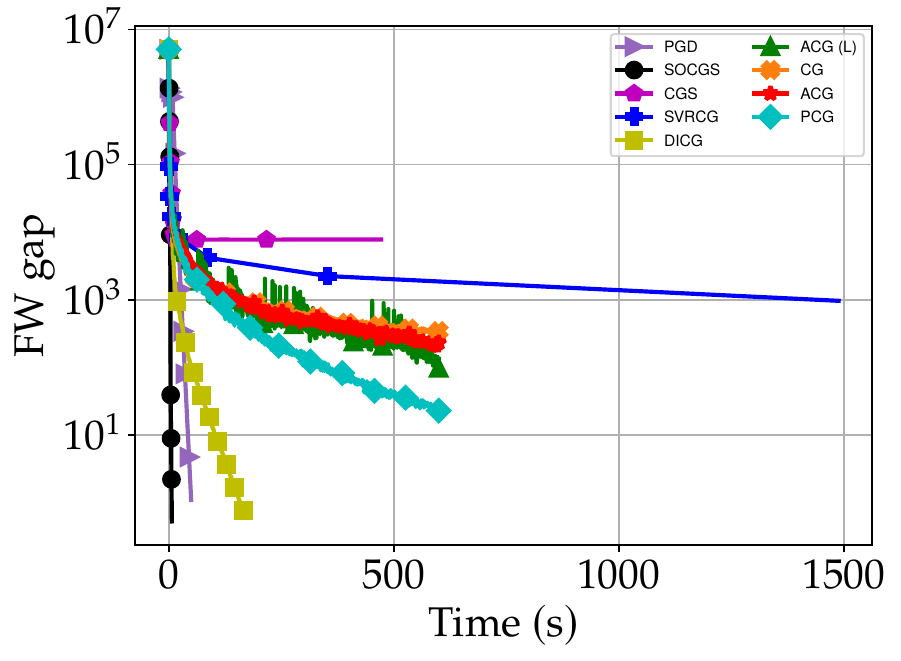} }\label{fig:BirkhoffDGTime1:Appx}}%
    \hspace{\fill}
    
    \bigskip
    
    \hspace{\fill}
    \subfloat[Iterations]{{\includegraphics[width=6.8cm]{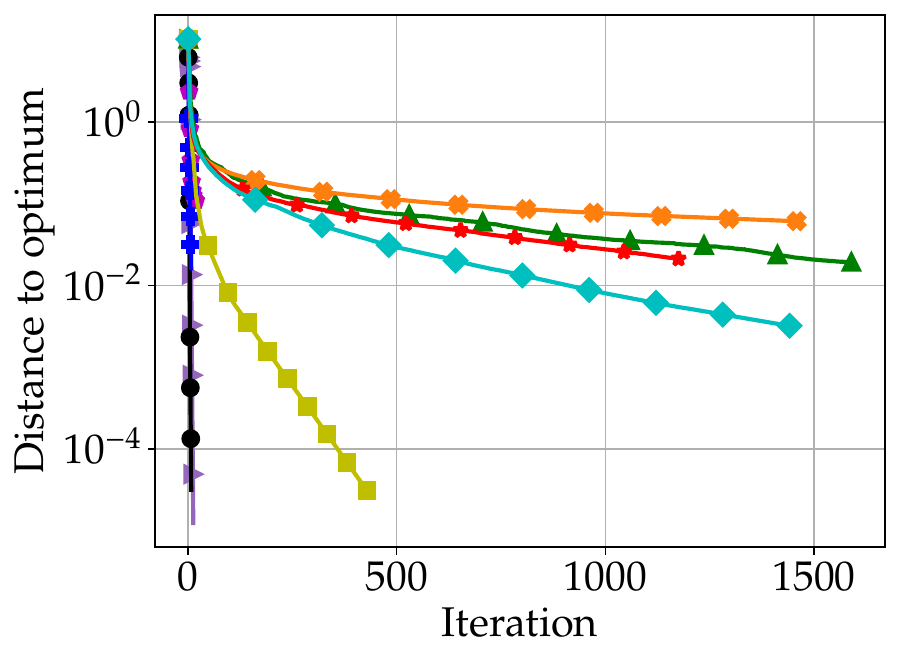} }\label{fig:BirkhoffDistanceIt1:Appx}}%
    \hspace{\fill}
    \subfloat[Seconds]{{\includegraphics[width=6.8cm]{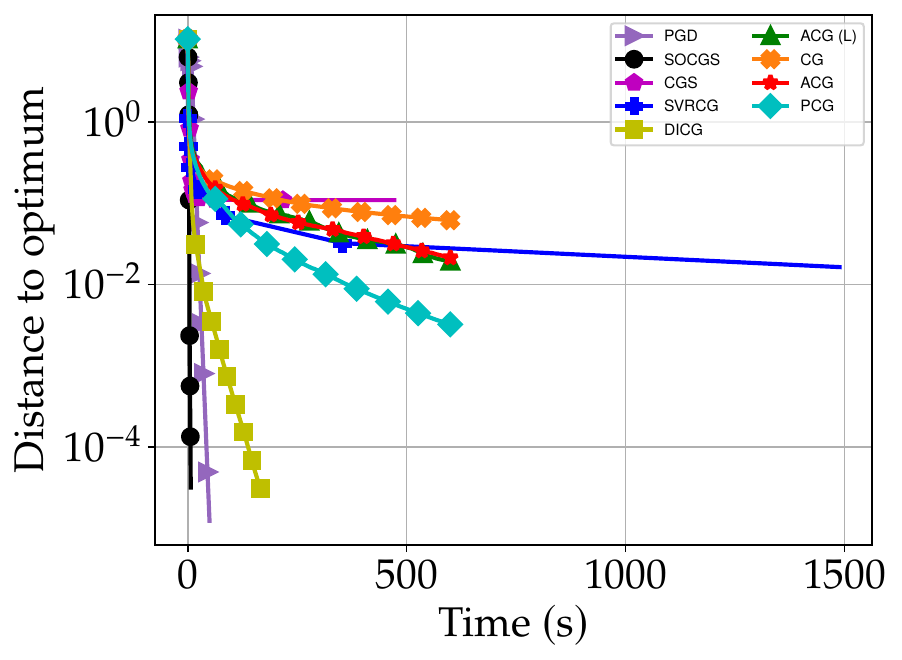} }\label{fig:BirkhoffDistanceTime1:Appx}}%
    \hspace{\fill}
    \caption{\textbf{Sparse Coding over the Birkhoff polytope: } Algorithm comparison for $m = 10,000$ (medium size) samples in terms of primal gap \protect\subref{fig:BirkhoffPGIt1:Appx},\protect\subref{fig:BirkhoffPGTime1:Appx}, Frank-Wolfe gap \protect\subref{fig:BirkhoffDGIt1:Appx},\protect\subref{fig:BirkhoffDGTime1:Appx} and distance to the optimum \protect\subref{fig:BirkhoffDistanceIt1:Appx},\protect\subref{fig:BirkhoffDistanceTime1:Appx}.}%
    \label{fig:Birkhoff1:Appx}%
\end{figure*}

\newpage

\begin{figure*}[th!]
    \centering
    \vspace{-10pt}
    \hspace{\fill}
    \subfloat[Iterations]{{\includegraphics[width=6.8cm]{Images/New_Images/Birkhoff_PG_iteration.pdf} }\label{fig:BirkhoffPGIt2:Appx}}%
    \hspace{\fill}
    \subfloat[Seconds]{{\includegraphics[width=6.8cm]{Images/New_Images/Birkhoff_PG_time.pdf} }\label{fig:BirkhoffPGTime2:Appx}}%
    \hspace{\fill}
    
    \bigskip 
    
    \hspace{\fill}
    \subfloat[Iterations]{{\includegraphics[width=6.8cm]{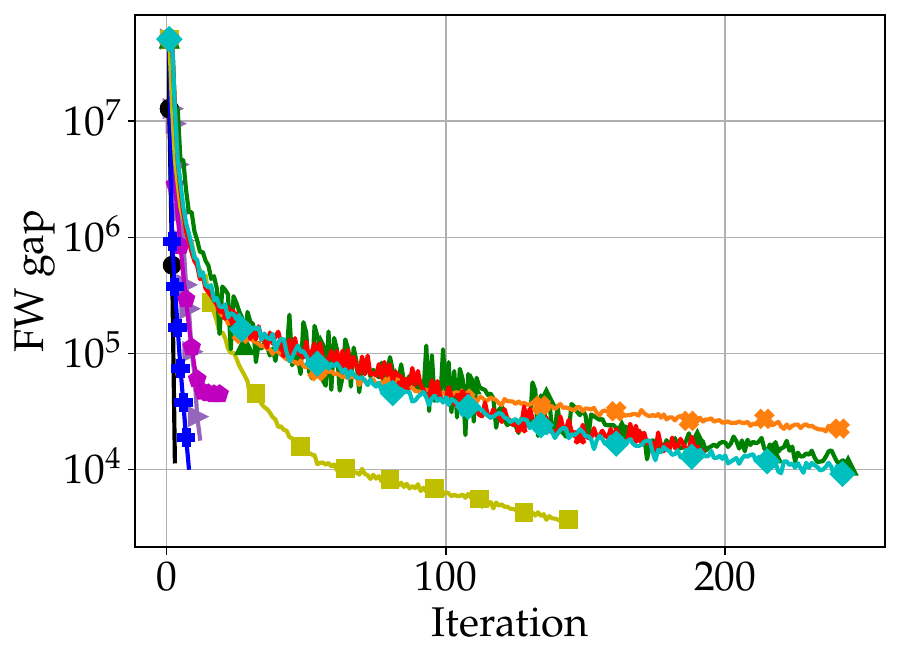} }\label{fig:BirkhoffDGIt2:Appx}}%
    \hspace{\fill}
    \subfloat[Seconds]{{\includegraphics[width=6.8cm]{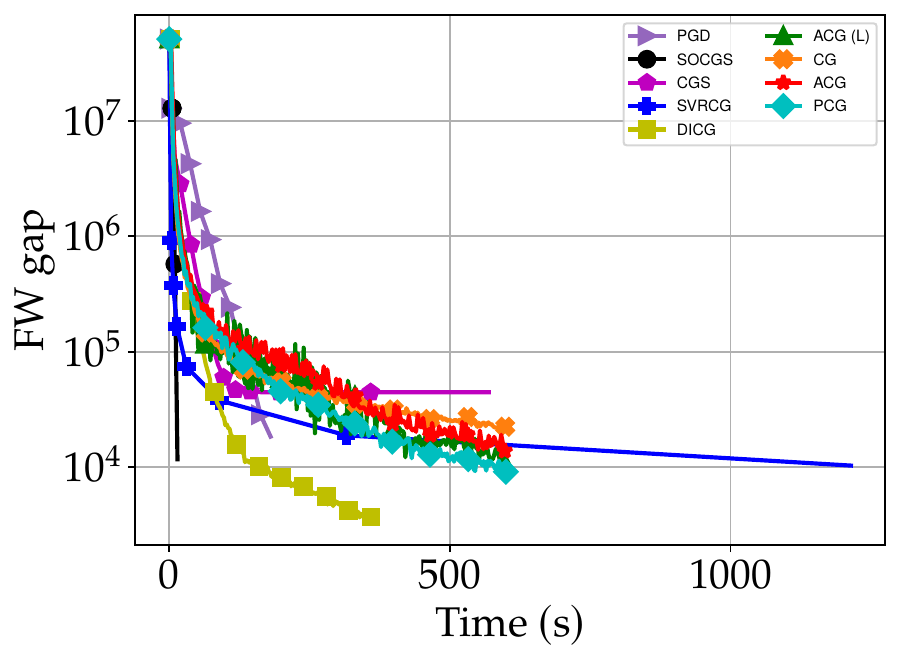} }\label{fig:BirkhoffDGTime2:Appx}}%
    \hspace{\fill}
    
    \bigskip
    
    \hspace{\fill}
    \subfloat[Iterations]{{\includegraphics[width=6.8cm]{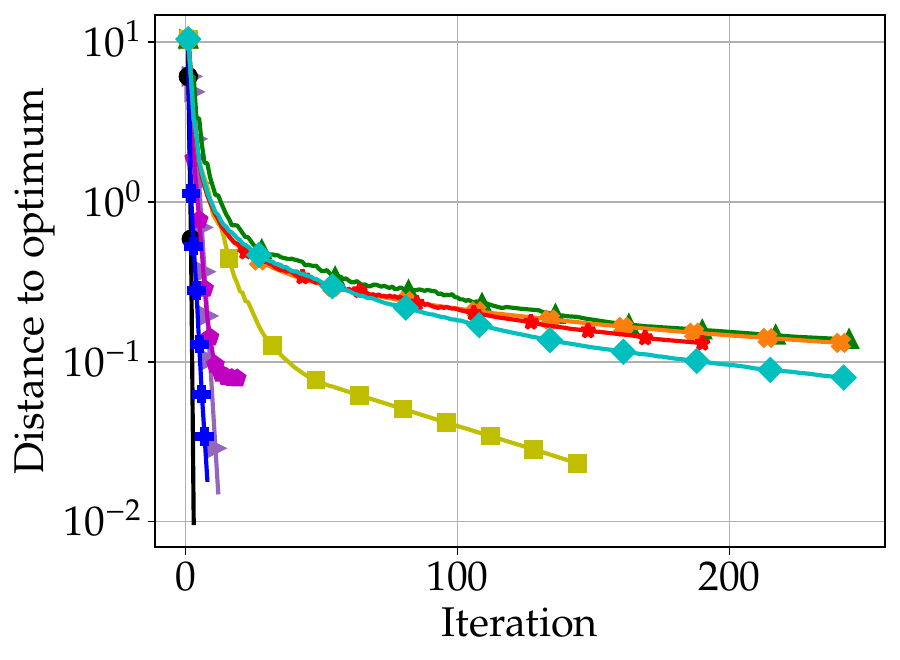} }\label{fig:BirkhoffDistanceIt2:Appx}}%
    \hspace{\fill}
    \subfloat[Seconds]{{\includegraphics[width=6.8cm]{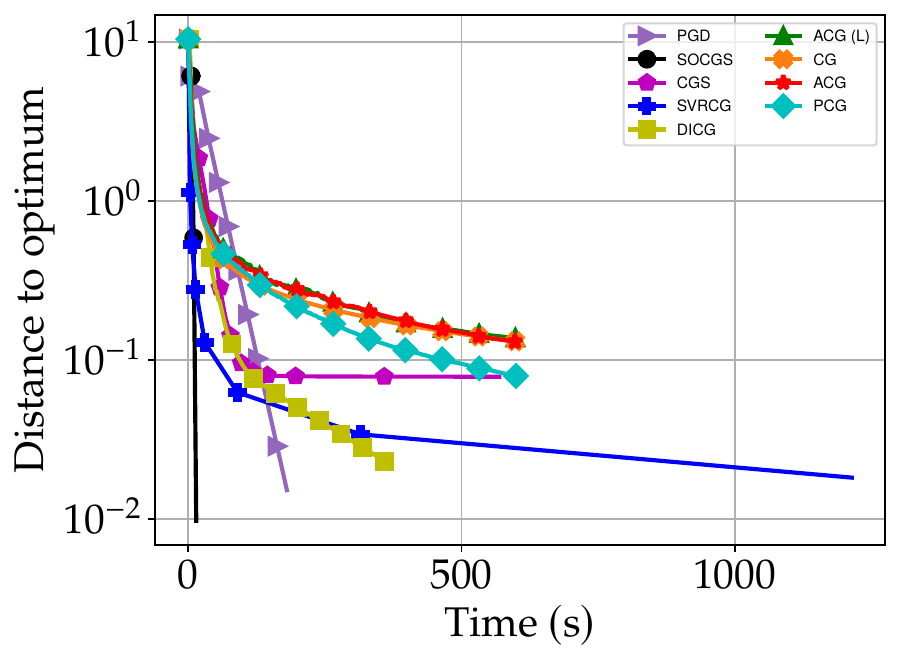} }\label{fig:BirkhoffDistanceTime2:Appx}}%
    \hspace{\fill}
    \caption{\textbf{Sparse Coding over the Birkhoff polytope: } Algorithm comparison for $m = 100,000$ (large size) samples in terms of primal gap \protect\subref{fig:BirkhoffPGIt2:Appx},\protect\subref{fig:BirkhoffPGTime2:Appx}, Frank-Wolfe gap \protect\subref{fig:BirkhoffDGIt2:Appx},\protect\subref{fig:BirkhoffDGTime2:Appx} and distance to the optimum \protect\subref{fig:BirkhoffDistanceIt2:Appx},\protect\subref{fig:BirkhoffDistanceTime2:Appx}.}%
    \label{fig:Birkhoff2:Appx}%
\end{figure*}

\newpage

\begin{figure*}[th!]
    \centering
    \vspace{-10pt}
    \hspace{\fill}
    \subfloat[Iterations]{{\includegraphics[width=6.8cm]{Images/New_Images/L1_PG_iteration_v2.pdf} }\label{fig:LogRegPGIt:Appx}}%
    \hspace{\fill}
    \subfloat[Seconds]{{\includegraphics[width=6.8cm]{Images/New_Images/L1_PG_time_v2.pdf} }\label{fig:LogRegPGTime:Appx}}%
    \hspace{\fill}
    
    \bigskip 
    
    \hspace{\fill}
    \subfloat[Iterations]{{\includegraphics[width=6.8cm]{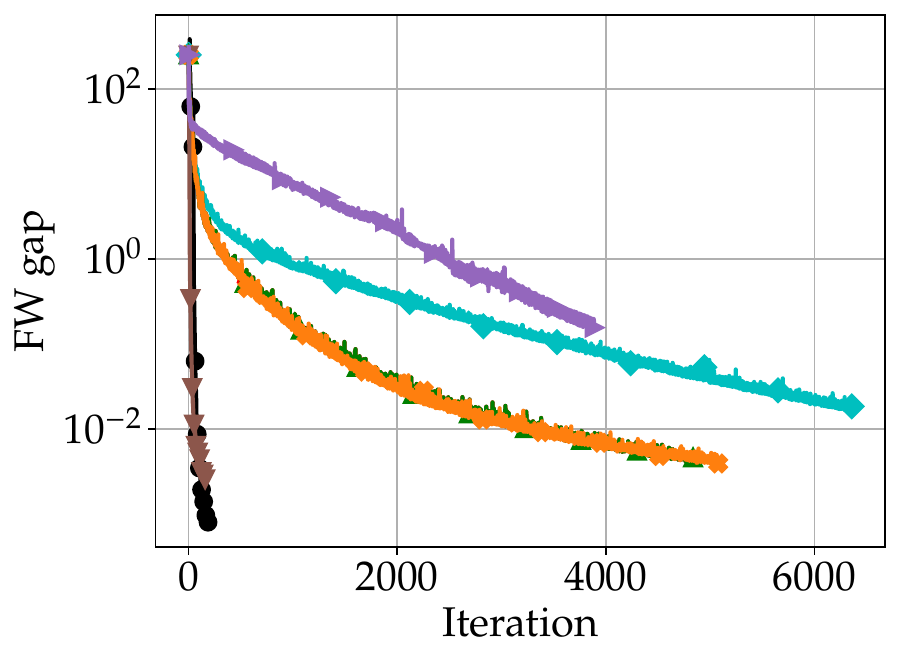} }\label{fig:LogRegDGIt:Appx}}%
    \hspace{\fill}
    \subfloat[Seconds]{{\includegraphics[width=6.8cm]{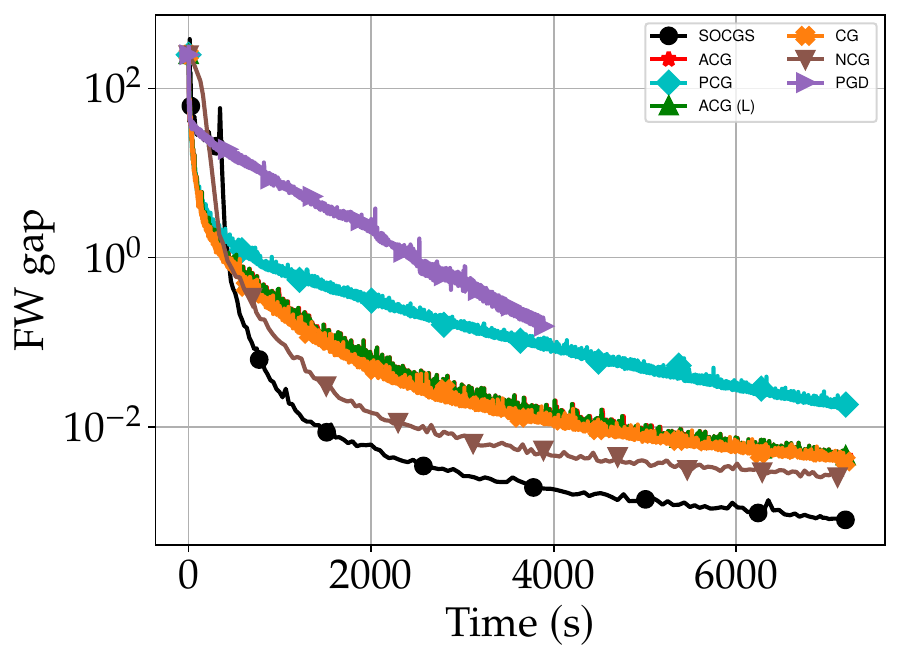} }\label{fig:LogRegDGTime:Appx}}%
    \hspace{\fill}
    
    \bigskip
    
    \hspace{\fill}
    \subfloat[Iterations]{{\includegraphics[width=6.8cm]{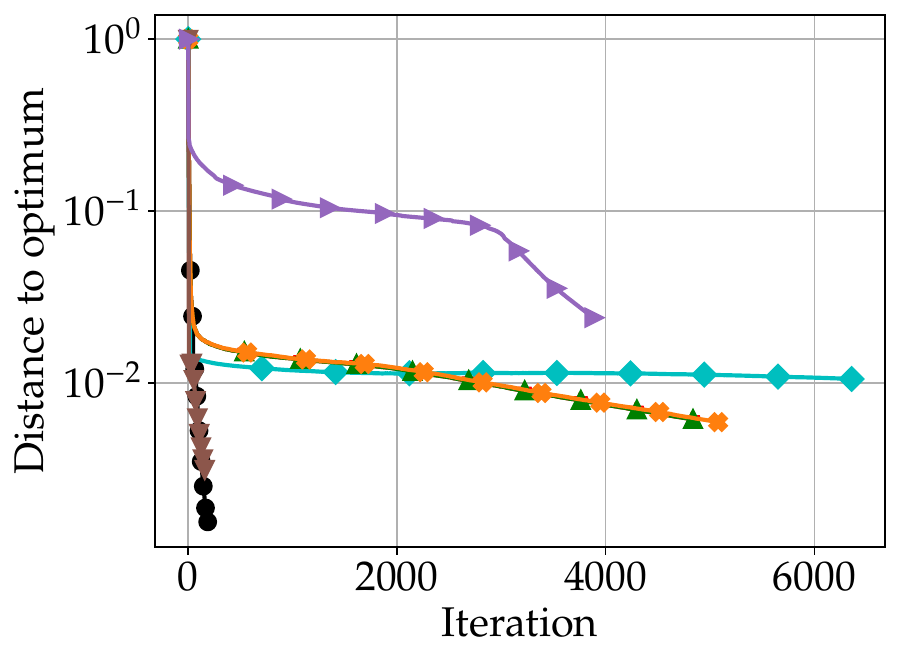} }\label{fig:LogRegDistanceIt:Appx}}%
    \hspace{\fill}
    \subfloat[Seconds]{{\includegraphics[width=6.8cm]{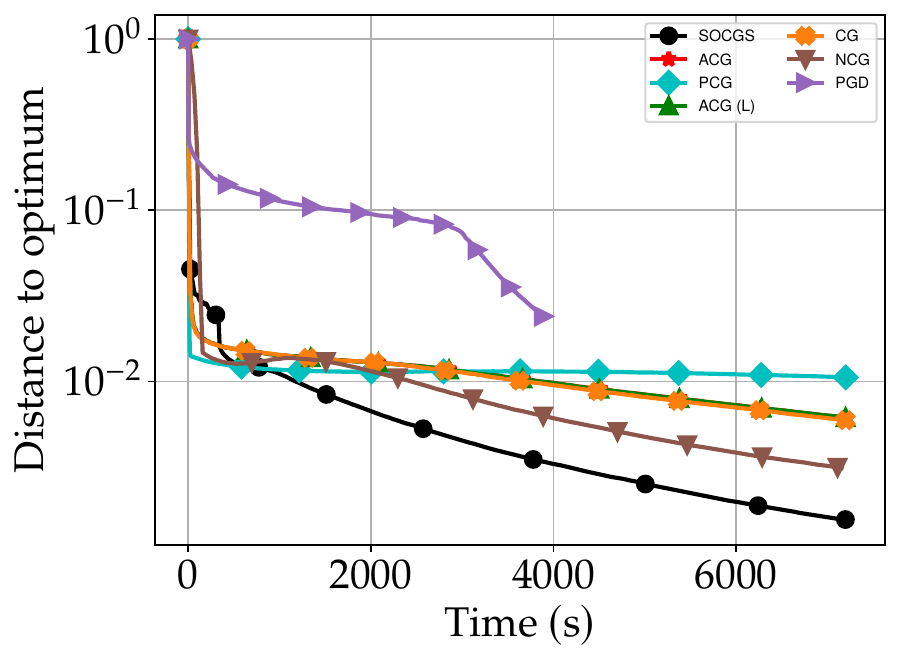} }\label{fig:LogRegDistanceTime:Appx}}%
    \hspace{\fill}
    \caption{\textbf{Structured Logistic Regression over $\ell_1$ unit ball: } Algorithm comparison in terms of primal gap \protect\subref{fig:LogRegPGIt:Appx},\protect\subref{fig:LogRegPGTime:Appx}, Frank-Wolfe gap \protect\subref{fig:LogRegDGIt:Appx},\protect\subref{fig:LogRegDGTime:Appx} and distance to the optimum \protect\subref{fig:LogRegDistanceIt:Appx},\protect\subref{fig:LogRegDistanceTime:Appx} for the \texttt{gissette} \cite{guyon2007competitive} dataset, where
$n = 5000$ and $m = 6000$.}%
    \label{fig:LogReg:Appx}%
\end{figure*}

\newpage

\begin{figure*}[th!]
    \centering
    \vspace{-10pt}
    \hspace{\fill}
    \subfloat[Iterations]{{\includegraphics[width=6.8cm]{Images/New_Images/L1_PG_iteration.pdf} }\label{fig:LogRegPGIt2:Appx}}%
    \hspace{\fill}
    \subfloat[Seconds]{{\includegraphics[width=6.8cm]{Images/New_Images/L1_PG_time.pdf} }\label{fig:LogRegPGTime2:Appx}}%
    \hspace{\fill}
    
    \bigskip 
    
    \hspace{\fill}
    \subfloat[Iterations]{{\includegraphics[width=6.8cm]{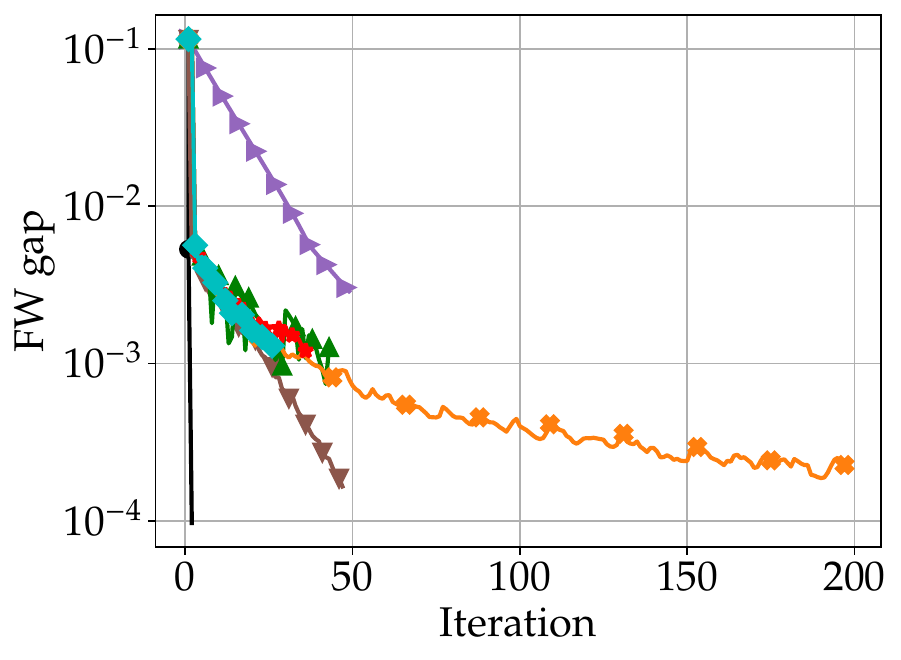} }\label{fig:LogRegDGIt2:Appx}}%
    \hspace{\fill}
    \subfloat[Seconds]{{\includegraphics[width=6.8cm]{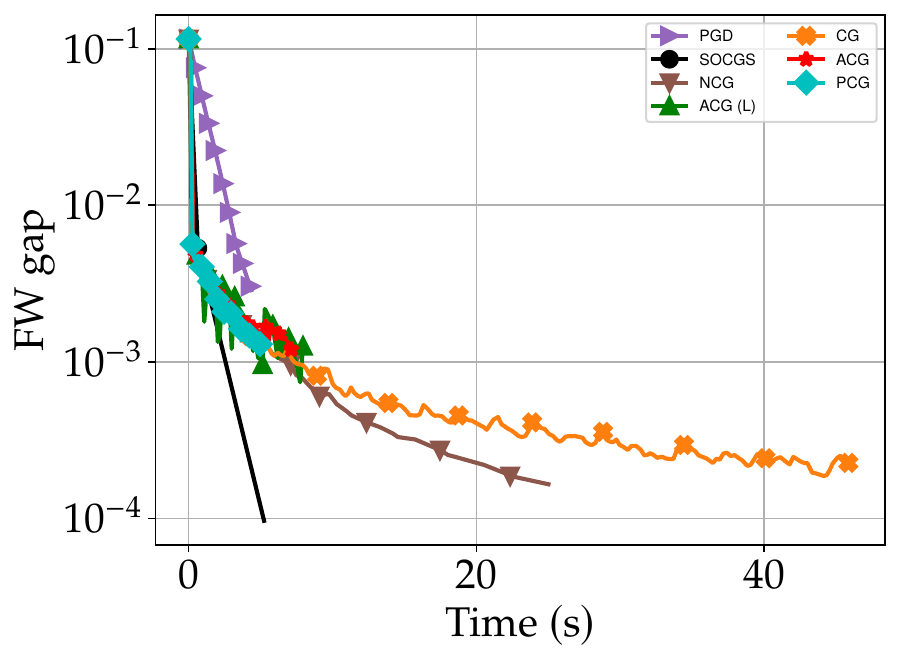} }\label{fig:LogRegDGTime2:Appx}}%
    \hspace{\fill}
    
    \bigskip
    
    \hspace{\fill}
    \subfloat[Iterations]{{\includegraphics[width=6.8cm]{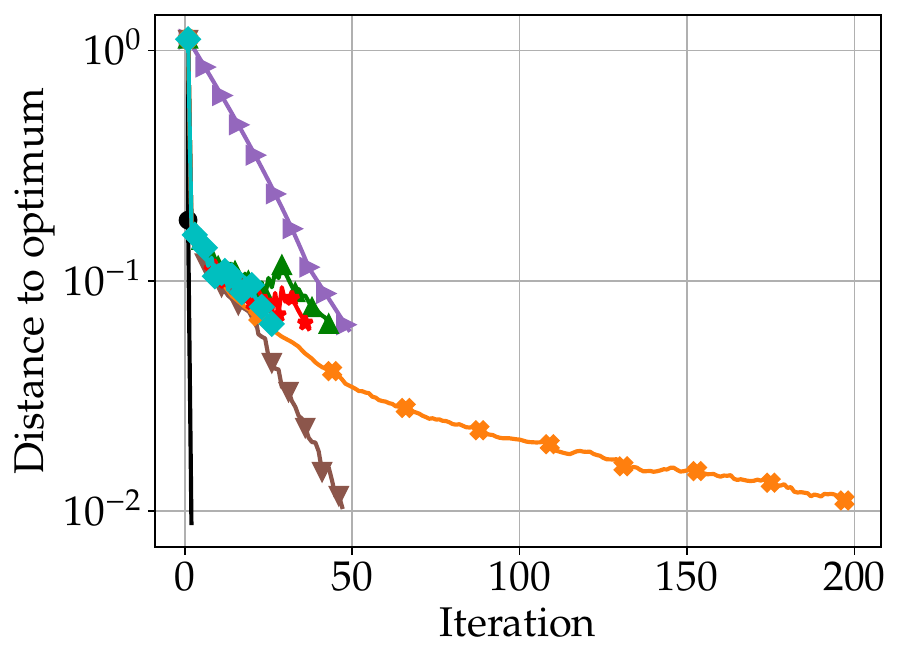} }\label{fig:LogRegDistanceIt2:Appx}}%
    \hspace{\fill}
    \subfloat[Seconds]{{\includegraphics[width=6.8cm]{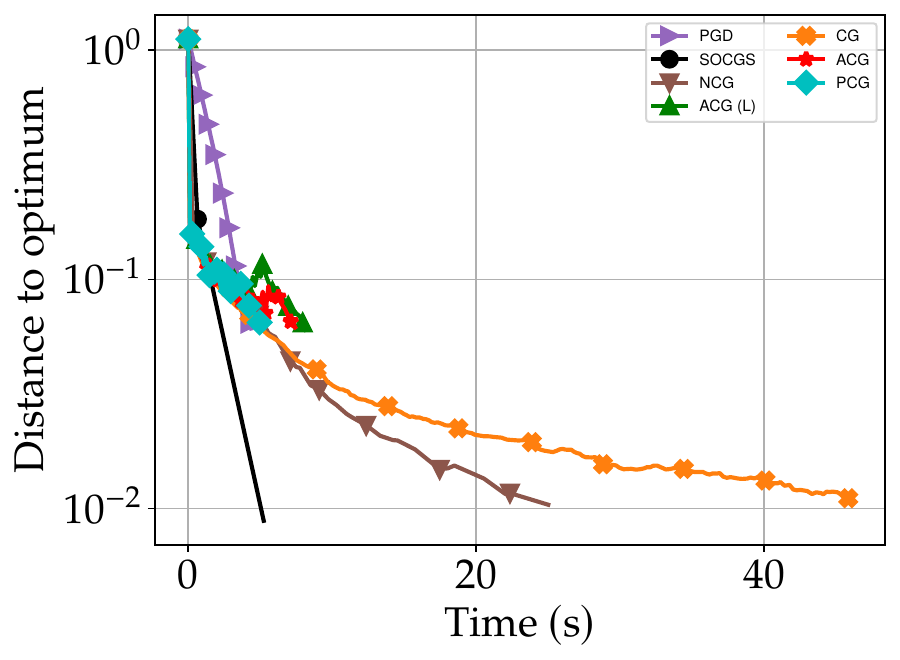} }\label{fig:LogRegDistanceTime2:Appx}}%
    \hspace{\fill}
    \caption{\textbf{Structured Logistic Regression over $\ell_1$ unit ball: } Algorithm comparison in terms of primal gap \protect\subref{fig:LogRegPGIt2:Appx},\protect\subref{fig:LogRegPGTime2:Appx}, Frank-Wolfe gap \protect\subref{fig:LogRegDGIt2:Appx},\protect\subref{fig:LogRegDGTime2:Appx} and distance to the optimum \protect\subref{fig:LogRegDistanceIt2:Appx},\protect\subref{fig:LogRegDistanceTime2:Appx} for the \texttt{real-sim} \cite{chang2011libsvm} dataset, where
$n = 72309$ and $m = 20958$.}%
    \label{fig:LogReg2:Appx}%
\end{figure*}

\newpage

\begin{figure*}[th!]
    \centering
    \vspace{-10pt}
    \hspace{\fill}
    \subfloat[Iterations]{{\includegraphics[width=6.8cm]{Images/New_Images/Spectrahedron_PG_iteration_v2.pdf} }\label{fig:GLassoPGIt:Appx}}%
    \hspace{\fill}
    \subfloat[Seconds]{{\includegraphics[width=6.8cm]{Images/New_Images/Spectrahedron_PG_time_v2.pdf} }\label{fig:GLassoPGTime:Appx}}%
    \hspace{\fill}
    
    \bigskip 
    
    \hspace{\fill}
    \subfloat[Iterations]{{\includegraphics[width=6.8cm]{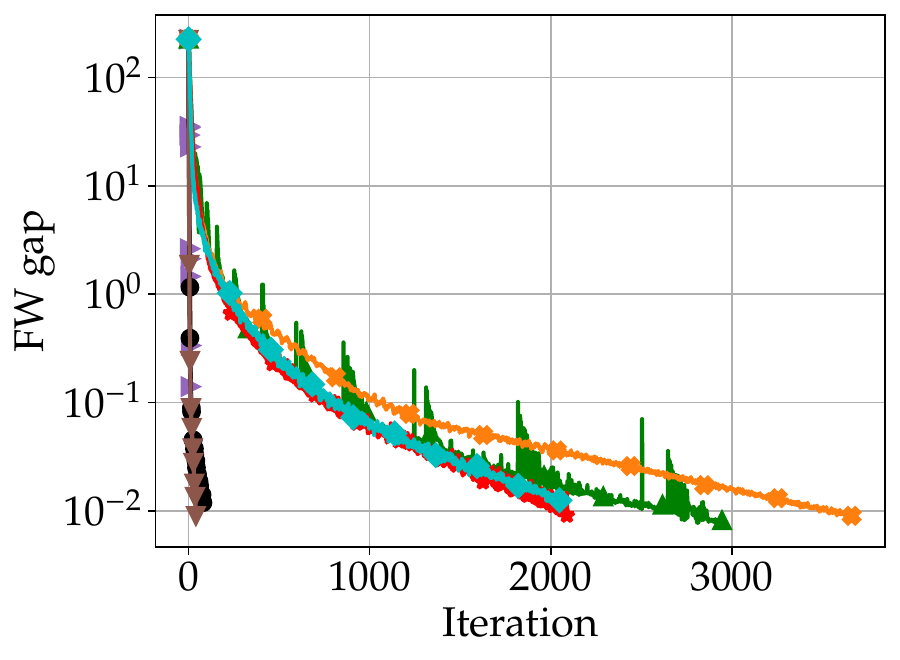} }\label{fig:GLassoDGIt:Appx}}%
    \hspace{\fill}
    \subfloat[Seconds]{{\includegraphics[width=6.8cm]{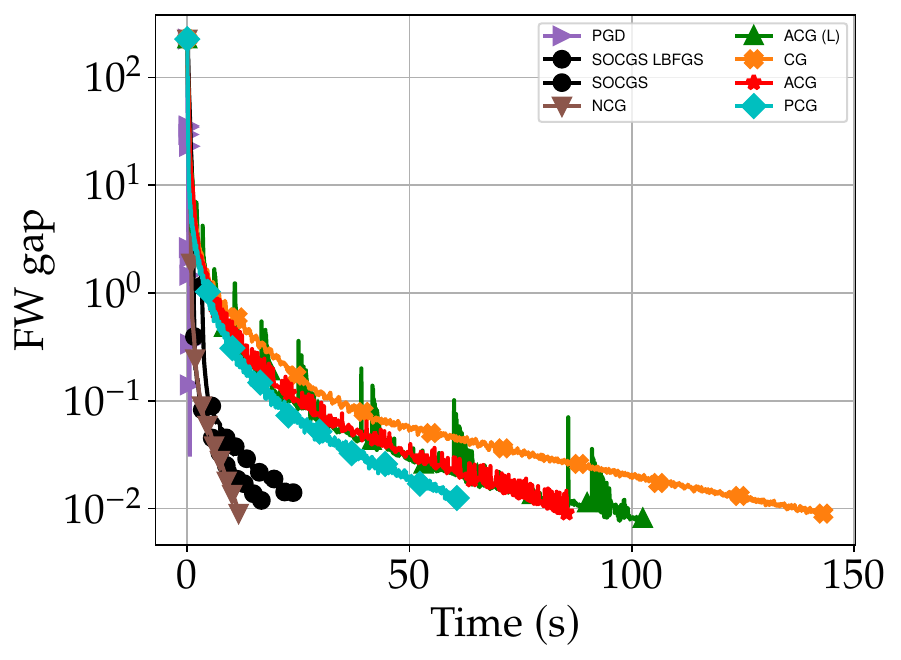} }\label{fig:GLassoDGTime:Appx}}%
    \hspace{\fill}
    
    \bigskip
    
    \hspace{\fill}
    \subfloat[Iterations]{{\includegraphics[width=6.8cm]{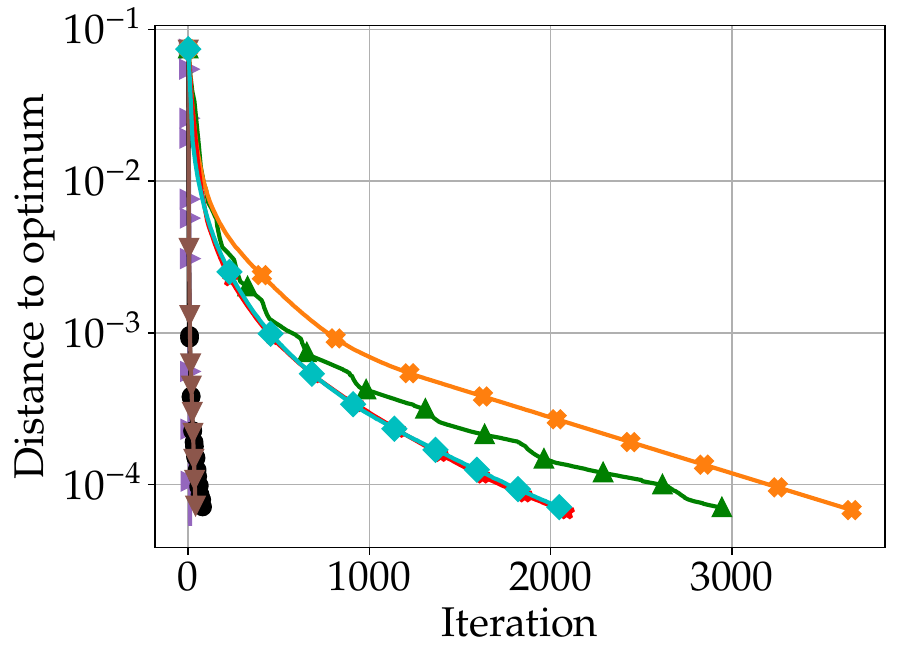} }\label{fig:GLassoDistanceIt:Appx}}%
    \hspace{\fill}
    \subfloat[Seconds]{{\includegraphics[width=6.8cm]{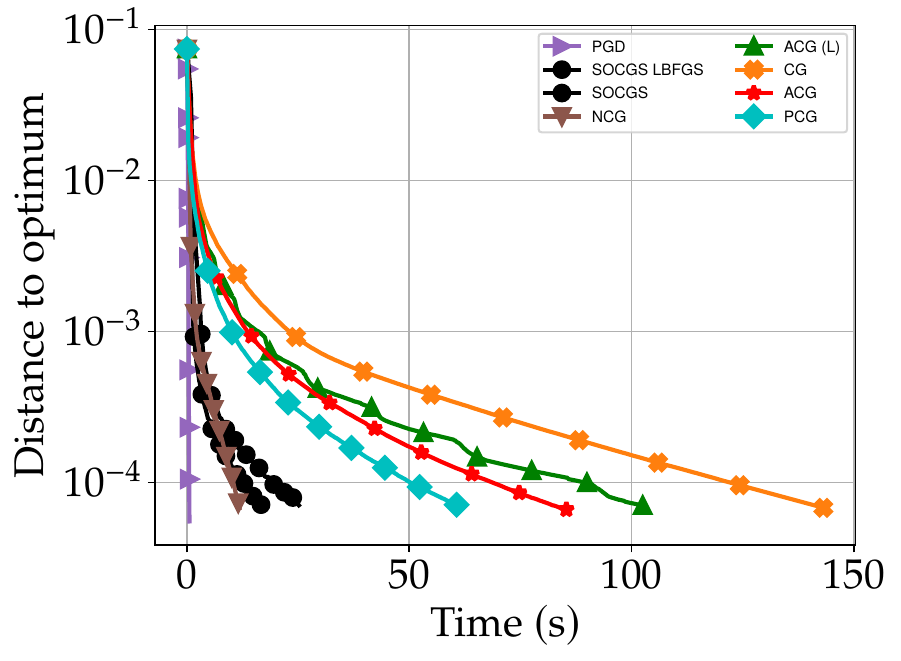} }\label{fig:GLassoDistanceTime:Appx}}%
    \hspace{\fill}
    \caption{\textbf{Inverse covariance estimation over spectrahedron: } Algorithm comparison for $n = 100$ in terms of primal gap \protect\subref{fig:GLassoPGIt:Appx},\protect\subref{fig:GLassoPGTime:Appx}, Frank-Wolfe gap \protect\subref{fig:GLassoDGIt:Appx},\protect\subref{fig:GLassoDGTime:Appx} and distance to the optimum \protect\subref{fig:GLassoDistanceIt:Appx},\protect\subref{fig:GLassoDistanceTime:Appx}.}%
    \label{fig:GLasso:Appx}%
\end{figure*}

\newpage

\begin{figure*}[th!]
    \centering
    \vspace{-10pt}
    \hspace{\fill}
    \subfloat[Iterations]{{\includegraphics[width=6.8cm]{Images/New_Images/Spectrahedron_PG_iteration.pdf} }\label{fig:GLassoPGIt2:Appx}}%
    \hspace{\fill}
    \subfloat[Seconds]{{\includegraphics[width=6.8cm]{Images/New_Images/Spectrahedron_PG_time.pdf} }\label{fig:GLassoPGTime2:Appx}}%
    \hspace{\fill}
    
    \bigskip 
    
    \hspace{\fill}
    \subfloat[Iterations]{{\includegraphics[width=6.8cm]{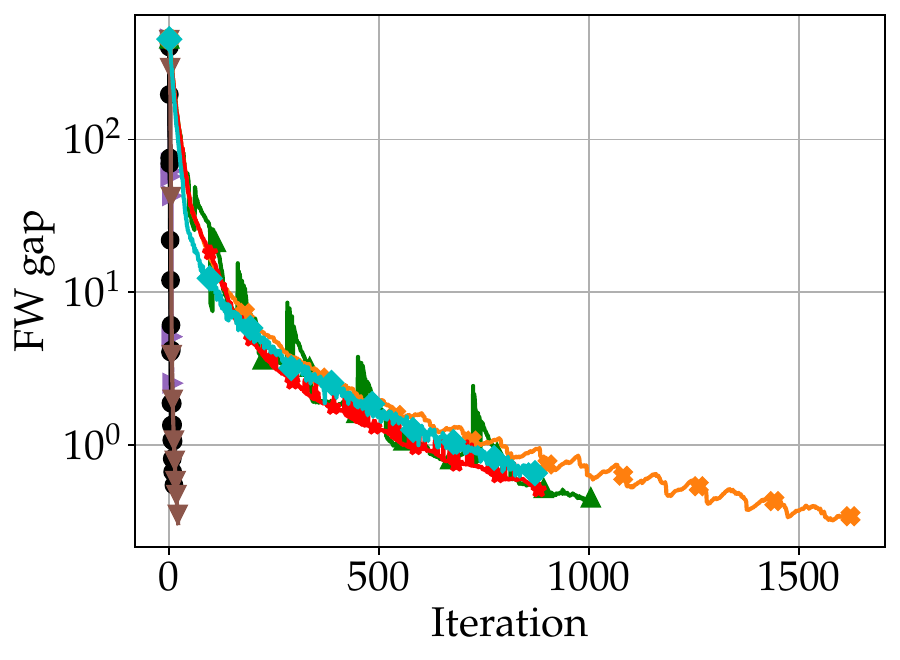} }\label{fig:GLassoDGIt2:Appx}}%
    \hspace{\fill}
    \subfloat[Seconds]{{\includegraphics[width=6.8cm]{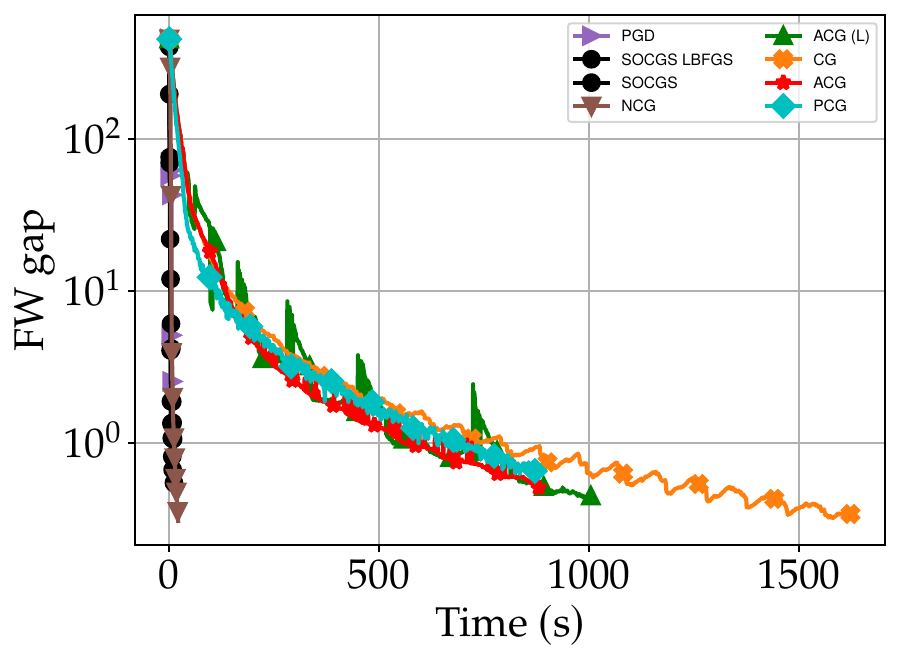} }\label{fig:GLassoDGTime2:Appx}}%
    \hspace{\fill}
    
    \bigskip
    
    \hspace{\fill}
    \subfloat[Iterations]{{\includegraphics[width=6.8cm]{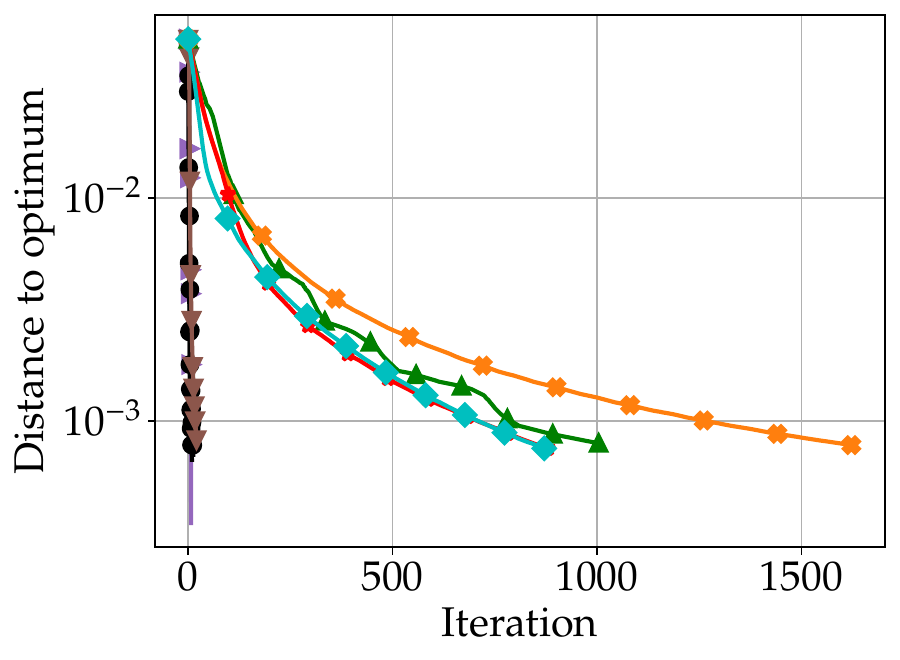} }\label{fig:GLassoDistanceIt2:Appx}}%
    \hspace{\fill}
    \subfloat[Seconds]{{\includegraphics[width=6.8cm]{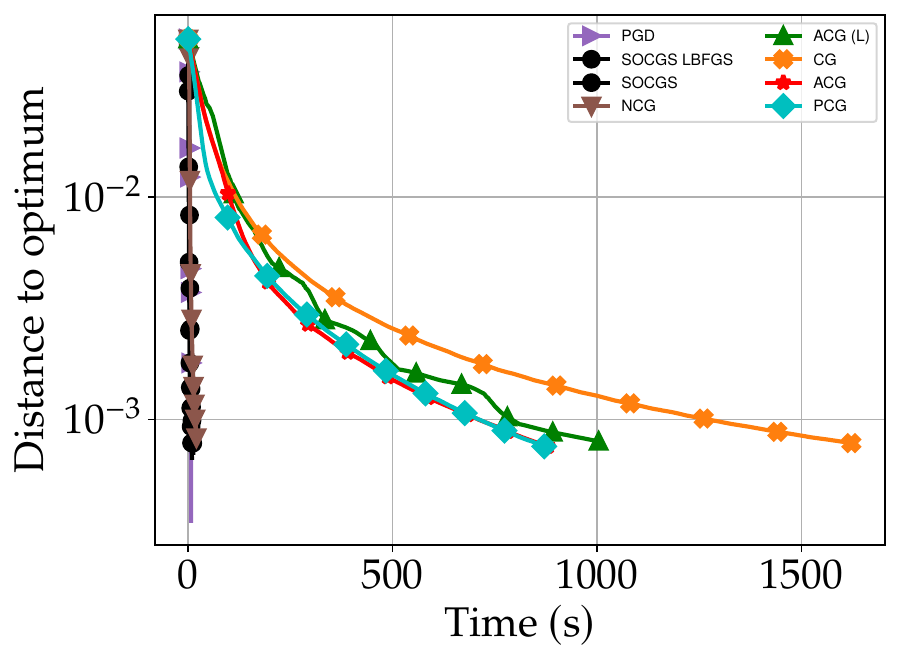} }\label{fig:GLassoDistanceTime2:Appx}}%
    \hspace{\fill}
    \caption{\textbf{Inverse covariance estimation over spectrahedron: } Algorithm comparison for $n = 50$ in terms of primal gap \protect\subref{fig:GLassoPGIt2:Appx},\protect\subref{fig:GLassoPGTime2:Appx}, Frank-Wolfe gap \protect\subref{fig:GLassoDGIt2:Appx},\protect\subref{fig:GLassoDGTime2:Appx} and distance to the optimum \protect\subref{fig:GLassoDistanceIt2:Appx},\protect\subref{fig:GLassoDistanceTime2:Appx}.}%
    \label{fig:GLasso2:Appx}%
\end{figure*}

\end{document}